\numberwithin{equation}{section}
\newtheorem{teo}{Theorem}[section]
\newtheorem{prop}[teo]{Proposition}
\newtheorem{lemma}[teo]{Lemma}
\newtheorem{example}[teo]{Example}
\theoremstyle{definition} 
\newtheorem{definition}[teo]{Definition}
\theoremstyle{remark}
\newtheorem{remark}[teo]{Remark}
\newtheorem*{cav}{\textbf{CAVEAT}}
\newtheorem*{notation}{\textbf{Notation}}
\newcommand{\del}{\partial}
\newcommand{\QQ}{\mathbb{Q}}
\newcommand{\tr}{\text{tr}}
\newcommand{\st}{\left(}
\newcommand{\dt}{\right)}
\newcommand{\sq}{\left[}
\newcommand{\dq}{\right]}
\newcommand{\sg}{\left\{}
\newcommand{\dg}{\right\}}
\newcommand{\p}{  {\bf p} }
\newcommand{\q}{ {\bf q} }
\newcommand{\hg}{ {\bf h} }
\newcommand{\kg}{  {\bf k} }
\newcommand{\ag}{ {\bf a}}
\newcommand{\bg}{  {\bf b}}
\newcommand{\cg}{{\bf c}}
\newcommand{\aaa}{{\bf a}}
\newcommand{\bbb}{{\bf b}}
\newcommand{\ccc}{{\bf c}}
\newcommand{\GGG}{ \mathbf G}
\newcommand{\etag}{\boldsymbol{\eta}}
\newcommand{\dd}{\partial\overline{\partial}}
\newcommand{\NN}{\mathbb{N}}
\newcommand{\s}{\textbf{S}}
\newcommand{\vol}{\text{Vol}_{\omega}}
\newcommand{\RR}{\mathbb{R}}
\newcommand{\ZZ}{\mathbb{Z}}
\newcommand{\CC}{\mathbb{C}}
\newcommand{\PP}{\mathbb{P}}
\newcommand{\Sp}{\mathbb{S}}
\newcommand{\ambbda}{\mathcal{B}_{\alpha}}
\newcommand{\dombd}{\mathcal{B}\st\kappa, \delta \dt}
\newcommand{\dombdmod}{\mathfrak{B}\st\kappa, \beta',\sigma' \dt}
\newcommand{\K}{K\"{a}hler}
\newcommand{\aaaa}{\boldsymbol{\alpha}}
\newcommand{\bbbb}{\boldsymbol{\beta}}
\newcommand{\cccc}{\boldsymbol{\gamma}}
\newcommand{\hkoid}{H_{h_{j}^{(\dagger)},k_{j}^{(\dagger)}}^{out}}
\newcommand{\hko}{{\bf H}_{\hg,\kg}^{out}}
\newcommand{\Lg}{\mathbb{L}_{\omega}}
\newcommand{\Cc}{C_{\delta-4}^{0,\alpha}\st M_{\p} \dt}
\newcommand{\Cqdd}{C_{\delta}^{4,\alpha}\st M_{\p} \dt \oplus \mathcal{D}_{\p}\st \bg,\cg \dt}
\newcommand{\Cqddd}{C_{\delta}^{4,\alpha}\st M_{\p} \dt \oplus \mathcal{E}_{\p} \oplus \mathcal{D}_{\p}\st \bg,\cg \dt}
\newcommand{\Pbe}{{\bf P}_{\bg,\etag}}
\newcommand{\rep}{r_\varepsilon}
\newcommand{\xroi}{\chi_{j,r_0}}
\newcommand{\Rep}{R_\varepsilon}
\newcommand{\hkjj}{H_{\tilde{h},\tilde{k}}^{in}}
\newcommand{\hkii}{{\bf H}_{\tilde{h},\tilde{k}}^{in}}
\newcommand{\Le}{\mathbb{L}_{\eta}}
\newcommand{\Cdx}{C_{\delta}^{4,\alpha}\st X_{\Gamma} \dt }
\newcommand{\Ccx}{C_{\delta-4}^{0,\alpha}\st X_{\Gamma} \dt }
\newcommand{\etat}{\eta_{\tilde{b},\tilde{h},\tilde{k}}}
\newcommand{\csfii}{f_{\tilde{b},\tilde{h},\tilde{k}}^{in}}
\newcommand{\cga}{c(\Gamma)}
\newcommand{\cgaj}{c\st\Gamma_{j}\dt}
\newcommand{\Pbg}{{\bf P}_{\tilde{b},\omega}}
\begin{document}

\title[On the Kummer construction for Kcsc metrics]{On the Kummer construction for Kcsc metrics}
\author[Claudio Arezzo] {Claudio Arezzo}
\address{ICTP Trieste and Univ. of Parma, arezzo@ictp.it }
\author{Riccardo Lena}
\address{Univesrit\`a di Parma, riccardo.lena@gmail.com}
\author{Lorenzo Mazzieri}
\address{Scuola Normale Superiore, Pisa, l.mazzieri@sns.it}

\begin{abstract}

Given a compact constant scalar curvature K\"ahler orbifold, with nontrivial holomorphic vector fields, whose singularities admit a local ALE \K\ Ricci-flat resolution, we find sufficient conditions on the position of the singular points to ensure the existence of a global constant scalar curvature \K\ desingularization. This generalizes the results previously obtained by the first author with F. Pacard. A series of explicit examples is discussed.\\

\end{abstract}

\maketitle

\vspace{-,15in}

{\it{1991 Math. Subject Classification:}} 58E11, 32C17.

\section{Introduction}

\noindent The aim of this paper is to extend the celebrated Kummer's construction for Calabi-Yau manifolds
(\cite{Page}, \cite{Top}, \cite{lsin}, and \cite{j} for a number of generalisations) to construct new families of K\"ahler constant scalar curvature (Kcsc from now on) metrics on 
compact complex manifolds and orbifolds. 


In order to state our results precisely, let us briefly recall that one starts with  a Kcsc base $M$ with {\em{isolated quotient singularities}}, hence locally of the form $\CC^m/\Gamma_j$, where $m$ is the complex dimension of $M$, $j\in J$ parametrizes the set of points we want to desingularize, and $\Gamma_j$ is a finite subgroup of $U(m)$ acting freely away from the origin.

\noindent Given such a singular object one would like to replace a small neighborhood of a singular point and replace it with a large piece of a K\"ahler resolution $\pi\colon (X_{\Gamma}, \eta) \rightarrow \CC^m/\Gamma$ keeping the scalar curvature constant (and close to the starting one). For such a construction to even have a chance to preserve the Kcsc equation it is necessary that $(X_{\Gamma}, \eta)$ is scalar flat, i.e. it is necessary to assume that {\em{$\CC^m/\Gamma_j$ has a scalar flat ALE resolution.}}

Having then fixed a set of singular points $\{p_1, \dots, p_n\} \subset M$ each corresponding to a group $\Gamma_j$, and denoted by
$ B_{j,r}  : =  \{  z \in {\mathbb C}^{m} / \, \Gamma_j
\, : \,  |z| < r \}, $
we can define, for
all $r >0$ small enough (say $r \in (0, r_0)$)
\begin{equation}
M_r : = M \setminus \cup_j \, B_{j,r}  . \label{eq:2.3}
\end{equation}

\noindent On the other side, for each $j = 1, \ldots, n$, we are given
a $m$-dimensional \K\ manifold $(X_{\Gamma_j}, \eta_j)$, with one end biholomorphic to a
neighborhood of infinity in ${\mathbb C}^{m} / \, \Gamma_j$. 
\noindent Dual to the previous notations on the base manifold, we set 
$C_{j , R}  : =  \{ x \in {\mathbb C}^{n} / \, \Gamma_j \,
: \, |x | >  R \}, $
the complement of a closed large ball and the complement of an open
large ball in $X_{\Gamma_j}$ (in the  coordinates which parameterize a
neighborhood of infinity in $X_{\Gamma_j}$). We define, for all $R > 0$ large
enough (say $R > R_0$)

\begin{equation}
X_{\Gamma_j, R} : = X_{\Gamma_j} \setminus C_{j , R}. \label{eq:2.5}
\end{equation}
which corresponds to the manifold $X_{\Gamma_j}$ whose end has been
truncated. The boundary of $X_{\Gamma_j,R}$ is denoted by $\del C_{j,R}$.

\medskip

\noindent We are now in a position to describe the generalized connected sum
construction. Indeed, for all
$\varepsilon \in (0, r_0/R_0)$, we choose $r_{\varepsilon} \in (\varepsilon \, R_0 , r_0)$ and
define
\begin{equation}
R_{\varepsilon} : =  \frac{r_{\varepsilon}}{\varepsilon} . \label{eq:2.6}
\end{equation}
By construction
\[
{\tilde{M}} : = M \sqcup _{{p_{1}, \varepsilon}} X_{\Gamma_1} \sqcup_{{p_{2},\varepsilon}} \dots
\sqcup _{{p_n, \varepsilon}} X_{\Gamma_n} ,
\]
is obtained by connecting  $M_{r_\varepsilon}$ with the truncated ALE spaces
$X_{\Gamma_{1, R_\varepsilon}}, \ldots, X_{\Gamma_{n , R_\varepsilon}}$. The identification of the
boundary $\del B_{j , r_{\varepsilon}}$ in $M_{r_{\varepsilon}}$ with the boundary $\del
C_{j , R_\varepsilon}$ of $X_{\Gamma_j, R_\varepsilon}$ is performed using the change of
variables
\[
(z^{1} , \ldots, z^{m} )  = \varepsilon \, (x^{1} , \ldots, x^{m}) ,
\]
where $(z^{1}, \ldots, z^{m} )$ are the coordinates in $B_{j , r_0}$
and $(x^{1}, \ldots, x^{m})$ are the coordinates in $C_{j , R_0}$.

\smallskip

It was proved in \cite{ap1} that if no nontrivial holomorphic vector fields exist on $\st M, \omega, g\dt$ the ALE scalar flat condition on the model is also sufficient to construct a family parametrized by the gluing parameter $\varepsilon$ on the manifold (or orbifold) obtained by this procedure. On the other hand, the known picture for the blow up of smooth points, suggests that the number and position of points should be relevant to achieve the same existence theorem in presence of continuous symmetries. In fact, being the linearized scalar curvature operator $\Lg$ given by
\begin{equation}
{\mathbb L}_\omega f \,  =  \, \Delta^{2}_\omega f \,   +  \, 4 \, \langle \, \rho_\omega \, | \,   i\dd f \, \rangle  \, , 
\end{equation}

\noindent we have to look at the positions of points relative to the elements of $ker(\Lg) \, = \,span_{\RR} \, \{\varphi_0, \varphi_1, \ldots , \varphi_d \} ,$
where $\varphi_0 \equiv 1$, $d$ is a positive integer and $\varphi_1, \ldots, \varphi_d$ is a collection of linearly independent functions in $\ker(\Lg)$ with zero mean and normalized in such a way that $||\varphi_i||_{L^2(M)} = 1$, $i=1, \ldots, d$,.

Interestingly, the analysis required to achieve the final goal strongly depends on some further structure of the ``local model" $X_{{\Gamma_j}}$ and in particular on whether the metric $\eta_j$ is {\em Ricci-flat} or merely scalar flat.

As it turns out, the hardest case is when the resolution is Ricci flat (which in particular forces the group $\Gamma_j$ to be in $SU(m)$) since these metrics do not present the leading non-euclidean term in the expansion of their potential, and this is the case we 
treat in this paper. The following is our main result which gives the new conditions on the ``symplectic" positions of the singular points for the Kcsc equation to be solvable:

\begin{teo}\label{maintheorem}
Let $\st M,\omega, g\dt$ be a compact $m$-dimensional Kcsc orbifold  with isolated singularities and constant scalar curvature equal to $s_{\omega}$. Let $\p\:=\sg p_1,\ldots,p_{N}\dg\subseteq M$ the set of points  with neighborhoods biholomorphic to  a ball of  $\CC^m/\Gamma_{j}$ where, for $j=1,\ldots, N$, the $\Gamma_{j}$'s are nontrivial subgroups of $SU(m)$ of order $|\Gamma_{j}|$ and such that $\CC^{m}/\Gamma_{j}$ admits an ALE  Kahler Ricci-flat resolution $\st X_{\Gamma_{j}},h_{j},\eta_{j} \dt$. Let 
\begin{align}
\ker\st   \Lg  \dt =&	\,span_{\RR	}\left\{1,\varphi_1,\ldots,\varphi_d \right\}\,.
\end{align}
be the space of Hamiltonian potentials of Killing fields with zeros. 
Suppose moreover that there exist $ \bg \in (\mathbb{R}^{+})^{N}$ and $\cg\in\RR^{N}$ such that
\begin{displaymath}\label{eq:matricebal}
		\left\{\begin{array}{lcl}
		\sum_{j=1}^{N}b_{j}\Delta_{\omega}\varphi_{i}\st p_{j} \dt+c_{j}\varphi_{i}\st p_{j} \dt=0 && i=1,\ldots, d\\
		&&\\
		\st b_{j}\Delta_{\omega}\varphi_{i}\st p_{j} \dt+c_{j}\varphi_{i}\st p_{j} \dt \dt_{\substack{1\leq i\leq d\\1\leq j\leq N}}&& \textrm{has full rank.}
		\end{array}\right.
\end{displaymath}
If in addition the condition
\begin{equation}
\label{eq:tuning}
c_{j}=s_{\omega}b_{j}\,
\end{equation}
is satisfied, then there exists $\bar{\varepsilon}$ such that for every $\varepsilon \in (0,\bar{\varepsilon})$ the orbifold
\[
\tilde{M} : = M \sqcup _{{p_{1}, \varepsilon}} X_{\Gamma_1} \sqcup_{{p_{2},\varepsilon}} \dots
\sqcup _{{p_N, \varepsilon}} X_{\Gamma_N}
\]
has a Kcsc metric in the class 
\begin{equation}
\pi^{*}\sq\omega \dq+ \sum_{j=1}^{N}\varepsilon^{2m}\tilde{b}_{j}^{2m}\sq \tilde{\eta}_{j} \dq\qquad\textrm{ with }\qquad \mathfrak{i}_{j}^{*}\sq \tilde{\eta}_{j} \dq=[\eta_{j}]
\end{equation}
\noindent where $\pi$ is the canonical surjection of $\tilde{M}$ onto $M$ and $\mathfrak{i}_{j}$ the natural embedding of $X_{\Gamma_{j},\Rep}$ into $\tilde{M}$. Moreover 
\begin{equation}
\left|\tilde{b}_{j}^{2m} - \frac{|\Gamma_{j}|b_{j}}{2\st m-1 \dt}\right| \leq \mathsf{C} \varepsilon^{\gamma}\qquad\textrm{ for some }\qquad \gamma>0\,,
\end{equation}
where $|\Gamma_{j}|$ denotes the order of the group.
\end{teo}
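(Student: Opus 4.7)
The plan is a generalized connected sum construction: first I would build an approximate Kcsc metric $\omega_{\varepsilon}$ on $\tilde M$ by a K\"ahler potential gluing, then perturb it by $i\partial\bar\partial\psi_{\varepsilon}$ to achieve constancy of the scalar curvature via a fixed-point scheme. In the annular transition region around each $p_{j}$, one interpolates between the potential of $\omega$ on the base and the potential of $\varepsilon^{2}\tilde b_{j}^{2}\eta_{j}$ pulled to the base via $z=\varepsilon x$. The free parameters are $\varepsilon$, the size parameters $\tilde b_{j}$ (which modify the K\"ahler class by the $\varepsilon^{2m}\tilde b_{j}^{2m}[\tilde\eta_{j}]$ summand), and auxiliary real parameters $\tilde c_{j}$ used to match asymptotics across the necks. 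Since each $(X_{\Gamma_{j}},\eta_{j})$ is Ricci-flat ALE with $\Gamma_{j}\subset SU(m)$, the K\"ahler potential of $\eta_{j}$ expands at infinity as $|x|^{2}/2+O(|x|^{4-2m})$ with no intermediate logarithmic or Green's-type correction: this absence is precisely what makes the Ricci-flat case more delicate than the strictly scalar-flat one, because the term that would drive the leading matching is missing and one must push the expansion to the next order.

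Next I would carry out the linear analysis. The base operator $\Lg$ has kernel $\mathrm{span}_{\RR}\{1,\varphi_{1},\dots,\varphi_{d}\}$ and, on weighted H\"older spaces $C^{4,\alpha}_{\delta}(M_{\p})$ with a suitable weight $\delta$ around the punctures, it becomes Fredholm once augmented by the deficiency data suggested by the notations $\mathcal D_{\p}(\bg,\cg)$ and $\mathcal E_{\p}$. Similarly $\mathbb{L}_{\eta_{j}}$ on $X_{\Gamma_{j}}$ in weighted spaces $C^{4,\alpha}_{\delta}(X_{\Gamma_{j}})$ has a deficiency generated by the constants and by the leading radial mode corresponding to rescaling of the K\"ahler class. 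After a careful choice of weights I would assemble a right inverse for $\mathbb{L}_{\omega_{\varepsilon}}$ modulo a finite-dimensional obstruction, with norm polynomially bounded in $\varepsilon^{-1}$. The obstruction coming from $\ker(\Lg)$ is eliminated by projecting the nonlinear equation onto $\mathrm{span}\{\varphi_{1},\dots,\varphi_{d}\}$ and computing the leading order of this projection: the computation produces exactly the combinations $b_{j}\Delta_{\omega}\varphi_{i}(p_{j})+c_{j}\varphi_{i}(p_{j})$, so the balancing assumption makes the projected system solvable while $c_{j}=s_{\omega}b_{j}$ is the consistency condition fixing the value of the resulting scalar curvature. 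The full-rank hypothesis then allows an implicit function argument that perturbs $(\tilde b_{j},\tilde c_{j})$ near $(b_{j},c_{j})$ as $\varepsilon$ varies.

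With the right inverse in hand, I would close the argument by a Banach contraction mapping in a small ball of the weighted H\"older space, writing the Kcsc equation as $\mathbb{L}_{\omega_{\varepsilon}}\psi=E_{\varepsilon}+Q_{\varepsilon}(\psi)$ where $E_{\varepsilon}$ is the scalar-curvature error of the approximate metric and $Q_{\varepsilon}$ collects the quadratic and higher-order terms. Controlling the weighted size of $E_{\varepsilon}$, the operator norm of the right inverse, and the Lipschitz norm of $Q_{\varepsilon}$ in a ball of radius $\varepsilon^{\gamma}$ for a suitable $\gamma>0$ yields a unique small solution, and tracking the explicit leading behaviour of $E_{\varepsilon}$ in $b_{j}$ produces the estimate $|\tilde b_{j}^{2m}-|\Gamma_{j}|b_{j}/(2(m-1))|\le \mathsf{C}\varepsilon^{\gamma}$, where the factor $|\Gamma_{j}|/(2(m-1))$ emerges from integrating the leading radial part of the ALE K\"ahler potential against $\Delta_{\omega}\varphi_{i}$ and reflects the volume contribution of each bubble.

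The hardest part is the linear analysis on the glued manifold in the Ricci-flat regime: because the ALE potential of $\eta_{j}$ has no non-Euclidean correction at order $|x|^{4-2m}$, the naive matching between base and bubble contributions degenerates and one is forced to perform matched asymptotics at one order higher than in the strictly scalar-flat case of \cite{ap1}, which is what forces the introduction of the auxiliary deficiency spaces $\mathcal D_{\p}(\bg,\cg)$ and $\mathcal E_{\p}$ and couples the parameters $(\tilde b_{j},\tilde c_{j})$ nontrivially to the balancing and rank conditions. Making these deficiency corrections compatible with the obstruction-projection step uniformly in $\varepsilon$, so that the implicit function theorem delivers the claimed K\"ahler class $\pi^{*}[\omega]+\sum_{j}\varepsilon^{2m}\tilde b_{j}^{2m}[\tilde\eta_{j}]$, is where the bulk of the technical effort will lie.
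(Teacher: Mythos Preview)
Your proposal follows the standard pre-gluing template: build an approximate metric on the whole glued space $\tilde M$, set up $\mathbb L_{\omega_\varepsilon}\psi=E_\varepsilon+Q_\varepsilon(\psi)$, and contract. The paper does something genuinely different: it never works on the glued manifold at all during the nonlinear step. Instead (Propositions \ref{crucialbase} and \ref{crucialmodello}) it produces \emph{families of exact Kcsc metrics} separately on the truncated base $M_{r_\varepsilon}$ and on each truncated model $X_{\Gamma_j,R_\varepsilon}$, parametrized by ``pseudo-boundary data'' $(h_j,k_j)$ and $(\tilde h_j,\tilde k_j)$ living in spaces of functions on $\mathbb S^{2m-1}$. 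These families are obtained by two independent fixed-point arguments, one on each side. The proof is then completed in Section \ref{matching} by a Cauchy-data matching: one imposes $C^3$-agreement of the two K\"ahler potentials across the boundary sphere (value, normal derivative, Laplacian, normal derivative of Laplacian), and the resulting system is solved for the boundary parameters using the Dirichlet-to-Neumann isomorphism of Theorem \ref{dirneu}. The paper explicitly justifies this choice over pre-gluing (see the discussion after the statement of Theorem \ref{maintheorem}), because it isolates precisely where the tuning condition \eqref{eq:tuning} enters.

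Your account of why $c_j=s_\omega b_j$ holds is also off. It is not a condition that ``fixes the value of the resulting scalar curvature''; rather, it is forced by matching the $|z|^{4-2m}$ (respectively $\log|z|$ when $m=2$) asymptotics across the neck. Concretely: on the model side, when one transplants $\psi_\omega$ and tries to solve $\Delta_\eta^2[\chi\Psi_4+W_4]=-2s_\omega$, the obstruction of Proposition \ref{GAP} forces an explicit radial term $\tfrac{c(\Gamma)(m-1)s_\omega}{2(m-2)m(m+1)}|x|^{4-2m}$ into $W_4$ (formula \eqref{eq:W4}). This term must then be matched with the $C_j|z|^{4-2m}$ piece of the multi-pole fundamental solution $\mathbf G_{\mathbf 0,\mathbf b,\mathbf c}$ on the base (equations \eqref{eq:force1}--\eqref{eq:force2}); comparing coefficients and using \eqref{eq:Cj} yields \eqref{eq:sceltagamma}, i.e.\ $c_j=s_\omega b_j$. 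In a pre-gluing scheme this mechanism would be buried inside the error estimate and much harder to extract.

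Finally, a small but important point about the ALE expansion: you write that the potential is $|x|^2/2+O(|x|^{4-2m})$ ``with no Green's-type correction''. In fact the Ricci-flat condition kills the $e(\Gamma)|x|^{4-2m}$ term (this is the bi-Laplacian Green's function order), while the Laplacian-Green term $-c(\Gamma)|x|^{2-2m}$ \emph{is} present and carries the leading matching with the skeleton on the base; see Proposition \ref{asintpsieta}. Getting this straight is essential, since the constant $c(\Gamma)$ is what produces the factor $|\Gamma_j|/(2(m-1))$ in the final estimate for $\tilde b_j^{2m}$ via \eqref{eq:Bj} and \eqref{eq:sceltaB}.
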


Whether, given $\Gamma$ in $SU(m)$, a Ricci flat K\"ahler resolution exists
is by itself an important problem in different areas of mathematics and we will not digress on it here. It suffices to recall the reader that 
Ricci flat  models do exist for any subgroup of $SU(m)$ with $m=2$, thanks to the work of Kronheimer, while in higher dimensions one needs to assume the existence of a K\"ahler crepant resolution and then apply deep results by Joyce \cite{j}, Goto ({\cite{Goto}), Van Coevering (\cite{VanCoevering}) and Conlon-Hein \cite{ConlonHein}. 
In particular $m=3$ works fine again for any $\Gamma$ in $SU(3)$.

The role of the equation (\ref{eq:tuning}) is particularly interesting. We will show in Section $5$ that without this assumption it is possible to construct Kcsc metrics on the manifolds with boundaries obtained by removing small neighbourhoods of the singularities on the base and large pieces of the ends of the models. We believe such a result should be of independent interest and it justifies the choice of using a Cauchy-data matching technique instead of the more common pre-gluing type argument.
Equation (\ref{eq:tuning}) is on the other hand crucial in order to prove, as we do in Section $6$, that there exists at least one truncated metric on the base which matches exactly one truncated metric on the model. It is also worth observing that without equation  
(\ref{eq:tuning}) we would have a space of solutions of dimension $2N-d$ gluing Ricci-flat models, opposed to the $N-d$ dimensional space of solutions of the corresponding problem when scalar flat, non Ricci-flat, models are glued as in the case of blow ups. Equation (\ref{eq:tuning}) reduces the number of parameters exactly to the same size as the previously known cases.

Theorem \ref{maintheorem} deserves few comments: first of all it would of great interest to interpret these new balancing conditions in terms of the algebraic data of the orbifold, at least when starting with a polarized object, very much in the spirit of Stoppa's interpretation of the blow-up picture (\cite{Stoppa}).

Our results can also be seen as ``singular perturbation" results applied to the original singular space {\em{fixing}} the complex structure and deforming the K\"ahler class. A very different, though parallel in spirit, analysis can be done by thinking of keeping the K\"ahler class fixed and {\em{moving the complex structure}}. Unfortunately nobody has been able to prove gluing theorems for integrable complex structures so far, but {\em{assuming that such a deformation exists}}, this dual analysis, with no holomorphic vector fields and in complex dimension two, has been done in the important work on Spotti (\cite{Spotti}) in the Einstein  and special ordinary double point case, and by Biquard-Rollin (\cite{BiquardRollin}) in the Kcsc case for general $\QQ$-Gorenstein singularities. 

Many of the technical difficulties encountered in proving Theorem \ref{maintheorem} could be avoided if one seeks extremal metrics instead of Kcsc ones.
This fact, already observed by Tipler for surfaces with cyclic quotient singularities in \cite{Tip}, is now rigorously proved in \cite{ALM}. Nevertheless going back from extremal
to Kcsc would require knowing the behaviour of Futaki's invariant under resolution of singularities, which at the moment seems out of reach. The analogue approach for blowing up smooth points has been carried out by Stoppa (\cite{Stoppa}), Della Vedova - Zuddas (\cite{DVZ}), and 
 G. Szekelyhidi (\cite{Gabor1}, \cite{Gabor2}). 
\vspace{2mm}

Turning back to our results, we can then look for new examples of full or partial desingularizations of Kcsc orbifolds. 
Of course it will be very hard on a general orbifold to compute $\Delta_{\omega}\varphi_j$. On the other hand, assuming for example that $M$ is Einstein and using \begin{equation}
\Delta_{\omega}\varphi_{j}=-\frac{s_{\omega}}{m}\varphi_{j}\, ,
\end{equation}
the balancing condition requires only the knowledge of the value of the $\varphi_j$ at the singular points. Moreover these values are easily computed for example in toric setting by the well known relationship between the evaluation of the potentials $\varphi_j$ and the image point via the moment map. 
 
With these classical observations one can then look for toric K\"ahler-Einstein orbifolds with isolated quotient singularities to test to which of them our results can be applied. In complex dimension $2$ things are pretty simple and in fact two such examples are 
\begin{itemize}
\item
$\st\PP^{1}\times\PP^{1},\pi_{1}^{*}\omega_{FS}+\pi_{2}^{*}\omega_{FS}\dt$ with $\ZZ_{2}$ acting by 
\begin{equation}\st[x_{0}:x_{1}],[y_{0}:y_{1}]\dt\longrightarrow  \st[x_{0}:-x_{1}],[y_{0}:-y_{1}]\dt\end{equation} 

This orbifold is isomorphic to the intersection of two singular quadrics in $\PP^4$.
\begin{equation}
\sg z_{0}z_{3}-z_{4}^{2}=0 \dg\cap \sg z_{1}z_{2}-z_{4}^{2}=0 \dg
\end{equation}

\item

$\st\PP^{2},\omega_{FS}\dt$ with $\ZZ_{3}$ acting by
\begin{equation}[z_{0}:z_{1}:z_{2}]\longrightarrow  [x_{0}:\zeta_{3}x_{1}:\zeta_{3}^{2}x_{2}]\qquad \zeta_{3}\neq 1, \zeta_{3}^{3}=1\end{equation}

This orbifold is isomorphic to the singular cubic surface in $\PP^{3}$
\begin{equation}
\sg z_{0}z_{1}z_{2}-z_{3}^{3}=0\dg\,.
\end{equation}

\end{itemize}

In both cases we will show in Section $7$ that our results provide a {\em{full}} Kcsc (clearly {\em{not} \K -Einstein}) desingularization (in the first case applied to $4$ singular $SU(2)$ points, while $3$ $SU(2)$ points in the second). It is worth noting that both these orbifolds are also limits of smooth \K - Einstein surfaces. This can be seen in various ways: either applying Tian's resolution of the Calabi Conjecture (\cite{Tian}) or by  \cite{AGP} in the first case, and Odaka-Spotti-Sun above mentioned result to both.

Working out higher dimensional examples turned out to be much more challenging than we expected. Even making use of the beautiful 
database of Toric Fano Threefolds run by G. Brown and A. Kasprzyk (\cite{GRD}, see also \cite{kasprzyk}) and their amazing help in implementing 
a complete search of Einstein ones with isolated singularities, we could only extract orbifolds where only a partial Kcsc resolution is possible.
In fact they produced a complete list (see \cite{GRD2}) of toric Fano threefolds s.t.
\begin{itemize}
\item
 they have only isolated quotient singular points;
\item
their moment polytope has barycenter in the origin (this implies the Einstein condition, thanks to a well known result by Mabuchi \cite{Mabuchi});
\item
each singular point is a $\CC^3/\Gamma$, $\Gamma \in U(3)$.
\end{itemize}
For example, let  $X^{(1)}$ be the toric K\"ahler-Einstein threefold whose  1-dimensional fan  $\Sigma^{(1)}_{1}$ is generated by points
\begin{equation}\Sigma^{(1)}_{1}=\left\{(1,3,-1), (-1,0,-1), (-1,-3,1), (-1,0,0), (1,0,0), (0,0,1), (0,0,-1), (1,0,1)\right\}
\end{equation}
and its $3$-dimensional fan $\Sigma^{(1)}_{3}$  is generated by $12$ cones 
\begin{align}
C_{1}:=& \left<        (-1,  0, -1),(-1, -3,  1),(-1,  0,  0)\right>\\
C_{2}:=& \left<        ( 1,  3, -1),(-1,  0, -1),(-1,  0,  0)\right>\\
C_{3}:=& \left<        (-1, -3,  1),(-1,  0,  0),( 0,  0,  1)\right>\\
C_{4}:=&\left<        ( 1,  3, -1),(-1,  0,  0),( 0,  0,  1)\right>\\
C_{5}:= &\left<        ( 1,  3, -1),(-1,  0, -1),( 0,  0, -1)\right>\\
C_{6}:=&\left<        (-1,  0, -1),(-1, -3,  1),( 0,  0, -1)\right>\\
C_{7}:= & \left<        (-1, -3,  1),( 1,  0,  0),( 0,  0, -1)\right>\\
C_{8}:=& \left<        (1,  3, -1),(1,  0,  0),(0,  0, -1)\right>\\
C_{9}:= &\left<        (1,  3, -1),(0,  0,  1),(1,  0,  1)\right>\\
C_{10}:=& \left<        (-1, -3,  1),( 1,  0,  0),( 1,  0,  1)\right>\\
C_{11}:=& \left<        (1,  3, -1),(1,  0,  0),(1,  0,  1)\right>\\
C_{12}:= & \left<        (-1, -3,  1),( 0,  0,  1),( 1,  0,  1)\right>
\end{align}
All these cones are singular and $C_{1},C_{4},C_{5},C_{7},C_{11},C_{12}$ are cones relative to affine open subsets of $X^{(1)}$ containing a $SU(3)$ singularity, while the others  are cones relative to affine open subsets of $X^{(1)}$ containing a $U(3)$ (non Ricci flat) singularity. 
\noindent We will show in Section $7$ that these $6$ $SU(3)$ singularities do satisfy all the requirements of Theorem \ref{maintheorem}.

\smallskip

\noindent {\bf{Structure of the paper:}} in Section $2$ we  collect some known facts and we prove a crucial refinement (Proposition \ref{asintpsieta}) of results of Joyce, Tian-Yau and others on the asymptotics of a \K\ Ricci flat metric on a crepant resolution. 

In Section $3$ we collect, with complete proofs, all results needed at the linear level on the linearized scalar curvature operator on the base orbifold. In particular we construct global functions in the kernel of the linearized operator with prescribed blow up behaviour near the singularities
(see Proposition \ref{loc_structure}).

Section $4$ contains all the (weighted) linear analysis on a scalar flat \K\ resolution of an isolated singularity. These results are significantly different from what was known, in that our problem forces us to use weights in a different, more delicate, range.

We emphasise that Sections $3$ and $4$ describe the complete picture of the weighted linear analysis needed not just to prove our main result, and in fact it will be used by the authors in a forthcoming paper to prove a result similar to Theorem \ref{maintheorem} for general scalar flat resolutions. We believe these sections clarifies many similar analyses present in the literature.

In Section $5$ the existence of truncated Kcsc metrics on the base and on the models is proved in Propositions \ref{crucialbase}  and \ref{crucialmodello}. 

Section $6$ contains the proof of Theorem \ref{maintheorem} by proving the mentioned Cauchy-data matching property of the truncated metrics under the assumption 
(\ref{eq:tuning}).

\noindent Section $7$ gives a complete description of the above mentioned examples.
\vspace{3mm}

\noindent {\bf{Aknowledgments:}} We wish to thank Frank Pacard and Gabor Szekelyhidi for many discussions on this topic. We also wish to express our deep gratitude to Gavin Brown and Alexander Kasprzyk for their help in not drowning in the Fano toric threefolds world. 
The authors have been partially supported by the FIRB Project ``Geometria Differenziale Complessa e Dinamica Olomorfa".

\section{Notations and preliminaries}\label{preliminaries}

\subsection{Eigenfunctions and eigenvalues of $\Delta_{\Sp^{2m-1}}$.}

\label{eigen}

In order to fix some notation which will be used throughout the paper, we agree that $\Sp^{2m-1}$ is the unit sphere of real dimension $2m-1$, equipped with the standard round metric inherited from $(\mathbb{C}^m, g_{eucl})$. We will denote by $\{\phi_k\}_{k\in \NN}$ a complete orthonormal system of the Hilbert space $L^2(\Sp^{2m-1})$, given by eigeinfunctions of the Laplace-Beltrami operator $\Delta_{\Sp^{2m-1}}$, so that, for every $k \in \NN$,
$$
\Delta_{\Sp^{2m-1}} \phi_k \,  = \,  \lambda_k \phi_k
$$ 
and $\{\lambda_k \}_{k \in \mathbb{N}}$ are the eigenvalues of $\Delta_{\Sp^{2m-1}}$ {\em{counted with multiplicity}}. We will also indicate by $\Phi_j$ the generic element of the $j$-th eigenspace of $\Delta_{\Sp^{2m-1}}$, so that, for every $j \in \NN$, 
$$
\Delta_{\Sp^{2m-1}} \Phi_j \,  = \,  \Lambda_j \Phi_j \, 
$$ and $\{\Lambda_j \}_{j \in \mathbb{N}}$ are the eigenvalue of $\Sp^{2m-1}$ {\em{counted without multiplicity}}. In particular, we have that
$\Lambda_j =  - j(2m - 2 + j)$, for every $j \in \NN$. If $\Gamma \triangleleft U(m)$ is a finite subgroup of the unitary group acting on $\CC ^m$ having the origin as its only fixed point, we denote by $\{\Lambda^{\Gamma}_j\}_{j \in \NN}$ the eigenvalues {\em{counted without multiplicity}} of the operator $\Delta_{\Sp^{2m-1}}$ restricted to the $\Gamma$-invariant functions.
For future convenience we introduce the following notation, given $f\in L^{2}\st \Sp^{2m-1} \dt$ we denote  with $f^{(k)}$ the  $L^2\st \Sp^{2m-1} \dt$-projection of $f$ on the $\Lambda_{k}$-eigenspace of $\Delta_{\Sp^{2m-1}}$ and 
\begin{equation}
f^{(\dagger)}:=f-f^{(0)}
\end{equation}

\subsection{The scalar curvature equation}

We let $(M,g,\omega)$ be a \K\ orbifold with complex dimension equal to $m$, where $g$ is the \K\ metric and $\omega$ is the \K\ form. Notice that we allow the Riemannian orbifold $(M,g)$ to be incomplete, since in the following we will be eventually led to consider punctured orbifolds. We denote by $s_\omega$ the scalar curvature of the \K\ metric $g$ and by $\rho_\omega$ its Ricci form.
%
In the following it will be useful to consider cohomologous deformations of the \K\ form $\omega$. Hence, for a smooth real function $f \in C^\infty(M)$ such that $\omega + i  \dd  f  >  0 $, we set
\[
\omega_f \,   =  \, \omega + i  \dd  f \,  ,
\]
and we will refer to $f$ as the deformation potential.
Since we want to understand the behavior of the scalar curvature under deformations of this type, it is convenient to consider the following differential operator 
$$
\mathbf{S}_\omega(\cdot) \, : \, C^\infty(M) \longrightarrow C^\infty(M) \, , \qquad \qquad
 f \,\longmapsto \,\mathbf{S}_{\omega}(f) := s_{\omega+ i\dd f} \, ,
$$ 
which associate to a deformation potential $f$ the scalar curvature of the corresponding metric. Following the formal computations given in~\cite{ls}, we obtain the formal expansion 
\begin{equation}
\label{eq:espsg}
\mathbf{S}_{\omega}(f) \,\, = \,\,  s_{\omega} - \, \frac{1}{2} \, {\mathbb L}_{\omega}f \, + \, \frac{1}{2} \mathbb{N}_{\omega}(f) \, ,
\end{equation}
where the linearized scalar curvature operator $\Lg$ is given by
\begin{equation}
{\mathbb L}_\omega f \,  =  \, \Delta^{2}_\omega f \,   +  \, 4 \, \langle \, \rho_\omega \, | \,   i\dd f \, \rangle  \, . \label{eq:defLg}
\end{equation}

Once we introduce the bilinear operator $\circ$ acting on  tensors in  $\st TM^{*} \dt^{(1,0)}\otimes \st TM^{*} \dt^{(0,1)}$ as
\begin{equation}\label{eq: circ}
\st T\circ U\dt_{i\bar{l}}:= T_{i\bar{\jmath}}g^{k\bar{\jmath}}U_{k\bar{l}}\qquad  T,U \in \st TM^{*} \dt^{(1,0)}\otimes \st TM^{*} \dt^{(0,1)}\,,
\end{equation}
the nonlinear remainder $\NN_{\omega}$ takes the form
\begin{equation}
\label{eq:defQg}
\NN_{\omega}(f) \, = \, 8 \tr_{\omega}\,(i\dd f \circ i\dd f \circ \rho_\omega) \, - \,  8\tr _{\omega}\, (i\dd f \circ i\dd \, \Delta_{\omega}f ) \, + \, 4 \Delta_{\omega} \, \tr_{\omega} \, (i\dd f\circ i\dd f) \, + \,  2\RR_{\omega}(f)\, ,
\end{equation}
with $\RR_\omega(f)$  the collections of all higher order terms.

\subsection{The K\"ahler potential of a Kcsc orbifold}

We let $(M,g,\omega)$ be a compact constant scalar curvature \K\ orbifold  without boundary with complex dimension equal to $m$. Unless otherwise stated the singularities are assumed to be isolated.
Combining the local $\dd$-lemma with the equations of the previous subsection, we are now in the position to give a more precise description of the local structure of the \K\ potential of a Kcsc metric.

\begin{prop}
\label{proprietacsck}
Let $(M,g, \omega)$ be a \K\ orbifold. Then, given any point $p\in M$,
 there exists a holomorphic coordinate chart $(U, z^1, \ldots, z^m)$ centered at $p$ such that the \K\ form can be written as
$$
\omega \,\, = \,\, i \dd \, \bigg(\,\frac{|z|^2}{2} + \psi_\omega \bigg) \, , \qquad \hbox{with} \qquad \psi_\omega \, = \, \mathcal{O}(|z|^4) \, .
$$
If in addition the scalar curvature $s_g$ of the metric $g$ is constant, then $\psi_g$ is a real analytic function on $U$, and one can write
\begin{equation}
\label{eq:decpsig}
\psi_\omega (z, \overline{z}) \, = \,  \sum_{k=0}^{+\infty}\Psi_{4+k}(z, \overline{z}) \, ,
\end{equation}
where, for every $k\in \mathbb{N}$, the component $\Psi_{4+k}$ is a real homogeneous polynomial in the variables $z$ and $\overline{z}$ of degree $4+k$. In particular, we have that $\Psi_4$ and $\Psi_5$ satisfy the equations
\begin{align}
\label{eq:bilapp4}
\Delta^2 \, \Psi_4 & =  -2s_\omega \, ,\\
\label{eq:bilapp5} 
\Delta^2 \, \Psi_5 & =  0 \, ,
\end{align}
where $\Delta$ is the Euclidean Laplace operator of $\mathbb{C}^m$. Finally, the polynomial $\Psi_4$ can be written as
\begin{equation}
\label{eq:decp4}
\Psi_4\st z, \overline{z}\dt \,\, = 
\,\,  \Big(- \frac{s_{\omega}}{16m(m +1)} \, + \, \Phi_2 \, + \,  
\Phi_4  \, \Big) \,  |z|^4 \, ,
\end{equation}
where $\Phi_2$ and $\Phi_4$ are functions in the second and fourth eigenspace of $\Delta_{\mathbb{S}^{2m-1}}$, respectively.

%

%


%





%



\end{prop}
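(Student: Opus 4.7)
The plan is to obtain the three pieces of the statement (normal coordinate form, analyticity/expansion, explicit shape of $\Psi_4$) in order, using the scalar curvature expansion \eqref{eq:espsg} against the Euclidean background $\omega_{eucl} = i\dd(|z|^2/2)$.

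First I would establish the normal coordinate presentation by a standard Bochner-type construction. The local $\dd$-lemma supplies a real potential $K$ for $\omega$ on a neighbourhood of $p$. A unitary linear change of holomorphic coordinates normalises $g_{i\bar j}(0)$ to the identity, so $K = |z|^2/2 + \mathrm{(cubic)} + \mathrm{(quartic)} + \ldots$ modulo real parts of holomorphic functions. Since adding $\mathrm{Re}(h(z))$ to $K$ does not affect $\omega$ (being pluriharmonic), one can kill all purely holomorphic and antiholomorphic Taylor coefficients of $K$. Combining this with a holomorphic change of coordinates $w = z + Q(z) + C(z)$ with $Q$ and $C$ polynomial of pure degrees $2$ and $3$, the remaining mixed Taylor coefficients of degree $\leq 3$ can be absorbed, leaving $\omega = i\dd(|z|^2/2 + \psi_\omega)$ with $\psi_\omega = \mathcal{O}(|z|^4)$.

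Second, if $s_\omega$ is constant the equation $\mathbf{S}_{\omega_{eucl}}(\psi_\omega) = s_\omega$ is a quasilinear fourth-order elliptic PDE for $\psi_\omega$ with analytic structure (scalar curvature depends analytically on the derivatives of the potential through $\log\det g_{k\bar l}$), so Morrey's analytic regularity theorem yields that $\psi_\omega$ is real analytic on $U$ and admits a convergent power series expansion which, grouping monomials by total degree in $(z,\bar z)$, reads $\psi_\omega = \sum_{k \geq 0} \Psi_{4+k}$ with $\Psi_{4+k}$ a real homogeneous polynomial of degree $4+k$. To derive \eqref{eq:bilapp4} and \eqref{eq:bilapp5} I would substitute this expansion into \eqref{eq:espsg}, which for the Euclidean background reduces to
\begin{equation}
s_\omega \,=\, -\tfrac{1}{2}\Delta^2 \psi_\omega \,+\, \tfrac{1}{2}\mathbb{N}_{\omega_{eucl}}(\psi_\omega),
\end{equation}
since $\rho_{\omega_{eucl}} \equiv 0$. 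Inspecting \eqref{eq:defQg}, the first term of $\mathbb{N}_{\omega_{eucl}}$ vanishes, while each surviving term contains at least two factors involving derivatives of $\psi_\omega$; since $i\dd \psi_\omega = \mathcal{O}(|z|^2)$, every such contribution is of order $|z|^2$ or higher. Matching the Taylor coefficients of degrees $0$ and $1$ on both sides then gives $\Delta^2\Psi_4 = -2s_\omega$ and $\Delta^2\Psi_5 = 0$.

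Finally, for \eqref{eq:decp4} I would invoke the classical decomposition of real homogeneous polynomials of degree $4$ on $\CC^m \cong \RR^{2m}$: any such polynomial writes uniquely as $H_4 + |z|^2 H_2 + |z|^4 H_0$ with $H_{2k}$ a harmonic polynomial of degree $2k$, equivalently as $|z|^4(H_0 + \Phi_2 + \Phi_4)$ where $\Phi_{2k} := H_{2k}/|z|^{2k}$ is, when restricted to $\Sp^{2m-1}$, an element of the $2k$-th eigenspace of $\Delta_{\Sp^{2m-1}}$. A direct computation gives $\Delta|z|^4 = 8(m+1)|z|^2$ and hence $\Delta^2 |z|^4 = 32 m(m+1)$, while $\Delta(|z|^2 H_{2k}) = (4m+4k) H_{2k}$ is harmonic, so $\Delta^2$ annihilates both $H_4$ and $|z|^2 H_2$. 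Equation \eqref{eq:bilapp4} therefore pins down the constant factor of $|z|^4$ in $\Psi_4$ to be $-s_\omega/(16m(m+1))$, yielding the stated formula. The main technical burdens are the careful bookkeeping in the normal coordinate step (ensuring that the coordinate and gauge transformations suffice to remove everything through order three) and the verification that $\mathbb{N}_{\omega_{eucl}}(\psi_\omega)$ contributes nothing at degrees $0$ and $1$.
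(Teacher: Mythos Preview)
Your proposal is correct and follows essentially the same route as the paper: normal coordinates via the $\dd$-lemma plus a holomorphic change, analyticity via elliptic regularity for the equation $\mathbf{S}_{eucl}(\psi_\omega)=s_\omega$, the biharmonic identities by matching low-degree Taylor terms in \eqref{eq:espsg}, and the shape of $\Psi_4$ by decomposing a degree-four homogeneous polynomial into spherical harmonics and computing $\Delta^2|z|^4$. The only cosmetic difference is that the paper rules out odd harmonics in $\Psi_4$ by the parity observation that a degree-four homogeneous polynomial is even, whereas you invoke the Gauss decomposition $H_4+|z|^2 H_2+|z|^4 H_0$ directly; these are equivalent.
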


\begin{proof}
Without loss of generality, we assume that $p$ is a smooth point, since, if it is not, it is sufficient to consider the local lifting of the quantities involved. The first assertion is a consequence of the $\dd$-lemma combined with the existence of normal coordinates and it is a classical fact. The real analiticity of $\psi_\omega$ follows by elliptic regularity of solutions of the constant scalar curvature equation $\mathbf{S}_{eucl}(\psi_\omega) \, = \, s_\omega$, which, according to~\eqref{eq:espsg}, \eqref{eq:defLg} and \eqref{eq:defQg}, reads
\begin{equation}
\Delta^2\psi_\omega \,\, = \,\,  -2s_\omega \, + \, 8 \,  \tr_{\omega}(i\dd \psi_\omega \circ i\dd \Delta \psi_\omega) \, + \, 4 \, \Delta \, \tr_{\omega} (i\dd \psi_\omega \circ i\dd \psi_\omega) \, + \,  2 \, \RR_{eucl}(\psi_\omega) \,.
\end{equation}
Having the expansion~\eqref{eq:decpsig} at hand, the equations \eqref{eq:bilapp4}, \eqref{eq:bilapp5} are now obvious, while to prove equation \eqref{eq:decp4}
we just observe that since $\Psi_4$ is a real polynomial of order $4$, it must be an even function. In particular, its restriction to $\mathbb{S}^{2m-1}$ is forced to have trivial projection along the eigenspaces of $-\Delta_{\Sp^{2m-1}}$ corresponding to the eigenvalues $\Lambda_{2k+1}$, for every $k\geq 0$. Hence, $\Psi_4$ can be written as
$$
\Psi_4 \st z,\overline{z}\dt \,\, = \,\,  \big( \Phi_0 + \Phi_2 + \Phi_4  \big) |z|^{4} \, ,
$$
where the $\Phi_k$'s are functions in the $k$-th eigenspace of $\Delta_{\Sp^{2m-1}}$. The fact that $\Phi_0 = - s_\omega/16m(m+1)$ is now an easy consequence of equation~\eqref{eq:bilapp4}.
\end{proof}

\subsection{The \K\ potential of a scalar flat ALE \K\ resolution.}

We start by recalling the concept of Asymptotically Locally Euclidean (ALE for short) K\"ahler resolution of an isolated quotient singularitiy. We let $\Gamma\triangleleft U(m)$ be a finite subgroup of the unitary group acting freely away from the origin.
and we say that a complete noncompact \K\ manifold $(X_\Gamma, h, \eta)$ of complex dimension $m$, where $h$ is the \K\ metric and $\eta$ is the \K\ form, is an ALE \K\ manifold with group $\Gamma$ if there exist a positive radius $R>0$ and a quotient map   
$\pi: X_{\Gamma}\rightarrow \CC^{m}/\Gamma$,
such that 
\begin{equation}
\pi: X_{\Gamma}\setminus \pi^{-1}(B_R)  \longrightarrow \st\CC^{m} \setminus B_{R} \dt/\Gamma
\end{equation}
is a biholomorphism and in standard Euclidean coordinates the metric $\pi_*h$ satisfies the expansion 
\begin{equation}
\left|  \frac{\partial^\alpha}{\partial x^\alpha} \left(  \big (\pi_{*}h)_{i \bar{j}}  \, - \, \frac{1}{2} \, \delta_{i\bar{j}}  \right) \right| \,\, = \,\,  \mathcal{O}\st |x|^{-\tau - |\alpha|}\dt\,, 
\end{equation}
for some $\tau>0$ and every multindex $\alpha \in \NN^m$. 

\begin{remark} The reader must be aware of the fact that the above definition gives only a special class of \K\ ALE manifolds. In particular we are identifying the complex structure outside a compact subset with the standard one, while in general it could be only asymptotic to it and in fact the complex structure could not even admit holomorphic coordinates at infinity as shown for example by Honda (\cite{Honda}) also in the scalar flat case.
\end{remark}

\begin{remark}
In the following, we will make as systematic use of $\pi$ as an identification and, consequently, we will make no difference between $h$ and $\pi_{*} h$ as well as between $\eta$ and $\pi_* \eta$.
\end{remark}

\begin{remark}
\label{nolinear}
It is a simple exercise to prove that if $\Gamma$ is nontrivial, then there are no $\Gamma$-invariant linear functions on $\CC ^m$, and thus, with the notations introduced in section~\ref{eigen}, we have that $\Lambda^{\Gamma}_1 > \Lambda_1$. 
This will be repeatedly used in our arguments in Proposition \ref{asintpsieta}, Proposition \ref{GAP} and Lemma \ref{stimabiarmonichebase}. 

\end{remark}

\vspace{11pt}

\noindent We are now ready to present a result which describe the asymptotic behaviour of the \K\ potential of a scalar flat ALE \K\ metric. This can be though as the analogous of Proposition~\ref{proprietacsck}. We omit the proof because in the spirit it is very similar to the one of the aforementioned proposition and the details can be found in~\cite{ap1} 

\begin{prop}\label{asintpsieta2}
Let $(X_\Gamma, h, \eta)$ be a scalar flat ALE \K\ resolution of an isolated quotient singularitiy and let $\pi: X_\Gamma \to \mathbb{C}^m/\Gamma$ be the quotient map. Then for $R>0$ large enough, we have that on $X_\Gamma\setminus \pi^{-1}(B_R)$ the K\"ahler form can be written as
\begin{equation}
\eta \,\, = \,\, i\partial\overline{\partial} \st \,  \frac{|x|^2}{2} \, + \, e({\Gamma}) \, |x|^{4-2m}  \, -  \, c({\Gamma}) \, |x|^{2 - 2m} \,  + \, \psi_{\eta}\st x \dt \dt \,,  \qquad \hbox{with} \qquad \psi_\eta \, = \,  \mathcal{O}(|x|^{-2m}) \, ,
\end{equation}
for some real constants $e({\Gamma})$ and $c({\Gamma})$.
Moreover, the radial component $\psi_{\eta}^{(0)}$ in the Fourier decomposition of $\psi_\eta$ is such that
$$
\psi_{\eta}^{(0)}\st |x| \dt=\mathcal{O}\st |x|^{6-4m} \dt \, .
$$ 
\end{prop}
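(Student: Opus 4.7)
My plan is to reduce the statement to an iterative asymptotic analysis of the scalar flat equation on the end of $X_\Gamma$, using the indicial structure of the flat bilaplacian in the spherical harmonic basis on $\mathbb{S}^{2m-1}$.

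First, by the ALE hypothesis together with the local $\partial\overline{\partial}$-lemma on the end $X_\Gamma \setminus \pi^{-1}(B_R)$, I may write $\eta = i\partial\overline{\partial}(|x|^2/2 + \Psi)$ for a real-valued $\Psi$ decaying at infinity at some initial rate controlled by the ALE decay $\tau$. Since $s_\eta \equiv 0$, while $s_{\omega_{\mathrm{eucl}}} = 0$ and $\rho_{\omega_{\mathrm{eucl}}} = 0$, equation \eqref{eq:espsg} combined with \eqref{eq:defLg} and \eqref{eq:defQg} reduces to
\[
\Delta^{2}\Psi \,=\, \mathbb{N}_{\omega_{\mathrm{eucl}}}(\Psi),
\]
a nonlinear biharmonic equation whose right hand side is quadratic and higher in $\Psi$ and its derivatives.

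Second, I Fourier-decompose $\Psi$ in the $\Gamma$-invariant spherical harmonic basis and analyze mode by mode. The indicial roots of $\Delta^{2}(r^{\alpha}\Phi_k) = 0$ are $\alpha \in \{k,\,-k-2m+2,\,k+2,\,-k-2m+4\}$. In the radial sector ($k=0$) the decaying indicial rates are exactly $|x|^{4-2m}$ and $|x|^{2-2m}$; by Remark \ref{nolinear} the critical $k=1$ sector is empty in the $\Gamma$-invariant setting, and the gauge freedom to act by holomorphic diffeomorphisms of the end asymptotic to the identity can be used to normalize away the non-radial indicial modes $|x|^{2-2m}\Phi_k$ for $k\geq 2$; the remaining admissible exponents for $k\geq 2$ are already at most $-2m$. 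A standard iterative bootstrap in weighted H\"older spaces on the end, in which $\Delta^{2}$ is successively inverted against the nonlinear source modulo its indicial kernel, then yields the asymptotic expansion
\[
\Psi \,=\, e(\Gamma)|x|^{4-2m}\,-\,c(\Gamma)|x|^{2-2m}\,+\,\psi_{\eta}, \qquad \psi_{\eta} = \mathcal{O}(|x|^{-2m}),
\]
with $e(\Gamma)$ and $c(\Gamma)$ the free constants attached to the two radial indicial modes.

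Third, to sharpen the decay of the radial component $\psi_\eta^{(0)}$, I project the residual equation $\Delta^{2}\psi_\eta = \mathbb{N}_{\omega_{\mathrm{eucl}}}(\Psi)$ onto the radial sector, using that the subtracted terms $e(\Gamma)|x|^{4-2m}$ and $-c(\Gamma)|x|^{2-2m}$ are themselves biharmonic. Invoking \eqref{eq:defQg} together with the identities $\Delta|x|^{2-2m}=0$ and $\Delta|x|^{4-2m} = 2(4-2m)|x|^{2-2m}$, a direct calculation shows that the leading radial contribution to $\mathbb{N}_{\omega_{\mathrm{eucl}}}(\Psi)$ is of order $\mathcal{O}(|x|^{2-4m})$: it comes from $\tr_{\omega_{\mathrm{eucl}}}(i\partial\overline{\partial}\Psi \circ i\partial\overline{\partial}\Delta\Psi)$ and $\Delta\,\tr_{\omega_{\mathrm{eucl}}}(i\partial\overline{\partial}\Psi \circ i\partial\overline{\partial}\Psi)$ evaluated on the leading radial ansatz, whereas pairings between higher $\Phi_k$ components feed the radial sector only at $\mathcal{O}(|x|^{-4m})$ or faster. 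Inverting the radial bilaplacian against this source — for which $\Delta^{2}|x|^{6-4m} = (6-4m)(4-2m)(4-4m)(2-2m)|x|^{2-4m}$ is a nonzero multiple of the source when $m \geq 3$, while the claim is vacuous for $m = 2$ because then $6-4m \geq -2m$ — produces $\psi_\eta^{(0)} = \mathcal{O}(|x|^{6-4m})$.

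The main technical obstacle is the careful bookkeeping of the bootstrap: one must verify that the nonlinear feedback never populates an indicial mode with decay slower than $|x|^{-2m}$, and, in the final radial step, that no subtler angular interaction contributes to the radial sector of $\mathbb{N}_{\omega_{\mathrm{eucl}}}$ at a rate intermediate between $|x|^{2-4m}$ and $|x|^{-4m}$. The $\Gamma$-invariance via Remark \ref{nolinear}, which eliminates the otherwise-competing $k=1$ indicial rates $|x|^{1-2m}$ and $|x|^{3-2m}$, together with the gauge normalization in the $k\geq 2$ sector, is precisely what makes both stated decay rates sharp and mutually compatible.
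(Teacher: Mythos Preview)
The paper omits the proof, noting only that it is similar in spirit to Proposition~\ref{proprietacsck} and that the details are in~\cite{ap1}. Your overall strategy --- writing the scalar-flat condition as $\Delta^{2}\Psi = \mathbb{N}_{eucl}(\Psi)$ and analysing it mode by mode via the indicial roots of the flat bilaplacian --- is correct and matches that spirit; the radial analysis in your final step is also essentially right.

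There is, however, a genuine gap in your treatment of the non-radial modes. The claim that ``the gauge freedom to act by holomorphic diffeomorphisms of the end asymptotic to the identity'' can normalize away indicial modes is wrong: for $m\geq 2$, Hartogs' theorem extends any holomorphic map $\mathbb{C}^{m}\setminus B_{R}\to\mathbb{C}^{m}$ to an entire map, and if it is asymptotic to the identity at infinity then Liouville's theorem forces it to \emph{be} the identity. There is simply no such gauge freedom. (The same obstruction kills the pluriharmonic ambiguity of the potential: a decaying pluriharmonic function on $\mathbb{C}^{m}\setminus B_{R}$ is identically zero.) Relatedly, your assertion that ``the remaining admissible exponents for $k\geq 2$ are already at most $-2m$'' overlooks the slow indicial root $4-2m-k$: for $k=2$ this gives a possible $|x|^{2-2m}\Phi_{2}$ contribution and for $k=3$ a possible $|x|^{1-2m}\Phi_{3}$ contribution, both decaying strictly slower than $|x|^{-2m}$. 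These coefficients cannot be gauged away; one must show they vanish from an a priori decay estimate on the potential. Compare the proof of Proposition~\ref{asintpsieta}, where Joyce's estimate $\psi_{\eta}=\mathcal{O}(|x|^{2-2m-\gamma})$ is precisely the input that forces the corresponding $a_{k}$ coefficients to be zero; in the general scalar-flat setting the analogous input is what the reference to~\cite{ap1} is meant to supply.
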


\noindent In the case where the ALE \K\ metric is Ricci-flat it is possible to obtain sharper  estimates for the deviation of the \K\ potential from the Euclidean one, indeed it happens that $e\st \Gamma\dt=0$. This is far form being obvious and in fact it is an important result of Joyce (\cite{j}, Theorem 8.2.3 pag 175). With the following proposition we  now give an improvement of Joyce's result which will turn out to be crucial in the rest of the paper.

\begin{prop}
\label{asintpsieta}
Let $( {X}_{\Gamma}, h, \eta )$ be as in Proposition \ref{asintpsieta2}. Moreover  let $\Gamma \triangleleft U(m)$ be nontrivial and $e\st \Gamma \dt=0$. Then for $R>0$ large enough, we have that on $X_\Gamma\setminus \pi^{-1}(B_R)$ the K\"ahler form can be written as
\begin{equation}
{\eta} \,\, = \,\, i\partial\overline{\partial} \st \, \frac{|x|^2}{2} \, - \, c({\Gamma}) \, |x|^{2 - 2m} \, + \, \psi_{\eta} \st x \dt \dt  \,,  \qquad \hbox{with} \qquad \psi_\eta \, = \,  \mathcal{O}(|x|^{-2m}) \, ,\label{eq: poteta}
\end{equation}
for some positive real constant $c({\Gamma})>0$. 
Moreover, the radial component $\psi_{\eta}^{(0)}$ in the Fourier decomposition of $\psi_\eta$ is such that
$$
\psi_{\eta}^{(0)}\st |x| \dt=\mathcal{O}\st |x|^{2-4m} \dt \, .
$$ 
\end{prop}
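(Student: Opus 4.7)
The plan is to exploit the extra information given by Ricci-flatness beyond scalar-flatness. Since $\Gamma \subset SU(m)$, the Euclidean holomorphic volume form $dz^1\wedge\cdots\wedge dz^m$ is $\Gamma$-invariant and extends across the exceptional set to a nowhere vanishing holomorphic $(m,0)$-form on $X_\Gamma$; Ricci-flatness of $\eta$ is then equivalent, on the asymptotic chart, to the complex Monge--Amp\`ere equation $\det(\eta_{i\bar{j}}) = (1/2)^m$. Writing the K\"ahler potential as $\tfrac12|x|^2 + u$ with $u = -c\st\Gamma\dt|x|^{2-2m} + \psi_\eta$ (the form guaranteed by Proposition~\ref{asintpsieta2} together with Joyce's $e\st\Gamma\dt=0$), this Monge--Amp\`ere equation expands as
\begin{equation}
\tfrac{1}{4}\Delta u \,=\, \tr(u_{i\bar{j}}^2) \,-\, (\tr u_{i\bar{j}})^2 \,+\, \mathcal{O}\st|u_{i\bar{j}}|^3\dt,
\end{equation}
where $\Delta$ is the Euclidean Laplacian in real dimension $2m$. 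Since $|x|^{2-2m}$ is harmonic in that dimension, $\Delta u = \Delta \psi_\eta$, and the equation becomes a pseudo-linear equation for $\psi_\eta$ whose right-hand side is explicit in the known leading term $u_0 := -c\st\Gamma\dt|x|^{2-2m}$.

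To obtain the improved radial estimate I would directly compute
\begin{equation}
u_{0,i\bar{j}} \,=\, c\st\Gamma\dt\st m-1\dt\,|x|^{-2m}\,\delta_{ij} \,-\, c\st\Gamma\dt\,m\st m-1\dt\,|x|^{-2m-2}\,z^j\bar{z}^i,
\end{equation}
and then check, after summation in $i,j$, that all angular dependence cancels and
\begin{equation}
\tr\bigl(u_{0,i\bar{j}}^2\bigr)\st x\dt \,=\, c\st\Gamma\dt^2\,m\st m-1\dt^3\,|x|^{-4m},
\end{equation}
a \emph{purely radial} function of $x$ at leading order. By the a priori bound $\psi_\eta = \mathcal{O}\st|x|^{-2m}\dt$ from Proposition~\ref{asintpsieta2}, the cross terms involving $\psi_\eta$ and the cubic remainder are all of strictly faster decay than $|x|^{-4m}$. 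Projecting the equation onto the radial eigenspace of $\Delta_{\Sp^{2m-1}}$ then gives $\Delta\psi_\eta^{(0)}\st r\dt = 4\,c\st\Gamma\dt^2\,m\st m-1\dt^3\,r^{-4m} + o\st r^{-4m}\dt$. Since $\Delta r^{2-4m} = 4m\st 2m-1\dt\,r^{-4m}$, a particular radial solution is of the asserted order $\mathcal{O}\st r^{2-4m}\dt$; the homogeneous solutions of the radial Laplacian in dimension $2m$ are $1$ and $r^{2-2m}$, the first being a harmless additive constant and the second being reabsorbed into the very definition of $c\st\Gamma\dt$ as the leading coefficient of the non-trivial decaying part of the potential. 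A short bootstrap then closes the argument.

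For the positivity $c\st\Gamma\dt > 0$: up to a universal positive multiplicative constant depending only on $m$ and $|\Gamma|$, the coefficient $c\st\Gamma\dt$ coincides with the ADM mass of the asymptotically Euclidean Ricci-flat manifold underlying $\st X_\Gamma, h\dt$. In the spin setting $\Gamma\subset SU(m)$ one may invoke the positive mass theorem for ALE manifolds, which gives strict positivity whenever $X_\Gamma$ is not isometric to the flat $\CC^m/\Gamma$; the nontriviality of the resolution rules out this degenerate case, and hence $c\st\Gamma\dt>0$.

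The main technical obstacle is the careful bookkeeping of subleading contributions in the Monge--Amp\`ere expansion, in particular ruling out hidden terms of the form (radial function)$\,\times\,|x|^{-4m}$ produced either by the cross interactions between $u_0$ and $\psi_\eta$, or by the expansion of the inverse metric $g^{i\bar{j}}$ away from $2\delta^{ij}$ that is implicit in the schematic derivation above. Combining the coarse bound $\psi_\eta = \mathcal{O}\st|x|^{-2m}\dt$ with the explicit polynomial structure of $u_0$ and a patient accounting of all mixed terms should dispose of this issue.
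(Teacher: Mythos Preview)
Your argument assumes Ricci-flatness, but the hypothesis is only that $(X_\Gamma, h, \eta)$ is \emph{scalar flat} with $e(\Gamma)=0$ and $\Gamma$ nontrivial. The Monge--Amp\`ere equation $\det(\eta_{i\bar{j}})=2^{-m}$ on which your whole scheme rests is equivalent to Ricci-flatness on the asymptotic chart and is simply not available under the stated assumptions; scalar-flat ALE K\"ahler metrics need not be Ricci-flat, and the condition $e(\Gamma)=0$ is strictly weaker than Ricci-flatness. The same objection applies to your positivity argument: the ALE positive-mass theorem you invoke needs $\Gamma\subset SU(m)$ and a spin/Ricci-flat setting, neither of which follows from the hypotheses. (The paper's own proof does not supply an argument for $c(\Gamma)>0$ either; positivity appears to be asserted with the Ricci-flat application in mind.)

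The paper works instead with the fourth-order scalar curvature equation, which under scalar-flatness reads $\Delta^2\psi_\eta = F$ with $F$ built from the nonlinear interaction of $-c(\Gamma)|x|^{2-2m}+\psi_\eta$. The key point---parallel to your observation about $\tr(u_{0,i\bar j}^2)$---is that the dominant contribution to $F$ is the purely radial term $\Delta\,\tr\bigl((i\dd|x|^{2-2m})\circ(i\dd|x|^{2-2m})\bigr)=\mathcal{O}(|x|^{-2-4m})$, while all remaining terms are $\mathcal{O}(|x|^{-2-4m-\gamma})$. One then Fourier-decomposes along $\Sp^{2m-1}$ and reads off each mode from the corresponding fourth-order radial ODE: the radial mode yields $\psi_\eta^{(0)}=\mathcal{O}(|x|^{2-4m})$ since the indicial roots $0,2,4-2m,2-2m$ are already absorbed by the leading terms, and for the higher modes the only unaccounted kernel contribution is $b_k|x|^{1-2m}\phi_k$ on the first eigenspace. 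It is precisely here that the nontriviality of $\Gamma$ is used---there are no $\Gamma$-invariant first-eigenspace functions (Remark~\ref{nolinear})---whereas your argument never invokes this hypothesis for the decay estimates, another indication that you are proving a different, stronger-hypothesis statement.
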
 

\begin{proof}
By~\cite[Theorem 8.2.3]{j}, we have that on $X_\Gamma\setminus\pi^{-1}(B_R)$ the K\"ahler form $\eta$ can be written  as
$$
{\eta} \,\, =  \,\, i\dd\st \, \frac{|x|^{2}}{2} - c({\Gamma}) \, |x|^{2 - 2m} +\psi_{\eta}\st x \dt\dt   
\quad\quad \hbox{with} \quad\quad 
\psi_{\eta}\st x \dt=\mathcal{O}\st |x|^{2 - 2m - \gamma} \dt \, ,
$$
for some $\gamma \in (0,1)$.
Since $({X}_{\Gamma},h)$ is  scalar flat, arguing as in Proposition~\ref{proprietacsck}, we deduce that $\psi_{\eta}$ is a real analytic function. To obtain the desired estimates on the decay of $\psi_\eta$, we are going to make use of the equation $\mathbf{S}_{eucl} (\psi_{\eta} -c({\Gamma})|x|^{2-2m}) = 0$. By means of identity~\eqref{eq:espsg}, \eqref{eq:defLg} and \eqref{eq:defQg}, this can be rephrased in terms of $\psi_\eta$ as follows
\begin{align}
\label{eq:psieta}
\nonumber
{\Delta^2\psi_\eta} \,\, = &  \,\,\,   8\, \tr  \big(  i\dd \st \psi_{\eta} -c({\Gamma})\, |x|^{2-2m}\dt \circ i\dd\Delta\psi_{\eta} \big) \, \\
 &+ \, 4 \, \Delta \, \tr \big(  i\dd\st \psi_{\eta} -c({\Gamma}) \, |x|^{2-2m} \dt  \circ i\dd\st \psi_{\eta} -c({\Gamma}) \, |x|^{2-2m} \dt         \big) 
 \\&\,\, + 2\RR_{eucl}\big( \psi_{\eta} -c({\Gamma}) \,|x|^{2-2m}\big) \, ,
\end{align} 
where, in writing the first summand on the right hand side, we have used the fact that $\Delta |x|^{2-2m} = 0$. Since $\psi_\eta = \mathcal{O} ( |x|^{2 - 2m - \gamma} )$, for some $\gamma \in (0,1)$, it is straightforward to see that all of the terms on the right hand side can be estimated as $\mathcal{O}(|x|^{-2-4m-\gamma})$, with the only exception of the purely radial term 
$$
\Delta \,  \tr \big( (i\dd |x|^{2 - 2m} ) \circ (i\dd |x|^{2 - 2m} ) \big) \,\, = \,\, \mathcal{O}(|x|^{-2-4m})\,.
$$
For sake of convenience, we set now the right hand side of the above equation equal to $F/2$, so that 
$$
\Delta^2 \psi_\eta \,\, = \,\, F \, .
$$
It is now convenient to expand both $\psi_\eta$ and $F$ in Fourier series as 
$$
\psi_{\eta}(x) \, = \, \sum_{k=0}^{ +\infty} \psi_{\eta}^{(k)} (|x|) \, \phi_{k}(x/|x|) \quad \quad \hbox{and} \quad \quad F(x) \, = \, \sum_{k=0}^{ +\infty} F^{(k)}(|x|) \, \phi_{k}(x/|x|) \, ,
$$
where the functions $\{\phi_k\}_{k \in \mathbb{N}}$, are the eigenfunctions of the spherical laplacian $\Delta_{\Sp^{2m-1}}$ on $\Sp^{2m-1}$, counted with multplicity. Since $\phi_0 \equiv |\Sp^{2m-1}|^{-1/2}$, we will refer to $\psi_\eta^{(0)}$ and $F^{(0)}$ as the radial part of $\psi_\eta$ and $F$, respectively. We also notice that in the forthcoming discussion it will be important to select among the eigenfunctions $\phi_k$'s, only the ones which are $\Gamma$-invariant, in order to respect the quotient structure. So far, we have seen that $F^{(0)} = \mathcal{O}(|x|^{-2-4m})$ and $F^{(k)} = \mathcal{O}(|x|^{-2-4m-\gamma})$, for $k \geq 1$. On the other hand, using the linear ODE satisfied by the components $\psi_\eta^{(k)}$, it is not hard to see that their general expression is given by
$$
\psi_{\eta}^{(k)} (|x|)\,\, = \,\, a_k |x|^{4 - 2m - \alpha(k)} +b_k|x|^{2 - 2m - \alpha(k)}+ c_k |x|^{\alpha(k)} + d_k |x|^{\alpha(k) + 2} +\tilde{\psi}_{\eta}^{(k)}(|x|) \,,
$$
where, in view of the behavior of the $F^{(k)}$'s, the functions $\tilde{\psi}_\eta^{(k)}$ are such that
$$
\tilde{\psi}_\eta^{(0)} = \mathcal{O}(|x|^{2-4m}) \quad \quad \hbox{and} \quad \quad \tilde{\psi}_\eta^{(k)} = \mathcal{O}(|x|^{2-4m-\gamma}), \quad \hbox{for $k \geq 1$}\, ,
$$
and the integers $\alpha(k)$'s are such that $\alpha(k)=h$ if and only if $\phi_k$ belongs to the $h$-th eigenspace.
Since the cited Joyce's result implies that $\psi_\eta^{(k)} = \mathcal{O}(|x|^{2-2m-\gamma})$, it is easy to deduce that $c_k =0 =d_k$, for every $k \in \mathbb{N}$. Moreover, we have that $a_0 = 0 = b_0$ and thus $\psi_\eta^{(0)} = \mathcal{O}(|x|^{2-4m})$, as wanted. The same kind of considerations imply that the components $\psi_\eta^{(k)}$'s satisfy the desired estimates for every $k \geq 2m + 1$, that is for every $k$ such that $\alpha(k) \geq 2$. For $1\leq k \leq 2m$, we have that $a_k = 0$, but a priori nothing can be said about the $b_k$'s and thus at a first glance, one has that
$$
\psi_{\eta}^{(k)} (|x|)\,\, = \,\, b_k|x|^{1 - 2m } +\tilde{\psi}_{\eta}^{(k)}(|x|) \, , \quad \hbox{for $1\leq k \leq 2m$} \, .
$$
As it has been pointed out in Remark~\ref{nolinear}, there are no $\Gamma$-invariant eigenfunctions for $\Delta_{\Sp^{2m-1}}$ in the first eigenspace. This means that the components $\psi_\eta^{k}$'s, with $1\leq k \leq 2m$ do not appear in the Fourier expansion of $\psi_\eta$ and hence $\psi_\eta(x) = \mathcal{O}(|x|^{-2m})$.
\end{proof}

\noindent  If the space $\st X_{\Gamma}, h, \eta \dt$ is Ricci-flat then the decaying rate at infinity of $\psi_{\eta}$ doesn't improve as one could expect, indeed it is the same as that  in Proposition \ref{asintpsieta}. However, Ricci-flat ALE K\"ahler manifolds enjoy another property, probably well known to experts but apparently not easy to find in the literature, needed in the sequel. 

 \begin{lemma}\label{misuraeuclidea}
Let $\st X_{\Gamma}, h, \eta \dt$ be a Ricci flat ALE K\"ahler resolution of an isolated quotient singularitiy 
and $\pi:X_{\Gamma}\rightarrow \CC^{m}/\Gamma$ be the quotient map.
 Then on $X_{\Gamma}\setminus \pi^{-1}\st 0 \dt$ we have
\begin{equation}d\mu_{\eta}= \pi^{*}d\mu_{0}\,,\end{equation}
and for $R>0$
\begin{equation}\text{Vol}_{\eta}\st X_{\Gamma,R}\dt=\frac{|\Sp^{2m-1}|}{2m\left| \Gamma\right|}R^{2m}\,.\end{equation}
\end{lemma}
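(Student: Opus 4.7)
\medskip
\noindent \emph{Plan.} The strategy is to compare the two volume forms directly on $X_{\Gamma}\setminus\pi^{-1}\st 0 \dt$, where $\pi$ restricts to a biholomorphism onto $\st \CC^{m}\setminus\sg 0 \dg \dt/\Gamma$, and to show that the smooth positive ratio
\[
u \,:=\, \frac{d\mu_{\eta}}{\pi^{*}d\mu_{0}}
\]
is identically equal to $1$. Once this is granted, the volume identity is immediate: the exceptional fibre $\pi^{-1}\st 0 \dt$ is a proper analytic subvariety of $X_{\Gamma}$, hence $d\mu_{\eta}$-negligible, and the computation reduces to the Euclidean volume of the orbifold ball $\st B_{R}\setminus\sg 0 \dg \dt/\Gamma$, which equals exactly $|\Sp^{2m-1}|R^{2m}/\st 2m\,|\Gamma|\dt$.

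To establish $u\equiv 1$, I would first note that in the holomorphic Euclidean coordinates inherited from $\CC^{m}$ via $\pi^{-1}$, $\log u$ differs from $\log\det h_{i\bar{\jmath}}$ by an additive constant, so
\[
i\partial\bar{\partial}\log u \,=\, -\rho_{\eta}+\rho_{0} \,=\, 0,
\]
both Ricci forms vanishing (the first by the Ricci-flat hypothesis, the second trivially). Therefore $\log u$ is pluriharmonic on $X_{\Gamma}\setminus\pi^{-1}\st 0 \dt$, and, pulled back through $\pi$, corresponds to a $\Gamma$-invariant pluriharmonic function on $\CC^{m}\setminus\sg 0 \dg$. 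The ALE asymptotic $h_{i\bar{\jmath}}\to \tfrac{1}{2}\delta_{i\bar{\jmath}}$ then forces $u\to 1$ at infinity, so $\log u\to 0$ as $|x|\to\infty$.

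The heart of the argument is then a Liouville-type step. Because $\Gamma$ is nontrivial one has $m\geq 2$, and $\CC^{m}\setminus\sg 0 \dg$ is simply connected; hence $\log u$ admits a global holomorphic primitive and can be written as $\mathrm{Re}\,F$ for some $F$ holomorphic on $\CC^{m}\setminus\sg 0 \dg$. By Hartogs' theorem $F$ extends holomorphically to all of $\CC^{m}$, and since $\mathrm{Re}\,F\to 0$ at infinity the entire function $e^{F}$ is bounded, hence constant by Liouville; this forces $F$ to be a purely imaginary constant, so $\log u\equiv 0$ and $u\equiv 1$, as claimed.

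The main obstacle lies precisely in this last step: the pluriharmonicity of $\log u$ is used essentially. Mere harmonicity of $\log u$ together with vanishing at infinity would still permit the radial-decaying modes $b_{k}|x|^{2-2m-k}$ in its spherical-harmonic expansion, and the Liouville conclusion would fail; it is the existence of a global holomorphic primitive on $\CC^{m}\setminus\sg 0 \dg$, combined with Hartogs' extension, that rules out these singular modes.
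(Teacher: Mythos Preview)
Your argument is correct and follows essentially the same route as the paper's proof: both exploit Ricci-flatness to obtain $i\partial\bar\partial\log u=0$, use the vanishing of $H^{1}$ on the punctured (or exterior) domain in $\CC^{m}$ to write $\log u$ as the real part of a holomorphic function, apply Hartogs' extension, and conclude via a Liouville-type theorem together with the ALE decay $\log u\to 0$ at infinity. The paper carries this out on $\CC^{m}\setminus B_{R}$ and splits the primitive into two pieces $h_{1},h_{2}$ before invoking the harmonic Liouville theorem, whereas you work on $\CC^{m}\setminus\{0\}$, write $\log u=\mathrm{Re}\,F$ in one step, and finish with the holomorphic Liouville theorem applied to $e^{F}$; these are cosmetic variations of the same idea.
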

\begin{proof}
Let $\pi_{\Gamma}:\CC^{m}\rightarrow \CC^{m}/\Gamma$ the canonical holomorphic quotient map, since 
\begin{equation}\rho_{\eta} =0\,,\end{equation}
on $\st\CC^{m}\setminus B_{R}\dt/\Gamma$  we have 
\begin{equation}i\dd\sq\log\st\det\st \st\pi_{\Gamma}\dt^{*}\st\pi^{-1}\dt^{*}\eta\dt\dt\dq=0\,.\end{equation} 
We want to prove that on $\CC^{m}\setminus \sg 0 \dg$
\begin{equation}\det\st\st\pi_{\Gamma}\dt^{*}\st\pi^{-1}\dt^{*}\eta\dt\equiv \frac{1}{2^{m}}\,.\end{equation}
By Proposition \ref{asintpsieta} we have  on $\CC^{m}\setminus B_{R}$ 
\begin{equation}\st\pi_{\Gamma}\dt^{*}\st\pi^{-1}\dt^{*}\eta_{i\bar{\jmath}}=\frac{\delta_{i\bar{\jmath}}}{2}- \cga \partial_{i}\overline{\partial}_{j} \left|x\right|^{2-2m} +	\mathcal{O}\st \left|x\right|^{-2m}  \dt \end{equation}

\noindent that implies immediately

\begin{align}
\log\st \det\st \st\pi_{\Gamma}\dt^{*}\st\pi^{-1}\dt^{*}\eta \dt \dt=&-m\log\st 2 \dt+\mathcal{O}\st \left|x\right|^{-2-2m} \dt\,.
\end{align}

\noindent On $\CC^{m}\setminus B_{R}$ we have
\begin{align}
i\dd \log\st \det\st \st\pi_{\Gamma}\dt^{*}\st\pi^{-1}\dt^{*}\eta \dt \dt=&-id\st \partial \log\st \det\st  \st\pi_{\Gamma}\dt^{*}\st\pi^{-1}\dt^{*}\eta \dt\dt\dt\,, 
\end{align}

so 
\begin{equation}\partial \log\st \det\st \st\pi_{\Gamma}\dt^{*}\st\pi^{-1}\dt^{*}\eta \dt \dt\in H^{1}\st \CC^{m}\setminus B_{R},\CC \dt\end{equation} but $H^{1}\st \CC^{m}\setminus B_{R},\CC \dt=0$  
and there exists $h_{1}\in C^{1}\st \CC^{m}\setminus B_{R},\CC  \dt$ such that
\begin{equation}\partial \log\st \det\st \st\pi_{\Gamma}\dt^{*}\st\pi^{-1}\dt^{*}\eta \dt \dt=dh_{1}=\partial h_{1} +\overline{\partial}h_{1}\qquad\qquad \overline{\partial}h_{1}=0\,.\end{equation}

Analogously, there is $h_{2}\in C^{1}\st \CC^{m}\setminus B_{R},\CC  \dt$ such that
\begin{equation}\overline{\partial} \sq\log\st \det\st \st\pi_{\Gamma}\dt^{*}\st\pi^{-1}\dt^{*}\eta \dt \dt -h_{1} \dq=dh_{2}=\partial h_{2}+\overline{\partial}h_{2}\qquad \qquad \partial h_{2}=0\,.\end{equation}

It is now clear that 
\begin{align}
d\sq \log\st \det\st \st\pi_{\Gamma}\dt^{*}\st\pi^{-1}\dt^{*}\eta \dt \dt-h_{1}-h_{2} \dq=&0\,.
\end{align}
We conclude that on $\CC^{m}\setminus B_{R}$
\begin{equation}\log\st \det\st \st\pi_{\Gamma}\dt^{*}\st\pi^{-1}\dt^{*}\eta \dt \dt=h_{1}+h_{2}+K\qquad K\in \RR \qquad \mathfrak{Im}h_{2}=-\mathfrak{Im}h_{1} \end{equation}
moreover  $h_{1},\overline{h_{2}}$ are holomorphic on $\CC^{m}\setminus B_{R}$ and by Hartogs extension theorem they are extendable to functions $H_{1},H_{2}$ holomorphic on $\CC^{m}$. Since $H_{1},H_{2}$ are holomorphic, their real and imaginary parts are harmonic with respect to the euclidean metric on $\CC^{m}$ and by assumptions on $\eta$  we have on  $\CC^{m}\setminus B_{R}$
\begin{equation}\mathfrak{Re}H_{1}+\mathfrak{Im}H_{2}+K=-m\log\st 2 \dt+\mathcal{O}\st \left|x\right|^{-2-2m} \dt\,.\end{equation}  
Since $\mathfrak{Re}H_{1}+\mathfrak{Im}H_{2}+K$ is harmonic and bounded, Liouville theorem implies it is constant, so that
\begin{equation} \det\st \st\pi_{\Gamma}\dt^{*}\st\pi^{-1}\dt^{*}\eta \dt=\frac{1}{2^{m}}\end{equation}

\noindent We can now see that

\begin{align}
\frac{1}{m!}\st\pi_{\Gamma}\dt^{*}\sq \st\pi^{-1}\dt^{*}\eta \dq^{\wedge m}=&d\mu_{0}\,.
\end{align}

\noindent and then

\begin{equation}
\text{Vol}_{\eta}\st X_{\Gamma,R}\dt=\int_{B_{R}/\Gamma\setminus \sg 0 \dg}d\mu_{\st\pi^{-1}\dt^{*}\eta}=\frac{| \Sp^{2m-1} |}{2m\left| \Gamma\right|}R^{2m}
\end{equation}

\noindent 	so the lemma follows.
\end{proof}

The above proposition might be well known to experts but we couldn't find any reference.




\section{Linear analysis on a Kcsc orbifold}\label{linearanalysis}

\noindent In this section we develop the linear analysis for the operator $\Lg$ and we do it in full generality even if, in this work, we will use  only some particular cases of this theory. We  distinguish between two sets of points: $\sg p_1,\ldots,p_{N}\dg$  with neighborhoods biholomorphic to  a ball of  $\CC^m/\Gamma_{j}$ with $\Gamma_{j}$ nontrivial such that $\CC^{m}/\Gamma_{j}$ admits an ALE  Kahler scalar-flat resolution  $\st X_{\Gamma_{j}},h,\eta_{j} \dt$ with $e\st \Gamma_{j} \dt=0$ and the set (possibly empty) $\sg q_1,\ldots,q_{K}\dg$ whose points have  neighborhoods biholomorphic to a ball of  $\CC^m/\Gamma_{N+l}$ such that
$\CC^{m}/\Gamma_{N+l}$ admits a scalar flat ALE resolution $(Y_{\Gamma_{N+l}},k_{l},\theta_{l})$ with $e({\Gamma_{N+l}})\neq 0$. To simplify the notation we set
\begin{equation*}
\p \, := \, \sg p_{1},\ldots,p_{N}\dg, \quad \q \, := \, \sg q_{1},\ldots,q_{K}\dg  , \quad \hbox{and} \quad M_{\p,\q} \, := \, M\setminus \st \p\cup\q \dt\,.
\end{equation*}

\begin{cav}
{\em We agree that, if $\q=\emptyset$, then $M_{ \p }:=M_{\p,\emptyset}$. When this case occurs and  whenever an object, that could be a function or a tensor, has indices relative to elements of $\q$ we set these indices to $0$.  }
\end{cav}

\subsection{The bounded kernel of $\Lg$.} As usual we let $(M, g, \omega)$ be a compact Kcsc  orbifold with isolated singularities and we assume that the kernel of the linearized scalar curvature operator $\Lg$ defined in~\eqref{eq:defLg} is nontrivial, in the sense that it contains also nonconstant functions. By the standard Fredholm theory for self-adjoint elliptic operators, we have that such a kernel is always finite dimensional. Throughout the paper we will assume that it is $(d+1)$-dimensional and we will set
\begin{equation}
\label{nontrivial_ker}
\ker (\Lg) \,\, = \,\, span_{\RR} \, \{\varphi_0, \varphi_1, \ldots , \varphi_d \} \, ,
\end{equation}
where $\varphi_0 \equiv 1$, $d$ is a positive integer and $\varphi_1, \ldots, \varphi_d$ is a collection of linearly independent functions in $\ker(\Lg)$ with zero mean and normalized in such a way that $||\varphi_i||_{L^2(M)} = 1$, $i=1, \ldots, d$, for sake of simplicity. From~\cite{ls} we recover the following charachterization of $\ker(\Lg)$.
\begin{prop}
\label{espsg}
Let $(M,g,\omega)$ be a compact constant scalar curvature \K\ orbifold  with isolated singularities.
Then, the subspace of $\ker(\Lg)$ given by the elements with zero mean is in one to one correspondence with the space of holomorphic
vector fields which vanish somewhere in $M$.
\end{prop}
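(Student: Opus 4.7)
My plan is to reduce everything to the Lichnerowicz factorization of $\mathbb{L}_\omega$ and then use an elementary observation about gradients versus holomorphy potentials. Define the second-order operator $\mathcal{D}f := \bar{\partial}\nabla^{1,0}f$, where $\nabla^{1,0}f = g^{i\bar{\jmath}}\partial_{\bar{\jmath}}f\,\partial_i$ is the $(1,0)$-part of the gradient. A direct computation, using $\nabla^i\rho_{i\bar{\jmath}} = \tfrac{1}{2}\partial_{\bar{\jmath}}s_\omega$ (the contracted second Bianchi identity on a K\"ahler manifold), gives on any K\"ahler manifold
\begin{equation}
2\mathcal{D}^{*}\mathcal{D}f \,=\, \Delta_\omega^{2}f + 4\langle\rho_\omega\,|\,i\partial\bar{\partial}f\rangle + 2\langle\partial s_\omega\,|\,\partial f\rangle.
\end{equation}
When $s_\omega$ is constant, the last term vanishes and we recover $\mathbb{L}_\omega = 2\mathcal{D}^{*}\mathcal{D}$. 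In particular, $f \in \ker(\mathbb{L}_\omega)$ if and only if $\mathcal{D}f = 0$, i.e.\ if and only if $\nabla^{1,0}f$ is a holomorphic vector field on $M$.

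For the first direction, take $f \in \ker(\mathbb{L}_\omega)$ with $\int_M f\,d\mu_\omega = 0$ and set $X_f := \nabla^{1,0}f$. The preceding step gives that $X_f$ is holomorphic. Because $\int_M f = 0$ and $M$ is compact, $f$ attains its maximum at some point $p_0 \in M$; at $p_0$ we have $df(p_0) = 0$, and hence $X_f(p_0) = 0$. (At an orbifold point the argument is unchanged after passing to a local uniformizing cover and lifting $f$ to a $\Gamma$-invariant function.) This yields a well-defined map from zero-mean elements of $\ker(\mathbb{L}_\omega)$ to holomorphic vector fields with zeros; injectivity is immediate since $\nabla^{1,0}f = 0$ forces $f$ to be locally constant, hence globally constant, hence zero under the mean condition.

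For surjectivity, let $X$ be a holomorphic vector field on $M$ vanishing at some point $p \in M$. I would invoke the classical Matsushima–Lichnerowicz structure theory for the Lie algebra of holomorphic vector fields on a compact Kcsc manifold (whose proof extends to the orbifold setting because the singular set is isolated and the Bochner-type integrations by parts only pick up contributions on sets of real codimension $\geq 2m \geq 4$, which are negligible): the subalgebra of holomorphic vector fields with zeros decomposes as $\mathfrak{h}_0 = \mathfrak{k} \oplus J\mathfrak{k}$, where $\mathfrak{k}$ is the algebra of Hamiltonian Killing vector fields. Writing $X = \xi_1 + J\xi_2$ with $\xi_\alpha \in \mathfrak{k}$, each $\xi_\alpha$ admits a real Hamiltonian potential $f_\alpha$, uniquely determined by $\int_M f_\alpha\,d\mu_\omega = 0$, and $\nabla^{1,0}f_\alpha = \tfrac{1}{2}(\xi_\alpha - iJ\xi_\alpha)$. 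Then $f := 2(f_1 + if_2)$ satisfies $\nabla^{1,0}f = X$, and by the factorization of the first paragraph $f \in \ker(\mathbb{L}_\omega^{\mathbb{C}})$; its real and imaginary parts therefore lie in the zero-mean part of $\ker(\mathbb{L}_\omega)$.

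The only genuinely delicate point in the orbifold version is justifying that $\mathcal{D}$ and $\mathcal{D}^{*}$ are formal adjoints on $L^2(M,d\mu_\omega)$ when the test functions are smooth in the orbifold sense. Since the singularities are isolated and of real codimension $2m\geq 4$, one can exhaust $M$ by $M\setminus\bigcup_j B_{j,\varepsilon}$ and observe that all boundary integrals arising from integration by parts tend to zero with $\varepsilon$ (the integrand is bounded while $|\partial B_{j,\varepsilon}| = O(\varepsilon^{2m-1})$). This is the step I expect to require the most care in a fully rigorous write-up, though no new ideas beyond those of~\cite{ls} are needed.
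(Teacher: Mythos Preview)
The paper does not supply its own proof of this proposition: it simply quotes the result from LeBrun--Simanca~\cite{ls}. Your argument is precisely the LeBrun--Simanca proof (the Lichnerowicz factorization $\mathbb{L}_\omega = 2\mathcal{D}^{*}\mathcal{D}$ on a Kcsc manifold, followed by the structure theory of the reduced automorphism algebra), with the correct orbifold remark that the boundary terms in the integrations by parts vanish because the singular set has real codimension at least~$4$. So your approach and the paper's are the same by reference.

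One small point worth tightening: in your surjectivity step you produce a \emph{complex} potential $f = 2(f_1 + i f_2)$, then pass to its real and imaginary parts. This is fine, but it means the ``one-to-one correspondence'' in the statement is really between the \emph{complexified} zero-mean kernel and the complex space $\mathfrak{h}_0$ of holomorphic vector fields with zeros (equivalently, between the real zero-mean kernel and the Hamiltonian Killing fields). The paper uses exactly this reading later on, calling $\ker(\mathbb{L}_\omega)$ ``the space of Hamiltonian potentials of Killing fields with zeros''. You might also note that the surjectivity does not require the full Matsushima--Lichnerowicz decomposition: it suffices to use the Hodge-theoretic fact that a holomorphic vector field on a compact K\"ahler manifold has a zero if and only if it is a complex gradient $\nabla^{1,0}u$; then $\bar\partial(\nabla^{1,0}u)=0$ gives $u\in\ker(\mathbb{L}_\omega^{\mathbb{C}})$ directly.
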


The aim of this section is to study the solvability of the linear problem
\begin{equation}\label{eq:linear}
 \Lg u = f
\end{equation}
on the complement of the singular points in $M$. In order to do that, we introduce some notation as well as an appropriate functional setting. We consider geodesics balls $B_{r_{0}}\st p_{j} \dt, B_{r_{0}}\st q_{l} \dt$ of radius $r_0>0$, with K\"ahler normal coordinates centered at the points $p_{j}$'s and $q_{l}$'s and we set
$$
M_{r_{0}} \,\, := \,\, M\setminus \bigg(   \bigcup_{j=1}^{N} B_{r_{0}}\st p_{j} \dt\,\cup\,  \bigcup_{l=1}^{K} B_{r_{0}}\st q_{l} \dt  \bigg) \, .
$$
For $\delta\in\mathbb{R}$ and $\alpha\in (0,1)$, we define the weighted H\"older space $C_{\delta}^{k,\alpha}\st M_{\p,\q} \dt$
as the set of functions $f\in C_{loc}^{k,\alpha}\st M_{\p,\q} \dt$ such that the norm
\begin{align*}
\left\|f\right\|_{C_{\delta}^{k,\alpha}\st M_{\p,\q} \dt}
\, := \,\,\,  &\left\|f\right\|_{C^{k,\alpha}( M_{r_{0}} )}
\hspace{-1,5cm}
&+\sup_{ 0<r\leq r_{0}} r^{-\delta}\sum_{j=1}^{N} \left\|\left.f(r\cdot)\right|_{B_{r_{0}}\st p_{j} \dt}\right\|_{C^{k,\alpha}\st B_{2}\setminus B_{1}  \dt}\\
& &+\sup_{ 0<r\leq r_{0}} r^{-\delta}\sum_{l=1}^{K}\left\|\left.f(r\cdot)\right|_{B_{r_{0}}\st q_{l} \dt}\right\|_{C^{k,\alpha}\st B_{2}\setminus B_{1}  \dt}
\end{align*}
is finite.
We observe that the typical function $f\in C_{\delta}^{4,\alpha}\st M_{\p,\q} \dt$
beheaves like
\begin{equation*}
f(\cdot)
\, \,= \,\, \mathcal{O}\big( d_{\omega}\st p_{j}, \cdot \dt^{\delta}\big) \,, \quad \hbox{on} \quad {B_{r_{0}}\st p_{j}\dt}\qquad \hbox{and} \qquad   f (\cdot)
\, \, = \,\, \mathcal{O}\big( d_{\omega}\st q_{j}, \cdot \dt^{\delta}\big)
\,, \quad \hbox{on} \quad {B_{r_{0}}\st q_{j}\dt} \, ,
\end{equation*}
where $d_{\omega}$ is the Riemannian distance induced by the Kahler metric $\omega$.

We are now in the position to solve equation~\eqref{eq:linear} in the case where the datum $f$ is {\em orthogonal} to $\ker(\Lg)$. By this we mean that, looking at $f$ as a distribution, we have \begin{equation}
\label{eq:orto}
\langle f  \, | \, \varphi_i \rangle_{\mathscr{D}' \times \mathscr{D} } \,\, = \,\, 0 \, ,
\end{equation}
for every $i=0,\ldots ,d$, where we denoted by $\langle \cdot \, | \, \cdot\cdot \, \rangle_{\mathscr{D}' \times \mathscr{D} }$ the distributional pairing and the functions $\varphi_i$'s are as in~\eqref{nontrivial_ker}. It is worth pointing out that since the functions in $\ker(\Lg)$ are smooth, everything makes sense.

To solve equation \eqref{eq:linear} we need to ensure the Fredholmness of the operator $\Lg$ on the functional spaces we have chosen. The Fredholm property depends heavily on the choice of weights, indeed the operator $\Lg$ is Fredholm if and only if the weight is not an indicial root (for definition of indicial roots we refer to \cite{ap2}) at any of the points $p_{j}$'s or $q_{l}$'s. Since in normal coordinates on a punctured ball, the principal part of our operator $\Lg$ is 'asymptotic' to the Euclidean Laplacian $\Delta$, then the set of indicial roots of $\Lg$ at the center of the ball coincides with the set of indicial roots of $\Delta$ at $0$. We recall that the set of indicial roots of $\Delta$ at $0$ is given by $\ZZ\setminus \sg 5-2m, \ldots ,-1 \dg$ for $m\geq 3$ and $\ZZ$ for $m=2$.

By the analysis in~\cite{ap1}, we recover the following result, which provides the existence of solutions in Sobolev spaces for the linearized equation together with {\em a priori} estimates in suitable weighted H\"older spaces.

\begin{teo}
\label{invertibilitapesatobase}
For every $f \in L^p(M)$, $p>1$, satisfying the orthogonality condition~\eqref{eq:orto}, there exists a unique solution $u \in W^{4,p}(M)$ to
$$
\Lg u \,\, = \,\, f \, ,
$$
which satisfy the condition~\eqref{eq:orto}.
Moreover, the following estimates hold true.
\begin{itemize}
\item
If $m\geq 3$ and in addition $f\in C^{0,\alpha}_{\delta-4}(M_{\p,\q})$ with $\delta \in (4-2m \, ,0 )$, then the solution $u$ belongs to $C^{4,\alpha}_{\delta}(M_{\p,\q})$ and satisfy the estimates
\begin{equation}
\left\|u\right\|_{C^{4,\alpha}_{\delta}(M_{\p,\q})}  \,\, \leq \,\,  C \, \left\|f\right\|_{C^{0,\alpha}_{\delta-4}(M_{\p,\q})} \, ,
\label{eq:stimapesatabase}
\end{equation}
for some positive constant $C>0$.
\item
If $m=2$ and in addition $f\in C^{0,\alpha}_{\delta-4}(M_{\p,\q})$ with $\delta \in (0 \,,1)$, then the solution $u$ belongs to $C^{4,\alpha}_{loc}(M_{\p,\q})$ and satisfy the following estimates
\begin{equation}
\bigg\| \,  u - \sum_{j=1}^{N}u ( p_{j}) \chi_{p_j}  - \sum_{l=1}^{K}u (q_{l})\chi_{q_l} \, \bigg\|_{C^{4,\alpha}_{\delta}(M_{\p,\q})}   \!\!\!\! + \,
\sum_{j=1}^{N}|u (p_{j})| \, + \sum_{l=1}^{K}|u(q_{l})|  \,\, \leq \,\,  C  \, \left\|f\right\|_{C^{0,\alpha}_{\delta-4}(M_{\p,\q})} \, ,
\label{eq:stimapesatabase2}
\end{equation}
where $C>0$ is a positive constant and the functions $\chi_{p_1}, \ldots, \chi_{p_N}$ and $\chi_{q_1}, \ldots, \chi_{q_K}$ are smooth cutoff functions supported on small balls centered at the points $p_{1}, \ldots, p_N$ and $q_1, \ldots, q_K$, respectively  and identically equal to $1$ in a neighborhood of these points.
\end{itemize}
\end{teo}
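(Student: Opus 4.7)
The plan is to first establish existence and uniqueness in $W^{4,p}(M)$ via the Fredholm alternative, then upgrade to weighted H\"older regularity by a local analysis around each singular point. Since $\Lg$ is a self--adjoint elliptic operator of order four on the compact orbifold $M$, its kernel (which we have assumed to be $\spa_{\RR}\sg 1,\varphi_{1},\ldots,\varphi_{d}\dg$) is finite-dimensional and its image is the $L^{2}$-orthogonal complement of this kernel. Hence for any $f\in L^{p}(M)$ satisfying \eqref{eq:orto} there is $u\in W^{4,p}(M)$ with $\Lg u=f$, and requiring $u$ to be orthogonal to $\ker(\Lg)$ pins down a unique such solution. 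Interior elliptic regularity shows $u\in C^{4,\alpha}_{loc}(M_{\p,\q})$ as soon as $f$ is, so all that is left is to analyse the behaviour at the singular points.

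Near a point $p_{j}\in\p$ (and similarly for $q_{l}\in\q$), pulling back by the normal coordinate chart lifted to the universal cover $B_{r_{0}}\subseteq\CC^{m}$, the operator $\Lg$ differs from its principal part $\Delta^{2}$ by terms of strictly lower order. Expanding $u$ and $f$ in Fourier series with respect to a complete system of $\Gamma_{j}$-invariant spherical harmonics on $\Sp^{2m-1}$ reduces the problem to a family of fourth order ODEs on $(0,r_{0})$, whose homogeneous solutions are of the form $r^{\nu}$ with $\nu$ ranging over the indicial roots of $\Delta^{2}$ (plus lower order corrections). Since the interval $(4-2m,0)$ contains no such indicial root, a variation of constants yields, on each Fourier mode, a particular solution of weight exactly $\delta$ when the datum has weight $\delta-4$; summing in $L^{2}$ on the sphere and combining with standard Schauder estimates on annuli $B_{2r}\setminus B_{r}$ gives a local a priori bound of the desired form. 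A global contradiction/blow-up argument (or equivalently a parametrix construction in the spirit of \cite{ap1}) then upgrades this local bound to the estimate \eqref{eq:stimapesatabase}, using the uniqueness coming from Fredholm theory to close the argument.

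When $m=2$ the admissible range becomes $(0,1)$, which now forces $u$ to \emph{vanish} at the singular points, while the Fredholm solution produced above is merely bounded there (indicial root $\nu=0$ is present in the two dimensional case). The fix is to subtract off the value of $u$ at each singular point using the cutoff functions $\chi_{p_{j}},\chi_{q_{l}}$: the residual function $u-\sum_{j}u(p_{j})\chi_{p_{j}}-\sum_{l}u(q_{l})\chi_{q_{l}}$ has vanishing zero Fourier mode at each singular point and, by the same mode by mode argument, lies in $C^{4,\alpha}_{\delta}(M_{\p,\q})$; the point values $u(p_{j}),u(q_{l})$ are controlled by $\|f\|_{C^{0,\alpha}_{\delta-4}}$ through the bounded Fredholm inverse together with Sobolev embedding into $C^{0}$. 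The main obstacle is precisely this interplay between the choice of weight, the indicial roots of $\Delta^{2}$, and the use of $\Gamma_{j}$-invariance to discard modes (notably the linear ones, cf. Remark \ref{nolinear}); once these are properly bookkept, the estimates \eqref{eq:stimapesatabase}--\eqref{eq:stimapesatabase2} follow from the weighted analysis machinery already set up in \cite{ap1,ap2}.
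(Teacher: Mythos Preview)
The paper does not actually supply a proof of this theorem: it simply states that the result is ``recovered'' from the analysis in \cite{ap1} and then moves on to the remark about the choice of weights. Your sketch---Fredholm alternative on the compact orbifold for existence/uniqueness in $W^{4,p}$, then local weighted Schauder theory near each puncture governed by the indicial roots of $\Delta^{2}$, with the $m=2$ case handled by subtracting off the constant mode---is precisely the standard argument that \cite{ap1} carries out, and you even cite that machinery explicitly at the end. So your proposal is correct and matches what the paper defers to.
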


\begin{remark}
Some comments are in order about the choice of the weighted functional setting. Concerning the case $m\geq 3$ we observe that
the choice of the weight $\delta$ in the interval $(4-2m,0)$ is motivated by the fact that only for $\delta$ in this range the kernel of $\Lg$ viewed as an operator from $C_{\delta}^{4,\alpha}\st M_{\p,\q} \dt$ to $C_{\delta-4}^{0,\alpha}\st M_{\p,\q} \dt$ coincides with the bounded kernel, which has been denoted for short by $\ker\st \Lg \dt$.In the case $m=2$ it is no longer possible to make a similar choice, since $4-2m$ becomes $0$ and thus, at a first glance, the  natural choice for the weight is not evident. One possibility is to take the weight in the first indicial interval before $0$, which for $m=2$ is given $(-1,0)$.  In this case, one would get a functional space which is strictly larger than the bounded kernel $\ker\st \Lg \dt$. We prefer instead to choose the weight in the first indicial interval after $0$, which for $m=2$ is given by $(0,1)$. This time, the bounded kernel of $\Lg$ is no longer contained in the possible domains of our operator, since the functions belonging to these spaces have to vanish at points $\p$ and $\q$.  On one hand this is responsible for the more complicate expression in the {\em a priori} estimate~\eqref{eq:stimapesatabase2}, but one the other hand this choice of the weight will reveal to be more fruitful.
Indeed, in view of the linear analysis on ALE \K\ manifolds performed in section~\ref{lineareALE} and with the notation introduced therein, one has that the corresponding linearized scalar curvature operator
\begin{equation*}
\Le : C_{\delta}^{4,\alpha}\st  X_{\Gamma} \dt\rightarrow C_{\delta-4}^{0,\alpha}\st  X_{\Gamma} \dt
\end{equation*}
admits an inverse (up to a constant) for $\delta \in (0,1)$. Since the possibility of choosing the same weight for the linear analysis on both the base orbifold and the model spaces will be crucial in the subsequent nonlinear arguments, this yields a reasonable justification of our choices. In the same spirit, we point out that, for $m=3$ and $\delta \in (4-2m,0)$ the operator $\mathbb{L}_\eta$ defined above is invertible, as it is proven in Theorem~\ref{isomorfismopesati}.
\end{remark}

In order to drop the orthogonality assumption~\eqref{eq:orto} in Theorem~\ref{invertibilitapesatobase} and tackle the general case, we first need to investigate the behaviour of the fundamental solutions of the operator $\Lg$. This will be done in the following subsection.


\subsection{Multi-poles fundamental solutions of $\Lg$.}\label{multipoles}

The aim of this subsection is twofold. On one hand, we want to produce the tools for solving equation~\eqref{eq:linear} on $M_{\p,\q}$, when $f$ is not necessarily {\em orthogonal} to $\ker{(\Lg)}$. On the other hand, we are going to determine under which global conditions on $\ker(\Lg)$ we can produce a function, which near the singularities behaves like the principal non euclidean part of the \K\ potential of the corresponding ALE resolution. In concrete, building on Propositions~\ref{asintpsieta} and~\ref{asintpsieta2}, we aim to establish the existence of a function, which blows up like $|z|^{2-2m}$ near the $p_j$'s and like $|z|^{4-2m}$ near the $q_l$'s. Such a function will then be added to the original \K\ potential of the base orbifold in order to make it closer to the one of the resolution. At the same time, for obvious reasons, it is important to guarantee that this new \K\ potential will produce on $M_{\p,\q}$ the smallest possible deviation from the original scalar curvature, at least at the linear level.
Thinking of $g$ as a perturbation of the flat metric at small scale, we have that $\Lg$ can be thought of as a perturbation of $\Delta^2$. Since $|z|^{2-2m}$ and $|z|^{4-2m}$ satisfy equations of the form
$$
\Delta^2(A|z|^{2-2m} + B |z|^{4-2m}) = C \Delta\delta_0 + D\delta_0 \,\, ,
$$
where $\delta_0$ is the Dirac distribution centered at the origin and $A,B,C$ and $D$ are suitable constants, we are led to study these type of equations on $M$ for the operator $\Lg$.


\begin{prop}
\label{balancrough}
Let $(M,g,\omega)$ be compact Kcsc orbifold of complex dimension $m$ and let ${\rm ker} {(\Lg)} = { span}\{\varphi_0, \varphi_1, \dots,\varphi_d\}$, as in~\eqref{nontrivial_ker}. Let $(f_0, \ldots, f_d )$ be a vector in $\RR^{d+1}$. Assume that the following {\em linear balancing condition} holds
\begin{eqnarray}
\label{eq:generalbal}
f_i \, + \, \sum_{l=1}^K a_l\varphi_i(q_l) \, +\, \sum_{j=1}^Nb_j(\Delta\varphi_i)(p_j) \, + \, \sum_{j=1}^Nc_j\varphi_i(p_j)   &  = &  0 \,, \quad\quad\quad\qquad\qquad \hbox{$i = 1, \dots, d$} \, , \\
\label{eq:fixnu}
f_0 \, {\rm Vol}_\omega(M)   \, + \, \sum_{l=1}^K a_l \, + \, \sum_{j=1}^N c_j  & = & \nu  \, {\rm Vol}_\omega(M) \, ,
\end{eqnarray}
for some choice of the coefficients $\nu$,  ${\bf a}=(a_1, \dots, a_K)$, ${\bf b} =(b_1, \dots, b_N)$ and ${\bf c} =(c_1, \dots, c_N)$.
Then, there exist a distributional solution ${U} \in \mathscr{D}'(M)$ to the equation
\begin{equation}
\Lg   [{U}]  \,  + \, \nu  \,\,\,  = \,\,\,   \sum_{i=0}^d f_i \, \varphi_i \, + \, \sum_{l=1}^K a_l\, \delta_{q_{l}} \, + \, \sum_{j=1}^N  b_j \, \Delta\delta_{p_{j}} \, + \, \sum_{j=1}^N c_j \, \delta_{p_{j}} \, , \qquad \hbox{in \,\,\,$M$}\, . \label{eq:LGabcd}
\end{equation}
\end{prop}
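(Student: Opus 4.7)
The plan is to recast \eqref{eq:LGabcd} as $\Lg U=F$, where
$$F := -\nu + \sum_{i=0}^d f_i\varphi_i + \sum_{l=1}^K a_l\delta_{q_l} + \sum_{j=1}^N b_j\,\Delta\delta_{p_j} + \sum_{j=1}^N c_j\,\delta_{p_j}$$
is viewed as a distribution of order at most two on $M$. Since $\Lg$ is formally self-adjoint and elliptic on the compact orbifold $M$ with smooth kernel $\mathrm{span}\{\varphi_0,\ldots,\varphi_d\}$, the Fredholm alternative in the distributional sense reduces the existence of $U\in\mathscr{D}'(M)$ to the orthogonality conditions $\langle F,\varphi_k\rangle_{\mathscr{D}'\times\mathscr{D}}=0$ for $k=0,\ldots,d$. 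The first step is therefore to compute these pairings using the normalization $\int_M\varphi_i\varphi_k=\delta_{ik}$ for $i,k\geq 1$, the zero-mean condition on $\varphi_1,\ldots,\varphi_d$, and the fact that $\varphi_0\equiv 1$ (so $\Delta\varphi_0\equiv 0$). For $k\geq 1$ the pairing collapses to $f_k+\sum_l a_l\varphi_k(q_l)+\sum_j b_j(\Delta\varphi_k)(p_j)+\sum_j c_j\varphi_k(p_j)$, which vanishes by \eqref{eq:generalbal}, while for $k=0$ it equals $(-\nu+f_0)\mathrm{Vol}_\omega(M)+\sum_l a_l+\sum_j c_j$, which vanishes by \eqref{eq:fixnu}.

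To build $U$ concretely, I would subtract the singular part of $F$ by a local parametrix. Near each $p_j$ and $q_l$, take K\"ahler normal coordinates so that $\Lg=\Delta^2+\text{lower order}$ with coefficients vanishing at the centre. Using the Euclidean identities $\Delta^2(|z|^{2-2m})=C_1\Delta\delta_0$ and $\Delta^2(|z|^{4-2m})=C_2\delta_0$, define
$$W \,=\, \sum_{j=1}^N \chi_{p_j}\Bigl(\tfrac{b_j}{C_1}|z|^{2-2m}+\tfrac{c_j}{C_2}|z|^{4-2m}\Bigr) \,+\, \sum_{l=1}^K \tfrac{a_l}{C_2}\,\chi_{q_l}\,|z|^{4-2m},$$
where $\chi_{p_j}$ and $\chi_{q_l}$ are smooth cutoffs around the singular points. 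A direct computation shows $\Lg W$ reproduces the singular part of $F$ exactly, with an error $R := \Lg W - \bigl(\sum_j b_j\Delta\delta_{p_j}+\sum_j c_j\delta_{p_j}+\sum_l a_l\delta_{q_l}\bigr)$ that is an $L^p$ function on $M$ for some $p>1$: the error comes from applying a differential operator with coefficients vanishing at $p_j,q_l$ to the locally integrable factors $|z|^{2-2m}$ and $|z|^{4-2m}$, plus derivatives of the cutoffs, all of which produce integrable remainders.

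Writing $U=W+V$ reduces the problem to solving $\Lg V=\tilde f$ with $\tilde f := -\nu+\sum_i f_i\varphi_i -R\in L^p(M)$. Self-adjointness gives $\langle \Lg W,\varphi_k\rangle=\langle W,\Lg\varphi_k\rangle=0$ since $\varphi_k\in\ker(\Lg)$, which together with the orthogonality of $F$ already verified yields $\int_M \tilde f\,\varphi_k=0$ for every $k=0,\ldots,d$. Theorem \ref{invertibilitapesatobase} then supplies $V\in W^{4,p}(M)$ solving $\Lg V=\tilde f$, and $U=W+V$ is the required distributional solution. I expect the main technical delicacy to be the bookkeeping in the second step: one must check that $R$ lies in $L^p$ and not merely in some weaker distributional space, which requires tracking how many derivatives land on the singular factors versus on the cutoffs or the vanishing coefficients of $\Lg-\Delta^2$.
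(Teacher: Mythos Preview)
Your orthogonality computation in the first paragraph is correct and is indeed the heart of the matter. However, the parametrix construction in the second step has a genuine gap: the remainder $R$ is \emph{not} in $L^p(M)$ for any $p\geq 1$. The problem comes from the $b_j\,|z|^{2-2m}$ pieces. In K\"ahler normal coordinates the metric deviates from the Euclidean one only at order~$2$, so the leading part of $\Lg-\Delta^2$ is a fourth-order operator with coefficients of size $O(|z|^2)$. Applying this to $|z|^{2-2m}$ produces a function behaving like $|z|^{-2m}$ near $p_j$ (with non-radial angular dependence $\Phi_2+\Phi_4$, cf.\ the computation in the proof of Lemma~\ref{Glapl}), and $|z|^{-2m}$ is the critical, non-integrable power in real dimension $2m$. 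So you cannot feed $\tilde f$ into Theorem~\ref{invertibilitapesatobase}. Your parametrix idea \emph{can} be made to work, but it requires inserting the intermediate corrections $V_4,V_5$ of Lemma~\ref{Glapl} to kill these $|z|^{-2m}$ and $|z|^{1-2m}$ error terms before the remainder becomes $L^p$ (indeed $C^{0,\alpha}$); this is precisely the content of Lemmas~\ref{Gbilapl}--\ref{Glapl}, which the paper develops \emph{after} Proposition~\ref{balancrough}, not as an ingredient for it.

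The paper's own proof bypasses all of this by working purely at the dual level: once the orthogonality $\langle T,\varphi_k\rangle=0$ is established (your first step), it simply \emph{defines} $U$ by the formula $\langle U,\psi\rangle_{\mathscr{D}'\times\mathscr{D}}:=\langle T,\mathbb{J}_\omega[\psi^\perp]\rangle_{\mathscr{D}'\times\mathscr{D}}$, where $\mathbb{J}_\omega$ is the inverse of $\Lg$ on $(\ker\Lg)^\perp$. Since $\psi^\perp$ is smooth, elliptic regularity makes $\mathbb{J}_\omega[\psi^\perp]$ smooth as well, so the pairing with the order-two distribution $T$ is legitimate. A two-line computation using self-adjointness then verifies $\Lg U=T$. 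This is shorter and requires no local analysis; the trade-off is that it gives no information about the singular profile of $U$ near $p_j,q_l$, which is why the paper establishes that separately in Proposition~\ref{loc_structure}.
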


\begin{proof}
Let us first remark that equations \eqref{eq:generalbal} and \eqref{eq:fixnu} imply that, for any $\varphi \in \ker(\Lg)$, one has that $\langle \, T \, | \,\varphi \, \rangle_{\mathscr{D}' \times \mathscr{D}} \, = \, 0$, where $T \in \mathscr{D}'$ is the distribution defined by
$$
T \,\,  = \,\,  \sum_{i=1}^{d}f_{i} \, \varphi_{i} \, + \, \sum_{l=1}^K a_l \, \delta_{q_{l}} \, + \, \sum_{j=1}^N b_j \, \Delta\delta_{p_{j}} \, + \, \sum_{j=1}^N c_j \, \delta_{p_{j}}  \, -  \, \nu \, .
$$
Having this in mind, we let $U \in \mathscr{D}'$ be the unique distribution such that, for every $\psi \in C^\infty(M)$
$$
\left< \, U \, |\, \psi\, \right>_{\mathscr{D}^{'} \times \mathscr{D}} \,\,  = \,\,
\left< \, T  \, |\,  \mathbb{J}_{\omega}[ \psi^{\perp}] \, \right>_{\mathscr{D}^{'} \times \mathscr{D}} \, ,
$$
where $\psi^\perp$, the component of $\psi$ which is {\em orthogonal} to $\ker(\Lg)$, is given by
\begin{equation*}
\psi^{\perp} \,\, = \,\, \psi \, - \, \frac{1}{\vol\st M \dt}\int_{M}\psi \,d\mu_{\omega} \, - \, \sum_{i=1}^{d}\varphi_{i}\int_{M}\psi \varphi_{i}\,d\mu_{\omega}\,,
\end{equation*}
and $\mathbb{J}_{\omega} : L^{2}\st M \dt/\ker\st \Lg \dt\rightarrow W^{4,2}\st M \dt/\ker\st \Lg \dt$ is inverse of $\Lg$ restricted to the orthogonal complement of $\ker(\Lg)$, given by Proposition \ref{invertibilitapesatobase}.
We claim that the distribution $U$ defined above satisfies the equation \eqref{eq:LGabcd} in the sense of distributions. With the notations just introduced, we need to show that, for every $\psi \in C^\infty(M)$, it holds
$$
\left< \, \Lg  [U ]\, |\, \psi \, \right>_{\mathscr{D}^{'} \times \mathscr{D}} \,\, = \,\,
\left< \, T \, | \, \psi \, \right> _{\mathscr{D}^{'} \times \mathscr{D}} \, .
$$
Using the definition of $U$ and the fact that $\Lg$ is formally selfadjoint, we compute
\begin{eqnarray*}
\left< \, \Lg  [U ]\, |\, \psi \, \right>_{\mathscr{D}^{'} \times \mathscr{D}} & = & \left< \, U \, |\, \Lg [\psi] \, \right>_{\mathscr{D}^{'} \times \mathscr{D}} \,\,\, = \,\,\, \left< \, U \, |\, \Lg [\psi^\perp] \, \right>_{\mathscr{D}^{'} \times \mathscr{D}} \,\,\, = \,\,\, \left< \, T \, |\, \mathbb{J}_\omega  \big[   (\Lg [\psi^\perp])^\perp  \big] \, \right>_{\mathscr{D}^{'} \times \mathscr{D}}  \\
& = & \left< \, T \, | \, \psi^\perp  \right> _{\mathscr{D}^{'} \times \mathscr{D}} \,\, \,= \,\,\, \left< \, T \, | \, \psi \, \right> _{\mathscr{D}^{'} \times \mathscr{D}} \,,
\end{eqnarray*}
since $\psi - \psi^\perp \in \ker(\Lg)$, and thus $\left< \, T \, | \, \psi - \psi^\perp  \right> _{\mathscr{D}^{'} \times \mathscr{D}} = 0$, by a previous observation.
%
This completes the proof of the proposition.
\end{proof}

\begin{remark}
\label{gabc}
When $f_i = 0$, for $i=0, \ldots, d$, we only impose the balancing condition~\eqref{eq:generalbal}, which specializes to
\begin{equation}\label{eq: balancingbc1}
\sum_{l=1}^K a_l\varphi_i(q_l) \, +\, \sum_{j=1}^Nb_j(\Delta\varphi_i)(p_j) \, + \, \sum_{j=1}^Nc_j\varphi_i(p_j)  \,\,  = \,\,  0 \,,
\end{equation}
and we obtain a real number $\nu_{\aaa,\ccc}$, defined by the relation
\begin{equation}\label{eq: balancingbc2}
\sum_{l=1}^K a_l \, + \, \sum_{j=1}^N c_j  \,\,= \,\, \nu_{\aaa,\ccc}  \, {\rm Vol}_\omega(M) \, ,
\end{equation}
and a distribution $\GGG_{\aaa,\bbb,\ccc} \in \mathscr{D}'(M)$, which satisfies the equation
\begin{eqnarray*}
\Lg \left[  \mathbf{G}_{\aaa,\bbb,\ccc} \right] \,  + \, \nu_{\aaa,\ccc}  & =&   \sum_{l=1}^K a_l\, \delta_{q_{l}} \, + \, \sum_{j=1}^N  b_j \, \Delta\delta_{p_{j}} \, + \, \sum_{j=1}^N c_j \, \delta_{p_{j}} \, , \qquad \hbox{in \,\,\,$M$}\, .
\end{eqnarray*}
We will refer to $\GGG_{\aaa,\bbb,\ccc}$ as a {\em multi-poles fundamental solution} of $\Lg$.
\end{remark}

The following two lemmata and the subsequent proposition~\eqref{loc_structure} will give us a precise description of the behavior of a {\em multi-poles fundamental solution}  $\GGG_{\aaa,\bbb,\ccc}$ of $\Lg$ around the singular points. The same considerations obviously apply to a distributional solution $U$ of the equation~\eqref{eq:LGabcd}. The first observation in this direction can be found in \cite{ap2} and we report it here for sake of completeness. 	
\begin{lemma}
\label{Gbilapl}
Let $(M,g,\omega)$ be a Kcsc orbifold of complex dimension $m\geq 2$ and let $M_q = M \setminus \{ q\}$, with $q \in M$. Then, the following holds true.
\begin{itemize}
\item If $m\geq 3$, there exists a function
$G_{\Delta\Delta}(q,\cdot) \in \mathcal{C}_{4-2m}^{4,\alpha}(M_q) \cap \mathcal{C}^{\infty}_{loc}(M_q)$, orthogonal to $\ker(\Lg)$ inthe sense of~\eqref{eq:orto}, such that
\begin{equation}\label{eq:greenbilapl}
\Lg[G_{\Delta\Delta}(q,\cdot)]  \,\, + \,\,
\frac{2(m-1) \, |\Sp^{2m-1}|}{|\Gamma|} \,\, \big[\, 4(m-2) \,\, \delta_q \, \big] \,\,  \in \, \mathcal{C}^{0,\alpha}(M) \, ,
\end{equation}
where $|\Gamma|$ is the order of the orbifold group at $q$.
Moreover, if $z$ are holomorphic coordinates centered at $q$, it holds the expansion
\begin{equation}
\label{eq:expgreenbilapl}
G_{\Delta\Delta}(q,z)  \,\, = \,\,  |z|^{4-2m} + \, \mathcal{O}(|z|^{6-2m}) \, .
\end{equation}
\item If $m=2$, there exists a function
$G_{\Delta\Delta}(q,\cdot) \in \mathcal{C}^{\infty}_{loc}(M_q)$, orthogonal to $\ker(\Lg)$ inthe sense of~\eqref{eq:orto}, such that
\begin{equation}
\label{eq:bluebilapl2}
\Lg[G_{\Delta\Delta}(q,\cdot)] \,\, - \,\, \frac{4|\Sp^{3}|}{|\Gamma|} \, \delta_q  \,\, \in  \,\, \mathcal{C}^{0,\alpha}(M) \, ,
\end{equation}
 where $|\Gamma|$ is the order of the orbifold group at $q$.
Moreover, if $z$ are holomorphic coordinates centered at $q$, it holds the expansion
\begin{equation}
\label{eq:expbluebilapl2}
G_{\Delta\Delta}(q,\cdot) \,\, = \,\, \log(|z|) \, + \, C_{q} \, + \, \mathcal{O}(|z|^{2}) \, ,
\end{equation}
for some constant $C_q \in \RR$.
\end{itemize}
\end{lemma}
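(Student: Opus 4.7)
The plan is to build $G_{\Delta\Delta}(q,\cdot)$ by cutting off an explicit Euclidean fundamental solution of $\Delta^2$ and then solving an inhomogeneous linear equation for $\Lg$ to absorb the resulting error. Fix a holomorphic chart $(U,z)$ centered at $q$, of orbifold type $\CC^m/\Gamma$, let $\chi$ be a smooth cutoff with $\chi\equiv 1$ on $B_{r_0/2}(q)$ and supported in $U$, and set
\[u_0\,:=\,\chi(z)\,G_{\mathrm{Eucl}}(z),\qquad G_{\mathrm{Eucl}}(z)\,:=\,\left\{\begin{array}{ll}|z|^{4-2m}&\text{if }m\geq 3,\\ \log|z|&\text{if }m=2.\end{array}\right.\]
A direct distributional computation in the lifted chart $\RR^{2m}$ shows that $\Delta^2 G_{\mathrm{Eucl}}$ is a nonzero constant multiple of $\delta_0$, and passing from the chart to $M$ introduces the factor $|\Gamma|^{-1}$; this reproduces the constants appearing in \eqref{eq:greenbilapl} and \eqref{eq:bluebilapl2}.

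Next one estimates $F:=\Lg u_0-(\text{delta singular part})$. The commutator $[\Lg,\chi]G_{\mathrm{Eucl}}$ is supported where $\chi$ is nonconstant and is therefore smooth and bounded on $M$. By Proposition \ref{proprietacsck} the Kcsc potential satisfies $\psi_\omega=\mathcal{O}(|z|^4)$, so $g^{i\bar\jmath}$ differs from the Euclidean value by $\mathcal{O}(|z|^2)$ and the Ricci term in $\Lg-\Delta^2$ is bounded. Counting powers, $(\Lg-\Delta^2)G_{\mathrm{Eucl}}$ is $\mathcal{O}(|z|^{2-2m})$ for $m\geq 3$ and $\mathcal{O}(1)$ for $m=2$, so $F$ belongs to $C^{0,\alpha}_{2-2m}(M_q)\cap L^p(M)$ for some $p>1$ when $m\geq 3$, and to $L^p(M)\cap C^{0,\alpha}_{loc}(M_q)$ for every $p<\infty$ when $m=2$. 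Splitting $F=F^\perp+F^\parallel$ with $F^\parallel$ the $L^2$-projection onto the smooth finite-dimensional subspace $\ker(\Lg)$, Theorem \ref{invertibilitapesatobase} yields a unique $v$, orthogonal to $\ker(\Lg)$, such that $\Lg v=-F^\perp$. Setting $G_{\Delta\Delta}(q,\cdot):=u_0+v$, and if necessary subtracting its $\ker(\Lg)$-component (a smooth modification that does not alter the singular behavior), one obtains \eqref{eq:greenbilapl} and \eqref{eq:bluebilapl2}, since $\Lg G_{\Delta\Delta}-(\text{delta part})=F^\parallel\in\ker(\Lg)\subset C^\infty(M)\subset C^{0,\alpha}(M)$.

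The main technical point is the precise local expansion near $q$. For $m\geq 4$ the weight $\delta=6-2m$ lies in the admissible interval $(4-2m,0)$ of Theorem \ref{invertibilitapesatobase}, so \eqref{eq:stimapesatabase} directly gives $v\in C^{4,\alpha}_{6-2m}(M_q)$ and \eqref{eq:expgreenbilapl} follows because $u_0(z)=|z|^{4-2m}$ on $B_{r_0/2}(q)$. When $m=3$ (where the natural target weight $6-2m=0$ is an indicial root) and when $m=2$ the direct estimate is insufficient. In these cases one Fourier-decomposes $v$ along $\Gamma$-invariant spherical harmonics on the punctured chart and uses the fact that the indicial roots of $\Lg$ agree with those of $\Delta^2$: Remark \ref{nolinear} excludes the indicial root $1$ because no $\Gamma$-invariant linear function exists, and a mode-by-mode elliptic bootstrap, using the $L^2$ orthogonality to $\ker(\Lg)$ to pin down the constant mode as $C_q$, forces $v-C_q\chi=\mathcal{O}(|z|^2)$ near $q$. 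This produces \eqref{eq:expbluebilapl2} when $m=2$ and the analogue of \eqref{eq:expgreenbilapl} when $m=3$.
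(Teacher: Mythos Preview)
The paper does not actually prove this lemma; it cites \cite{ap2} and then proves the harder companion Lemma~\ref{Glapl} in full. Your strategy---cut off the Euclidean biharmonic fundamental solution, estimate the error, and invoke Theorem~\ref{invertibilitapesatobase} to solve away the part orthogonal to $\ker(\Lg)$---is precisely the template used in that proof, so the overall approach is correct and matches what the authors do.

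There is, however, a genuine gap in your handling of the critical cases $m=2,3$. Your bootstrap claims $v-C_q\chi=\mathcal{O}(|z|^{2})$ (resp.\ $v=\mathcal{O}(1)$ for $m=3$), and you justify this by appealing to Remark~\ref{nolinear} to kill the indicial root $1$. But the obstruction to the sharp expansion does not live in the first eigenspace: it lives in the \emph{radial} (zeroth) mode. Concretely, $F=(\Lg-\Delta^{2})G_{\mathrm{Eucl}}$ has in general a nonzero radial component at the critical order $|z|^{2-2m}$; since for $m=3$ one has $\Delta^{2}\log|z|=c\,|z|^{-4}$ and for $m=2$ one has $\Delta^{2}(|z|^{2}\log|z|)=c\,|z|^{-2}$, the particular solution of $\Delta^{2}v=-F^{\perp}$ on the radial mode picks up a $\log|z|$ (resp.\ $|z|^{2}\log|z|$) term, which is \emph{not} $\mathcal{O}(1)$ (resp.\ $\mathcal{O}(|z|^{2})$). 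Remark~\ref{nolinear} is irrelevant here, and in any case is only available when $\Gamma$ is nontrivial, whereas the lemma is stated for arbitrary $q$.

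What is actually needed is the analogue of the ``remarkable'' cancellation $c_{0}=0$ that the paper verifies by hand in the proof of Lemma~\ref{Glapl} using the explicit form \eqref{eq:decp4} of $\Psi_{4}$ coming from the Kcsc condition. You must either perform that computation for $G_{\mathrm{Eucl}}=|z|^{4-2m}$ (resp.\ $\log|z|$) to show the radial leading coefficient vanishes, or---as the paper does for $G_{\Delta}$---first add explicit correction terms $V_{4},V_{5},W$ to $u_{0}$ so that $\Lg[u_{0}+V_{4}+V_{5}+W]\in C^{0,\alpha}(M)$ \emph{before} invoking Theorem~\ref{invertibilitapesatobase}; then the correction $v$ produced by the linear theory is smooth on all of $M$ and the expansion follows. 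Without one of these two steps, your argument does not yield \eqref{eq:expgreenbilapl} for $m=3$ or \eqref{eq:expbluebilapl2} for $m=2$.
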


Before stating the next lemma, it is worth pointing out that $G_{\Delta \Delta} (q, \cdot)$ has the same rate of blow up as the Green function of the bi-Laplacian operator $\Delta^2$. Since we want to produce a local approximation of the {\em multi-poles fundamental solution} $\GGG_{\aaa,\bbb,\ccc}$\,, we also need a profile whose blow up rate around the singular points is the same as the one of the Green function of the Laplace operator. This will be responsible for the $\Delta \delta_p$'s terms. 
\begin{lemma}
\label{Glapl}
Let $(M,g,\omega)$ be a Kcsc orbifold of complex dimension $m\geq 2$ and let $M_p = M \setminus \{ p \}$, with $p \in M$. Then, the following holds true.
\begin{itemize}
\item
If $m \geq 3$, there exists a function
$G_{\Delta}(p,\cdot) \in \mathcal{C}_{2-2m}^{4,\alpha}(M_p) \cap \mathcal{C}_{loc}^{\infty}(M_p)$, orthogonal to $\ker(\Lg)$ inthe sense of~\eqref{eq:orto}, such that
\begin{equation}
\label{eq:greenlapl}
\Lg[G_{\Delta}(p,\cdot)] \,\,  - \,\, \frac{2(m-1) \, |\Sp^{2m-1}|}{|\Gamma|}  \,\, \Big[   \, \Delta\delta_p \,\,
+\,\,
\frac{s_\omega (m^2-m+2)}{m(m+1)}
\, \,  \delta_p  \, \Big]    \,\,  \in \, \mathcal{C}^{0,\alpha}(M) \, ,
\end{equation}
{where $|\Gamma|$ is the cardinality of the orbifold group at $p$} and $s_\omega$ is the constant scalar curvature of the orbifold.
Moreover, if $z$ are holomorphic coordinates centered at $p$, it holds the expansion
\begin{equation}
\label{eq:expgreenlapl}
G_{\Delta }(p,\cdot) \,\,  =  \,\, |z|^{2-2m}  \, + \, |z|^{4-2m} \, ( \, \Phi_2 + \Phi_4 \, )  \, + \,  |z|^{5-2m} \,
\sum_{j=0}^2\Phi_{2j+1}
 \, + \,  \mathcal{O}(|z|^{6-2m}) \, ,
\end{equation}
for suitable smooth $\Gamma$-invariant functions $\Phi_j$'s defined on $\mathbb{S}^{2m-1}$ and belonging to the $j$-th eigenspace of the operator $\Delta_{\mathbb{S}^{2m-1}}$.

\smallskip

\item
If $m = 2$,  there exists a function
$G_{\Delta}(p,\cdot) \in \mathcal{C}_{-2}^{4,\alpha}(M_p) \cap \mathcal{C}_{loc}^{\infty}(M_p)$, orthogonal to $\ker(\Lg)$ inthe sense of~\eqref{eq:orto}, such that
\begin{equation}
\label{eq:greenlapl2}
\Lg[G_{\Delta}(p,\cdot)] \,\,  - \,\, \frac{ |\Sp^{3}|}{|\Gamma|}  \,\, \Delta\delta_p \,\,
- \,\,\frac{ \,  s_\omega \, 2  \, |\Sp^{3}|}{3 \, |\Gamma|} \, \,  \delta_p  \,\,  \in \, \mathcal{C}^{0,\alpha}(M) \, ,
\end{equation}
where $|\Gamma|$ is the cardinality of the orbifold group at $p$ and $s_\omega$ is the constant scalar curvature of the orbifold.
Moreover, if $z$ are holomorphic coordinates centered at $p$, it holds the expansion
\begin{equation}
\label{eq:expbluelapl2}
G_{\Delta}(p,\cdot) \,\, = \,\,  |z|^{-2} \, + \, \log(|z|)(\Phi_2 + \Phi_4) \, + \, C_{p} \, + \,  |z| \,
 \sum_{h=0}^{2}\Phi_{2h+1} \,  + \,  \mathcal{O}(|z|^{2})
\end{equation}
for some constant $C_p \in \RR$, some $H \in \mathbb{N}$ and suitable smooth $\Gamma$-invariant functions $\Phi_h$'s defined on $\mathbb{S}^{3}$ and belonging to the  $h$-th eigenspace of the operator $\Delta_{\mathbb{S}^{3}}$. 
\end{itemize}
\end{lemma}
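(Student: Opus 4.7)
The strategy parallels the construction of $G_{\Delta\Delta}$ in Lemma~\ref{Gbilapl}, but now the leading profile is $|z|^{2-2m}$, which is less regular. I would build $G_{\Delta}(p,\cdot)$ as a cut-off local parametrix plus a smooth correction coming from $\ker(\Lg)$ that enforces the orthogonality~\eqref{eq:orto}. In K\"ahler normal coordinates $(U,z)$ centered at $p$, Proposition~\ref{proprietacsck} gives $\omega = i\partial\overline{\partial}(|z|^{2}/2 + \Psi_{4} + \Psi_{5} + \mathcal{O}(|z|^{6}))$ with $\Psi_{4}, \Psi_{5}$ explicit. Inserting this into~\eqref{eq:defLg} produces an expansion $\Lg = \Delta^{2} + \mathcal{L}_{2} + \mathcal{L}_{3} + \ldots + \mathcal{R}_{\omega}$, where $\mathcal{L}_{k}$ is a fourth-order operator whose coefficients vanish at $p$ to order $k$ and $\mathcal{R}_{\omega}$ collects the Ricci contribution.

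\textbf{Local profile.} Starting from $u_{0} := |z|^{2-2m}$, I would iteratively add corrections $v_{2} = |z|^{4-2m}\tilde\Phi^{(2)}$, $v_{3} = |z|^{5-2m}\tilde\Phi^{(3)}$, and so on, chosen so that $\Delta^{2} v_{k}$ cancels the non-radial part of $(\mathcal{L}_{2} + \ldots + \mathcal{L}_{k} + \mathcal{R}_{\omega})u_{k-1}$ at the corresponding order, mode by mode. Since $\Psi_{4}$ contains only even spherical harmonics of degrees $0, 2, 4$ by~\eqref{eq:decp4}, the angular content of $v_{2}$ is forced into degrees $2, 4$ (the degree-$0$ mode is annihilated by $\Delta^{2}$ off the origin and instead feeds the $\delta_{p}$ coefficient). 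Similarly, $\Psi_{5}$ drives $v_{3}$ into odd spherical harmonic degrees $1, 3, 5$. The ODEs at each spherical mode are solvable because Remark~\ref{nolinear} rules out the problematic indicial weights on $\Gamma$-invariant eigenspaces, giving precisely the expansion~\eqref{eq:expgreenlapl}.

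\textbf{Distributional computation.} Setting $\tilde G := \chi\, u_{loc}$ for a cutoff $\chi$ supported in $U$ and identically $1$ near $p$, I would show
\[
\Lg \tilde G \;=\; A\,\Delta\delta_{p} \;+\; B\,\delta_{p} \;+\; f, \qquad f \in \mathcal{C}^{0,\alpha}(M).
\]
The coefficient $A$ comes from the Euclidean identity $\Delta^{2}|z|^{2-2m} = -2(m-1)|\Sp^{2m-1}|\Delta\delta_{0}$ on $\RR^{2m}$; the orbifold quotient adds $|\Gamma|^{-1}$ and one arrives at $A = 2(m-1)|\Sp^{2m-1}|/|\Gamma|$. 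The coefficient $B$ is more delicate: it receives a contribution from the Ricci term $4\langle\rho_{\omega}\,|\,i\partial\overline{\partial} u_{0}\rangle$, whose spherical average tested against constants near $p$ yields a $\delta_{p}$ weighted by $\mathrm{tr}_{\omega}\rho_{\omega}(p) = s_{\omega}/m$, together with a contribution from the radial part of $\mathcal{L}_{2} u_{0}$, which is of borderline integrability $|z|^{-2m}$ and, when regularized as a distribution, produces a $\delta_{p}$ controlled by the radial coefficient $-s_{\omega}/(16m(m+1))$ of~\eqref{eq:decp4}. Combining both contributions with the correct signs gives the stated factor $s_{\omega}(m^{2}-m+2)/(m(m+1))$.

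\textbf{Finalization, the case $m=2$, and main obstacle.} To pass from $\tilde G$ to $G_{\Delta}(p,\cdot)$, set $G_{\Delta}(p,\cdot) := \tilde G + \sum_{i=0}^{d}\gamma_{i}\varphi_{i}$ with the $\gamma_{i}$ chosen so that the $L^{2}$-orthogonality~\eqref{eq:orto} holds; the integrals are finite because $|z|^{2-2m}$ is locally integrable at $p$ for $m\geq 3$, and adding elements of $\ker(\Lg)$ does not alter $\Lg\tilde G$. Membership in $\mathcal{C}^{4,\alpha}_{2-2m}(M_{p})\cap\mathcal{C}^{\infty}_{loc}(M_{p})$ then follows from the explicit form of $u_{loc}$ and elliptic regularity on $M\setminus\{p\}$. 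For $m=2$ the same scheme applies, but in $\RR^{4}$ the harmonic polynomials of degrees $2$ and $4$ lie in $\ker\Delta^{2}$, so the correction ODEs at order $|z|^{0}$ are resonant and their solutions acquire the logarithmic factor $\log(|z|)(\Phi_{2}+\Phi_{4})$ of~\eqref{eq:expbluelapl2}, while the undetermined additive constant becomes $C_{p}$; the Dirac coefficients in~\eqref{eq:greenlapl2} are computed by the same distributional procedure. The main obstacle is precisely this bookkeeping for $B$: every correction term at the borderline of local integrability contributes distributional mass at $p$, and the rational factor $(m^{2}-m+2)/(m(m+1))$ only emerges after one carefully combines the Ricci contribution with the specific shape~\eqref{eq:decp4} of $\Psi_{4}$ dictated by Proposition~\ref{proprietacsck}.
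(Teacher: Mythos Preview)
Your overall strategy---local parametrix built on $|z|^{2-2m}$, iterative corrections using the Fourier decomposition of $\Psi_4,\Psi_5$, cut-off, distributional computation, orthogonalization---is exactly the paper's approach. The gap is in your account of the $\delta_p$ coefficient.

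You claim that the radial part of $\mathcal{L}_2 u_0$ is a borderline $|z|^{-2m}$ term which, ``when regularized as a distribution, produces a $\delta_p$.'' But this cannot work: a genuine radial $c_0|z|^{-2m}$ term in $(\Lg-\Delta^2)|z|^{2-2m}$ is not correctable by any biharmonic $v_2$ (as you yourself note, $\Delta^2[|z|^{4-2m}]=0$ off the origin), and it is \emph{not} of the form ``$C^{0,\alpha}$ plus a Dirac mass'' either---it is a honest non-integrable singularity. If $c_0\neq 0$ the conclusion $\Lg[G_\Delta]-(\text{Dirac part})\in C^{0,\alpha}(M)$ would simply be false. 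The paper's key observation, which you are missing, is precisely that $c_0=0$: expanding $(\Lg-\Delta^2)|z|^{2-2m}$ using~\eqref{eq:decp4} one finds that the radial $|z|^{-2m}$ coefficient vanishes identically. This is what makes the parametrix construction go through.

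Once $c_0=0$ is established, the extra $\delta_p$ contribution beyond the obvious $\tfrac{s_\omega}{m}\Delta_\omega u_0$ piece does \emph{not} come from regularizing a bulk term; it comes from the surviving boundary integral $\int_{\partial M_\varepsilon}\phi\,\partial_\nu(\Delta_\omega U_p)\,d\sigma_\omega$ in the integration by parts. Since $\Delta|z|^{2-2m}=0$, this boundary term is governed by $(\Delta_\omega-\Delta)|z|^{2-2m}$, whose radial part is determined by the radial piece $-\tfrac{s_\omega}{16m(m+1)}|z|^4$ of $\Psi_4$ and yields the factor $\tfrac{s_\omega(m-1)^2}{m(m+1)}$; combined with $\tfrac{s_\omega}{m}$ this gives $\tfrac{s_\omega(m^2-m+2)}{m(m+1)}$. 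The paper also checks, via a separate symmetry argument, that the traceless Ricci piece $4\langle\rho_\omega^0\,|\,i\partial\bar\partial U_p\rangle$ contributes no Dirac mass---another point your sketch glosses over. So the ingredient you identified (the radial coefficient in~\eqref{eq:decp4}) is indeed the right one, but it enters through the boundary term $\partial_\nu\Delta_\omega U_p$, not through a fictitious bulk $|z|^{-2m}$ regularization.
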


%

%




%

%

\begin{proof} We focus on the case $m\geq 3$ and since the computations for the case $m=2$ are very similar, we leave them to the  reader.
To prove the existence of $G_{\Delta}\st p,\cdot\dt$, we fix a coordinate chart centered at $p$ and we consider the Green function for the Euclidean Laplacian $|z|^{2-2m}$. In the spirit of Proposition~\ref{proprietacsck}, we compute
\begin{align*}
\Lg [ \,  |z|^{2-2m} \, ] \,\,\, =  \,\,\,&\st \, \Lg \, - \, \Delta^{2} \, \dt [\, |z|^{2-2m} \, ]\\
= \,\,\, &  - \, 4 \, \tr  \st \,  i\dd |z|^{2-2m}\circ i\dd\Delta \psi_{\omega} \, \dt  \, - \, 4 \, \tr \st \,  i\dd \psi_{\omega} \circ i\dd\Delta |z|^{2-2m} \, \dt\\
&-4\, \Delta\, \tr \st\,  i\dd \psi_{\omega} \circ i\dd|z|^{2-2m} \, \dt \, + \, \mathcal{O}\st|z|^{2-2m}\dt \\
=&-\frac{m}{4|z|^{2m}}\Delta^{2}\Psi_{4}+\frac{m\st m+1 \dt}{|z|^{2m+2}}\Delta\Psi_{4}-\frac{m}{4}\Delta\st \frac{\Delta\Psi_{4}}{|z|^{2m}} \dt\\
&+4m\st m+1 \dt\Delta\tr\st \frac{\Psi_{4}}{|z|^{2m+2}}  \dt+\mathcal{O}\st|z|^{2-2m}\dt
\end{align*}
where we used the explicit form of  $\Psi_{4}$	
\begin{equation}
\Psi_{4}\st z,\overline{z} \dt=-\frac{1}{4}\sum_{i,j,k,l=1}^{m}R_{i\bar{\jmath}k\bar{l}}z^{i}\overline{z^{j}}z^{k}\overline{z^{l}}
\end{equation}
and the complex form of the euclidean laplace operator
\begin{equation}
\Delta=4\sum_{i=1}^{m} \frac{\partial^{2}}{\partial z^{i}\partial \overline{z^{i}}}\,.  
\end{equation}
Expanding the real analytic function $\psi_\omega$ as $\psi_\omega \, = \, |z|^4 \, (\Phi_0 + \Phi_2 + \Phi_4) \, + \, |z|^5 \, (\Phi_1 + \Phi_3 + \Phi_5) \, + \, \mathcal{O}(|z^6|)$, where, for $h=0, 1,2$, the $\Phi_{2h}$'s and the $\Phi_{2h+1}$'s are suitable $\Gamma$-invariant functions in the $h$-th eigenspace of $\Delta_{\Sp^{2m-1}}$, we obtain
\begin{align*}
\Lg [ \,  |z|^{2-2m} \, ] \,\,\, =  \,\,\, &{|z|^{-2m}} \, \sum_{h=0}^2 c_{2h} \, \Phi_{2h} \, + \,
|z|^{1-2m} \, \sum_{h=0}^2 c_{2h+1} \, \Phi_{2h+1} \,  + \, \mathcal{O}\st |z|^{2-2m} \dt\,,
\end{align*}
where $c_0, \ldots, c_5$ are suitable constants. It is a straightforward but remarkable consequence of formula~\eqref{eq:decp4}, the fact that $c_0=0$.\,It is then possible to introduce the corrections
$$
V_{4} \,\, = \,\, |z|^{4-2m} \, (\, C_2 \, \Phi_2 \, + \, C_4 \, \Phi_4 \,)
\qquad \hbox{and} \qquad  V_{5} \,\, = \,\, |z|^{5-2m} \, \sum_{h=0}^2 C_{2h+1} \, \Phi_{2h+1}  \,,
$$
where the coefficients $C_1, \ldots, C_5$ are so chosen that
$$
\Delta^2 \, [\, V_4 \, + \,  V_5 \, ] \,\, = \,\, {|z|^{-2m}} \,
(\, c_2 \, \Phi_2 \, + \, c_4 \, \Phi_4 \,)
\, + \,
|z|^{1-2m} \, \sum_{h=0}^2 c_{2h+1} \, \Phi_{2h+1} \, .
$$
This implies in turn that
$
\Lg \,\big[ \,  |z|^{2-2m}  - \, V_4 \,- \, V_5  \,\big] \, \, = \, \, \mathcal{O}(|z|^{2-2m}) \, .
$
Using the fact that in normal coordinates centered at $p$ the Euclidean bi-Laplacian operator $\Delta^2$ yields a good approximation of $\Lg$, it is not hard to construct a function $W\in C_{6-2m}^{4,\alpha} ( B_{r_{0}}^{*} )$ on a sufficiently small punctured ball $B_{r_0}^*$ centered at $p$, such that
$$
\Lg \,\big[ \,  |z|^{2-2m}  - \, V_4 \,- \, V_5  \, - \, W \,\big] \, \,  \in \,\,   C^{0,\alpha} ( B_{r_{0}}^{*} )    \, .
$$
By means of a smooth cut-off function $\chi$, compactly supported in $B_{r_0}$ and identically equal to $1$ in $B_{r_0/2}$, we obtain a globally defined function in $L^1(M)$, namely
$$
U_p \,\, = \,\,  \chi \, \bigg( \,   |z|^{2-2m}  - \, |z|^{4-2m} \, (\, C_2 \, \Phi_2 \, + \, C_4 \, \Phi_4 \,)  \, - \,  |z|^{5-2m} \, \sum_{h=0}^2 C_{2h+1} \, \Phi_{2h+1}  \, - \, W \, \bigg)
$$
In order to guarantee the orthogonality condition~\eqref{eq:orto}, we set
\begin{equation*}
G_{\Delta} (p,\cdot \,)  \,\,\, = \,\,\, U_{p} (\cdot) \,\, - \,\,  \frac{1}{\vol\st M \dt} \int_{M} U_{p} \, d\mu_{\omega} \,\, -\,\, \sum_{i=1}^{d} \, \varphi_{i}(\cdot)\int_{M} U_{p} \, \varphi_{i} \, d\mu_{\omega}
\end{equation*}
and we claim that $\Lg [G_{\Delta} (p,\cdot \,) ]$ satisfies the desired distributional identity. To see this, we set $M_{\varepsilon} \, = \, M\setminus B_{\varepsilon}$, where $B_\varepsilon$ is a ball of radius $\varepsilon$ centered at $p$, and we integrate $\Lg [G_{\Delta} (p,\cdot \,) ] \, = \, \Lg\, [U_p]$ against a test function $\phi \in C^{\infty}(M)$.
Setting
$$
\rho_{\omega}^{0} \,\, = \,\, \rho_{\omega} \, - \, \frac{s_{\omega}}{2m}\omega \, ,
$$
and using formula \eqref{eq:defLg}, it is convenient to write
\begin{equation*}
\Lg [U_p] \,\, = \,\, \Delta_{\omega}^{2} \, U_p\, + \, \frac{s_{\omega}}{m}\, \Delta_{\omega} \, U_p\, + \, 4\left<\, \rho_{\omega}^{0} \, | \, i\dd U_p \, \right> \,,
\end{equation*}
so that we have
\begin{align*}
\int_{M_{\varepsilon}}  \phi \,\, \Lg\sq U_p \dq \,d\mu_{\omega} \,\,\,  = \,\,\, &\int_{M_{\varepsilon}}  \phi \,\Big(\Delta_{\omega}^{2} \, + \, \frac{s_{\omega}}{m}\Delta_{\omega} \Big) \big[ U_p \big]\, d\mu_{\omega}
\, + \, 4 \int_{M_{\varepsilon}}  \phi \,\left<\, \rho_{\omega}^{0} \, | \, i\dd U_p \, \right>\, d\mu_{\omega}\,.
\end{align*}
We first integrate by parts the first summand on the right hand side and we take the limit for $\varepsilon \to 0$, obtaining
\begin{align*}
\lim_{\varepsilon\rightarrow 0} \int_{M_{\varepsilon}} \!\! \phi \,\Big(\Delta_{\omega}^{2} \, + \, \frac{s_{\omega}}{m}\Delta_{\omega} \Big) \big[ U_p \big]\, d\mu_{\omega}
\,\,\, = \,\,\, & \int_{M} \!\! U_p  \, \,\Big(\Delta_{\omega}^{2} \, + \, \frac{s_{\omega}}{m}\Delta_{\omega} \Big) \big[ \phi \big]\, d\mu_{\omega} \, + \, \lim_{\varepsilon\rightarrow 0}\int_{\partial M_{\varepsilon}} \!\!\! \phi \,\, \partial_{\nu}(\Delta_{\omega} U_p) \, d\sigma_{\omega}\\
&+\lim_{\varepsilon\rightarrow 0}\int_{\partial M_{\varepsilon}} \!\!\!\!   (\Delta_{\omega} \phi) \,\, \partial_{\nu} U_p \, d\sigma_{\omega}
\, + \, \frac{s_{\omega}}{m} \, \lim_{\varepsilon\rightarrow 0}\int_{\partial M_{\varepsilon}} \!\!\!\! \phi \,\, \partial_{\nu} U_p \, d\sigma_{\omega}
\end{align*}
where $d\sigma_{\omega}$ is the restriction of the measure $d\mu_{\omega}$ to $\partial M_{\varepsilon}$ and $\nu$ is the exterior unit normal to $\partial M_{\varepsilon}$. Combining the definition of $U_p$ with the standard development of the area element, it is easy to deduce that
$$
\lim_{\varepsilon\rightarrow 0}\int_{\partial M_{\varepsilon}} \!\!\!\!   (\Delta_{\omega} \phi) \,\, \partial_{\nu} U_p \, d\sigma_{\omega}
\, + \, \frac{s_{\omega}}{m} \, \lim_{\varepsilon\rightarrow 0}\int_{\partial M_{\varepsilon}} \!\!\!\! \phi \,\, \partial_{\nu} U_p \, d\sigma_{\omega} \,\,\, = \,\,\, \frac{2\, (m-1) \, |\Sp^{2m-1}|}{\left|\Gamma\right|} \, \Big[ \,  \Delta_{\omega} \phi \, ( p ) \, + \, \frac{s_{\omega}}{m} \, \phi \, (p) \,  \Big] \, .
$$
To treat the last boundary term, we use Proposition~\ref{proprietacsck} and we compute
\begin{align*}
\partial_{\nu}\, (\Delta_{\omega} U_p) \,\,\,
&= \,\,\, {\left|z\right|^{1-2m}}   \Big(  \, \frac{2s_{\omega} \st m-1 \dt^{3}}{m\st m+1 \dt } \, + \, K_2 \, \Phi_{2} \, + \, K_4 \, \Phi_{4} \, \Big) \, + \, \mathcal{O}\big( \left|z\right|^{2-2m} \big)\,,
\end{align*}
for suitable constants $K_2$ and $K_4$. Hence, we get
$$
\lim_{\varepsilon\rightarrow 0}\int_{\partial M_{\varepsilon}} \!\!\! \phi \,\,  \partial_{\nu}(\Delta_{\omega} U_p) \, d\sigma_{\omega} \,\,\, = \,\,\, \frac{2\, (m-1) \, |\Sp^{2m-1}|}{\left|\Gamma\right|}  \, \Big[ \frac{s_\omega (m-1)^2}{m(m+1)} \, \phi \, (p) \Big] \, .
$$
In conclusion we have that
\begin{align*}
\left<\, \Big(\Delta_{\omega}^{2} \, + \, \frac{s_{\omega}}{m}\Delta_{\omega} \Big) \big[ U_p \big]  \, \Big| \, \phi    \,\right>_{\mathscr{D}' \times \mathscr{D}} \,\, = \,\, &\int_{M} \!\! U_p  \, \,\Big(\Delta_{\omega}^{2} \, + \, \frac{s_{\omega}}{m}\Delta_{\omega} \Big) \big[ \phi \big]\, d\mu_{\omega} \\
& + \, \frac{2\, (m-1) \, |\Sp^{2m-1}|}{\left|\Gamma\right|} \, \Big[ \Delta_\omega \phi \,(p) \, + \, \frac{s_\omega (m^2-m+2)}{m(m+1)} \, \phi \, (p) \, \Big] \, .
\end{align*}

We now pass to consider the term contanining $\rho_{\omega}^{0}$. An integration by parts gives
\begin{align*}
\lim_{\varepsilon\rightarrow 0}\int_{M_{\varepsilon}}\!\!\! \phi \,  \left< \, \rho_{\omega}^{0} \, | \,  i\dd U_p \, \right> \, d\mu_{\omega}
\,\,\, = \,\,\, & \int_{M}   U_p \,\left< \, \rho_{\omega}^{0} \, | \, i\dd \phi \, \right>\,d\mu_{\omega} \\
& + \, \lim_{\varepsilon\rightarrow 0}  \int_{\partial M_{\varepsilon}} \!\!\! \phi \,\,
X(U_p) \lrcorner \,  d\mu_{\omega} \, + \, \lim_{\varepsilon\rightarrow 0}  \int_{\partial M_{\varepsilon}} \!\!\! U_p \,\,
\overline{X(\phi)} \lrcorner \,  d\mu_{\omega}  \, ,
\end{align*}
where, for a given function $u \in C^1(M_p)$, the vector field $X(u)$ is defined as $
X(u)  =  \big( \, \rho_\omega^0 (\partial^\sharp u \, , \, \cdot \, ) \,  \big)^\sharp $. It is easy to check that second boundary term vanishes in the limit. We claim that the same is true for the first boundary term. To prove this, we recall the expansions
\begin{eqnarray*}
\st\rho_{\omega}^{0}\dt_{i\bar{\jmath}} &=& \st \lambda_{i}\st p \dt- \frac{s_{\omega}}{2m} \dt\delta_{i\bar{\jmath}} \, + \, \mathcal{O}\st |z| \dt\,,\\
\partial^{\sharp} U_p &=&\sum_{i=1}^{m}   \big(  \st 1-m \dt \, {|z|^{-2m}}{z^{i}} \, + \, \mathcal{O}\st |z|^{2-2m}\dt  \big)  \, \frac{\partial}{\partial z^i} \\
d\mu_{\omega}&=&\st 1 +\mathcal{O}\st |z|^{2} \dt\dt \, d\mu_{0}\,,
\end{eqnarray*}
where the $\lambda_{i}$'s are the eigenvalues of the matrix $\st \rho_{\omega}^{0}\dt_{i\bar{\jmath}}$ and $d \mu_0$ is the Euclideam volume form. This implies
\begin{align*}
X(U_p) \,  \lrcorner \, d \mu_{\omega}
\,\,\,\, = \,\,\,\, &\st 1-m \dt  \, \sum_{i=1}^{m}\st \lambda_{i}\st p \dt- \frac{s_{\omega}}{2m} \dt  z^{i} \, \frac{\partial}{\partial z^{i}} \, \lrcorner \, d\mu_{0}  \, + \, \mathcal{O}\st |z| \dt\,.
\end{align*}
On the other hand, by the symmetry of $d \mu_0$, it is easy to deduce that
\begin{equation*}
\int_{\partial M_{\varepsilon}} z^{1} \, \frac{\partial}{\partial z^{1}} \, \lrcorner  \, d\mu_{0} \,\, = \,\, \ldots \,\, = \,\, \int_{\partial M_{\varepsilon}}z^{m} \, \frac{\partial}{\partial z^{m}} \, \lrcorner  \, d\mu_{0} \, .
\end{equation*}
The claim is now a straightforward consequence. In synthesis, we have obtained
\begin{align*}
\left<\, \Lg \big[ U_p \big]  \, \big| \, \phi   \,\right>_{\mathscr{D}' \times \mathscr{D}} \,\,\, = \,\,\, &\int_{M} \!\! U_p  \, \,\Lg \big[ \phi \big]\, d\mu_{\omega} \, + \, \frac{2\, (m-1) \, |\Sp^{2m-1}|}{\left|\Gamma\right|} \, \Big[ \,\Delta_\omega \phi \,(p) \,\, + \,\, \frac{s_\omega (m^2-m+2)}{m(m+1)} \, \phi \, (p) \, \Big] \,
\end{align*}
and the lemma is proven.
\end{proof}
%
%
%

Having at hand the above lemmata, we are now in the position to describe the local structure around the singular points of the {\em multi-poles fundamental solutions} $\GGG_{\aaa,\bbb,\ccc}$ constructed in Remark~\ref{gabc} through Proposition~\ref{balancrough}. For $m\geq 3$, it is sufficient to apply the operator $\Lg$ to the expression
\begin{eqnarray*}
\GGG_{\aaa,\bbb,\ccc} & + & \sum_{l=1}^K  \, \frac{a_l}{4(m-2)} \,\, \bigg[   \, \frac{|\Gamma_{N+l}|}{ 2 (m-1) |\Sp^{2m-1}|} \,\,  G_{\Delta\Delta}(q_l,\cdot)  \, \bigg]  \\
& + & \sum_{j=1}^N  \, \bigg(     \frac{c_j}{4(m-2)}  \, - \, \frac{s_\omega \, (m^2-m+2) \, b_j}{(m-2)m(m+1)}  \,   \bigg) \,\, \bigg[   \, \frac{|\Gamma_{j}|}{ 2 (m-1) |\Sp^{2m-1}|} \,\,  G_{\Delta\Delta}(p_j,\cdot)  \, \bigg]  \\
& - & \sum_{j=1}^N  \, b_j \,\, \bigg[   \, \frac{|\Gamma_{j}|}{ 2 (m-1) |\Sp^{2m-1}|} \,\, G_{\Delta}(p_j,\cdot)  \, \bigg]  \, ,
\end{eqnarray*}
to get a function in $C^{0,\alpha}(M)$. For $m=2$, one can obtain the same conclusion, applying the operator $\Lg$ to the expression 
\begin{eqnarray*}
\GGG_{\aaa,\bbb,\ccc} & - & \sum_{l=1}^K  \, \frac{a_l}{4} \,\, \bigg[   \, \frac{|\Gamma_{N+l}|}{ |\Sp^{3}|} \,\, G_{\Delta\Delta}(q_l,\cdot)  \, \bigg]  \\
& - & \sum_{j=1}^N  \, \bigg(     \frac{c_j}{4}  \, - \, \frac{s_\omega  \, b_j}{6}  \,   \bigg) \,\, \bigg[   \, \frac{|\Gamma_{j}|}{ |\Sp^{3}|} \,\, G_{\Delta\Delta}(p_j,\cdot)  \, \bigg]  \\
& - & \sum_{j=1}^N  \, b_j \,\, \bigg[   \, \frac{|\Gamma_{j}|}{ 2|\Sp^{3}|} \,\,  G_{\Delta}(p_j,\cdot) \, \bigg]  \, .
\end{eqnarray*}
Combining the previous observations with the standard elliptic regularity theory, we obtain the following proposition.
\begin{prop}
\label{loc_structure}
Let $(M,g,\omega)$ be a compact Kcsc orbifold of complex dimension $m \geq 2$, let ${\rm Ker}{(\Lg)} = {span}\{\varphi_0, \varphi_1, \dots,
\varphi_d\}$, as in~\eqref{nontrivial_ker} and let $\GGG_{\aaa,\bbb,\ccc}$ be as in Remark~\ref{gabc}. Then, we have that
$$
\GGG_{\aaa,\bbb,\ccc}  \,\,\, \in \,\,\,   \mathcal{C}^{\infty}_{loc}(M_{\p,\q}) \, .
$$
Moreover, if $z^1, \ldots , z^m$ are local coordinates centered at the singular points, then the following holds.
\begin{itemize}
\item If $m\geq 3$, then $\GGG_{\aaa,\bbb,\ccc}$  blows up like $|z|^{2-2m}$ at the points points of $p_1, \ldots, p_N$ and like $|z|^{4-2m}$ at the points $q_1, \ldots, q_K$.
\item If $m=2$, then $\GGG_{\aaa,\bbb,\ccc}$  blows up like $|z|^{-2}$ at the points $p_1, \ldots, p_N$ and like $\log\st |z|\dt$ at the points $q_1, \ldots, q_K$.
\end{itemize}
\end{prop}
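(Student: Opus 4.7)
The plan is to construct an explicit ``Green-like'' corrector built out of the functions $G_{\Delta}(p_j,\cdot)$, $G_{\Delta\Delta}(p_j,\cdot)$ and $G_{\Delta\Delta}(q_l,\cdot)$ provided by Lemmata~\ref{Gbilapl} and~\ref{Glapl}, chosen so that the singular distributions on the right-hand side of the defining equation \eqref{eq:LGabcd} for $\GGG_{\aaa,\bbb,\ccc}$ are exactly cancelled. Concretely, for $m\geq 3$ I would set
$$
\tilde{\GGG} \,\, := \,\, \GGG_{\aaa,\bbb,\ccc} \, + \, \sum_{l=1}^{K} A_l \, G_{\Delta\Delta}(q_l,\cdot) \, + \, \sum_{j=1}^{N} B_j \, G_{\Delta\Delta}(p_j,\cdot) \, - \, \sum_{j=1}^{N} C_j \, G_{\Delta}(p_j,\cdot) \, ,
$$
and determine the constants $A_l,B_j,C_j$ in the order: first, $C_j$ is fixed (using~\eqref{eq:greenlapl}) so that the $\Delta\delta_{p_j}$ contributions cancel, giving $C_j=b_j|\Gamma_j|/(2(m-1)|\Sp^{2m-1}|)$; secondly, since $C_j G_{\Delta}(p_j,\cdot)$ also contributes a Dirac mass at $p_j$ with weight $s_\omega(m^2-m+2)/(m(m+1))$, the constant $B_j$ is chosen so as to cancel simultaneously that spurious mass and the original $c_j\delta_{p_j}$ on the right-hand side of~\eqref{eq:LGabcd}; thirdly, $A_l=a_l|\Gamma_{N+l}|/(8(m-1)(m-2)|\Sp^{2m-1}|)$ by~\eqref{eq:greenbilapl}. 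For $m=2$ the same recipe is applied using~\eqref{eq:bluebilapl2} and~\eqref{eq:greenlapl2}.

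With these choices $\Lg[\tilde{\GGG}]$ equals $-\nu_{\aaa,\ccc}$ plus a $C^{0,\alpha}(M)$ function, hence lies in $C^{0,\alpha}(M)$. By standard interior elliptic regularity for the fourth-order elliptic operator $\Lg$, whose coefficients are smooth (in fact real-analytic, by Proposition~\ref{proprietacsck}) on the smooth locus, a bootstrap argument gives $\tilde{\GGG}\in C^{\infty}_{loc}(M_{\p,\q})$. Rearranging,
$$
\GGG_{\aaa,\bbb,\ccc} \,\, = \,\, \tilde{\GGG} \, - \, \sum_{l=1}^{K} A_l \, G_{\Delta\Delta}(q_l,\cdot) \, - \, \sum_{j=1}^{N} B_j \, G_{\Delta\Delta}(p_j,\cdot) \, + \, \sum_{j=1}^{N} C_j \, G_{\Delta}(p_j,\cdot) \, ,
$$
so that $\GGG_{\aaa,\bbb,\ccc}\in C^{\infty}_{loc}(M_{\p,\q})$. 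The local behaviour near the singular points is then read off from the expansions \eqref{eq:expgreenbilapl}--\eqref{eq:expgreenlapl} (resp.~\eqref{eq:expbluebilapl2}--\eqref{eq:expbluelapl2} in the case $m=2$): near each $p_j$ the dominant singular term is $C_j\,G_{\Delta}(p_j,\cdot)$, which blows up like $|z|^{2-2m}$ for $m\geq 3$ and like $|z|^{-2}$ for $m=2$; near each $q_l$ the dominant singular term is $A_l\,G_{\Delta\Delta}(q_l,\cdot)$, which blows up like $|z|^{4-2m}$ for $m\geq 3$ and like $\log|z|$ for $m=2$.

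The main obstacle, in my view, is not analytic but combinatorial: one has to track the various numerical factors in Lemmata~\ref{Gbilapl}--\ref{Glapl} (in particular the unavoidable cross-term of order $\delta_{p_j}$ inside $\Lg[G_{\Delta}(p_j,\cdot)]$, which couples the determination of $C_j$ and $B_j$), and verify that the balancing conditions~\eqref{eq: balancingbc1}--\eqref{eq: balancingbc2} on $(\aaa,\bbb,\ccc,\nu_{\aaa,\ccc})$ are still consistent with this cancellation procedure, i.e. that the resulting distribution is indeed $L^{2}$-orthogonal to $\ker(\Lg)$ so that the elliptic regularity argument can be applied globally rather than only locally. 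No new idea beyond those already used in the proof of Lemma~\ref{Glapl} is needed here, only careful bookkeeping of the constants.
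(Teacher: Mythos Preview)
Your proposal is correct and follows essentially the same route as the paper: the paper also subtracts from $\GGG_{\aaa,\bbb,\ccc}$ an explicit linear combination of $G_{\Delta}(p_j,\cdot)$ and $G_{\Delta\Delta}(p_j,\cdot)$, $G_{\Delta\Delta}(q_l,\cdot)$ with exactly the constants you found (including the cross-term correction at $p_j$), observes that applying $\Lg$ to the difference lands in $C^{0,\alpha}(M)$, and then invokes standard elliptic regularity. One minor remark: the orthogonality concern you raise at the end is unnecessary, since interior elliptic regularity for $\Lg$ requires no global condition---once $\Lg[\tilde{\GGG}]\in C^{0,\alpha}_{loc}$, a local bootstrap gives $\tilde{\GGG}\in C^{\infty}_{loc}$ regardless of how $\tilde{\GGG}$ pairs with $\ker(\Lg)$.
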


%

%




%

\subsection{Solution of the linearized scalar curvature equation.}\label{solutionlinearized}

In this subsection, we are going to describe the possible choices for a right inverse of the operator $\Lg$, in a suitable functional setting. Since this operator is formally selfadjoint and since we are assuming that its kernel is nontrivial, we expect the presence of a nontrivial cokernel. To overcome this difficulty, we are going to consider some appropriate finite dimensional extensions of the natural domain of $\Lg$, which, according to Theorem~\ref{invertibilitapesatobase}, is given by $C^{4,\alpha}_\delta(M_{\p,\q})$, with $\delta \in (4-2m,0)$ if $m\geq 3$ and $\delta \in (0,1)$ if $m=2$. Building on the analysis of the previous section, we are going to introduce the following {\em deficiency spaces}. Given a triple of vectors $\boldsymbol\alpha \in \RR^K$ and $\bbbb, \cccc \in \RR^N$, we set, for $m \geq 3$, $l=1,\ldots,K$ and $j=1, \ldots, N$,
\begin{align}
W^l_{\aaaa}  = &
 -\frac{\alpha_l}{4(m-2)} \,\, \bigg[   \, \frac{|\Gamma_{N+l}|}{ 2 (m-1) |\Sp^{2m-1}|} \,\, G_{\Delta\Delta}(q_l,\cdot)  \, \bigg] \, , \\
&\\
W^j_{\bbbb,\cccc}   = &\,\,   \beta_j \,\, \bigg[   \, \frac{|\Gamma_{j}|}{ 2 (m-1) |\Sp^{2m-1}|} \,\, G_{\Delta}(p_j,\cdot)  \, \bigg] \\
&  - \,  \bigg(     \frac{\gamma_j}{4(m-2)}  \, - \, \frac{s_\omega \, (m^2-m+2) \, \beta_j}{(m-2)m(m+1)}  \,  \bigg) \,\, \bigg[   \, \frac{|\Gamma_{j}|}{ 2 (m-1) |\Sp^{2m-1}|} \,\,  G_{\Delta\Delta}(p_j,\cdot)  \, \bigg]  \, ,\label{eq:Wjbc}
\end{align}
whereas, for $m=2$, $l=1,\ldots,K$ and $j=1, \ldots, N$, we set
\begin{eqnarray*}
W^{l}_{\aaaa} &  = &  \, \alpha_{l} \,\, \bigg[   \, \frac{|\Gamma_{N+l}|}{ 4|\Sp^{3}|} \,\, G_{\Delta\Delta}(q_l,\cdot)  \, \bigg]  \, , \\
W^j_{\bbbb,\cccc}& = &  \beta_j \,\, \bigg[   \, \frac{|\Gamma_{j}|}{ |\Sp^{3}|} \,\,  G_{\Delta}(p_j,\cdot) \, \bigg] \, + \, \bigg(     \frac{\gamma_j}{4}  \, - \, \frac{s_\omega  \, \beta_j}{6}  \,   \bigg) \,\, \bigg[   \, \frac{|\Gamma_{j}|}{ |\Sp^{3}|} \,\, G_{\Delta\Delta}(p_j,\cdot)  \, \bigg]  \, .
\end{eqnarray*}
We are now in the position to define the {\em deficiency spaces}
\begin{eqnarray*}\label{deficiency1}
\mathcal{D}_{\q}(\aaaa) \,\, = \,\, \mbox{\em span} \,\Big\{ \, W^l_{\aaaa}\,  : \, {l=1,\ldots , K}  \, \Big\}  \quad & \hbox{and} & \quad
\mathcal{D}_{\p}(\bbbb, \cccc)  \,\,= \,\,  \mbox{\em span}\, \Big\{ \,
W^j_{\bbbb,\cccc} \,  :  \, {j=1, \ldots , N} \, \Big\} \, .
\end{eqnarray*}
These are finite dimensional vector spaces and they can be endowed with the following norm. If $V = \sum_{l=1}^K V^l \, W_{\aaaa}^l \in \mathcal{D}_\q(\aaaa)$ and $U = \sum_{j=1}^N U^j W_{\bbbb,\cccc}^j \in \mathcal{D}_\p(\bbbb, \cccc)$, we set
$$
\left\| V \right\|_{\mathcal{D}_\q(\aaaa)} \,\, = \,\, \sum_{l=1}^K\,  | V^l | \qquad \hbox{and} \qquad \left\| U \right\|_{\mathcal{D}_\p(\bbbb,\cccc)} \,\, = \,\, \sum_{j=1}^N\,  | U^j |\, .
$$
We will also make use of the shorthand notation $\mathcal{D}_{\p,\q}(\aaaa,\bbbb,\cccc)$ to indicate the direct sum $\mathcal{D}_{\q}(\aaaa) \oplus \mathcal{D}_{\p}(\bbbb,\cccc)$ of the {\em deficiency spaces} introduced above, endowed with the obvious norm $\left\| \, \cdot \, \right\|_{\mathcal{D}_\q(\aaaa)} +  \left\| \, \cdot \, \right\|_{\mathcal{D}_\p(\bbbb,\cccc)}$.

To treat the case $m=2$, it is convenient to introduce further finite dimensional extensions of the domain $C^{4,\alpha}_\delta(M_{\p,\q})$, with $\delta \in (0,1)$. These will be called {\em extra deficiency spaces} and they are defined as
\begin{eqnarray*}\label{deficiency2}
\mathcal{E}_{\q} \,\, = \,\, \mbox{\em span} \,\big\{ \, \chi_{q_l}\,  : \, {l=1,\ldots , K}  \, \big\}  \quad & \hbox{and} & \quad
\mathcal{E}_{\p}  \,\, = \,\, \mbox{\em span}\, \big\{ \,
\chi_{p_j} \,  :  \, {j=1, \ldots , N} \, \big\} \, ,
\end{eqnarray*}
where the functions $\chi_{p_1}, \ldots, \chi_{p_N},\chi_{q_1}, \ldots, \chi_{q_K}$ are smooth cutoff functions supported on small balls centered at the points $p_{1}, \ldots, p_N, q_1, \ldots, q_K$ and identically equal to $1$ in a neighborhood of these points. Given two functions $X = \sum_{j=1}^N X^j \chi_{p_j}\in \mathcal{E}_{\p}$ and $Y= \sum_{l=1}^K Y^l \chi_{q_l} \in \mathcal{E}_{\q}$, we set
$$
\left\| Y \right\|_{\mathcal{E}_\q} \,\, = \,\, \sum_{l=1}^K\, | Y^l | \qquad \hbox{and} \qquad \left\| X \right\|_{\mathcal{E}_\p} \,\, = \,\, \sum_{j=1}^N\,  | X^j |\, .
$$
We will also make use of the shorthand notation $\mathcal{E}_{\p,\q}$ to indicate the direct sum $\mathcal{E}_{\q} \oplus \mathcal{E}_{\p}$ of the {\em extra deficiency spaces} introduced above, endowed with the obvious norm $\left\| \, \cdot \, \right\|_{\mathcal{E}_\q} +  \left\| \, \cdot \, \right\|_{\mathcal{E}_\p}$. Notice that, with these notation, the estimate~\eqref{eq:stimapesatabase2} in Theorem~\ref{invertibilitapesatobase} reads
$$
|| \, \widetilde{u} \,||_{C^{4,\alpha}_\delta(M_{\p,\q}) }  \, + \, ||  \stackrel{\circ}{u} ||_{\mathcal{E}_{\p,\q}} \,\, \leq \,\,  C \, || \, f \, ||_{C^{0,\alpha}_{\delta-4}} \, ,
$$
where $u \, = \, \widetilde{u} \,+ \!\stackrel{\circ}{u} \,\, \in{{C}^{4,\alpha}_{\delta}(M_{\p ,\q})      \, \oplus \,     \mathcal{E}_{\p,\q} }$ and $f \in {C}^{0,\alpha}_{\delta-4}(M_{\p ,\q})$ are functions satisfying the equation $\Lg [u] = f$ as well as the orthogonality condition~\eqref{eq:orto} and $\delta \in (0,1)$.

{
\begin{remark} We notice {\em en passant} that a function $\GGG_{\aaa,\bbb,\ccc}$ constructed as in Remark~\ref{gabc} behaves like $W_\aaa^l$ near the point $q_l$, for $l=1, \ldots, K$ and like $W_{\bbb,\ccc}^j$, near the point $p_j$, for $j=1,\ldots, N$. In fact, it satisfies
\begin{equation*}
\Lg \Big[\, \GGG_{\aaa,\bbb,\ccc}  \, - \, \sum_{l=1}^K W_\aaa^l     \, - \, \sum_{j=1}^N W_{\bbb,\ccc}^j  \,\Big] \,\,\, \in \,\,\, C^{0,\alpha}(M) \, .
\end{equation*}
\end{remark}
}

We recall that we have assumed that the bounded kernel of $\Lg$ is $(d+1)$-dimensional and that it is spanned by $\{\varphi_0, \varphi_1, \ldots , \varphi_d \}$, where $\varphi_0 \equiv 1$ and $\varphi_1, \ldots, \varphi_d$, with $d\geq 1$, is a collection of mutually $L^2(M)$-orthogonal smooth functions with zero mean and $L^2(M)$-norm equal to $1$. Given a triple of vectors $\boldsymbol\alpha \in \RR^K$ and $\bbbb, \cccc \in \RR^N$, it is convenient to introduce the following matrices
\begin{align}
\Xi_{il}(\boldsymbol{\alpha} ) \,\, := & \,\,\,\, \alpha_{l} \, \varphi_{i}(q_{l}) \,,   & \hbox{for} \quad  i=1 \dots, d \quad \hbox{and} \quad  l=1, \dots, K \, ,
\label{eq:nondeggen2}\\
\Theta_{ij}(\boldsymbol{\beta},\boldsymbol{\gamma} ) \,:=&\,\,\,  \beta_{j} \, \Delta\varphi_{i}(p_{j})  \, + \,
\gamma_{j} \, \varphi_{i}(p_{j}) \, , &  \,\hbox{for} \quad i=1 \dots, d \quad \hbox{and} \quad  j=1, \dots, N \, .
\label{eq:nondeggen}
\end{align}
These will help us in formulating our {\em nondegeneracy assumption}. We are now in the position to state the main results of our linear analysis on the base obifold.

\begin{teo}
\label{invertibilitapesatodef}
Let $(M,g,\omega)$ be a compact Kcsc orbifold of complex dimension $m \geq 2$ and let ${\rm Ker}{(\Lg)} = {span}\{\varphi_0, \varphi_1, \dots, \varphi_d\}$. Assume that the following {\em nondegeneracy condition} is satisfied: a triple of vectors
$\boldsymbol\alpha \in \RR^K$ and $\bbbb, \cccc \in \RR^N$ is given such that the $d \times (N+K)$ matrix
\begin{equation*}
\st\left.  \st\Xi_{il}(\aaaa)\dt_{\substack{ 1 \leq i\leq d\\ 1 \leq l \leq K }}   \,\, \right|\,\, \st\Theta_{ij}(\bbbb,\cccc)\dt_{\substack{ 1 \leq i\leq d\\ 1 \leq j \leq N }}  \dt
\end{equation*}
has full rank. Then, the following holds.
\begin{itemize}
\item If $m \geq 3$, then for every $f\in{C}^{0,\alpha}_{\delta-4}(M_{\p ,\q})$ with $\delta\in (4-2m,0)$, there exist real number $\nu$ and a function
$$
u\, = \, \widetilde{u} \, + \, \widehat{u}\, \, \in \, {C}^{4,\alpha}_{\delta}(M_{\p ,\q}) \, \oplus \, \mathcal{D}_{\p,\q}(\aaaa,\bbbb, \cccc)
$$
such that
\begin{equation}
\label{eq:linpro}
\Lg u \, + \, \nu \,\,  = \,\,  f\, , \qquad \hbox{in} \quad {M_{\p,\q}} \,.
\end{equation}
Moreover, there exists a positive constant $ C = C(\aaaa,\bbbb,\cccc,\delta )>0$ such that
\begin{equation}
|\,\nu\,| \,\, + \,\, ||\,\widetilde{u} \,||_{{C}^{4,\alpha}_{\delta}(M_{\p ,\q})  }
\, + \, ||\,\widehat{u} \,||_{  \mathcal{D}_{\p,\q}(\aaaa,\bbbb, \cccc) }
 \,\,\, \leq \,\,\, C \, || \, f \,   ||_{ \mathcal{C}^{0,\alpha}_{\delta-4}(M_{\p ,\q}) } \, .
\end{equation}
\item
If $m =2$, then for every $f\in {C}^{0,\alpha}_{\delta-4}(M_{\p ,\q})$ with $\delta\in (0,1)$, there exist real number $\nu$ and a function
$$
u \, = \, \widetilde{u} \,+ \,
\stackrel{\circ}{u}
 \,+ \, \widehat{u}\, \,\in \,\, {C}^{4,\alpha}_{\delta}(M_{\p ,\q}) \, \oplus \mathcal{E}_{\p,\q} \, \oplus \, \mathcal{D}_{\p,\q}(\aaaa,\bbbb, \cccc)
$$
such that
\begin{equation}
\label{eq:linpro2}
\Lg u \, + \, \nu \,\,  = \,\,  f\, , \qquad \hbox{in} \quad {M_{\p,\q}} \,.
\end{equation}
Moreover, there exists a positive constant $ C = C(\aaaa,\bbbb,\cccc,\delta )>0$ such that
\begin{equation}
|\,\nu\,| \, \, + \,\, ||\,\widetilde{u} \,||_{ {C}^{4,\alpha}_{\delta}(M_{\p ,\q}) }  \,+ \,
||\stackrel{\circ}{u} ||_{ \mathcal{E}_{\p,\q} }   \, + \,
||\,\widehat{u} \,||_{ \mathcal{D}_{\p,\q}(\aaaa,\bbbb, \cccc) }
\,\,\, \leq \,\,\, C \, || \, f \,   ||_{ {C}^{0,\alpha}_{\delta-4}(M_{\p ,\q}) }
\end{equation}
\end{itemize}
\end{teo}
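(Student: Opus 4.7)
The plan is to use the deficiency space method: decompose the sought solution as $u = \widetilde u + \widehat u$ (with an additional summand $\stackrel{\circ}{u}\in\mathcal{E}_{\p,\q}$ when $m=2$), where $\widehat u\in\mathcal{D}_{\p,\q}(\aaaa,\bbbb,\cccc)$ absorbs the part of $f$ that obstructs direct application of Theorem \ref{invertibilitapesatobase}, and $\widetilde u$ is then obtained by applying that theorem to a residual right-hand side which is orthogonal to $\ker(\Lg)$. The deficiency elements $W^l_\aaaa$ and $W^j_{\bbbb,\cccc}$ have been engineered so that, as distributions on $M$,
\begin{align*}
\Lg W^l_\aaaa \,\, &= \,\, \alpha_l\,\delta_{q_l} \,+\, (\text{function in } C^{0,\alpha}(M)) \,, \\
\Lg W^j_{\bbbb,\cccc} \,\, &= \,\, \beta_j\,\Delta\delta_{p_j} \,+\, \gamma_j\,\delta_{p_j} \,+\, (\text{function in } C^{0,\alpha}(M)) \,,
\end{align*}
which is a direct consequence of Lemmas \ref{Gbilapl} and \ref{Glapl}. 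In particular, $\Lg\widehat u$ restricted to $M_{\p,\q}$ extends to a function $R\in C^{0,\alpha}(M)$ whose norm is controlled by $\|\widehat u\|_{\mathcal{D}_{\p,\q}}$.

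Given $f\in C^{0,\alpha}_{\delta-4}(M_{\p,\q})$, I would first compute the projections $f_i := \int_M f\varphi_i\, d\mu_\omega$ for $i=1,\ldots,d$ and seek $(V^1,\ldots,V^K, U^1,\ldots, U^N)\in\RR^{K+N}$ solving the linear system
\begin{equation*}
\sum_{l=1}^K V^l\,\Xi_{il}(\aaaa) \,+\, \sum_{j=1}^N U^j\,\Theta_{ij}(\bbbb,\cccc) \,\,=\,\, -f_i \,, \qquad i=1,\ldots,d \,.
\end{equation*}
The nondegeneracy hypothesis is precisely the statement that the $d\times(K+N)$ coefficient matrix of this system has full rank $d$, so a (generally non-unique) solution exists; choosing the minimum-norm solution produces a continuous linear map $f\mapsto(V,U)$ with $\|(V,U)\|\leq C\|f\|_{C^{0,\alpha}_{\delta-4}}$. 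Set $\widehat u := \sum_l V^l W^l_\aaaa + \sum_j U^j W^j_{\bbbb,\cccc}\in\mathcal{D}_{\p,\q}(\aaaa,\bbbb,\cccc)$, denote by $R := \Lg\widehat u\in C^{0,\alpha}(M)$ the regular part as above, and set $\nu := \voll_\omega(M)^{-1}\int_M (f - R)\,d\mu_\omega$.

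The crucial check is that $h := f - R - \nu$ lies in $C^{0,\alpha}_{\delta-4}(M_{\p,\q})$ and is $L^2$-orthogonal to every $\varphi_i\in\ker(\Lg)$. The inclusion is immediate since $R\in C^{0,\alpha}(M)\subset C^{0,\alpha}_{\delta-4}(M_{\p,\q})$; orthogonality to $\varphi_0\equiv 1$ is forced by the choice of $\nu$; orthogonality to $\varphi_i$ with $i\geq 1$ reduces to the identity $\int_M R\varphi_i\,d\mu_\omega = f_i$, which follows from the distributional identity $\Lg\widehat u = R + T$ with $T = \sum_l V^l\alpha_l\delta_{q_l} + \sum_j U^j(\beta_j\Delta\delta_{p_j} + \gamma_j\delta_{p_j})$, the duality $\langle\Lg\widehat u, \varphi_i\rangle_{\mathscr{D}'\times\mathscr{D}} = \langle\widehat u, \Lg\varphi_i\rangle_{\mathscr{D}'\times\mathscr{D}} = 0$, and the very choice of $(V,U)$ as a solution of the linear system above. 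Having secured orthogonality, Theorem \ref{invertibilitapesatobase} provides $\widetilde u\in C^{4,\alpha}_\delta(M_{\p,\q})$ (respectively $\widetilde u + \stackrel{\circ}{u}\in C^{4,\alpha}_\delta(M_{\p,\q}) \oplus \mathcal{E}_{\p,\q}$ when $m=2$), orthogonal to $\ker(\Lg)$, solving $\Lg\widetilde u = h$ with the associated weighted \emph{a priori} estimate. Then $u := \widetilde u + \widehat u$ (plus the $\mathcal{E}$-component for $m=2$) solves $\Lg u + \nu = f$ in $M_{\p,\q}$, and the overall estimate is obtained by concatenating the three bounds on $(V,U)$, on $R$, and on $\widetilde u$.

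The main technical step, and the only real obstacle, is the verification of the distributional identity $\Lg\widehat u = R + T$; this amounts to tracing through the proofs of Lemmas \ref{Gbilapl} and \ref{Glapl}, where the singular contributions $\delta_{q_l}$, $\Delta\delta_{p_j}$, $\delta_{p_j}$ are shown to arise from integrating $\Lg$ against a test function on $M\setminus\bigcup B_\varepsilon$ and letting $\varepsilon\to 0$, the boundary terms collapsing to precisely the pointwise evaluations $\varphi_i(q_l)$, $\Delta\varphi_i(p_j)$, $\varphi_i(p_j)$ that appear in the matrices $\Xi$ and $\Theta$. Once this identity is in hand, the argument is essentially linear algebra combined with the already established Fredholm-type Theorem \ref{invertibilitapesatobase}.
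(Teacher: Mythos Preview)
Your proposal is correct and follows essentially the same strategy as the paper: use the nondegeneracy of the matrix $(\Xi\,|\,\Theta)$ to select coefficients for a deficiency element $\widehat u$ whose distributional $\Lg$-image cancels the projections $f_i$, then apply Theorem~\ref{invertibilitapesatobase} to the orthogonal residual. The paper's presentation is slightly more roundabout---it first solves $\Lg u^\perp=f^\perp$, then invokes Proposition~\ref{balancrough} to obtain a distributional solution $U$ for the kernel component, and finally decomposes $U=\widehat u+\overline u$ with $\overline u$ smooth---whereas you construct $\widehat u$ directly and solve for $\widetilde u$ in one step, which is a cleaner organization of the same argument.
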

\begin{proof} We only prove the statement in the case $m\geq 3$, since it is completely analogous in the other case. For sake of simplicity we assume $\aaaa= \mathbf{0} \in \RR^K$, so that the nondegeneracy condition becomes equivalent to the requirement that the matrix
$$
\st\Theta_{ij}(\bbbb,\cccc)\dt_{\substack{ 1 \leq i\leq d\\ 1 \leq j \leq N }}
$$
has full rank. Under these assumptions, the {\em deficiency space} $\mathcal{D}_{\p,\q}(\aaaa,\bbbb,\cccc)$ reduces to $\mathcal{D}_\p(\bbbb,\cccc)$. In order to split our problem, it is convenient to set
\begin{equation}
f^{\perp} \,\, = \,\, f \, - \, \frac{1}{\vol\st M \dt}\int_{M} f \,d\mu_{\omega} \, - \, \sum_{i=1}^{d}\varphi_{i}\int_{M} f\varphi_{i}\,d\mu_{\omega}\,,
\end{equation}
so that $f^\perp$ satisfies the orthogonality conditions~\eqref{eq:orto}. By Theorem~\ref{invertibilitapesatobase}, we obtain the existence of a function $u^\perp \in C^{4,\alpha}_\delta(M_{\p,\q})$, which satisfies the equation
\begin{equation*}
\Lg \, [ \, u^\perp ] \,\, = \,\, f^\perp \, ,
\end{equation*}
together with the orthogonality conditions~\eqref{eq:orto} and the desired estimate~\eqref{eq:stimapesatabase}. To complete the resolution of equation~\eqref{eq:linpro}, we set
$$
f_0 \,\, = \,\, \frac{1}{\vol\st M \dt}\int_{M}\!\! f \,d\mu_{\omega} \qquad\quad \hbox{and} \quad\qquad f_i \,\, = \,\, \int_{M}\!\! f\varphi_{i}\,d\mu_{\omega} \, , \qquad \hbox{for $i=1,\ldots, d$} \, .
$$
Recalling the definition of $\Theta_{ij}(\bbbb,\cccc)$ and using the {\em nondegeneracy condition}, we select a solution $(\nu, U_1, \ldots, U_N) \in \RR^{N+1}$ to the following system of {\em linear balancing conditions}
\begin{eqnarray}
f_i \,\,  +\,\,  \sum_{j=1}^N \, U^j \, \big[ \, \beta_j \, (\Delta\varphi_i)(p_j) \, + \,  \gamma_j \, \varphi_i(p_j)  \, \big]  &  = &  0 \,, \quad\quad\qquad\qquad\qquad \hbox{$i = 1, \dots, d$}  , \\
f_0 \, {\rm Vol}_\omega(M)   \,\,  + \,\, \sum_{j=1}^N \, U^j \,\gamma_j  & = & \nu  \, {\rm Vol}_\omega(M) \, .
\end{eqnarray}
It is worth pointing out that in general this choice is not unique, since it depends in the choice of a right inverse for the matrix $\Theta_{ij}(\bbbb,\cccc)$. Theorem~\ref{balancrough} implies then the existence of a distribution $U \in \mathscr{D}'(M)$ which satisfies
\begin{equation*}
\Lg   [{U}]  \,  + \, \nu  \,\,\,  = \,\,\,   \sum_{i=0}^d f_i \, \varphi_i \, + \, \sum_{j=1}^N \, U^j \beta_j \, \Delta\delta_{p_{j}} \, + \, \sum_{j=1}^N U^j  \gamma_j \, \delta_{p_{j}} \, , \qquad \hbox{in \,\,\,$M$}\, .
\end{equation*}
Arguing as in Proposition~\ref{loc_structure}, it is not hard to show that $U \in C^{\infty}_{loc}(M_\p)$. In particular the function $u^\perp + U \in C^{4,\alpha}_{loc}(M_\p)$ satisfies the equation
\begin{equation*}
\Lg   [u^\perp + U]  \,  + \, \nu  \,\,\,  = \,\,\, f  \, , \qquad \hbox{in \,\,\,$M_{\p}$}\, .
\end{equation*}
To complete the proof of our statement, we need to describe the local structure of $U$ in more details. First, we observe that, by the very definition of the deficiency spaces, one has
$$
\Lg \, \big[  W^j_{\bbbb,\cccc} \big] \,\, = \,\, \beta_j \, \Delta \delta_{p_j} \, + \, \gamma_j \, \delta_{p_j} \, + \, V^j_{\bbbb,\cccc} \, ,
$$
where, for every $j=1, \ldots, N$, the function $V^j_{\bbbb,\cccc}$ is in $C^{\infty}(M)$. Combining this fact with the linear balancing conditions, we deduce that
\begin{eqnarray*}
\Lg \, \Big[ \, U \, - \, \sum_{j=1}^N \, U^j  \, W^j_{\bbbb,\cccc}\Big] & = &  f_0 \, - \,  \nu  \, + \, \sum_{i=1}^d f_i \, \phi_i \, - \, \sum_{j=1}^N \, U^j \, V_{\bbbb,\cccc}^j \\
& = &   \frac{1}{\vol(M)} \, \sum_{j=1}^N \, U^j \, \gamma_j \, - \, \sum_{i=1}^d \, \sum_{j=1}^N  \, U^j \, \Theta_{ij}(\bbbb,\cccc) \, \phi_i \, - \, \sum_{j=1}^N \, U^j \, V_{\bbbb,\cccc}^j  \, .
\end{eqnarray*}
By the definition of $V^j_{\bbbb,\cccc}$ it follows that
$$
\int_M V^j_{\bbbb,\cccc} \, \phi_0 \,\, d\mu_\omega \,\, = \,\, -\, \gamma_j \qquad \quad \hbox{and} \qquad \quad \int_M V^j_{\bbbb,\cccc} \, \phi_i\,\, d\mu_\omega \,\, = \,\, - \, \Theta_{ij}(\bbbb,\cccc)
$$
and thus, it is easy to check the right hand side of the equation above is orthogonal to $\ker(\Lg)$. Hence, using Theorem~\ref{invertibilitapesatobase} and by the elliptic regularity, we deduce the existence of a smooth function $\overline u \in C^{\infty}(M)$ which satisfies
$$
\Lg \, [\, \overline{u} \,] \,\,\, = \,\,\, \frac{1}{\vol(M)} \, \sum_{j=1}^N \, U^j \, \gamma_j \, - \, \sum_{i=1}^d \, \sum_{j=1}^N  \, U^j \, \Theta_{ij}(\bbbb,\cccc) \, \phi_i \, - \, \sum_{j=1}^N \, U^j \, V_{\bbbb,\cccc}^j \,, \qquad \hbox{in $M$.}
$$
Setting $\widehat{u} = \sum_{j=1}^N \, U^j  \, W^j_{\bbbb,\cccc}$, we have obtained that $\Lg\, [\,U\,] \, = \, \Lg \, [\,\widehat{u} \,+ \, \overline{u}\, ]$, hence
\begin{equation*}
\Lg   [\, u^\perp \! + \,\overline{u}  + \, \widehat{u}\,]  \, + \, \nu  \,\,\,  = \,\,\, f  \, , \qquad \hbox{in \,\,\,$M_{\p}$}\, ,
\end{equation*}
with $\widetilde{u} = (u^\perp +\, \overline{u}) \, \in \, C^{4,\alpha}_\delta(M_\p)$ and $\widehat{u} \in \mathcal{D}_\p(\bbbb,\cccc)$. Moreover, combining the estimate~\eqref{eq:stimapesatabase} with our construction, it is clear that, for suitable positive constants $C_0,\ldots, C_3$, possibly depending on $\bbbb,\cccc$ and $\delta$, it holds
\begin{eqnarray*}
|| \,u \, ||_{C^{4,\alpha}_{\delta}(M_\p) \oplus \mathcal{D}_\p(\bbbb,\cccc)} \!\!\!\! &= & \!\!\!\! || \,\widetilde{u} \, ||_{C^{4,\alpha}_{\delta}(M_\p)}    + ||\,\widehat{u} \, ||_{\mathcal{D}_\p(\bbbb,\cccc)}  \, \leq \,   || \, u^\perp ||_{C^{4,\alpha}_{\delta}(M_\p) }   +  || \, \overline{u} \,||_{C^{4,\alpha}_{\delta}(M_\p)}  +  || \, \widehat{u} \, ||_{ \mathcal{D}_\p(\bbbb,\cccc)} \phantom{\sum_j=1^N}\\
& \leq & \!\!\!\! C_0 \, || \, f^\perp||_{C^{0,\alpha}_{\delta-4}(M_\p)}  +  C_1  \, \sum_{j=1}^N \, |U^j| \,  \leq \, C_2 \, \Big( \, ||\, f^\perp \, ||_{C^{0,\alpha}_{\delta-4}(M_\p)}  \, + \,    \, \sum_{i=1}^d \, |f_i| \, \,  \Big) \\
& \leq & \!\!\!\!  C_3\, || \, f\,||_{C^{0,\alpha}_{\delta-4}(M_\p)}\,, \phantom{\bigg(\sum_j=1^N\bigg)}
\end{eqnarray*}
which is the desired estimate. Finally, we observe that the constant $\nu$ as well can be easily estimated in terms of the norm of $f$. This concludes the proof of the theorem.
\end{proof}
\begin{remark}\label{inversadef}
In other words, with the notations introduced in the proof of the previous theorem, we have proven that, for $m\geq 3$ and $\delta\in(4-2m,0)$, the operator
\begin{eqnarray*}
\mathbb{L}^{(\delta)}_{\aaaa,\bbbb,\cccc} \,\, : \, \, {C}^{4,\alpha}_{\delta}(M_{\p ,\q}) \, \oplus \, \mathcal{D}_{\p,\q}(\aaaa,\bbbb, \cccc)   \, \times \, \RR \!\!& \longrightarrow &\!\! {C}^{0,\alpha}_{\delta-4}(M_{\p ,\q}) \\
(\, \widetilde{u}\, +\, \widehat{u} \,\,,\,\, \nu\, ) \!\!& \longmapsto &\!\! \Lg \, [\,\widetilde{u}\, +\, \widehat{u} \,] \, + \, \nu \, ,
\end{eqnarray*}
with $\bbbb,\cccc$ and $\aaaa$ satisfying the {\em nondegeneracy condition}, admits a (in general not unique) bounded right inverse
\begin{eqnarray*}
\mathbb{J}^{(\delta)}_{\aaaa,\bbbb,\cccc} \,\, : \, \, {C}^{0,\alpha}_{\delta-4}(M_{\p ,\q})  \!\!& \longrightarrow &\!\!
{C}^{4,\alpha}_{\delta}(M_{\p ,\q}) \, \oplus \, \mathcal{D}_{\p,\q}(\aaaa,\bbbb, \cccc)   \, \times \, \RR \, ,
\end{eqnarray*}
so that $ \big( \,\mathbb{L}^{(\delta)}_{\aaaa,\bbbb,\cccc} \circ  \mathbb{J}^{(\delta)}_{\aaaa,\bbbb,\cccc}\, \big) \, [\, f \,] \, = \, f$, for every $f \in {C}^{0,\alpha}_{\delta-4}(M_{\p ,\q})$ and
$$
\big\|\,      \mathbb{J}^{(\delta)}_{\aaaa,\bbbb,\cccc}  \, [\, f \,]   \,  \big\|_{{C}^{4,\alpha}_{\delta}(M_{\p ,\q}) \, \oplus \, \mathcal{D}_{\p,\q}(\aaaa,\bbbb, \cccc)   \, \times \, \RR } \,\,\, \leq \,\,\, C \,\, || \, f \, ||_{{C}^{0,\alpha}_{\delta-4}(M_{\p ,\q}) } \, .
$$
Of course, the analogous conclusion holds in the case $m=2$.
\end{remark}

\section{Linear Analysis on ALE manifolds}\label{lineareALE} 
We now reproduce an analysis similar to the one just completed on the base orbifold on our 
model ALE resolutions of isolated singularities. We define also in this setting  weighted H\"older spaces. Since we will use duality arguments we introduce   also  weighted Sobolev spaces. Let $\st X_{\Gamma},h,\eta \dt$ be an $ALE$ K\"ahler resolution of isolated singularity and set 
\begin{equation}X_{\Gamma,R_{0}}=\pi^{-1}\st B_{R_{0}} \dt\,.\end{equation}
where $\pi:X_{\Gamma}\rightarrow \CC^{m}/\Gamma$ is the canonical projection. This can be thought as the counterpart in $X_{\Gamma}$ of  $M_{r_{0}}$ in $M$. For $\delta\in \RR$ and $\alpha\in (0,1)$, the weighted H\"older space $C_{\delta}^{k,\alpha}\st X_{\Gamma} \dt$ is the set of functions $f\in C_{loc}^{k,\alpha}(X_{\Gamma})$ such that
\begin{equation}
\left\|f\right\|_{C_{\delta}^{k,\alpha}\st X_{\Gamma} \dt}:=\left\|f\right\|_{C^{k,\alpha}\st X_{\Gamma,R_{0}} \dt}+\sup_{R\geq R_{0}}R^{-\delta}\left\|f\st R\cdot \dt\right\|_{C^{k,\alpha}\st B_{1}\setminus B_{1/2} \dt}<+\infty\,.
\end{equation} \label{eq:weightedHolderALE} 
In order to define weighted Sobolev spaces we have to introduce a distance-like function $\gamma\in C_{loc}^{\infty}\st X_{\Gamma} \dt$ defined as 
\begin{equation}
\gamma\st p \dt:=\chi\st p \dt + \st 1-\chi\st p \dt \dt |x\st p \dt|\qquad p\in X_{\Gamma}
\end{equation}
with $\chi$ a smooth cutoff function identically $1$ on $X_{\Gamma,R_{0}}$ and identically $0$ on $X_{\Gamma}\setminus X_{\Gamma,2R_{0}}$.
For $\delta\in \RR$, the weighted Sobolev space $W_{\delta}^{k,2}\st X_{\Gamma} \dt$ is the set of functions $ f\in L_{loc}^{1}(X_{\Gamma})$ such that
\begin{equation}\label{eq:weightedSobolevALE}
\left\|f\right\|_{W_{\delta}^{k,2}\st X_{\Gamma} \dt}:=\sqrt{\sum_{j=0}^{k}\int_{X}\left|\gamma^{-\delta-m+j}\nabla^{(j)}f\right|_{\eta}^{2}\,d\mu_{\eta}}<+\infty\,
\end{equation}
 where 
\begin{equation}
\nabla^{(j)}f:=\underbrace{\nabla \circ \cdots \circ \nabla}_{j\textrm{ times }} f\,.
\end{equation}

We recall now the natural duality between weighted spaces 
\begin{equation}\left<\cdot|\cdot\right>_{\eta}\,:\, L_{\delta}^{2}\st X_{\Gamma} \dt\times L_{-2m-\delta}^{2}\st X_{\Gamma} \dt\rightarrow \RR\end{equation} 
defined as
\begin{equation}\left<f|g\right>_{\eta}:=\int_{X}f\,g\,d\mu_{\eta}\,.\label{eq: dualita}\end{equation}

\begin{remark}
 We note that a function $f\in W_{\delta}^{k,2}\st X_{\Gamma}\dt\cap C_{loc}^{\infty}\st X_{\Gamma} \dt$ on the set $X_{\Gamma}\setminus X_{\Gamma,R_{0}}$ beheaves like
\begin{equation}
f|_{X_{\Gamma}\setminus X_{\Gamma,R_{0}}}\st p \dt=\mathcal{O}\st |x\st p \dt|^{\delta'} \dt \qquad\textrm{ for dome } \delta'<\delta\,.
\end{equation}
and a function $f\in C^{k\,\alpha}\st X_{\Gamma}\dt$ on the set $X\setminus X_{\Gamma,R_{0}}$ typically beheaves like
\begin{equation}
f|_{X_{\Gamma}\setminus X_{\Gamma,R_{0}}}\st p \dt=\mathcal{O}\st |x\st p \dt|^{\delta} \dt\,.
\end{equation}
We also note that for every $\delta'<\delta$ we have the inclusion
\begin{equation}
C_{\delta}^{k,\alpha}\st X_{\Gamma} \dt\subseteq W_{\delta'}^{k,2}\st X_{\Gamma} \dt\,.
\end{equation}
\end{remark}

\noindent The main task of this section is to solve the linearized constant scalar curvature equation
\begin{equation}
\Le u=f\,. \label{eq: Leta}
\end{equation}
We recall that by \eqref{eq:defLg}
\begin{equation}
\Le u=\Delta_{\eta}^{2}u+4\left< \rho_{\eta} | i\dd u\right>
\end{equation}
 and, since $\st X_{\Gamma}, h,\eta \dt$ is scalar flat,  $\Le$ is formally self-adjoint. We also notice that if $\st X_{\Gamma}, h,\eta \dt$ is Ricci-flat, the operator $\Le$ reduces to the $\eta$ bi-Laplacian operator. Since we want to study the operator $\Le$ on weighted spaces we have to be careful on the choice of weights. Indeed to have Fredholm properties we must avoid the indicial roots at infinity of $\Le$ that, thanks to the decay of the metric, coincide with those of euclidean bi-Laplace operator $\Delta^{2}$ . We recall that the set of indicial roots at infinity for $\Delta^{2}$ on $\CC^{m}$ is $\ZZ\setminus \sg 5-2m, \ldots ,-1 \dg$ for $m\geq 3$ and $\ZZ$ for $m=2$.  Let $\delta\in \RR$ with
\begin{equation}
\delta\notin \ZZ\setminus\sg  5-2m,\ldots, -1 \dg \,.
\end{equation}
for $m\geq 3 $ and $\delta\notin \ZZ$ for $m=2$, then the operator
\begin{equation}
\mathbb{L}_{\eta}^{(\delta)}:W_{\delta}^{4,2}\st X_{\Gamma} \dt\rightarrow L_{\delta-4}^{2}\st X_{\Gamma} \dt\,.
\end{equation}
is Fredholm and its cokernel is the kernel of its adjoint under duality \eqref{eq: dualita}
\begin{equation}
\mathbb{L}_{\eta}^{(-2m-\delta)}:W_{-2m-\delta}^{4,2}\st X_{\Gamma} \dt\rightarrow L_{-2m-4-\delta}^{2}\st X_{\Gamma} \dt\,.
\end{equation}
For $ALE$ K\"ahler manifolds a result analogous to Proposition \ref{invertibilitapesatobase} holds true.

\begin{prop}\label{isomorfismopesati}
Let $(X_{\Gamma},h,\eta)$ a scalar flat $ALE$ K\"ahler resolution. If $m\geq 3$  and $\delta\in (4-2m,0)$, then 
\begin{equation}
\mathbb{L}_{\eta}^{(\delta)} : C_{\delta}^{4,\alpha}\st X_{\Gamma} \dt\longrightarrow C_{\delta-4}^{0,\alpha}\st X_{\Gamma} \dt
\end{equation}
 is invertible. If $m=2$ and $\delta\in (0,1)$, then
\begin{equation}
\mathbb{L}_{\eta}^{(\delta)} : C_{\delta}^{4,\alpha}\st X_{\Gamma} \dt\longrightarrow C_{\delta-4}^{0,\alpha}\st X_{\Gamma} \dt
\end{equation}
 is surjective with one dimensional kernel spanned by the constant function.
\end{prop}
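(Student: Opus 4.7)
The plan is to combine Fredholm theory for asymptotically Euclidean operators on weighted spaces with a K\"ahler--Lichnerowicz identity and a Liouville-type result for decaying holomorphic vector fields on ALE manifolds.

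First, by standard weighted Fredholm theory on ALE spaces, fully analogous to the base orbifold case of Theorem \ref{invertibilitapesatobase}, the map $\mathbb{L}_\eta^{(\delta)}: W^{4,2}_\delta\st X_\Gamma\dt \to L^{2}_{\delta-4}\st X_\Gamma\dt$ is Fredholm whenever $\delta$ is not an indicial root of $\Le$ at infinity. Since $\eta$ approaches the flat metric at rate $|x|^{-\tau}$, the indicial roots at infinity agree with those of the Euclidean $\Delta^2$ on $\CC^m$, i.e. $\ZZ\setminus\sg 5-2m,\ldots,-1\dg$ for $m\geq 3$ and $\ZZ$ for $m=2$; both intervals $(4-2m,0)$ and $(0,1)$ avoid these roots. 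Interior Schauder estimates then upgrade the Fredholm property to the weighted H\"older pair $C^{4,\alpha}_\delta\st X_\Gamma\dt \to C^{0,\alpha}_{\delta-4}\st X_\Gamma\dt$. The formal self-adjointness of $\Le$ together with the $L^2$ pairing \eqref{eq: dualita} identifies the cokernel of $\Le^{(\delta)}$ with the kernel of $\Le^{(-2m-\delta)}$ on the dual weighted space.

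The analytic core is the K\"ahler--Lichnerowicz identity, which on the scalar-flat space $(X_\Gamma,\eta)$ reads
\begin{equation*}
\int_{X_\Gamma} u\,\Le u\, d\mu_\eta \,\,=\,\, 2\int_{X_\Gamma} |\Pg u|_\eta^2\, d\mu_\eta,
\end{equation*}
after integration by parts on the exhaustion $X_{\Gamma,R}\setminus X_{\Gamma,R_0}$ and letting $R\to\infty$. A routine count shows that the boundary contribution at $\partial X_{\Gamma,R}$ is $\mathcal{O}\st R^{2\delta+2m-4}\dt$, which vanishes in the limit precisely when $\delta<0$ in the case $m\geq 3$ or when $\delta<1$ in the case $m=2$. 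Hence for $u\in C^{4,\alpha}_\delta\st X_\Gamma\dt$ with $\Le u=0$ in these ranges we deduce $\Pg u=\bar\partial\partial^\sharp u=0$, so that $\partial^\sharp u$ is a holomorphic vector field on $X_\Gamma$ whose components decay like $|x|^{\delta-1}$ at infinity. Using the ALE identification outside a compact set and Hartogs' extension theorem, $\partial^\sharp u$ descends to a holomorphic vector field on $\CC^m$ that tends to zero at infinity, which by Liouville must vanish, so that $u$ is constant.

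Applying this rigidity to both $\Le^{(\delta)}$ and to the dual $\Le^{(-2m-\delta)}$ concludes the proof. For $m\geq 3$ with $\delta\in(4-2m,0)$ the dual weight $-2m-\delta$ lies in $(-2m,-4)$; both weights are negative, so nonzero constants are not admissible and kernel and cokernel are both trivial, giving invertibility. For $m=2$ with $\delta\in(0,1)$ constants do belong to $C^{4,\alpha}_\delta$ and form the full one-dimensional kernel, while the dual weight $-4-\delta\in(-5,-4)$ is negative and rules out constants, so the cokernel is trivial and $\Le^{(\delta)}$ is surjective. The main technical obstacle is the careful justification of the vanishing of the boundary terms in the K\"ahler--Lichnerowicz integration by parts at the borderline $m=2$, $\delta\in(0,1)$, where $u$ itself can grow, and where one must balance the sharp pointwise decay of $\partial^\sharp u$ and $\Delta_\eta u$ against the volume growth $R^{2m-1}$ of $\partial X_{\Gamma,R}$.
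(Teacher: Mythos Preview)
The paper does not actually supply a proof of this proposition: it simply says the argument ``follows standard lines'' and cites Pacard's notes, so there is no detailed argument to compare against. Your overall strategy (weighted Fredholm theory plus the K\"ahler--Lichnerowicz identity plus a Liouville argument for decaying holomorphic vector fields) is indeed the standard one. However, there is a genuine arithmetic gap in your treatment of the boundary terms.

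You correctly estimate the boundary contribution in the integration by parts as $\mathcal{O}(R^{2\delta+2m-4})$, but your claim about when it vanishes is inconsistent with that estimate: $R^{2\delta+2m-4}\to 0$ requires $\delta<2-m$, not $\delta<0$ for $m\geq 3$ and not $\delta<1$ for $m=2$. Concretely, for $m=3$ and $\delta\in[-1,0)$ the boundary term does \emph{not} decay, and for $m=2$ with $\delta\in(0,1)$ it actually \emph{grows}. So the Lichnerowicz identity as you wrote it does not directly yield $\Pg u=0$ over the full ranges claimed.

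For $m\geq 3$ the repair is cheap: the interval $(4-2m,0)$ contains no indicial roots (the integers $5-2m,\ldots,-1$ are exactly the excluded ones), so the kernel of $\Le^{(\delta)}$ is constant on this interval by the standard weighted regularity theory. It therefore suffices to run your Lichnerowicz argument at a single weight $\delta$ close to $4-2m$, where $\delta<2-m$ is satisfied, and then propagate the triviality of the kernel to the whole interval; the cokernel is handled symmetrically at the dual weight. For $m=2$ with $\delta\in(0,1)$ the repair is slightly more substantial, and your closing sentence already signals the difficulty without resolving it: one first uses the asymptotic expansion at infinity (the indicial root at $0$) to show that any $u\in\ker\Le^{(\delta)}$ satisfies $u-c\in C^{4,\alpha}_{\delta'}(X_\Gamma)$ for some constant $c$ and some $\delta'<0$, then applies the Lichnerowicz argument at weight $\delta'$ (where the boundary term now does vanish) to conclude $u\equiv c$. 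Without this two--step reduction, the direct integration by parts on $X_{\Gamma,R}$ is not justified for $\delta\in(0,1)$.
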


\begin{remark}\label{inversamod}
Rephrasing Proposition \ref{isomorfismopesati} we can say that for $\delta\in (4-2m)$ if $m\geq 3$ and $\delta\in (0,1)$ if $m=2$ the operator   
\begin{equation}
\mathbb{L}_{\eta}^{(\delta)} : C_{\delta}^{4,\alpha}\st X_{\Gamma} \dt\longrightarrow C_{\delta-4}^{0,\alpha}\st X_{\Gamma} \dt
\end{equation}
has a continuous right inverse
\begin{equation}\label{eq:inversamod}
\mathbb{J}^{(\delta)} :   C_{\delta-4}^{0,\alpha}\st X_{\Gamma} \dt \longrightarrow    C_{\delta}^{4,\alpha}\st X_{\Gamma} \dt\,.
\end{equation}
\end{remark}

\noindent The proof of the above result follows standard lines (see e.g. Theorem 10.2.1 and Proposition 11.1.1 in \cite{Pacard-notes}). 
We focus now on the asymptotic expansions of various operators on $ALE$ manifolds.

\begin{lemma}\label{espansioniALE}
Let $\st X_{\Gamma},h,\eta \dt$ be a scalar flat $ALE$-K\"ahler resolution with $e\st \Gamma \dt=0$. Then on the coordinate chart at infinity we have the following expansions  
\begin{itemize}
\item for the inverse of the metric $\eta^{i\bar{\jmath}}$
\begin{equation}\eta^{i\bar{\jmath}}=2\sq \delta^{i\bar{\jmath}}-\frac{2 \cga \st m-1 \dt}{\left|x\right|^{2m}} \st \delta_{i\bar{\jmath}}-m\frac{\overline{x^{i}}x^{j}}{\left|x\right|^{2}}  \dt+\mathcal{O}\st \left|x\right|^{-2-2m} \dt \dq\,;\label{eq:inversaeta} \end{equation}

\item for the unit normal vector to the sphere $\left|x\right|=\rho$ 
\begin{equation}\nu= \frac{1}{|x|}\st  x^{i}\frac{\partial}{\partial x^{i}}+\overline{x^{i}}\frac{\partial }{\partial \overline{x^{i}}}\dt    \sq 1+\frac{ \cga \st m-1 \dt^{2}}{|x|^{2m}} +\mathcal{O}\st \left|x\right|^{-2-2m} \dt\dq\,;\label{eq:espnormaleALE}\end{equation}

\item for the laplacian $\Delta_{\eta}$
\begin{equation}\Delta_{\eta}=\sq1-\frac{2 \cga \st m-1 \dt}{\left|x\right|^{2m}}\dq\Delta+\sq\frac{8 \cga \st m-1 \dt m}{\left|x\right|^{2m+2}}\overline{x^{i}}x^{j}+\mathcal{O}\st \left|x\right|^{-2-2m} \dt \dq\partial_{j}{\partial}_{\bar{\imath}}\,.\label{eq:esplapALE}\end{equation}
\end{itemize}
\end{lemma}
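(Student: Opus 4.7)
All three expansions descend from the K\"ahler potential representation provided by Proposition~\ref{asintpsieta}: on the end of $X_\Gamma$,
\[
\eta \,=\, i\partial\overline{\partial}\Bigl(\,\tfrac{|x|^{2}}{2}\,-\, c(\Gamma)\,|x|^{2-2m}\,+\,\psi_{\eta}\,\Bigr),\qquad \psi_{\eta}=\mathcal{O}(|x|^{-2m}).
\]
My strategy is to first obtain the expansion of the metric coefficients $\eta_{i\bar{\jmath}}=\partial_{i}\overline{\partial}_{j}(\cdot)$ to order $|x|^{-2m}$, then invert by a Neumann series to get \eqref{eq:inversaeta}, and finally read off the unit normal and the Laplacian from algebraic manipulations of the inverse metric.

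For $\eta_{i\bar{\jmath}}$, the term $|x|^{2}/2$ contributes the flat piece $\tfrac{1}{2}\delta_{i\bar{\jmath}}$, while a direct calculation gives
\[
\partial_{i}\overline{\partial}_{j}\bigl(-c(\Gamma)|x|^{2-2m}\bigr) \,=\, c(\Gamma)(m-1)|x|^{-2m}\Bigl[\delta_{i\bar{\jmath}} - m\,\tfrac{\overline{x^{i}}x^{j}}{|x|^{2}}\Bigr].
\]
The nontrivial point is to absorb the contribution $\partial_{i}\overline{\partial}_{j}\psi_{\eta}$ into an $\mathcal{O}(|x|^{-2-2m})$ remainder: Proposition~\ref{asintpsieta} only bounds $\psi_{\eta}$ pointwise. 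This is the main technical step, and I would handle it by a standard scaling/Schauder argument. Since $\psi_{\eta}$ is real-analytic and solves the elliptic equation~\eqref{eq:psieta} whose principal part is $\Delta^{2}$, pulling back to unit balls via the dilation $x\mapsto |x_{0}|\,x$ and applying interior Schauder estimates upgrades the decay $\psi_{\eta}=\mathcal{O}(|x|^{-2m})$ to the derivative decay $\partial_{i}\overline{\partial}_{j}\psi_{\eta}=\mathcal{O}(|x|^{-2-2m})$. This is the key obstacle; everything else is algebra.

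With $\eta_{i\bar{\jmath}}=\tfrac{1}{2}\delta_{i\bar{\jmath}}+P_{i\bar{\jmath}}$ and $P=\mathcal{O}(|x|^{-2m})$, the inverse is obtained from the Neumann expansion
\[
\eta^{i\bar{\jmath}} \,=\, 2\,\delta^{i\bar{\jmath}} - 4\,P^{i\bar{\jmath}} + \mathcal{O}(P^{2}),
\]
and since $P^{2}=\mathcal{O}(|x|^{-4m})$ is subdominant to $|x|^{-2-2m}$ for $m\geq 1$, all quadratic terms are absorbed in the remainder. Plugging in the explicit form of $P^{i\bar{\jmath}}$ yields \eqref{eq:inversaeta}.

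To extract the unit normal, I would use that the outward Euclidean radial field $X=\sum_{i}\bigl(x^{i}\partial_{i}+\overline{x^{i}}\,\overline{\partial}_{i}\bigr)$ already points in the normal direction to $\{|x|=\rho\}$, so only its $\eta$-length has to be computed: $g_{\eta}(X,X)=2\,\mathrm{Re}\bigl(\eta_{i\bar{\jmath}}\,x^{i}\overline{x^{j}}\bigr)$. Here the contractions collapse to scalars since $\delta_{ij}x^{i}\overline{x^{j}}=|x|^{2}$ and $\overline{x^{i}}x^{j}\cdot x^{i}\overline{x^{j}}=|x|^{4}$, giving $g_{\eta}(X,X)=|x|^{2}\bigl[1-2c(\Gamma)(m-1)^{2}|x|^{-2m}+\mathcal{O}(|x|^{-2-2m})\bigr]$. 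Expanding the reciprocal square root produces the factor $1+c(\Gamma)(m-1)^{2}|x|^{-2m}+\mathcal{O}(|x|^{-2-2m})$ appearing in \eqref{eq:espnormaleALE}. Finally, for the Laplacian I would use the K\"ahler convention $\Delta_{\eta}=2\eta^{i\bar{\jmath}}\partial_{i}\overline{\partial}_{j}$ (normalised so that for the Euclidean metric one recovers $\Delta=4\partial_{i}\overline{\partial}_{i}$), substitute the expansion of $\eta^{i\bar{\jmath}}$ from \eqref{eq:inversaeta}, and separate the isotropic piece, which bundles into $[1-2c(\Gamma)(m-1)|x|^{-2m}]\,\Delta$, from the anisotropic piece $8c(\Gamma)(m-1)m\,|x|^{-2m-2}\,\overline{x^{i}}x^{j}\,\partial_{j}\overline{\partial}_{i}$, obtaining \eqref{eq:esplapALE}.
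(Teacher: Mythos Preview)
Your proposal is correct and follows precisely the route the paper intends: the authors omit the proof entirely, stating only that it ``consists of straightforward computations,'' and your sketch supplies exactly those computations from the potential expansion of Proposition~\ref{asintpsieta}. The one point you single out---upgrading $\psi_{\eta}=\mathcal{O}(|x|^{-2m})$ to $\partial_{i}\overline{\partial}_{j}\psi_{\eta}=\mathcal{O}(|x|^{-2-2m})$ via rescaled interior Schauder estimates---is indeed the only step that is not pure algebra, and your treatment of it is the standard one.
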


\noindent The proof of the above lemma consists of straightforward computations and is therefore omitted.

\noindent We conclude this section with an observation regarding fine mapping properties of

\begin{equation}
\mathbb{L}_{\eta}^{(\delta)}: W_{\delta}^{4,2}\st X_{\Gamma} \dt\rightarrow L_{\delta-4}^{2}\st X_{\Gamma} \dt
\end{equation}
that will  be useful in Subsection \ref{analisinonlinearemodello} in a crucial point where we show how the nonlinear analysis constrains the choice of balancing parameters.
In the following proposition we want to solve the equation
\begin{equation}
\Le\sq u \dq=f
\end{equation}
with $f\in L_{\delta-4}^{2}\st X_{\Gamma} \dt$ ($C_{\delta-4}^{0,\alpha}\st X_{\Gamma} \dt$). In general, when $\delta\in (2-2m,4-2m)$, the indicial root $3-2m$ imposes to the solution $u$ to have a component with asymptotic growth $|x|^{3-2m}$. The keypoint of Proposition is that if $\Gamma$ is non trivial this doesn't occur.

\begin{prop}\label{GAP}
Let $\st X_{\Gamma},h,\eta \dt$ be a scalar-flat  ALE K\"ahler resolution with $e\st \Gamma\dt=0$ and nontrivial $\Gamma\triangleleft U(m)$. For $\delta\in\st  2-2m , 4-2m \dt$, the equation
\begin{equation}
\Le\sq u\dq= f
\end{equation} 
with $f\in L_{\delta-4}^{2}\st X_{\Gamma} \dt$ (respectively $f\in C_{\delta-4}^{0,\alpha}\st X_{\Gamma} \dt$)   is solvable for $u\in W_{\delta}^{4,2}\st X_{\Gamma} \dt$ (respectively $u\in C_{\delta}^{4,\alpha}\st X_{\Gamma} \dt$) if and only if 
\begin{equation}
\int_{X_{\Gamma}}f\,d\mu_{\eta}=0\,.
\end{equation}
\end{prop}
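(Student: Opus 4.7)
The plan is to exploit the Fredholm theory in the weighted Sobolev scale and pin down the cokernel via the asymptotic expansion at infinity combined with Remark~\ref{nolinear}, which forbids $\Gamma$-invariant linear functions on $\CC^m$.

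First I would observe that $\Le^{(\delta)}\colon W^{4,2}_\delta(X_\Gamma)\to L^2_{\delta-4}(X_\Gamma)$ is Fredholm for $\delta\in(2-2m,\,4-2m)$: the only integer in this open interval is $3-2m$, which lies in the indicial gap $\{5-2m,\ldots,-1\}$ and so is not an indicial root of $\Delta^2$; by Lemma~\ref{espansioniALE}, $\Le$ is at infinity a decaying perturbation of $\Delta^2$, so the indicial picture is inherited. Using the formal self-adjointness of $\Le$ and the duality $(L^2_{\delta-4})^{*}\simeq L^2_{4-2m-\delta}$ implemented by the pairing~\eqref{eq: dualita}, the cokernel of $\Le^{(\delta)}$ is identified with
\[
\mathcal{K}\,:=\,\bigl\{\phi\in L^2_{4-2m-\delta}(X_\Gamma)\,:\,\Le\phi=0\bigr\},\qquad 4-2m-\delta\in(0,2).
\]
The constant function $1$ lies in $\mathcal{K}$, since $1\in L^2_{4-2m-\delta}$ precisely for $4-2m-\delta>0$, and $\Le 1=0$. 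The proposition therefore reduces to proving $\mathcal{K}=\RR\cdot 1$, because the resulting orthogonality condition is then exactly $\int_{X_\Gamma}f\,d\mu_\eta=0$.

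Given $\phi\in\mathcal{K}$, elliptic regularity and the asymptotic expansion theorem for ALE elliptic operators give $\phi\in C^\infty_{\rm loc}(X_\Gamma)$ together with a leading-order development at infinity $\phi(x)=|x|^\gamma\,\Phi(x/|x|)+(\text{lower order})$, where $\gamma$ is an indicial root of $\Delta^2$ with $\gamma\leq 4-2m-\delta<2$ and $\Phi$ is the restriction to $\Sp^{2m-1}$ of a $\Gamma$-invariant harmonic polynomial. The only positive indicial roots strictly less than $2$ are $\gamma=0$ and $\gamma=1$; but $\gamma=1$ would force $\Phi$ to come from a $\Gamma$-invariant linear function on $\CC^m$, which Remark~\ref{nolinear} rules out. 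Hence $\phi(x)\to c$ for some $c\in\RR$ as $|x|\to\infty$. Setting $\widetilde\phi:=\phi-c$, the residual function is annihilated by $\Le$ and tends to zero at infinity; a further indicial expansion, using the fact that the next indicial root below $0$ is $4-2m$ when $m\geq 3$ (the gap $\{5-2m,\ldots,-1\}$ being entirely in the way) and $-2$ when $m=2$, places $\widetilde\phi$ into $W^{4,2}_{\delta_1}(X_\Gamma)$ for some $\delta_1\in(4-2m,0)$ when $m\geq 3$, or into $L^2_{-\delta'}(X_\Gamma)$ for some $\delta'\in(0,1)$ when $m=2$. In the first case Proposition~\ref{isomorfismopesati} directly gives $\widetilde\phi=0$ via the invertibility of $\Le^{(\delta_1)}$; in the second case, the surjectivity of $\Le^{(\delta')}$ provided by the same proposition is equivalent, by duality and self-adjointness, to triviality of $\ker\Le$ on $L^2_{-\delta'}$, so again $\widetilde\phi=0$. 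Thus $\mathcal{K}=\RR\cdot 1$.

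The step I expect to demand the most care is the $m=2$ part of the last argument: the weighted exponents are tight and one must check that the improved decay rate $\widetilde\phi=\mathcal{O}(|x|^{-2})$ genuinely puts $\widetilde\phi$ into a space covered by the dual formulation of Proposition~\ref{isomorfismopesati}. Everything else is routine Fredholm bookkeeping combined with the decisive structural input from the nontriviality of $\Gamma$ encoded in Remark~\ref{nolinear}.
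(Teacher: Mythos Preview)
Your proposal is correct and follows essentially the same route as the paper: identify the cokernel of $\Le^{(\delta)}$ via duality with $\ker\Le$ at the dual weight in $(0,2)$, use the asymptotic structure at infinity together with Remark~\ref{nolinear} to force any such kernel element to be asymptotically constant, subtract the constant, and kill the remainder with Proposition~\ref{isomorfismopesati}. The paper carries out the middle step by an explicit Fourier decomposition of $u$ on the end and an ODE analysis of each mode, whereas you invoke the general indicial/expansion formalism; these are equivalent.

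One small point worth tightening in your write-up: for $m=2$ you assert that ``the next indicial root below $0$ is $-2$''. In fact the full indicial set for $\Delta^2$ on $\CC^2$ is all of $\ZZ$, so $-1$ is present; it is only after invoking Remark~\ref{nolinear} a second time (the $|x|^{-1}\Phi_1$ mode also requires a $\Gamma$-invariant first eigenfunction) that $-1$ is excluded and the effective next root becomes $-2$. The paper's Fourier argument handles this implicitly because the $\phi_1$-component simply does not appear. Once this is said, your $m=2$ endgame goes through.
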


\begin{proof} We are going to prove  the following characterization:
\begin{equation}\label{eq:caratt}
\mathbb{L}_{\eta}^{(\delta)}\sq W_{\delta}^{4,2}\st X_{\Gamma} \dt\dq=\sg f\in L_{\delta}^{2}\st X_{\Gamma} \dt\,| \, \int_{X_{\Gamma}}f\,d\mu_{\eta}=0 \dg\,.
\end{equation}
\noindent Since $\Le$ is formally selfadjoint  we can identify, via duality \eqref{eq: dualita}, the cokernel of 
\begin{equation}
\mathbb{L}_{\eta}^{(\delta)}: W_{\delta}^{4,2}\st X_{\Gamma} \dt\rightarrow L_{\delta-4}^{2}\st X_{\Gamma} \dt \qquad \delta\in \st2 -2m,4-2m \dt
\end{equation} 
with the kernel of 
\begin{equation}
\mathbb{L}_{\eta}^{(-2m-\delta)}: W_{-2m-\delta}^{4,2}\st X_{\Gamma} \dt\rightarrow L_{-2m-4-\delta}^{2}\st X_{\Gamma} \dt\,.
\end{equation}
We want to identify generators of this kernel. Let then $u\in W_{\delta}^{4,2}\st X_{\Gamma} \dt$ such that
\begin{equation}
\Le\sq u\dq=0\,,
\end{equation}
\noindent with $\delta\in (0,2)$,we want ot prove that $u\equiv c_{0}$ for some $c_{0}\in \RR$.  By standard elliptic regularity we have that $u\in C_{loc}^{\omega}\st X_{\Gamma} \dt$. On $X_{\Gamma}\setminus  X_{\Gamma,R}$ we consider the Fourier expansion of $u$  
\begin{equation}u=\sum_{k=0}^{+\infty} u^{(k)}\st |x| \dt \phi_{k}\,,\end{equation}     
with $u^{(k)}\in C_{\delta}^{n,\alpha}\st [R,+\infty) \dt$ for any $n\in \NN$ and this sum is $C^{n,\alpha}$-convergent on compact sets. Then, using expansions\eqref{eq:inversaeta}, \eqref{eq:espnormaleALE},\eqref{eq:esplapALE}, we have on $X_{\Gamma}\setminus X_{\Gamma,R}$
\begin{equation}
0=\,\Delta_{\eta}^{2}\sq u\dq =\, \sum_{k=0}^{+\infty}\Delta^{2}\sq u^{(k)}\st |x| \dt  \phi_{k}\dq +|x|^{-2m}L_{4}\sq u \dq+|x|^{-1-2m}L_{3}\sq u \dq+|x|^{-2-2m}L_{2}\sq u \dq\,.
\end{equation}

\noindent where the  $L_{k}$'s are differential operators of order $k$ and uniformly bounded coefficients.  The equation 
\begin{equation}\sum_{k=0}^{+\infty} \Delta^{2}\sq u^{(k)}\st |x| \dt \phi_{k} \dq =-|x|^{-2m}L_{4}\sq u \dq-|x|^{-1-2m}L_{3}\sq u \dq-|x|^{-2-2m}L_{2}\sq u \dq\end{equation}

\noindent implies  
\begin{equation}\Delta^{2}\sq u^{(k)} \phi_{k} \dq \in C_{\delta-2m-4}^{n,\alpha}\st X_{\Gamma}\setminus X_{\Gamma,R} \dt\qquad \textrm{ for }  k\geq 0\,.\end{equation}
\noindent Suppose  by contradiction that 
\begin{equation}\limsup_{|x|\rightarrow +\infty}|u|>0\,.\end{equation}
\noindent Since $u^{(k)}\phi_{k}\in C_{\delta}^{n,\alpha}\st X_{\Gamma}\setminus X_{\Gamma,R} \dt$ the only possibilities are
\begin{align} 
u^{(0)}\st |x| \dt=& c_{0}+\upsilon_{0}\st |x|\dt\\ 
u^{(1)}\st |x| \dt=& \st |x| +\upsilon_{1}\st |x| \dt\dt \phi_{1}
\end{align}
\noindent with $\upsilon_{0},\upsilon_{1}\in C_{\delta-2m}^{n,\alpha}\st [R,+\infty) \dt$ and $c_{0}\in \RR$. But there are not $\phi_{1}$ that are $\Gamma$-invariant (see Remark \ref{nolinear}) since $\Gamma$ is nontrivial, so the only possibility is that
\begin{equation}u^{(0)}\st |x| \dt= c_{0}+\upsilon_{0}\st |x| \dt\,.\end{equation}
\noindent We now show that $u$ is actually constant, indeed $u-c_{0}\in C_{\delta-2m}^{n,\alpha}\st X \dt$ and
\begin{equation}\Le\sq u-c_{0} \dq=\Delta_{\eta}^{2}\sq u-c_{0} \dq=0\end{equation}
\noindent so by Proposition \ref{isomorfismopesati} we can conclude
\begin{equation}u-c_{0}\equiv 0\,.\end{equation}
The proposition now follows immediately.

\end{proof}


\section{Nonlinear analysis}
\noindent In this section we collect all the estimates needed in the proof of Theorem\ref{maintheorem}. As in \cite{ap1} and \cite{ap2} we produce Kcsc metrics on orbifolds with boundary  which we believe could be of independent interest (Propositions \ref{crucialbase}, \ref{crucialmodello}). \newline
\newline
{\em From now on we will assume that the points in $\p\subset M$ have resolutions which are Ricci-Flat  ALE K\"ahler manifold. }
\newline
\begin{remark}
We recall that, by \cite[Theorem 8.2.3]{j},  when an ALE K\"ahler manifold is Ricci-flat then $e\st \Gamma \dt=0$. 
\end{remark}

\noindent Given $\varepsilon$ sufficiently small we look at the truncated orbifolds $M_{\rep}$  and $X_{\Gamma_{j},\Rep}$ for $j=1,\ldots, N $  where we impose the following relations:
\begin{equation}
\rep=\varepsilon^{\frac{2m-1}{2m+1}}=\varepsilon \Rep.
\end{equation}

\noindent We want to construct families of Kcsc metrics on $M_{\rep}$ and $X_{\Gamma_{j},\Rep}$ perturbing K\"ahler potentials of $\omega$ and $\eta_{j}$'s. We build these perturbations in such a way that they depend on parameters that we call {\em pseudo- boundary data} and  we can also prescribe, with some freedom,  {\em principal asymptotics} of the resulting Kcsc metrics.   
By {\em principal asymptotics} we mean the terms of the potentials of the families of Kcsc metrics on $M_{\rep}$ that near points $p_{j}$ beheave like $|z|^{2-2m}$ or $|z|^{4-2m}$  and the terms of the potentials of the families of Kcsc metrics on $X_{\Gamma_{j},\Rep}$ approaching infinity beheave like $|x|^{2-2m}$ or $|x|^{4-2m}$.  
In a second moment we choose the exact shape of these asymptotics by specifying some free parameters ({\em tuning}). The {\em pseudo-boundary data} form a particular set of functions on the unit sphere and they are the parameters that rule the behavior of the families of Kcsc metrics at the boundaries $\partial M_{\rep}$ and $\partial X_{\Gamma_{j},\Rep}$. They are the main tool for gluing the various families of metrics to a unique Kcsc metric on the resulting manifold, indeed their arbitrariness will allow us to perform the procedure of data matching. We call them {\em pseudo-boundary data} 
because they represent small perturbations of the (suitably rescaled) potentials of the Kcsc metrics at the boundaries.

\begin{notation}
{\em For the rest of the section $\chi_{j}$ will denote a  smooth cutoff functions identically equal to $1$ on $B_{2r_{0}}\st p_{j} \dt$ and identically equal to $0$ outside $B_{3r_{0}}\st p_{j} \dt$.}
\end{notation}  

\subsection{{\em Pseudo-boundary data and euclidean Biharmonic extensions}}\label{biharmonicext}

 A key technical tool to implement such a 
strategy is given by using  outer (which will be transplanted on the base orbifold) and inner (transplanted on the model) euclidean biharmonic extensions of functions on the unit sphere. We define now the outer biharmonic extensions of functions on the unit sphere.  
Let $(h,k)\in C^{4,\alpha}\st\Sp^{2m-1}\dt\times C^{4,\alpha}\st\Sp^{2m-1}\dt$ the outer biharmonic extension of $(h,k)$ is the function $H_{h,k}^{o}\in C^{4,\alpha}\st \CC^{m}\setminus B_{1} \dt$ solution fo the boundary value problem
\begin{equation}\begin{cases}
\Delta^{2} H_{h,k}^{out}=0 & \textrm{ on } \CC^{m}\setminus B_{1}\\
H_{h,k}^{out}=h &\textrm{ on } \partial B_{1}\\
\Delta H_{h,k}^{out}=k& \textrm{ on } \partial B_{1}
\end{cases}\end{equation}
Moreover $H_{h,k}^{out}$ has the following expansion in Fourier series for $m\geq 3$
\begin{equation}H^{out}_{h,k}:=\sum_{\gamma=0}^{+\infty}\st  \st  h^{(\gamma)}+\frac{k^{(\gamma)}}{4(m+\gamma-2)}  \dt |w|^{2-2m-\gamma}-\frac{k^{(\gamma)}}{4(m+\gamma-2)}|w|^{4-2m-\gamma} \dt \phi_\gamma\,,\label{eq:bihout3}\end{equation}
and for $m=2$
\begin{equation} H^{out}_{h,k}:=   h^{(0)}|w|^{-2}+\frac{k^{(0)}}{2}\log\st|w|\dt+  \sum_{\gamma=1}^{+\infty}\st  \st  h^{(\gamma)}+\frac{k^{(\gamma)}}{4\gamma}  \dt |w|^{-2-\gamma}-\frac{k^{(\gamma)}}{4\gamma}|w|^{-\gamma} \dt \phi_\gamma\,.\label{eq:bihout2}\end{equation}

\begin{remark}
In the sequel we will take $\Gamma$-invariant $(h,k)\in C^{4,\alpha}\st\Sp^{2m-1}\dt\times C^{4,\alpha}\st\Sp^{2m-1}\dt$ and by the Remark \ref{nolinear} we will 
have no terms with $\phi_{1}$ in the formulas  \eqref{eq:bihout3} and \eqref{eq:bihout2} for nontrivial $\Gamma$.
\end{remark}

We define also the inner biharmonic extensions of functions on the unit sphere. Let $\st \tilde{h}, \tilde{k}\dt\in C^{4,\alpha}\st \Sp^{2m-1} \dt\times C^{2,\alpha}\st \Sp^{2m-1} \dt$, the biharmonic extension $\hkjj$ on $B_1$ of $\st \tilde{h}, \tilde{k}\dt$ is the function $\hkjj\in C^{4,\alpha}\st \overline{B_{1}} \dt$ given by the solution of the boundary value problem
\begin{equation}\begin{cases}
\Delta^{2}H_{\tilde{h},\tilde{k}}^{in}=0 & w\in  B_{1} \\
H_{\tilde{h},\tilde{k}}^{in}=\tilde{h}& w\in \partial B_{1}\\
\Delta H_{\tilde{h},\tilde{k}}^{in}=\tilde{k}& w\in \partial B_{1}\\
\end{cases}\,.\end{equation}

\noindent The function $\hkjj$  has moreover the expansion

\begin{equation}\hkjj\st w\dt=\sum_{\gamma=0}^{+\infty}\left(\left(\tilde{h}^{(\gamma)}-\frac{\tilde{k}^{(\gamma)}}{4(m+\gamma)}  \dt |w|^\gamma+\frac{\tilde{k}^{(\gamma)}}{4(m+\gamma)}|w|^{\gamma+2} \dt \phi_\gamma\,.\end{equation}

\begin{remark}
Again, if the group $\Gamma$ is non trivial and  for $\Gamma$-invariant $(h,k)$, by Remark \ref{nolinear} , there  will be no $\phi_1$-term in the above summations. So we will have
\begin{equation}H_{h,k}^{in}=\left(\tilde{h}^{(0)}-\frac{\tilde{k}^{(0)}}{4m}  \dt  +\frac{\tilde{k}^{(0)}}{4m}|w|^{2}+  \sum_{\gamma=2}^{+\infty}\left(\left(\tilde{h}^{(\gamma)}-\frac{\tilde{k}^{(\gamma)}}{4(m+\gamma)}  \dt |w|^{\gamma}+\frac{\tilde{k}^{(\gamma)}}{4(m+\gamma)}|w|^{\gamma+2} \dt \phi_{\gamma}\,.\end{equation}
\end{remark}

As in \cite{ap1}, \cite{ap2} we introduce some functional spaces that will be needed in the sequel that will naturally work as ``space of parameters" for our construction:

\begin{equation}
\mathcal{B}_{j}:=C^{4,\alpha}\st \Sp^{2m-1}/\Gamma_{j} \dt\times C^{2,\alpha}\st \Sp^{2m-1}/\Gamma_{j} \dt
\end{equation}

\begin{equation}
\mathcal{B}:=\prod_{j=1}^{N}\mathcal{B}_{j}
\end{equation}

\begin{equation}\label{eq:dombd}
\mathcal{B}\st\kappa, \delta\dt:=\sg \st \hg,\kg \dt\in \mathcal{B}\,\left|\, \left\|h^{(0)}_{j},k^{(0)}_{j}\right\|_{\mathcal{B}_{j}}\leq \kappa \varepsilon^{4m+2}\rep^{-6m+4-\delta},  \left\|h^{(\dagger)}_{j},k^{(\dagger)}_{j}\right\|_{\mathcal{B}_{j}}\leq \kappa \varepsilon^{2m+4}\rep^{2-4m-\delta} \right. \dg
\end{equation}

\noindent We call the functions in $\mathcal{B}\st\kappa, \delta\dt$ {\em pseudo-boundary data} and  will be used to parametrize solution of the Kcsc problem near a given
``skeleton" solution built by hand to match some of the first orders of the metrics coming on the two sides of the gluing.

\subsection{Kcsc metrics on the truncated base orbifold.}\label{nonlinbase}

\noindent We start with a Kcsc orbifold $\st M,\omega, g \dt$ with isolated singular points such that there is a subset of sungular points  $\p\subset M$ whose elements have  resolutions which are Ricci-flat  ALE K\"ahler manifold. We want to find $F_{{\bf 0},\bg,\cg,\hg,\kg}^{out}\in C^{4,\alpha}\st M_{\rep} \dt$ such that 

\begin{equation}\omega_{{\bf 0},\bg,\cg,\hg,\kg}:=\omega +i\dd F_{{\bf 0},\bg,\cg,\hg,\kg}^{out}\end{equation} 

\noindent is a metric on $M_{\rep}$ and its scalar curvature $s_{\omega_{{\bf 0},\bg,\cg,\hg,\kg}}$ is a small perturbation of the scalar curvature $s_{\omega}$ of the reference K\"ahler metric on $M$.

\noindent   The function $F_{{\bf 0},\bg,\cg,\hg,\kg}^{out}$ consists of four blocks
\begin{equation}
F_{{\bf 0},\bg,\cg,\hg,\kg}^{out}:=-\varepsilon^{2m}{\bf{G}}_{\bf{0},\bg,\cg}+\Pbe+ \hko+ f_{{\bf 0},\bg,\cg,\hg,\kg}^{out}\,
\end{equation}  
the  skeleton $\varepsilon^{2m}{\bf{G}}_{\bf{0},\bg,\cg}$, extensions of {\em pseudo-boundary data} $\hko$, transplanted potentials of $\eta_{j}$'s $\Pbe$ and a "small" correction term $f_{{\bf 0},\bg,\cg,\hg,\kg}^{out}$ that has to be determined. We want $F_{{\bf 0},\bg,\cg,\hg,\kg}^{out}$  be a small perturbation of $\omega$ and hence we can use the expansion in Proposition \ref{espsg} to look for the equation that $f_{{\bf 0},\bg,\cg,\hg,\kg}^{out}$ has to satisfy on $M_{\rep}$. We have  
\begin{align}\label{eq:basegrezza1}
s_{\omega_{{\bf 0},\bg,\cg,\hg,\kg}}=&\,\s_{\omega}\st -\varepsilon^{2m}{\bf{G}}_{\bf{0},\bg,\cg}+\Pbe+ \hko+ f\dt\\
=&s_{\omega}-\frac{1}{2} \varepsilon^{2m}\nu_{{\bf 0},\cg}-\frac{1}{2}\Lg\sq \Pbe \dq -\frac{1}{2}\Lg \sq \hko \dq -\frac{1}{2}\Lg \sq f\dq\\
&+\frac{1}{2}\NN _{\omega}\st -\varepsilon^{2m}{\bf{G}}_{\bf{0},\bg,\cg}+\Pbe+ \hko+ f\dt
\end{align}
where in the second line we used the very definition of ${\bf{G}}_{\bf{0},\bg,\cg}$. Rewriting the above equation in terms of the unknown $f$ we obtain
\begin{align}\label{eq:basegrezza2}
\Lg \sq f\dq=&\st 2s_{\omega}- \varepsilon^{2m}\nu_{{\bf 0},\cg}-2s_{\omega_{{\bf 0},\bg,\cg,\hg,\kg}}\dt-\Lg\sq \Pbe \dq -\Lg \sq \hko \dq \\
&+\NN _{\omega}\st -\varepsilon^{2m}{\bf{G}}_{\bf{0},\bg,\cg}+\Pbe+ \hko+ f\dt\,.
\end{align}
The rest of this section is devoted to solve this equation.
\vspace{10pt}

\begin{itemize}
\item[] {\bf Skeleton.} The skeleton is made of {\em multi-poles fundamental solutions} ${\bf{G}}_{\bf{0},\bg,\cg}$  of $\Lg$ introduced in section \ref{multipoles}. These can be regarded as functions defined on $M_{\p}$ that are in $\ker\Lg$ and blow up approaching points in $\p$. For this reason, the existence of a skeleton, is strictly related to balancing conditions \eqref{eq: balancingbc1} and \eqref{eq: balancingbc2} in Remark \ref{gabc} 
with $\ag=0$, namely
\begin{equation}
\begin{array}{rll}
\sum_{j=1}^Nb_j(\Delta\varphi_i)(p_j) \, + \, \sum_{j=1}^Nc_j\varphi_i(p_j)   & = &  0 \\
&&\\
\sum_{j=1}^N c_j  &=& \nu_{{\bf 0},\ccc}  \, {\rm Vol}_\omega(M) 
\end{array}
\end{equation}
so that 
\begin{equation}
\Lg \left[  \mathbf{G}_{{\bf 0},\bbb,\ccc} \right] \,  + \, \nu_{{\bf 0},\ccc}   =    \sum_{j=1}^N  b_j \, \Delta\delta_{p_{j}} \, + \, \sum_{j=1}^N c_j \, \delta_{p_{j}} \, , \qquad \hbox{in \,\,\,$M$}\, .
\end{equation}
for a local description of the skeleton it is useful to keep in mind that,  by Lemma \ref{Gbilapl}, near points  $p_{j}$ we have the expansion      
\begin{equation}
{\bf{G}}_{\bf{0},\bg,\cg}\sim \frac{b_{j} |\Gamma_{j}|}{2\st m-1 \dt |\Sp^{2m-1}|}G_{\Delta}\st p_{j},z \dt.
\end{equation} 
It is clear that the form
\begin{equation}
\omega +i\dd \sq - \varepsilon^{2m}{\bf{G}}_{\bf{0},\bg,\cg}+\st \frac{b_{j}|\Gamma_{j}|}{ 2\cgaj  \st m-1 \dt|\Sp^{2m-1}|}  \dt^{\frac{1}{m}}\varepsilon^{2}\chi_{j}\psi_{\eta_{j}}\st  \st \frac{2 \cgaj  \st m-1 \dt|\Sp^{2m-1}|}{b_{j}|\Gamma_{j}|}  \dt^{\frac{1}{m}} \frac{z}{\varepsilon}  \dt   \dq
\end{equation}
matches exactly at the highest order the form  $\st \frac{b_{j}|\Gamma_{j}|}{2 \cgaj  \st m-1 \dt|\Sp^{2m-1}|}  \dt^{\frac{1}{m}}\eta_{j}$, once we rescale (as we will in the final gluing) the model using the map 
\begin{equation}
x= \st \frac{2 \cgaj  \st m-1 \dt|\Sp^{2m-1}|}{b_{j}|\Gamma_{j}|}  \dt^{\frac{1}{m}} \frac{z}{\varepsilon},
\end{equation} 
where the coefficient $ \cgaj  $ is given by Proposition \ref{asintpsieta}. It is then convenient, from now on, to set the following notation
\begin{equation}\label{eq:Bj}
B_{j}=\st \frac{b_{j}|\Gamma_{j}|}{ 2\cgaj  \st m-1 \dt|\Sp^{2m-1}|}  \dt^{\frac{1}{2m}}.
\end{equation}
It will also be convenient to identify the right constants $C_{j}$ such that 
\begin{equation}
\Lg\st {\bf{G}}_{\bf{0},\bg,\cg}-\sum_{j=1}^{N} \cgaj  B_{j}^{2m}G_{\Delta}\st p_{j},z \dt+ C_{j}G_{\Delta\Delta}\st p_{j},z \dt \dt\in C^{0,\alpha}\st M \dt.
\end{equation}
By Lemma \ref{Glapl} one gets
\begin{equation}\label{eq:Cj}
C_{j}=\frac{|\Gamma_{j}|}{8\st m-2 \dt\st m-1 \dt}\sq  2\cgaj  B_{j}^{2m}\frac{\st m-1 \dt|\Sp^{2m-1}|}{m|\Gamma_{j}|}s_{\omega}\st 1+\frac{\st m-1 \dt^{2}}{\st m+1 \dt} \dt-c_{j} \dq.
\end{equation}
The highest blow-up terms of $G_{\Delta}, G_{\Delta\Delta}$ in  ${\bf{G}}_{\bf{0},\bg,\cg}$ i.e. terms exploding like $|z|^{2-2m}, |z|^{4-2m}$   are the {\em principal asymptotics} of the family of Kcsc metrics $\omega_{{\bf 0},\bg,\cg,\hg,\kg}$. At the moment of data matching, the coefficients $B_{j}$'s and $C_{j}$'s will be ``{\em tuned}" in such a way that, {\em principal asymptotics} of $\omega_{\bf{0},\bg,\cg,\hg,\kg}$ on $M_{\rep}$ will match exactly the "principal asymptotics" of $\varepsilon^{2}\eta_{\tilde{b}_{j},\tilde{h}_{j},\tilde{k}_{j}}$'s  on $X_{\Gamma_{j},\frac{\Rep}{\tilde{b}_{j}}}$'s. More precisely,  under suitable rescalings, the $|z|^{2-2m}$ terms of ${\bf{G}}_{\bf{0},\bg,\cg}$    will match exactly the $|x|^{2-2m}$ terms of the potentials at infinity of $\eta_{j}$'s   and also $|z|^{4-2m}$ terms  will match exactly the  correction terms $|x|^{4-2m}$  that pop up transplanting potential of $\omega$ on $X_{\Gamma_{j}}$. The justification for this procedure will come  at the moment of data matching. indeed, when we will look at the metrics at the boundaries,  it will be clear that the $\varepsilon$-growths of the {\em principal asymptotics} are the maximum among all terms constituting the family  $\omega_{{\bf 0},\bg,\cg,\hg,\kg}$ and are in fact too large to be controlled  by the extensions of  {\em pseudo-boundary data} (introduced just below here). For general $\bg,\cg$ as in assumptions of Proposition \ref{crucialbase} the data matching procedure becomes hence impossible.   To overcome this difficulty we are forced to impose relations on $\bg,\cg$  with the {\em tuning procedure}, and in some sense we fix them, in order to  have that the extensions of  {\em pseudo-boundary data} control all the components of $\omega_{{\bf 0},\bg,\cg,\hg,\kg}$ not perfectly matched. The {\em tuning procedure}, although it could appear as a merely technical procedure, has strong geometric consequences indeed  it  yields to the key condition \eqref{eq:tuning} of Theorem \ref{maintheorem} and hence puts constraints on the "symplectic positions" of singular points. 
\vspace{10pt}
\item[] {\bf Extensions of pseudo-boundary data.} Using the notion of euclidean outer biharmonic extensions of functions on the sphere we define  for $\st \hg,\kg \dt\in \dombd$ 
\begin{equation}\label{eq:hko}
\hko:=\sum_{j=1}^{N}\chi_{j}H_{h_{j}^{(\dagger)},k_{j}^{(\dagger)}}^{out}\st \frac{z}{\rep} \dt\,.
\end{equation}
\noindent  When we will look to this term at the boundary we will see that it has the second $\varepsilon$-growth after the {principal asymptotics} and it will become the highest $\varepsilon$-growth after the ``{\em tuning}" of {\em principal asymptotics}.  We will have, hence, that  extensions of {\em pseudo-boundary data} dominate every other term with respect to $\varepsilon$-growth. Moreover thanks to the arbitrariness of $\st \hg,\kg \dt$, we can perform the Cauchy data matching procedure and glue the various metrics to a unique one.

\vspace{10pt}

\item[] {\bf Transplanted potentials.} As Sz\'ekelyhidi does in \cite{Gabor1} and \cite{Gabor2}, we bring to $M_{\rep}$ the potentials of $\eta_{j}$'s suitably rescaled and cut off in order to have better estimates through algebraic simplifications. Indeed, using the fact that $\eta_{j}$'s are scalar flat we obtain some useful cancellations when compute the magnitude of the error we commit adding to $\omega$ "artificial" terms like the skeleton and the transplanted potentials. In $x$-coordinates on $X_{\Gamma_{j}}$'s we have 
\begin{align}\label{eq:psij1}
0\equiv&\,\s_{eucl}\st -\cgaj|x|^{2-2m}+\psi_{\eta_{j}}\st x \dt \dt\\
=&\,-\frac{1}{2}\Delta^{2}\sq \psi_{\eta_{j}}\st x \dt \dq +\frac{1}{2}\NN_{eucl}\st -\cgaj|x|^{2-2m}+\psi_{\eta_{j}}\st x \dt\dt \,,
\end{align}
with $\psi_{\eta_{j}}$'s potentials "at inifinity" of metrics $\eta_{j}$'s defined in Section \ref{preliminaries}  Proposition \ref{asintpsieta} formula \eqref{eq: poteta}. With the rescaling 
\begin{equation}
x=\frac{B_{j}z}{\varepsilon}\,,
\end{equation} 
where the  coefficients $B_{j}$'s are defined in formula \eqref{eq:Bj}, we consider the term
\begin{equation}\label{eq:transplanted}
\textbf{P}_{\bg,\etag}:=\sum_{j=1}^{N}B_{j}^{2}\varepsilon^{2}\chi_{j}\psi_{\eta_{j}}\st \frac{z}{B_{j}	\varepsilon} \dt\,.
\end{equation}

We can rewrite identities \eqref{eq:psij1}  as follows
\begin{align}\label{eq:psij2}
0\equiv&\,\s_{eucl}\st -\cgaj\varepsilon^{2m}B^{2m}|z|^{2-2m}+ \Pbe\dt\\
=&\,-\frac{1}{2}\Delta^{2}\sq\textbf{P}_{\bg,\etag}\dq+\frac{1}{2}\NN_{eucl}\st -\cgaj\varepsilon^{2m}B^{2m}|z|^{2-2m}+ \textbf{P}_{\bg,\etag} \dt\,.
\end{align}
Unfortunately, since we are not in the euclidean setting, we have 
\begin{equation}
-\frac{1}{2}\Lg\sq\textbf{P}_{\bg,\etag}\dq+\frac{1}{2}\NN_{\omega}\st -\cgaj\varepsilon^{2m}B^{2m}|z|^{2-2m}+ \textbf{P}_{\bg,\etag} \dt\neq 0\,
\end{equation}
and hence we produce an error that has to be corrected by the solution $f$ of the equation \eqref{eq:basegrezza1}. The size of the solution $f$ grows as the error grows and we need $f$ to be small to be able to perform the Cauchy data matching procedure.  So we want to minmize as much as possible this error. Here two facts come into play, the first is  that on a small ball centered at $p_{j}\in \p$ the metric $\omega$ osculates with order two to the euclidean one and the second is that we substitute $\cgaj\varepsilon^{2m}B^{2m}|z|^{2-2m}$ with $\varepsilon^{2m}{\bf{G}}_{\bf{0},\bg,\cg}$ whose principal asymptotic is exactly $\cgaj\varepsilon^{2m}B^{2m}|z|^{2-2m}$. As we will see in the sequel ( precisely in the proof of Proposition \ref{stimascheletrobase}) we can use these two facts and relations \eqref{eq:psij2} to produce sharp estimates for the error $\s_{\omega}\st -\varepsilon^{2m}{\bf{G}}_{\bf{0},\bg,\cg} +\Pbe \dt$ and verify that is sufficiently small to allow us to perform the Cauchy data matching procedure and hence conclude the gluing construction.           

\vspace{10pt}
\item[] {\bf Correction term.} It is the term that ensures the constancy of the scalar curvature of the metric $\omega_{{\bf 0},\bg,\cg,\hg,\kg}$ on $M_{\rep}$ and it is a function $f_{{\bf 0},\bg,\cg,\hg,\kg}^{out}\in \Cqdd$ if $m\geq 3$ and $f_{{\bf 0},\bg,\cg,\hg,\kg}^{out}\in \Cqddd$ if $m=2$,  where the spaces $\Cqdd$ and $\Cqddd$ are defined in Subsection \ref{solutionlinearized} by formulas \eqref{deficiency1} and \eqref{deficiency2}. As the notation suggests, the function $f_{{\bf 0},\bg,\cg,\hg,\kg}^{out}$ depends nonlinearly on  $\st \hg,\kg \dt$ and $\bg$ and we find it by solving a fixed point problem on a suitable closed and bounded subspace of $ \Cqdd$ if $m\geq 3$ and $\in \Cqddd$ if $m=2$.

\end{itemize} 

\begin{notation}
{\em For the rest of the paper we will denote with $\mathsf{C}$ a positive constant, that can vary from line to line, depending only on $\omega$ and $\eta_{j}$'s.  }
\end{notation}

 We can now state the main proposition for the base space, whose proof will fill the rest of this subsection:

\begin{prop}
\label{crucialbase}
Let $(M,g,\omega )$ a Kcsc orbifold with isolated singularities and let $\p$ be the set of singular points with non trivial orbifold group that admit a \K\ Ricci flat resolution.

\begin{itemize}
\item  Assume exist $\bg\in \st\RR^{+}\dt^{N}$ and $\cg\in \RR^{N}$ such that 
	\begin{displaymath}
\left\{\begin{array}{lcl}
\sum_{j=1}^{N}b_{j}\Delta_{\omega}\varphi_{i}\st p_{j} \dt+c_{j}\varphi_{i}\st p_{j} \dt=0 && i=1,\ldots, d\\
&&\\
\st\Theta\st \bg,\cg \dt\dt_{\substack{1\leq i\leq d\\ 1\leq j\leq N}}&& \textrm{has full rank}
\end{array}\right.
\end{displaymath}
where $\st\Theta\st \bg,\cg \dt\dt_{\substack{1\leq i\leq d\\ 1\leq j\leq N}}$ is the matrix introduced in Section \ref{linearanalysis} formula \eqref{eq:nondeggen}. Let ${\bf{G}}_{\bf{0},\bg,\cg}$ be the multi-poles solution of $\Lg$ constructed in Section \ref{linearanalysis}  Remark \ref{gabc}.

\item    Let $\delta\in (4-2m,5-2m)$. Given any $\st \hg,\kg \dt\in \dombd$, where $\dombd$ is the space defined in formula \eqref{eq:dombd},  let $\hko$ be the function defined in formula \eqref{eq:hko}. 
\begin{equation}
\hko:=\sum_{j=1}^{N}\chi_{j}H_{h_{j}^{(\dagger)},k_{j}^{(\dagger)}}^{out}\st \frac{z}{\rep} \dt\,.
\end{equation}
\item Let ${\bf P}_{\bg,\etag}$ be the transplanted potentials defined in formula \eqref{eq:transplanted}
\begin{equation}
{\bf P}_{\bg,\etag}:=\sum_{j=1}^{N}B_{j}^{2}\varepsilon^{2}\chi_{j}\psi_{\eta_{j}}\st \frac{z}{B_{j}	\varepsilon} \dt\,.
\end{equation}
\end{itemize}
Then there exists $f_{{\bf 0},\bg,\cg,\hg,\kg}^{out}\in \Cqdd$ if $m\geq 3$ and $f_{{\bf 0},\bg,\cg,\hg,\kg}^{out}\in \Cqddd$ if $m=2$ such that 
\begin{equation}
\omega_{{\bf 0},\bg,\cg,\hg,\kg}=\omega+i\dd\st {-\varepsilon^{2m}\bf{G}}_{\bf{0},\bg,\cg}  +{\bf P}_{\bg,\etag}+ \hko +f_{{\bf 0},\bg,\cg,\hg,\kg}^{out}  \dt
\end{equation}
is a Kcsc metric on $M_{\rep}$ and the following estimates hold
\begin{equation}
\begin{array}{lll}
\left\|f_{\bf{0},\bg,\cg,\hg,\kg}^{out}\right\|_{\Cqdd}&\leq \mathsf{C} \varepsilon^{2m+2}\rep^{2-2m-\delta}&\textrm{ for } m\geq 3\,,\\
&&\\
\left\|f_{\bf{0},\bg,\cg,\hg,\kg}^{out}\right\|_{\Cqddd}&\leq \mathsf{C} \varepsilon^{6}\rep^{-2-\delta}&\textrm{ for } m= 2\,.
\end{array}
\end{equation}
 Moreover $s_{\omega_{{\bf 0},\bg,\cg,\hg,\kg}}$, the scalar curvature of $\omega_{{\bf 0},\bg,\cg,\hg,\kg}$, is a small perturbation of $s_{\omega}$, the scalar curvature of the background metric $\omega$ and we have
\begin{equation}
\left| s_{\omega_{{\bf 0},\bg,\cg,\hg,\kg}}-s_{\omega}\right|\leq \mathsf{C}\varepsilon^{2m}\,.
\end{equation}
\end{prop}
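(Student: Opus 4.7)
The plan is to recast equation \eqref{eq:basegrezza2} as a fixed-point problem for $f$ and solve it via the bounded right inverse $\mathbb{J}^{(\delta)}$ of $\Lg$ acting on $\Cqdd$ (resp.\ $\Cqddd$ for $m=2$) constructed in Remark \ref{inversadef}, whose existence on the appropriate deficiency space is guaranteed precisely by the nondegeneracy hypothesis on $\st\Theta(\bg,\cg)\dt$. Setting $\nu := \varepsilon^{2m}\nu_{\mathbf{0},\cg} + 2(s_{\omega_{\mathbf{0},\bg,\cg,\hg,\kg}} - s_\omega)$ and
$$\mathcal{F}(f) := -\Lg[\Pbe] - \Lg[\hko] + \NN_\omega\bigl(-\varepsilon^{2m}\mathbf{G}_{\mathbf{0},\bg,\cg} + \Pbe + \hko + f\bigr),$$
the equation becomes $\Lg[f]+\nu = \mathcal{F}(f)$, so I would look for a fixed point of $f \mapsto \mathbb{J}^{(\delta)}[\mathcal{F}(f)]$ on the closed ball of radius $\mathsf{C}\varepsilon^{2m+2}\rep^{2-2m-\delta}$ via the Banach contraction theorem.

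Estimating $\mathcal{F}(0)$ is the heart of the matter. For $\hko$: since each outer biharmonic extension is annihilated by the Euclidean bi-Laplacian, one has $\Lg[\hko] = (\Lg - \Delta^2)[\hko]$ up to commutators with the cutoffs $\chi_j$; because the coefficients of $\Lg - \Delta^2$ vanish to order two at each $p_j$ in K\"ahler normal coordinates, combining this with the Fourier expansions \eqref{eq:bihout3}--\eqref{eq:bihout2} and the pseudo-boundary data bounds \eqref{eq:dombd} yields a contribution of the correct size. For $\Pbe$: I would exploit the scalar-flat identity \eqref{eq:psij2}, which gives $\s_{eucl}(-c(\Gamma_j)\varepsilon^{2m}B_j^{2m}|z|^{2-2m} + \Pbe) \equiv 0$, and replace the Euclidean profile $-c(\Gamma_j)\varepsilon^{2m}B_j^{2m}|z|^{2-2m}$ by the skeleton $-\varepsilon^{2m}\mathbf{G}_{\mathbf{0},\bg,\cg}$. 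The definitions \eqref{eq:Bj}--\eqref{eq:Cj} of $B_j$ and $C_j$, together with the asymptotic expansion in Proposition \ref{asintpsieta}, ensure that the two profiles agree up to order $|z|^{4-2m}$, leaving only residuals coming from (i) the metric deviation $\omega - \omega_{eucl} = O(|z|^2)$, appearing through $\Lg - \Delta^2$ and $\NN_\omega - \NN_{eucl}$; (ii) sub-leading parts of $\psi_{\eta_j}$, which by Proposition \ref{asintpsieta} decay at least like $|x|^{-2m}$; and (iii) the $C^{0,\alpha}(M)$ piece of $\Lg[\mathbf{G}_{\mathbf{0},\bg,\cg}] + \nu_{\mathbf{0},\cg}$. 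After the rescaling $x = B_j z/\varepsilon$, each residual is bounded in $C^{0,\alpha}_{\delta-4}$ by $\mathsf{C}\varepsilon^{2m+2}\rep^{2-2m-\delta}$ (resp.\ $\mathsf{C}\varepsilon^{6}\rep^{-2-\delta}$ for $m=2$).

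For the contraction property, since $\mathcal{F}(f_1) - \mathcal{F}(f_2) = \NN_\omega(\cdots + f_1) - \NN_\omega(\cdots + f_2)$ and every term of $\NN_\omega$ in \eqref{eq:defQg} is at least quadratic in its argument, the Lipschitz constant of $\mathcal{F}$ on the target ball is controlled by the $C^{4,\alpha}_\delta$-norm of the ansatz $-\varepsilon^{2m}\mathbf{G}_{\mathbf{0},\bg,\cg} + \Pbe + \hko$, which becomes $o(1)$ thanks to the scale relation $\rep = \varepsilon^{(2m-1)/(2m+1)}$ and the choice $\delta \in (4-2m, 5-2m)$. Banach's theorem then delivers the unique $f_{\mathbf{0},\bg,\cg,\hg,\kg}^{out}$ in the stated ball. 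The bound $|s_{\omega_{\mathbf{0},\bg,\cg,\hg,\kg}} - s_\omega| \leq \mathsf{C}\varepsilon^{2m}$ follows directly from the definition of $\nu$ together with the uniform bound on $\nu_{\mathbf{0},\cg}$.

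The main obstacle will be making the cancellation argument above rigorous. A naive pointwise bound of $\NN_\omega(-\varepsilon^{2m}\mathbf{G}_{\mathbf{0},\bg,\cg} + \cdots)$ produces contributions of the catastrophic order $\varepsilon^{4m}|z|^{-4m}$ near each $p_j$, which is completely out of reach of the target weighted norm; only the exact algebraic matching of the skeleton's principal asymptotic with the ALE potential $\psi_{\eta_j}$, packaged via Proposition \ref{asintpsieta} and the constants \eqref{eq:Bj}--\eqref{eq:Cj}, rescues the estimate. Following this cancellation through each of the three summands of \eqref{eq:defQg} as well as through the higher-order remainder $\RR_\omega$, and packaging everything in the correct weighted norms, is the delicate technical crux of the proof.
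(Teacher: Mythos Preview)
Your proposal is correct and follows essentially the same route as the paper: set up a fixed-point problem via the right inverse $\mathbb{J}^{(\delta)}_{\mathbf{0},\bg,\cg}$ of Remark~\ref{inversadef}, estimate the zero-order term through the scalar-flat cancellation \eqref{eq:psij2} (the paper packages your residuals (i)--(iii) as three terms $\textrm{\bf I},\textrm{\bf II},\textrm{\bf III}$ in Lemma~\ref{stimascheletrobase}), prove the contraction estimates, and conclude by Banach's theorem. One technical point you should make explicit: the right inverse $\mathbb{J}^{(\delta)}_{\mathbf{0},\bg,\cg}$ is defined on $C^{0,\alpha}_{\delta-4}(M_{\p})$, whereas your source $\mathcal{F}(f)$ lives only on $M_{\rep}$; the paper introduces a truncation--extension operator $\mathcal{E}_{\rep}$ to bridge this gap, and the constant $s_{\mathbf{0},\bg,\cg,\hg,\kg}$ is then \emph{determined} (not merely bounded) by the integral condition~\eqref{eq: s} that the extended source have the correct mean, which is what ultimately yields $|s_{\omega_{\mathbf{0},\bg,\cg,\hg,\kg}}-s_\omega|\leq \mathsf{C}\varepsilon^{2m}$.
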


Since the scalar curvature $s_{\omega_{{\bf 0},\bg,\cg,\hg,\kg}}$ is going to be a small perturbation of $s_{\omega}$ we can write 
\begin{equation}\label{eq:sbchk}
s_{\omega_{{\bf 0},\bg,\cg,\hg,\kg}}:=s_{\omega}+\frac{1}{2}s_{{\bf 0},\bg,\cg,\hg,\kg}
\end{equation}
where $s_{\omega_{{\bf 0},\bg,\cg,\hg,\kg}}$ is a small constant depending on $\varepsilon$ such that
\begin{equation}
\lim_{\varepsilon\rightarrow 0}s_{{\bf 0},\bg,\cg,\hg,\kg}=0\,.
\end{equation}

\noindent  In order to find the correction $f_{\bf{0},\bg,\cg,\hg,\kg}^{out}$ we set up a fixed point problem that will be solved using Banach-Caccioppoli Theorem. We can rewrite equation \eqref{eq:basegrezza2} in the following form.
\begin{align}\label{eq: PDEbase}
\Lg\sq f\dq+s_{\bf{0},\bg,\cg,\hg,\kg}+ \varepsilon^{2m}\nu_{\bf{0},\cg}=&-\Lg\sq\Pbe\dq -\Lg\sq \hko\dq\\
&+\NN _{\omega}\st -\varepsilon^{2m}{\bf{G}}_{\bf{0},\bg,\cg}+\Pbe+ \hko+ f\dt\,.
\end{align}

}

\noindent The assumption of Proposition \ref{crucialbase} that there exist $\bg\in \st\RR^{+}\dt^{N}$ and $\cg\in \RR^{N}$ such that the matrix
\begin{equation}
\st\Theta_{ij}\st \bg,\cg \dt \dt_{\substack{1\leq i\leq d\\1\leq j\leq N}}
\end{equation}  
has full rank enables us, making use of Theorem \ref{invertibilitapesatodef}  Remark \ref{inversadef},  to invert the operator $\mathbb{L}_{{\bf 0},\bg,\cg}^{(\delta)}$ on $M_{\p}$. It is  then useful to consider a PDE  on the whole $M_{\p}$  such that on $M_{\rep}$ reduces to the \eqref{eq: PDEbase}. To this aim we introduce a truncation-extension operator on weighted H\"older spaces.  Let $f\in C_{\delta}^{0,\alpha}\st  M \dt $ we define $\mathcal{E}_{\rep}:C_{\delta}^{0,\alpha}\st  M \dt \rightarrow C_{\delta}^{0,\alpha}\st  M \dt $
\begin{displaymath}
\mathcal{E}_{\rep}\st  f \dt :\left\{ \begin{array}{ll}
f\st  z \dt  & z\in B_{2\rep}\setminus B_{\rep}   \\
f\st  \rep\frac{z}{|z|} \dt \chi\st  \frac{|z|}{\rep} \dt  & z\in B_{ \rep }\setminus B_{\frac{\rep}{2}}\\
0  & z\in B_{\frac{\rep}{2}}  
\end{array} \right.
\end{displaymath}
where $\chi\in C^{\infty}\st [0,+\infty) \dt$ is a cutoff function identically equal to  $1$ on $[1,+\infty)$ and identically equal to $0$ on $\sq 0,\frac{1}{2} \dq$. Now we use the truncation-extension operator and we find our differential equation.

\begin{align}\label{eq: PDEbaseMp}
\Lg\sq f\dq+s_{\bf{0},\bg,\cg,\hg,\kg}+ \varepsilon^{2m}\nu_{\bf{0},\cg}=&-\mathcal{E}_{\rep}\Lg\sq\Pbe\dq -\mathcal{E}_{\rep}\Lg\sq \hko\dq\\
&+\mathcal{E}_{\rep}\NN _{\omega}\st -\varepsilon^{2m}{\bf{G}}_{\bf{0},\bg,\cg}+\Pbe+ \hko+ f\dt\,.
\end{align}

To set up the fixed point problem we use the inverse $\mathbb{J}_{{\bf 0},\bg,\cg}^{(\delta)}$ of $\mathbb{L}_{{\bf 0},\bg,\cg}^{(\delta)}$  of Remark	 \ref{inversadef} and we construct the following operator 
\begin{equation}
\begin{array}{lll}
\mathbb{T}_{{\bf 0},\bg,\cg}^{(\delta)}: & \Cqdd\times \dombd \rightarrow \Cqdd &\textrm{ for } m\geq 3\\
&&\\
\mathbb{T}_{{\bf 0},\bg,\cg}^{(\delta)}:&  \Cqddd\times \dombd \rightarrow \Cqddd   &\textrm{ for } m= 2
\end{array}
\end{equation}

defined as

\begin{align}
\mathbb{T}_{{\bf 0},\bg,\cg}^{(\delta)}\st f,\hg,\kg \dt=&\mathbb{J}^{(\delta)}_{{\bf 0},\bg,\cg}\sq-\mathcal{E}_{\rep}\Lg\sq\Pbe\dq -\mathcal{E}_{\rep}\Lg\sq \hko\dq\right.\\
&\left.\phantom{aaaaa}+\mathcal{E}_{\rep}\NN _{\omega}\st -\varepsilon^{2m}{\bf{G}}_{\bf{0},\bg,\cg}+\Pbe+ \hko+ f\dt-s_{\bf{0},\bg,\cg,\hg,\kg}- \varepsilon^{2m}\nu_{\bf{0},\cg}\dq\,,
\end{align}

with

\begin{align}\label{eq: s}
\st s_{\bf{0},\bg,\cg,\hg,\kg}+ \varepsilon^{2m}\nu_{\bf{0},\cg} \dt \vol\st M \dt=&\int_{M}\sq  \Lg\sq\Pbe\dq -\Lg\sq \hko\dq   \dq\, d\mu_{\omega}\\
&+\int_{M}\sq \mathcal{E}_{\rep}\NN _{\omega}\st -\varepsilon^{2m}{\bf{G}}_{\bf{0},\bg,\cg}+\Pbe+ \hko+ f\dt- \varepsilon^{2m}\nu_{\bf{0},\cg}\dq  \,d\mu_{\omega}\,.
\end{align}

The constant $s_{\bf{0},\bg,\cg,\hg,\kg}$ is an undetermined parameter of our construction and, a priori, there is no restriction on its size. It is precisely in formula \eqref{eq: s} that we are forced to set its value and, as we anticipated, it turns out to be a small constant since 
\begin{equation}
s_{\bf{0},\bg,\cg,\hg,\kg}\approx -\varepsilon^{2m}\nu_{\bf{0},\cg}\,.
\end{equation}

\noindent We prove the existence of a solution of equation \eqref{eq: PDEbase} by finding, for fixed $\st\hg,\kg\dt\in \dombd$, a fixed point of the operator $\mathbb{T}_{{\bf 0},\bg,\cg}^{(\delta)}$
\begin{equation}
\begin{array}{lll}
\mathbb{T}_{{\bf 0},\bg,\cg}^{(\delta)}\st \cdot ,\hg,\kg\dt: & \Cqdd \rightarrow \Cqdd &\textrm{ for } m\geq 3\\
\mathbb{T}_{{\bf 0},\bg,\cg}^{(\delta)}\st \cdot ,\hg,\kg\dt:&  \Cqddd\rightarrow \Cqddd   &\textrm{ for } m= 2
\end{array}
\end{equation}
 hence showing it satisfies the assumptions of  contraction Theorem. More precisely we want to prove that there exist a domain $\Omega\subset \Cqdd$ (respectively $\Omega\subset \Cqddd$)  such that for any $f\in \Omega$ then  $\mathbb{T}_{{\bf 0},\bg,\cg}^{(\delta)}\st f,\hg,\kg \dt\in \Omega$ and $\mathbb{T}_{{\bf 0},\bg,\cg}^{(\delta)}\st \cdot,\hg,\kg \dt$ is a contraction on $\Omega$. The first step  is to estimate at $\mathbb{T}_{{\bf 0},\bg,\cg}^{(\delta)}\st 0,{\bf 0 },{\bf 0} \dt$ that heuristically  tells us ``how far'' is  the metric
\begin{equation}
\omega +i\dd\st  - \varepsilon^{2m}{\bf{G}}_{\bf{0},\bg,\cg}+\Pbe  \dt
\end{equation}   
from being Kcsc on $M_{\rep}$.

\begin{lemma}\label{stimascheletrobase}
Under the assumptions of Proposition \ref{crucialbase} the following estimates hold
\begin{equation}
\begin{array}{lll}
\left\|\mathbb{T}_{{\bf 0},\bg,\cg}^{(\delta)}\st 0,{\bf 0},{\bf 0} \dt\right\|_{\Cqdd}&\leq \mathsf{C}\varepsilon^{2m+2}\rep^{2-\delta-2m}&\textrm{ for  }m\geq 3\\
\left\|\mathbb{T}_{{\bf 0},\bg,\cg}^{(\delta)}\st 0,{\bf 0},{\bf 0} \dt\right\|_{\Cqddd}&\leq \mathsf{C}\varepsilon^{6}\rep^{-2-\delta}&\textrm{ for  }m= 2
\end{array}\,.
\end{equation}

\end{lemma}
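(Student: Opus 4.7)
The approach is to unfold $\mathbb{T}_{{\bf 0},\bg,\cg}^{(\delta)}(0,\mathbf{0},\mathbf{0})$ and then invoke the bounded right inverse $\mathbb{J}^{(\delta)}_{{\bf 0},\bg,\cg}$ provided by Remark \ref{inversadef}. Since that inverse is continuous, it will suffice to estimate the forcing
$$
\mathcal{R}_{\varepsilon} \,:=\, -\mathcal{E}_{\rep}\Lg[\Pbe] \, + \, \mathcal{E}_{\rep}\NN_{\omega}\!\left(-\varepsilon^{2m}\mathbf{G}_{\mathbf{0},\bg,\cg}+\Pbe\right) \, - \, s_{\mathbf{0},\bg,\cg,\mathbf{0},\mathbf{0}} \, - \, \varepsilon^{2m}\nu_{\mathbf{0},\cg}
$$
in $C^{0,\alpha}_{\delta-4}(M_{\p})$ by $\mathsf{C}\,\varepsilon^{2m+2}\rep^{2-2m-\delta}$. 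The plan is to extract from the scalar-flatness of each $\eta_{j}$ cancellations which are invisible if the two non-constant summands are estimated in isolation but which are needed to reach the correct $\varepsilon$-rate.

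Concretely, I would substitute the euclidean scalar-flatness identity \eqref{eq:psij2} in the form $\Delta^{2}[\Pbe] = \NN_{eucl}\bigl(-\cgaj\varepsilon^{2m}B_{j}^{2m}|z|^{2-2m}+\Pbe\bigr)$ and regroup as
$$
-\Lg[\Pbe] \, + \, \NN_{\omega}(A_{1}) \,=\, -(\Lg-\Delta^{2})[\Pbe] \, + \, \bigl[\NN_{\omega}(A_{1}) - \NN_{eucl}(A_{2})\bigr],
$$
where $A_{1}=-\varepsilon^{2m}\mathbf{G}_{\mathbf{0},\bg,\cg}+\Pbe$ and $A_{2}=-\cgaj\varepsilon^{2m}B_{j}^{2m}|z|^{2-2m}+\Pbe$. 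In \K\ normal coordinates at each $p_{j}$ the metric $\omega$ osculates the euclidean one to second order, so the coefficients of $\Lg-\Delta^{2}$ carry an extra factor $O(|z|^{2})$ at every order of differentiation; together with the decay $\Pbe \sim \varepsilon^{2m+2}|z|^{-2m}$ furnished by Proposition \ref{asintpsieta}, this forces $(\Lg-\Delta^{2})[\Pbe]=O(\varepsilon^{2m+2}|z|^{-2m-2})$. An analogous analysis of $\NN_{\omega}(A_{1})-\NN_{eucl}(A_{2})=[\NN_{\omega}-\NN_{eucl}](A_{1})+\NN_{eucl}(A_{1})-\NN_{eucl}(A_{2})$ gives bounds of the same order: the first piece again uses the $|z|^{2}$ vanishing of $\omega-g_{eucl}$, while the second exploits $A_{1}-A_{2}=-\varepsilon^{2m}(\mathbf{G}_{\mathbf{0},\bg,\cg}-\cgaj B_{j}^{2m}|z|^{2-2m})=O(\varepsilon^{2m}|z|^{4-2m})$, which is precisely the content of Lemma \ref{Glapl}, combined with the bilinear structure of $\NN_{eucl}$ evaluated on arguments whose second derivatives are of size $\varepsilon^{2m}|z|^{-2m}$.

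For the remaining constant $s_{\mathbf{0},\bg,\cg,\mathbf{0},\mathbf{0}}+\varepsilon^{2m}\nu_{\mathbf{0},\cg}$, the defining relation \eqref{eq: s} presents it as the $\omega$-average over $M$ of the first two summands of $\mathcal{R}_{\varepsilon}$, so it inherits their order of magnitude and contributes only a fixed multiple to the norm, since constants embed continuously into $C^{0,\alpha}_{\delta-4}(M_{\p})$. Converting the pointwise bound $O(\varepsilon^{2m+2}|z|^{-2m-2})$ to the weighted H\"older norm over $M_{\rep}$ produces the supremum at scale $|z|\sim\rep$, namely $\mathsf{C}\varepsilon^{2m+2}\rep^{-2m-2-(\delta-4)}=\mathsf{C}\varepsilon^{2m+2}\rep^{2-2m-\delta}$, which agrees with the statement in both regimes $m\geq 3$ and $m=2$ (the latter specialising to $\varepsilon^{6}\rep^{-2-\delta}$). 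The main obstacle is propagating these cancellations from $C^{0}$ to the full H\"older level: the operator $\NN_{\omega}$ contains products of second and third derivatives of its argument, so one must verify that the factorised structure of the cancellations survives differentiation and H\"older interpolation. This is essentially routine bookkeeping once the decomposition above is fixed, and delivers the stated estimates after one final application of the bounded inverse $\mathbb{J}^{(\delta)}_{\mathbf{0},\bg,\cg}$.
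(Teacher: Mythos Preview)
Your proof is correct and follows essentially the same strategy as the paper: exploit the scalar-flatness identity \eqref{eq:psij2} to cancel the principal part, then split the remainder into the piece $(\Lg-\Delta^{2})[\Pbe]$ controlled by the second-order osculation of $\omega$ to the euclidean metric, and the nonlinear discrepancy controlled both by that osculation and by the fact that $\mathbf{G}_{\mathbf{0},\bg,\cg}-\cgaj B_{j}^{2m}|z|^{2-2m}=O(|z|^{4-2m})$. The only cosmetic difference is that you decompose $\NN_{\omega}(A_{1})-\NN_{eucl}(A_{2})$ by first switching the metric at $A_{1}$ and then the argument inside $\NN_{eucl}$, whereas the paper first switches the argument inside $\NN_{\omega}$ and then the metric at $A_{2}$; both orderings yield the same bounds, with the term $\mathbf{I}$ dominating and fixing the final rate $\varepsilon^{2m+2}\rep^{2-2m-\delta}$.
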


\begin{proof} We give the proof for the case $m\geq 3$, the case $m=2$ is identical. For the sake of notation throughout this proof we set 
\begin{equation}
\psi_{j}\st z \dt:=B_{j}^{2}\varepsilon^{2}\chi_{j}\psi_{\eta_{j}}\st \frac{z}{B_{j}\varepsilon} \dt
\end{equation}

\noindent  We note that, on $M_{r_0}$, using estimates of Proposition \ref{asintpsieta} we have 
\begin{equation}\left\|-\mathcal{E}_{\rep}\Lg\sq \Pbe \dq +\mathcal{E}_{\rep}\NN _{\omega}\st -\varepsilon^{2m}{\bf{G}}_{\bf{0},\bg,\cg}+\Pbe\dt\right\|_{C^{0,\alpha}\st M_{r_0}\dt}\leq \mathsf{C} \varepsilon^{2m+2}\,.\end{equation}
 According to the definition of the weighted H\"older spaces  we now estimate on $B_{2r_{0}}\st p_{j} \dt$ the quantity 
\begin{equation}\sup_{\rho\in[\rep ,r_0]} \rho^{4-\delta}\left\| -\mathcal{E}_{\rep}\Lg\sq \psi_{j}\dq +\mathcal{E}_{\rep}\NN _{\omega}\st -\varepsilon^{2m}{\bf{G}}_{\bf{0},\bg,\cg}+\psi_{j}\dt  \right\|_{C^{0,\alpha}\st  B_{2} \setminus B_{1} \dt}\,. \end{equation}

\noindent On $B_{2r_0}$, we have 
\begin{align}
\Lg\sq\psi_{j}\dq -\NN _{\omega}\st -\varepsilon^{2m}{\bf{G}}_{\bf{0},\bg,\cg}+\psi_{j}\dt=&\, \Delta^{2}\sq\psi_{j}\dq -\NN _{eucl}\st - \cgaj  \varepsilon^{2m}B_{j}^{2m}|z|^{2m}+\psi_{j}\dt+  \textrm{\bf I}+ \textrm{\bf II}+\textrm{\bf III}\\
\end{align}

with 
\begin{equation}
\begin{array}{cll}
\textrm{\bf I}&:=&  \st\Lg-\Delta^{2}\dt\sq\psi_{j}\dq\\
\textrm{\bf II}&:=& \sq \NN _{eucl}\st - \cgaj  \varepsilon^{2m}B_{j}^{2m}|z|^{2m}+\psi_{j}\dt-\NN _{\omega}\st - \cgaj  \varepsilon^{2m}B_{j}^{2m}|z|^{2m}+\psi_{j}\dt\dq\\
\textrm{\bf III}&:=&\sq \NN _{\omega}\st - \cgaj  \varepsilon^{2m}B_{j}^{2m}|z|^{2m}+\psi_{j}\dt-\NN _{\omega}\st -\varepsilon^{2m}{\bf{G}}_{\bf{0},\bg,\cg}+\psi_{j}\dt\dq\,.
\end{array}
\end{equation}

\noindent The metric $\eta_{j}$ is Ricci-flat and hence scalar-flat and this fact, by \eqref{eq:psieta} , gives us the algebraic identity 
\begin{equation}
-\Delta^{2}\sq\psi_{j}\dq+\NN _{eucl}\st - \cgaj  \varepsilon^{2m}B_{j}^{2m}|z|^{2m}+\psi_{j}\dt=0
\end{equation}

\noindent With this cancellation, the only terms left to estimate are $\textrm{\bf I},\textrm{\bf II},\textrm{\bf III}$ and with standard, but cumbersome, computations  we obtain  

\begin{equation}\sup_{ \rho\in [\rep, r_0]}\rho^{-\delta+4}\left\| \textrm{\bf I}\right\|_{C^{0,\alpha}\st  B_{2} \setminus B_{1} \dt} \leq \mathsf{C}\varepsilon^{2m+2}\rep^{2-2m-\delta}\,, \end{equation}
\begin{equation}\sup_{ \rho\in [\rep, r_0]}\rho^{-\delta+4}\left\| \textrm{\bf II}\right\|_{C^{0,\alpha}\st  B_{2} \setminus B_{1} \dt} \leq \mathsf{C}\varepsilon^{4m}\rep^{4-\delta-4m}\,, \end{equation}
\begin{equation}\sup_{ \rho\in [\rep, r_0]}\rho^{-\delta+4}\left\| \textrm{\bf III}\right\|_{C^{0,\alpha}\st  B_{2} \setminus B_{1} \dt} \leq \mathsf{C} \varepsilon^{4m}\rep^{4-\delta-4m}\,. \end{equation}

\noindent We can conclude that

\begin{equation}\sup_{\substack{  1\leq j\leq N \\ \rho\in [\rep, r_0]}}\rho^{-\delta+4}\left\| \Lg\psi_{j} -2\NN _{\omega}\st -\varepsilon^{2m}{\bf{G}}_{\bf{0},\bg,\cg}+\psi_{j}\dt\right\|_{C^{0,\alpha}\st  B_{2} \setminus B_{1} \dt} \leq \mathsf{C} \varepsilon^{2m+2}\rep^{2-2m-\delta }\, 
\end{equation}
\noindent  and therefore the lemma is proved.
\end{proof}

\noindent  In light of  Lemma \ref{stimascheletrobase} we can take the quantity    $\left\|\mathbb{T}_{{\bf 0},\bg,\cg}^{(\delta)}\st 0,\textbf{0},\textbf{0} \dt\right\|_{\Cqdd}$ for $m\geq 3$  and  $\left\|\mathbb{T}_{{\bf 0},\bg,\cg}^{(\delta)}\st 0,\textbf{0},\textbf{0} \dt\right\|_{\Cqddd}$ for $m=2$ as a reference for the magnitude of the diameter of the $\Omega$ we are looking for.  Indeed if we consider the set of $f\in \Cqdd$ (respectively $f\in \Cqddd$) such that 
\begin{equation}
\begin{array}{lll}
\left\|f\right\|_{\Cqdd}&\leq 2 \left\|\mathbb{T}_{{\bf 0},\bg,\cg}^{(\delta)}\st 0,\textbf{0},\textbf{0} \dt\right\|_{\Cqdd}&=2\mathsf{C}\varepsilon^{2m+2}\rep^{2-2m-\delta }\\
\left\|f\right\|_{\Cqddd}&\leq 2 \left\|\mathbb{T}_{{\bf 0},\bg,\cg}^{(\delta)}\st 0,\textbf{0},\textbf{0} \dt\right\|_{\Cqddd}&=2\mathsf{C}\varepsilon^{6}\rep^{-2-\delta }
\end{array}
\end{equation}
we find our $\Omega$. The fact that, for fixed $\st \hg,\kg \dt\in \dombd$  
\begin{equation}
\mathbb{T}_{{\bf 0},\bg,\cg}^{(\delta)}\st \cdot, \hg,\kg \dt:\Omega\rightarrow \Omega
\end{equation}

\noindent and  is a well defined contraction follows from the following two Lemmas.

\begin{lemma}\label{stimabiarmonichebase}
Under the assumptions of Proposition \ref{crucialbase}, we have 
\begin{equation}
\left\|\mathcal{E}_{\rep}\Lg\hko\right\|_{C_{\delta-4}^{0,\alpha}\st M_{\p} \dt}\leq \mathsf{C}\left\|\hg^{(\dagger)},\kg^{(\dagger)}\right\|_{\mathcal{B}} \rep ^{2-\delta}\,.
\end{equation}

\end{lemma}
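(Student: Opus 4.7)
The strategy is to write $\Lg\hko=\sum_{j=1}^N\Lg\bigl[\chi_j\,H^{out}_{h_j^{(\dagger)},k_j^{(\dagger)}}(z/\rep)\bigr]$ and exploit the fact that the Euclidean biharmonic extension is annihilated by $\Delta^2$, so that $\Lg$ only contributes through its deviation from $\Delta^2$. Concretely, on the region $B_{2r_0}(p_j)$ where $\chi_j\equiv 1$, one has $\Delta^2\bigl[H^{out}(z/\rep)\bigr]=\rep^{-4}(\Delta^2 H^{out})(z/\rep)=0$, hence
\begin{equation*}
\Lg\bigl[H^{out}(z/\rep)\bigr] \, = \, (\Lg-\Delta^2)\bigl[H^{out}(z/\rep)\bigr] .
\end{equation*}
In K\"ahler normal coordinates at $p_j$ the metric satisfies $g^{i\bar{\jmath}}=\tfrac12\delta^{i\bar{\jmath}}+\mathcal{O}(|z|^2)$ and $\partial g(0)=0$, so the operator $\Lg-\Delta^2$ has the schematic structure
\begin{equation*}
\Lg-\Delta^2 \, = \, \mathcal{O}(|z|^2)\,\partial^4 \, + \, \mathcal{O}(|z|)\,\partial^3 \, + \, \mathcal{O}(1)\,\partial^2 ,
\end{equation*}
the last summand coming from the $4\langle\rho_\omega|i\dd\cdot\rangle$ piece of $\Lg$.

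The second ingredient is the Fourier expansion \eqref{eq:bihout3}--\eqref{eq:bihout2} of $H^{out}$ together with Remark \ref{nolinear}: since $(h_j^{(\dagger)},k_j^{(\dagger)})$ is $\Gamma_j$-invariant with zero projection onto the constants, and since no $\Gamma_j$-invariant function exists in the first eigenspace of $\Delta_{\Sp^{2m-1}}$, the expansion of $H^{out}_{h_j^{(\dagger)},k_j^{(\dagger)}}$ starts at $\gamma=2$ in all dimensions $m\geq 2$. Hence $H^{out}(w)=\mathcal{O}\bigl(\|(h_j^{(\dagger)},k_j^{(\dagger)})\|_{\mathcal{B}_j}|w|^{2-2m}\bigr)$ at infinity, and differentiating term by term
\begin{equation*}
\bigl|\partial^k_w H^{out}(w)\bigr| \, \leq \, \mathsf{C}\,\bigl\|(h_j^{(\dagger)},k_j^{(\dagger)})\bigr\|_{\mathcal{B}_j}\,|w|^{2-2m-k} , \qquad |w|\geq 1 .
\end{equation*}
Rescaling $w=z/\rep$ yields $|\partial^k_z[H^{out}(z/\rep)]|\leq \mathsf{C}\|\cdot\|\,\rep^{2m-2}|z|^{2-2m-k}$ for $|z|\geq\rep$. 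Feeding this into the schematic form of $\Lg-\Delta^2$ above, each of the three contributions produces the same bound $\mathsf{C}\|\cdot\|\,\rep^{2m-2}|z|^{-2m}$, so that
\begin{equation*}
\bigl|\Lg\bigl[H^{out}(z/\rep)\bigr]\bigr| \, \leq \, \mathsf{C}\,\bigl\|(h_j^{(\dagger)},k_j^{(\dagger)})\bigr\|_{\mathcal{B}_j}\,\rep^{2m-2}\,|z|^{-2m}
\end{equation*}
on $B_{2r_0}(p_j)\setminus\{p_j\}$, and analogous estimates for H\"older seminorms on dyadic annuli follow by scaling.

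Plugging this into the weighted norm on an annulus $B_{2\rho}(p_j)\setminus B_\rho(p_j)$ with $\rho\in[\rep,r_0]$ gives a contribution bounded by $\mathsf{C}\|\cdot\|\,\rep^{2m-2}\rho^{4-2m-\delta}$. Since $\delta\in(4-2m,5-2m)$ the exponent $4-2m-\delta$ is negative, so the supremum is attained at $\rho=\rep$ and produces exactly $\mathsf{C}\|(h_j^{(\dagger)},k_j^{(\dagger)})\|_{\mathcal{B}_j}\,\rep^{2-\delta}$, the desired bound. It remains to check the auxiliary regions: on $M_{r_0}$ the only contribution comes from derivatives of $\chi_j$ supported on the fixed annulus $\{2r_0\leq|z|\leq 3r_0\}$, where $H^{out}(z/\rep)$ and its derivatives are bounded by $\mathsf{C}\|\cdot\|\rep^{2m-2}$, which is absorbed into $\mathsf{C}\|\cdot\|\rep^{2-\delta}$ for $\delta>2-2m$; on $B_\rep(p_j)\setminus B_{\rep/2}(p_j)$ the truncation-extension operator $\mathcal{E}_\rep$ inherits the same boundary bound by construction. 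Summing over $j$ yields the claim. The main technical point to verify carefully will be the vanishing of the $\gamma=1$ component (via Remark \ref{nolinear}), without which the leading power would be $|w|^{1-2m}$ and the resulting estimate would be off by a factor of $\rep^{-1}$.
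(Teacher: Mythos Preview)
Your proposal is correct and follows essentially the same approach as the paper, whose proof is simply the one-liner ``straightforward computation using Remark~\ref{nolinear}.'' You have filled in precisely the intended details: biharmonicity of $H^{out}$ reduces $\Lg$ to $\Lg-\Delta^2$, the absence of the $\gamma=0$ and $\gamma=1$ Fourier modes (the first by the $\dagger$, the second by Remark~\ref{nolinear}) forces decay $|w|^{2-2m}$, and the resulting pointwise bound $\mathsf{C}\|\cdot\|\,\rep^{2m-2}|z|^{-2m}$ yields $\rep^{2-\delta}$ after taking the weighted supremum at $\rho=\rep$.

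One small slip: when absorbing the $M_{r_0}$ contribution you need $\rep^{2m-2}\leq\rep^{2-\delta}$, i.e.\ $\delta\geq 4-2m$, not merely $\delta>2-2m$; this is of course guaranteed by the standing hypothesis $\delta\in(4-2m,5-2m)$. Also, on $M_{r_0}$ the contribution is not \emph{only} from derivatives of $\chi_j$---on $B_{2r_0}\setminus B_{r_0}$ one still has $(\Lg-\Delta^2)[H^{out}]$---but both pieces satisfy the same $\mathsf{C}\|\cdot\|\rep^{2m-2}$ bound, so the conclusion is unaffected.
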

\begin{proof} 

This is a straightforward computation using Remark \ref{nolinear}.
\end{proof}

\begin{lemma}
Let  $ \st \hg',\kg' \dt \in \dombd$ and 
$f,f'\in \Cqdd$ if $m\geq 3$ and $f,f'\in \Cqddd$ if $m=2$ such that 
\begin{equation}
\left\|f\right\|_{\Cqdd},\left\|f'\right\|_{\Cqdd}\leq 2 \left\|\mathbb{T}_{{\bf 0},\bg,\cg}^{(\delta)}\st 0,{\bf 0},{\bf 0} \dt\right\|_{\Cqdd}.
\end{equation}
and respectively
\begin{equation}
\left\|f\right\|_{\Cqddd},\left\|f'\right\|_{\Cqddd}\leq 2 \left\|\mathbb{T}_{{\bf 0},\bg,\cg}^{(\delta)}\st 0,{\bf 0},{\bf 0} \dt\right\|_{\Cqddd}.
\end{equation}
\noindent If assumptions of Proposition \ref{crucialbase} are satisfied then the following estimates hold:

\begin{equation}
\begin{array}{lll}
\left\|\mathbb{T}_{{\bf 0},\bg,\cg}^{(\delta)}\st f,\hg,\kg \dt-\mathbb{T}_{{\bf 0},\bg,\cg}^{(\delta)}\st 0,\hg,\kg \dt\right\|_{\Cqdd}\,&\leq\, \frac{1}{2}\left\|\mathbb{T}_{{\bf 0},\bg,\cg}^{(\delta)}\st 0,{\bf 0},{\bf 0} \dt\right\|_{\Cqdd}&\textrm{ for }m\geq 3\\
\left\|\mathbb{T}_{{\bf 0},\bg,\cg}^{(\delta)}\st f,\hg,\kg \dt-\mathbb{T}_{{\bf 0},\bg,\cg}^{(\delta)}\st 0,\hg,\kg \dt\right\|_{\Cqddd}\,&\leq\, \frac{1}{2}\left\|\mathbb{T}_{{\bf 0},\bg,\cg}^{(\delta)}\st 0,{\bf 0},{\bf 0} \dt\right\|_{\Cqddd}&\textrm{ for }m=2\,;
\end{array}
\end{equation}

\begin{equation}
\begin{array}{lll}
\left\|\mathbb{T}_{{\bf 0},\bg,\cg}^{(\delta)}\st f,\hg,\kg \dt-\mathbb{T}_{{\bf 0},\bg,\cg}^{(\delta)}\st f',\hg,\kg \dt\right\|_{\Cqdd}\,&\leq\, \frac{1}{2}\left\|f-f'\right\|_{\Cqdd}&\textrm{ for }m\geq 3\\
\left\|\mathbb{T}_{{\bf 0},\bg,\cg}^{(\delta)}\st f,\hg,\kg \dt-\mathbb{T}_{{\bf 0},\bg,\cg}^{(\delta)}\st f',\hg,\kg \dt\right\|_{\Cqddd}\,&\leq\, \frac{1}{2}\left\|f-f'\right\|_{\Cqddd}&\textrm{ for }m=2\,;
\end{array}
\end{equation}

\begin{equation}
\begin{array}{lll}
\left\|\mathbb{T}_{{\bf 0},\bg,\cg}^{(\delta)}\st f,\hg,\kg \dt-\mathbb{T}_{{\bf 0},\bg,\cg}^{(\delta)}\st f,\hg',\kg' \dt\right\|_{\Cqdd}\,&\leq\, \frac{1}{2}\left\|\hg-\hg',\kg-\kg'\right\|_{\mathcal{B}}&\textrm{ for }m\geq 3\\
\left\|\mathbb{T}_{{\bf 0},\bg,\cg}^{(\delta)}\st f,\hg,\kg \dt-\mathbb{T}_{{\bf 0},\bg,\cg}^{(\delta)}\st f,\hg',\kg' \dt\right\|_{\Cqddd}\,&\leq\, \frac{1}{2}\left\|\hg-\hg',\kg-\kg'\right\|_{\mathcal{B}}&\textrm{ for }m=2\,;
\end{array}
\end{equation}

\end{lemma}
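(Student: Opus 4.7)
Set $\mathfrak{P} := -\varepsilon^{2m}\GGG_{\mathbf{0},\bg,\cg} + \Pbe + \hko$. By the continuity of the right inverse $\mathbb{J}^{(\delta)}_{\mathbf{0},\bg,\cg}$ recorded in Remark \ref{inversadef}, each of the three differences of interest is controlled by the $C_{\delta-4}^{0,\alpha}(M_{\p})$-norm of the corresponding difference of right-hand sides appearing in the definition of $\mathbb{T}^{(\delta)}_{\mathbf{0},\bg,\cg}$. Concretely, one has
\begin{align*}
\mathbb{T}^{(\delta)}_{\mathbf{0},\bg,\cg}(f,\hg,\kg) - \mathbb{T}^{(\delta)}_{\mathbf{0},\bg,\cg}(0,\hg,\kg) &= \mathbb{J}^{(\delta)}_{\mathbf{0},\bg,\cg}\bigl[\mathcal{E}_{\rep}\bigl(\NN_\omega(\mathfrak{P}+f) - \NN_\omega(\mathfrak{P})\bigr)\bigr], \\
\mathbb{T}^{(\delta)}_{\mathbf{0},\bg,\cg}(f,\hg,\kg) - \mathbb{T}^{(\delta)}_{\mathbf{0},\bg,\cg}(f',\hg,\kg) &= \mathbb{J}^{(\delta)}_{\mathbf{0},\bg,\cg}\bigl[\mathcal{E}_{\rep}\bigl(\NN_\omega(\mathfrak{P}+f) - \NN_\omega(\mathfrak{P}+f')\bigr)\bigr],
\end{align*}
while the third difference additionally features a purely linear contribution proportional to $\mathcal{E}_{\rep}\Lg(\hko - {\bf H}_{\hg',\kg'}^{out})$.

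The nonlinear pieces are treated via the mean-value identity
$$\NN_\omega(u+w) - \NN_\omega(u) \,=\, \int_0^1 D\NN_\omega(u+tw)[w]\,dt,$$
combined with the explicit form \eqref{eq:defQg} of $\NN_\omega$: the Fr\'echet differential $D\NN_\omega(u)[w]$ is a finite sum of terms, each containing at least one factor depending on the base point $u$ and its derivatives up to order four. Term-by-term estimates are obtained by coupling the decay of $\GGG_{\mathbf{0},\bg,\cg}$ given by Proposition \ref{loc_structure}, the asymptotic bounds for $\Pbe$ coming from Proposition \ref{asintpsieta}, the weighted H\"older bounds on $\hko$ encoded in $\dombd$ (together with Lemma \ref{stimabiarmonichebase}), and the assumed smallness of $f, f'$. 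The linear contribution in the third inequality is controlled directly by Lemma \ref{stimabiarmonichebase} applied to $(\hg - \hg', \kg - \kg')$, yielding a bound of the order $\rep^{2-\delta}\|(\hg-\hg',\kg-\kg')\|_{\mathcal{B}}$.

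The main obstacle is the combinatorial bookkeeping: each term produced by $D\NN_\omega$ mixes scales originating from the different pieces of $\mathfrak{P}$ and from $f$, and one must verify in every single case that the resulting product of powers of $\varepsilon$ and $\rep$ is strictly dominated by the target right-hand side of the inequality, namely $\tfrac{1}{2}\|\mathbb{T}^{(\delta)}_{\mathbf{0},\bg,\cg}(0,\mathbf{0},\mathbf{0})\|$ for the first two estimates and $\tfrac{1}{2}\|(\hg-\hg',\kg-\kg')\|_{\mathcal{B}}$ for the third. This is precisely where the choice of weight $\delta \in (4-2m, 5-2m)$, the scaling relation $\rep = \varepsilon^{(2m-1)/(2m+1)}$, and the defining bounds of $\dombd$ play their role, the desired estimates then following for $\varepsilon$ sufficiently small. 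The case $m=2$ is structurally identical, the only additional work consisting in monitoring the component in $\mathcal{E}_{\p,\q}$ present in the functional setting of Remark \ref{inversadef}.
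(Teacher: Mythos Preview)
Your sketch is correct and follows the same route as the paper: here the proof is simply ``direct computation as \cite[Lemma 5.2]{ap2}'', and the commented-out draft proofs in the source reveal exactly the decomposition you describe---reduction via the boundedness of $\mathbb{J}^{(\delta)}_{\mathbf{0},\bg,\cg}$, the add-and-subtract of $\NN_{eucl}$ versus $\NN_\omega$ near the punctures, and Lemma~\ref{stimabiarmonichebase} for the linear piece in the third estimate. One small omission: the constant $s_{\mathbf{0},\bg,\cg,\hg,\kg}$ in the definition of $\mathbb{T}^{(\delta)}_{\mathbf{0},\bg,\cg}$ also depends on $f$ and on $(\hg,\kg)$ through the integral \eqref{eq: s}, so the differences you wrote pick up an additional constant term; this is harmless since that constant is itself controlled by the $C^{0,\alpha}_{\delta-4}$-norm of the nonlinear difference and is absorbed in the $\RR$-component of the output of $\mathbb{J}^{(\delta)}_{\mathbf{0},\bg,\cg}$.
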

\begin{proof}
Follows by direct computation as \cite[Lemma 5.2]{ap2}.
\end{proof}

The proof of Proposition \ref{crucialbase} is now complete.

\subsection{Kcsc metrics on the truncated model spaces}\label{analisinonlinearemodello} We now want to perform on the model spaces $X_{\Gamma_{j}}$'s a similar analysis as in the previous Subsection. 

\begin{notation}
{\em To keep notations as short as possible we drop the subscript $j$. } 
\end{notation}
Our starting point is a Ricci-flat ALE K\"ahler manifold $\st X_{\Gamma},\eta, h \dt$ where we want to find $F_{\tilde{b},\tilde{h},\tilde{k}}^{in}\in C^{4,\alpha}\st X_{\Gamma,\frac{\Rep}{\tilde{b}}} \dt$ with $\tilde{b}\in \RR^{+}$ such that

\begin{equation}\eta_{\tilde{b},\tilde{h},\tilde{k}}:=\tilde{b}^{2}\eta+i\dd F_{\tilde{b},\tilde{h},\tilde{k}}^{in}\end{equation} 

\noindent is a metric on $X_{\frac{\Rep}{\tilde{b}}}$ and 
\begin{equation}
\s_{\tilde{b}^{2}\eta}\st F_{\tilde{b},\tilde{h},\tilde{k}}^{in} \dt=\varepsilon^{2}\st s_{\omega}+	\frac{1}{2}s_{\bf{0},\bg,\cg,\hg,\kg}\dt\,.
\end{equation}

with $\s_{\tilde{b}^{2}\eta}$  the operator introduced in \eqref{eq:espsg}.
\noindent The parameters $\tilde{b}, \tilde{h}, \tilde{k}$ will be chosen after  the construction of the familiy of Kcsc metrics on $X_{\Gamma,\frac{\Rep}{\tilde{b}}}$, in particular $\tilde{b}$ will be chosen with a ``manual tuning" of the {\em principal asymptotics} while $\tilde{h}, \tilde{k}$ with the Cauchy data matching procedure. The function $F_{\tilde{b},\tilde{h},\tilde{k}}^{in}$  will be made of three blocks:

\begin{equation}
F_{\tilde{b},\tilde{h},\tilde{k}}^{in}:=\Pbg+ \hkii +\csfii
\end{equation}

$\Pbg$ is the transplanted potential of $\omega$  that keeps the metric near to a Kcsc metric, $\hkii$ is the extension of {\em pseudo-boundary data} that will allow us to perform the Cauchy data matching procedure and a small perturbation $f_{\tilde{b},\tilde{h},\tilde{k}}^{in}$ that ensures the constancy of the scalar curvature. Since $F_{\tilde{b},\tilde{h},\tilde{k}}^{in}$ has to be a small perturbation we can use the expansion in Proposition \ref{espsg} to look for the equation that $\csfii$ has to satisfy and we have

\begin{align}\label{eq: modellogrezza}
\varepsilon^{2} s_{\omega}+\frac{1}{2}\varepsilon^{2}s_{{\bf 0},\bg,\cg,\hg,\kg}=&\,\s_{\tilde{b}^{2}\eta}\st \Pbg+ \hkii +f\dt\\
=&\, \s_{\tilde{b}^{2}\eta}\st 0 \dt -\frac{1}{2}\mathbb{L}_{\tilde{b}^{2}\eta}\sq \Pbg+\hkii f \dq  +\frac{1}{2}\NN _{\tilde{b}^{2}\eta}\st \Pbg+\hkii+f\dt
\end{align}

Remembering that $\s_{\tilde{b}^{2}\eta}\st 0 \dt=0$ since $\eta$ is scalar flat and  
\begin{equation}
\mathbb{L}_{\tilde{b}^{2}\eta}=\frac{1}{\tilde{b}^{4}}\Delta_{\eta}^{2}
\end{equation}
because $\eta$ is also Ricci-flat we can rewrite equation \eqref{eq: modellogrezza} in terms of the unknown $f$
\begin{align} \label{eq: modellogrezza2}
\Delta_{\eta}^{2}\sq f  \dq=&-\varepsilon^{2}\tilde{b}^{4}\st 2s_{\omega}+ s_{\bf{0},\bg,\cg,\hg,\kg}\dt-\Delta_{\eta}^{2}\sq \Pbg+ \hkii\dq+\tilde{b}^{4}\NN_{\tilde{b}^{2}\eta}\st \Pbg+\hkii+f  \dt\,.
\end{align}

\vspace{10pt}
 \begin{itemize}

\item[] {\bf Transplanted potential.} As in \cite{Gabor1} and \cite{Gabor2} we introduce the term $\Pbg$ that is a suitable modification of the function $\psi_{\omega}$ defined in Proposition \ref{proprietacsck}. 
We recall that $\psi_{\omega}$ satisfies
\begin{equation}
\s_{eucl}\st \psi_{\omega}\dt=s_{\omega}
\end{equation}
and hence
\begin{equation}
s_{\omega}=-\frac{1}{2}\Delta^{2}\sq \psi_{\omega} \dq+\frac{1}{2}\NN_{eucl}\st\psi_{\omega}  \dt
\end{equation}
in $z$ coordinates on a small ball. Once we perform the rescaling 
\begin{equation}
z=\tilde{b}\varepsilon x
\end{equation}
we consider the function $\varepsilon^{-2}\psi_{\omega}\st \tilde{b}\varepsilon x\dt$ and we have 
\begin{equation}
\varepsilon^{2}s_{\omega}=-\frac{1}{2\tilde{b}^{4}}\Delta^{2}\sq \frac{\psi_{\omega}\st \tilde{b}\varepsilon x \dt}{\varepsilon^{2}}\dq+\frac{1}{2}\NN_{\tilde{b}^{2}\cdot eucl}\st \frac{\psi_{\omega}\st \tilde{b}\varepsilon x \dt}{\varepsilon^{2}}\dt 
\end{equation}

The aim of the transplanted potential is, hence, to cancel the term $\varepsilon^{2}s_{\omega}$ in equation \eqref{eq: modellogrezza2}. Unfortunately the metric associated to $\eta$ is not the euclidean one  so remainder terms appear and the solution $f$ has to correct them, indeed we have 
\begin{equation}
-\frac{1}{2\tilde{b}^{4}}\Delta^{2}\sq \frac{\psi_{\omega}\st \tilde{b}\varepsilon x \dt}{\varepsilon^{2}}\dq+\frac{1}{2}\NN_{\tilde{b}^{2}\cdot eucl}\st \frac{\psi_{\omega}\st \tilde{b}\varepsilon x \dt}{\varepsilon^{2}}\dt = \varepsilon^{2}s_{\omega}+ \textrm{\em remainder terms}\,.
\end{equation}

\begin{remark}
If the remainder terms of the equation above are too large, then the solution $f$ to the equation \eqref{eq: modellogrezza2} becomes too large and it becomes impossible to perform the Cauchy data matching construction. 
\end{remark}
\begin{remark}
The error produced by the term
\begin{equation}
\frac{1}{\varepsilon^{2}}\sum_{k=6}^{+\infty}\Psi_{k}\st\tilde{b}\varepsilon x \dt
\end{equation}
is tolerable, as we will show in the sequel.
\end{remark}

For simplicity we come back to the pre-rescaling expression of $\psi_{\omega}$ and we observe that by Lemma \ref{proprietacsck}

\begin{eqnarray}
\psi_{\omega}  &=& \sum_{k=0}^{+\infty}\Psi_{4+k}\,,\\
-\Delta^{2}\sq\Psi_{4}\dq&=& 2s_{\omega}\,,\\
-\Delta^{2}\sq\Psi_{5}\dq&=&0\,.
\end{eqnarray}

We have to correct the linear error committed by terms $\Psi_{4},\Psi_{5}$ and hence we look for functions $W_{4},W_{5}$ solutions of 
\begin{equation}
\begin{array}{ll}
\Delta_{\eta}^{2}\sq\Psi_{4}+W_{4}\dq&=-2s_{\omega}\\
&\\
\Delta_{\eta}^{2}\sq\Psi_{5}+W_{5}\dq&=0\,.
\end{array}
\end{equation}

We point out that it will be crucial to obtain a description as explicit as possible of $W_{4},W_{5}$. More precisely these corrections will be made of explicit terms and rapidly decaying terms. The first ones will impose constraints on the parameters of the {\em balancing condition} while the latter will be sufficiently small to be handled in the process of Cauchy data matching. The correction $W_{4}$, more precisely precisely one of its  components,  will give  an extra constraint in the {\em balancing condition} and it is responsible for the requirement  \eqref{eq:tuning} in Theorem \ref{maintheorem}.

\begin{notation}
 {\em For the rest of the subsection $\chi$ will denote a smooth cutoff function identically $0$ on $X_{\Gamma,\frac{R_{0}}{3\tilde{b}}}$ and identically $1$ outside $X_{\Gamma,\frac{R_{0}}{2\tilde{b}}}$.}
\end{notation}
Using Lemmas \ref{espansioniALE} and \ref{proprietacsck} it is easy to see that 
\begin{equation}
\begin{array}{ll}
\Delta_{\eta}^{2}\sq \chi \Psi_{4}\dq&=-2s_{\omega}+\st \Phi_{2}+\Phi_{4} \dt\chi|x|^{-2m}+ \mathcal{O}\st |x|^{-2-2m} \dt\\
& \\
\Delta_{\eta}^{2} \sq\chi \Psi_{5}\dq&=\st \Phi_{3}+\Phi_{5} \dt\chi|x|^{1-2m}+ \mathcal{O}\st |x|^{-1-2m} \dt\,.
\end{array}
\end{equation}  
\noindent If we set 
\begin{equation}
\begin{array}{ll}
u_{4}&:= \begin{cases}
\st \frac{\Phi_{2}}{\Lambda_{2}^{2}}+\frac{\Phi_{4}}{\Lambda_{4}^{2}} \dt\chi|x|^{4-2m}&\textrm{for }m\geq3\\
&\\
\st \frac{\Phi_{2}}{\Lambda_{2}^{2}}+\frac{\Phi_{4}}{\Lambda_{4}^{2}} \dt\chi\log\st|x|\dt&\textrm{for }m=2
\end{cases}\\
&\\
u_{5}&:= \st \frac{\Phi_{3}}{\Lambda_{3}^{2}}+\frac{\Phi_{5}}{\Lambda_{5}^{2}} \dt\chi|x|^{5-2m}
\end{array}
\end{equation}

\noindent for a suitable choice of $\Phi_{2},\Phi_{4},\Phi_{3},\Phi_{5}$ eigenfunctions relative to the eigenvalues $\Lambda_{2},\Lambda_{4},\Lambda_{3},\Lambda_{5}$ of $\Delta_{\Sp^{2m-1}}$ , then 
\begin{equation}
\begin{array}{ll}
\Delta_{\eta}^{2}\sq \chi \Psi_{4}+u_{4}\dq&=-2s_{\omega}+ \mathcal{O}\st |x|^{-2-2m} \dt\\
&\\
\Delta_{\eta}^{2}\sq \chi \Psi_{5}+u_{5}\dq&= \mathcal{O}\st |x|^{-1-2m} \dt\,.
\end{array}
\end{equation}

\noindent Now we would like to find $v_{4}\in C_{\delta}^{4,\alpha}\st X_{\Gamma} \dt$ with $\delta\in (2-2m,3-2m)$ and $v_{5}\in C_{\delta}^{4,\alpha}\st X_{\Gamma} \dt$ with $\delta\in (3-2m,4-2m)$ such that 
\begin{align}
\Delta_{\eta}^{2}\sq \chi \Psi_{4}+u_{4}+v_{4}\dq&=-2s_{\omega}\label{eq:psi4}\\
\Delta_{\eta}^{2}\sq \chi \Psi_{5}+u_{5}+v_{5}\dq&= 0\label{eq:psi5}\,.
\end{align}

\noindent Proposition \ref{GAP} tells us that we can find such $v_{4},v_{5}$ if and only if the integrals  
\begin{align}
&\int_{X_{\Gamma}}\st\Delta_{\eta}^{2}\sq \chi \Psi_{4}+u_{4}\dq +2s_{\omega} \dt \, d\mu_{\eta}\label{eq:int4}\\
&\int_{X_{\Gamma}}\Delta_{\eta}^{2}\sq \chi \Psi_{5}+u_{5}\dq\,d\mu_{\eta} \label{eq:int5}
\end{align}
 vanish identically. To check whether those conditions are satisfied we have to compute the two integrals above. The crucial tool for the calculations is Lemma \ref{misuraeuclidea}. We start computing integral \eqref{eq:int4}. By means of divergence Theorem and Lemma \ref{misuraeuclidea} we can write 
\begin{equation}
\int_{X_{\Gamma}}\st\Delta_{\eta}^{2}\sq \chi \Psi_{4}+u_{4}\dq +2s_{\omega} \dt \, d\mu_{\eta}=\lim_{\rho\rightarrow+\infty}\sq \int_{\partial X_{\Gamma,\rho}}\partial_{\nu}\Delta_{\eta}\st  \chi \Psi_{4}\dt d\mu_{\eta}+\frac{s_{\omega}| \Sp^{2m-1} | }{m\left| \Gamma\right|}\rho^{2m}\dq\,,
\end{equation} 
\noindent with $\nu$ outward unit normal to the boundary. We point out that $u_{4}$ doesn't appear in the right hand side of the equation above because the boundary term produced by the integration by parts tends to zero as $\rho$ tends to infinity, and this is an immediate consequence of Lemma \ref{espansioniALE} and the fact that $u_{4}$ has zero mean on every euclidean sphere. Then using Proposition \ref{proprietacsck} and Lemma \ref{espansioniALE}
\begin{align}
\partial_{\nu}\Delta_{\eta}\sq\Psi_{4}\dq\left.d\mu_{\eta}\right|_{\partial X_{\Gamma,\rho}} =& \sq -\frac{s_{\omega}}{m} \rho^{2m}- \frac{4  \cga \st m-1 \dt^{2}s_{\omega}}{m\st m+1 \dt}\dq \left.d\mu_{0}\right|_{\Sp^{2m-1} / \Gamma }\\
&+\sq\mathcal{O}\st 1\dt\st \Phi_{2}+\Phi_{4}\dt +\mathcal{O}\st\frac{1}{\rho}\dt\dq \left.d\mu_{0}\right|_{\Sp^{2m-1} / \Gamma }\,,
\end{align}
and integrating we obtain
\begin{equation}
\int_{X_{\Gamma}}\st\Delta_{\eta}^{2}\sq \chi \Psi_{4}+u_{4}\dq +2s_{\omega} \dt \, d\mu_{\eta} = - \frac{4  \cga \st m-1 \dt^{2}| \Sp^{2m-1} | s_{\omega}}{m\st m+1 \dt |\Gamma|}\,.
\end{equation}
\noindent this shows that equation \eqref{eq:psi4} cannot be solved in general for $v_{4}\in C_{\delta}^{4,\alpha}\st X_{\Gamma} \dt$ with $\delta\in (2-2m,3-2m)$. To overcome this difficulty we add an explicit function which belongs approximately to $\ker\st \Delta_{\eta}^{2} \dt$, more precisely we can solve the equation
\begin{displaymath}
\begin{array}{lcl}
\Delta_{\eta}^{2}\sq \chi \Psi_{4}+u_{4}+\frac{  \cga \st m-1 \dt s_{\omega} }{2\st m-2 \dt   m\st m+1 \dt }\chi |x|^{4-2m}+v_{4}\dq&=&-2s_{\omega}\qquad \textrm{ for }m\geq3\\
&&\\
\Delta_{\eta}^{2}\sq \chi \Psi_{4}+u_{4}-\frac{  \cga  s_{\omega} }{6 }\chi \log\st|x|\dt+v_{4}\dq&=&-2s_{\omega}\qquad \textrm{ for }m=2
\end{array}
\end{displaymath}
\noindent for $v_{4}\in C_{\delta}^{4,\alpha}\st X \dt$ with $\delta\in (2-2m,3-2m)$. In a completely analogous way we can compute integral \eqref{eq:int5} that vanishes identically  and so we can solve the equation 
\begin{equation}
\Delta_{\eta}^{2}\sq \chi \Psi_{5}+u_{5}+v_{5}\dq= 0\,.
\end{equation}
\noindent for $v_{5}\in C_{\delta}^{4,\alpha}\st X \dt$ with $\delta\in (3-2m,4-2m)$. Now we can write the explicit expression of $W_{4}$
\begin{equation}\label{eq:W4}
W_{4}:=\begin{cases}
\frac{  \cga \st m-1 \dt s_{\omega} }{2\st m-2 \dt   m\st m+1 \dt }\chi |x|^{4-2m}+u_{4}+v_{4}&\qquad \textrm{ for }m\geq3\,,\\
&\\
-\frac{  \cga  s_{\omega} }{6 }\chi \log\st|x|\dt+u_{4}+v_{4} & \qquad \textrm{ for }m=2\,.
\end{cases}\,.
\end{equation}
The structure of the function $W_{4}$ deserves a word of comment, the function $v_{4}$ is what we call the rapidly decaying term, $u_{4}$ has a ``critical" decaying rate but it has no radial components with respect to Fourier decomposition reative to $\Delta_{\Sp^{2m-1}}$ and hence it will be handled by {\em pseudo-boundary data} in the Cauchy data matching, the remaning term is the one that will constrain the coefficients of the {\em balancing condition}.
\begin{remark}
The term $|x|^{4-2m}$ (respectively $\log\st |x| \dt$) in formula \eqref{eq:W4} plays a crucial role in our procedure, not only it is necessary for creating function on $X_{\Gamma}$  that rapidly decays  $\Psi_{4}$ at infinity, but also influence the {\em balancing condition}. It forces, indeed, to require condition \eqref{eq:tuning} in Theorem \ref{maintheorem}. In  Subsection \ref{tuning}, we will see that, in order to be able to perform the data matching procedure, we will  have to match perfectly ({\em tuning} procedure) the terms of the potential at inifinity of $\eta_{\tilde{b},\tilde{h},\tilde{k}}$ decaying as $|x|^{4-2m}$ and $|x|^{2-2m}$ with   the {\em principal asymptotics } of  the potential of $\omega_{{\bf 0},\bg,\cg,\hg,\kg}$ that are the terms exploding as $|z|^{2-2m}$ and $|z|^{4-2m}$. We will  do this by making a specific choice for the parameters $\bg$ and $\cg$  and as a consequence we will get the key condition \ref{eq:tuning} of Theorem \ref{maintheorem}.   
\end{remark}

 Contrarily to the case of $\Psi_{4}$ the correction of $\Psi_{5}$ is much easier indeed it is easy to see, using Lemma \ref{espansioniALE} and the fact that $u_{5}$ has no radial component with respect to the fourier decomposition relative to $\Delta_{\Sp^{2m-1}}$, that 
\begin{equation}
\lim_{\rho\rightarrow +\infty}\int_{\partial X_{\Gamma,\rho}}\partial_{\nu}\Delta_{\eta}\sq \chi\Psi_{5}+u_{5} \dq=0
\end{equation}
and hence it is sufficient to apply Proposition \ref{GAP} to find $v_{5}$. The function $W_{5}$ is then
\begin{equation}
W_{5}:=u_{5}+v_{5}
\end{equation}
and as for $W_{4}$ the function $v_{5}$ is a rapidly decaying term and $u_{4}$ has also a ``critical" decaying rate but it has no radial components with respect to Fourier decomposition reative to $\Delta_{\Sp^{2m-1}}$ and hence it will be handled by {\em pseudo-boundary conditions} in the Cauchy data matching. If we define  
\begin{equation}
V:=\varepsilon^{2}\tilde{b}^{4}W_{4}+\varepsilon^{3}\tilde{b}^{5}W_{5} \,.
\end{equation}
\noindent then we can define the transplanted potential $\Pbg$ as the function in $C^{4,\alpha}\st X_{\Gamma,\frac{\Rep}{\tilde{b}}} \dt$
\begin{equation}
\Pbg:=\begin{cases}
\frac{1}{\varepsilon^{2}}\chi\psi_{\omega}\st \tilde{b}\varepsilon x \dt+V&\qquad \textrm{ for }m\geq3\,,\\
&\\
\frac{1}{\varepsilon^{2}}\chi\psi_{\omega}\st \tilde{b}\varepsilon x \dt+V+C& \qquad \textrm{ for }m=2\,.
\end{cases}\label{eq:transplantedmod} 
\end{equation}
where $C$ is the constant term in the expansion  at $B_{2r_{0}}\st p \dt\setminus B_{\rep}\st p \dt$ of  
\begin{equation}
F_{{\bf 0},\bg,\cg,\hg,\kg}^{out}= - \varepsilon^{2m}{{\bf{G}}}_{\bf{0},\bg,\cg}+\Pbe+\hko+ f_{\bf{0},\bg,\cg,\hg,\kg}^{out}\,.
\end{equation}
introduced in Proposition \ref{crucialbase}. As we will see in Section \ref{matching} the coefficient $\tilde{b}$  is very important and it will force the choice of particular values for the parameters $\bg,\cg$ we used on $M$ to construct $F_{{\bf 0},\bg,\cg,\hg,\kg}^{out}$, and in particular of the skeleton ${{\bf{G}}}_{\bf{0},\bg,\cg}$.

\vspace{10pt}

\item[] {\bf Extensions of pseudo-boundary data.} Using euclidean inner biharmonic extensions of functions on the sphere we want to build a function on $X_{\Gamma}$ that is ``almost'' in the kernel of $\Delta_{\eta}^{2}$. We note that 
\begin{equation}
\begin{array}{ll}
\Delta_{\eta}^{2}\sq\chi|x|^{2}\dq&=\, \mathcal{O}\st |x|^{-2-2m} \dt\,,\\
&\\
\Delta_{\eta}^{2}\sq \chi|x|^{2}\Phi_{2}\dq&=\,\mathcal{O}\st |x|^{-2m-2} \dt\,,\\
&\\
\Delta_{\eta}^{2}\sq\chi|x|^{3}\Phi_{3}\dq&=\, \chi|x|^{-1-2m} \Phi_{3} +\mathcal{O}\st|x|^{-3-2m}\dt\,.
\end{array}
\end{equation}

\noindent As for the transplanted potential we want to correct the functions on the left hand sides of equations in such a way they are in $\ker\st \Delta_{\eta}^{2} \dt$. Precisely we want to solve the equations 
\begin{equation}
\begin{array}{ll}
\Delta_{\eta}^{2}\sq\chi|x|^{2}+v^{(0)}\dq&= 0\,, \\
&\\
\Delta_{\eta}^{2}\sq \chi|x|^{2}\Phi_{2}+v^{(2)}\dq&=0\,,\\
&\\
\Delta_{\eta}^{2}\sq\chi|x|^{3}\Phi_{3}+ u^{(3)} +v^{(3)}\dq&= 0\,.
\end{array}
\end{equation}

 with $v^{(0)},v^{(2)},v^{(3)}\in C_{\delta}^{4,\alpha}\st X_{\Gamma} \dt$ for $\delta\in (2-2m,3-2m)$ and  
\begin{equation}
u^{(3)}:=\chi|x|^{3-2m} \Phi_{3}
\end{equation}
\noindent for a suitable  spherical harmonic $\Phi_{3}$. The existence of  $v^{(0)},v^{(2)},v^{(3)}$ follows from Proposition \ref{isomorfismopesati}, Lemma \ref{GAP} and 
\begin{equation}
\int_{X_{\Gamma}}\Delta_{\eta}^{2}\sq\chi|x|^{2}\dq\,d\mu_{\eta}=\int_{X_{\Gamma}}\Delta_{\eta}^{2}\sq\chi|x|^{2}\Phi_{2}\dq\,d\mu_{\eta}=\int_{X_{\Gamma}}\Delta_{\eta}^{2}\sq\chi|x|^{3}\Phi_{3}\dq\,d\mu_{\eta}=0  
\end{equation}
\noindent as one can easily check using exactly the same ideas exposed for the transplanted potential. We are ready to define the function $\hkii \in C^{4,\alpha}(X_{\Gamma,\frac{\Rep}{\tilde{b}}})$ 
\begin{align}
\hkii :=& H_{\tilde{h},\tilde{k}}^{in}\st 0\dt+ \chi\st H_{\tilde{h},\tilde{k}}^{in}\st \frac{ \tilde{b} x}{\Rep} \dt-  H_{\tilde{h},\tilde{k}}^{in}\st 0\dt\dt+\frac{\tilde{k}^{(0)}\tilde{b}^2}{4m \Rep^2}v^{(0)}  \\
&+ \st \tilde{h}^{(2)}-\frac{\tilde{k}^{(2)}}{4(m+2)} \dt \frac{\tilde{b}^{2}v^{(2)}}{\Rep^2}+ \st \tilde{h}^{(3)}-\frac{\tilde{k}^{(3)}}{4(m+3)}  \dt  \frac{\tilde{b}^{3}\st u^{(3)}+v^{(3)}\dt}{\Rep^3}\,.\label{eq:thki}
\end{align}

\vspace{10pt}

\item[] {\bf Correction term.} It is the term that ensures the constancy of the scalar curvature of the metric $\eta_{\tilde{b},\tilde{h},\tilde{k}}$ on $X_{\Gamma,\frac{\Rep}{\tilde{b}}}$ and it is a function $\csfii\in \Cdx$  where the space $\Cdx$ is a weighted H\"older space  defined in Subsection \ref{lineareALE}. As in the base orbifold, the function $\csfii$ depends nonlinearly on  $\st \tilde{h},\tilde{k} \dt$ and $\tilde{b}$ and we find it by solving a fixed point problem on a suitable closed and bounded subspace of $\Cdx$.
\end{itemize} 

\noindent We are now ready to state the main result on the model spaces.

\begin{prop}\label{crucialmodello}

Let $(X_{\Gamma},h,\eta )$ an ALE  Ricci-Flat K\"ahler resolution of an isolated quotient singularity.

\begin{itemize}
\item    Let $\delta\in (4-2m,5-2m)$. Given any $\st \tilde{h},\tilde{k} \dt\in \mathcal{B}$, such that $\st \varepsilon^{2}\tilde{h},\varepsilon^{2}\tilde{k} \dt\in \dombd$, where $\dombd$ is the space defined in formula \eqref{eq:dombd},  let $\hkii$ be the function defined in formula \eqref{eq:thki}. 
\begin{align}
\hkii :=& H_{\tilde{h},\tilde{k}}^{in}\st 0\dt+ \chi\st H_{\tilde{h},\tilde{k}}^{in}\st \frac{ \tilde{b} x}{\Rep} \dt-  H_{\tilde{h},\tilde{k}}^{in}\st 0\dt\dt+\frac{\tilde{k}^{(0)}\tilde{b}^2}{4m \Rep^2}v^{(0)}  \\
&+ \st \tilde{h}^{(2)}-\frac{\tilde{k}^{(2)}}{4(m+2)} \dt \frac{\tilde{b}^{2}v^{(2)}}{\Rep^2}+ \st \tilde{h}^{(3)}-\frac{\tilde{k}^{(3)}}{4(m+3)}  \dt  \frac{\tilde{b}^{3}\st u^{(3)}+v^{(3)}\dt}{\Rep^3}\,.
\end{align}
\item Let $\Pbg$ be the transplanted potential defined in formula \eqref{eq:transplantedmod}
\begin{equation}
\Pbg:=\begin{cases}
\frac{1}{\varepsilon^{2}}\chi\psi_{\omega}\st \tilde{b}\varepsilon x \dt+V&\qquad \textrm{ for }m\geq3\,,\\
&\\
\frac{1}{\varepsilon^{2}}\chi\psi_{\omega}\st \tilde{b}\varepsilon x \dt+V+C& \qquad \textrm{ for }m=2\,.
\end{cases} 
\end{equation}
\end{itemize}
Then there is $\csfii\in \Cdx$  such that 
\begin{equation}
\etat=\tilde{b}^{2}\eta+i\dd\st \Pbg+ \hkii +\csfii  \dt
\end{equation}
is a Kcsc metric on $X_{\Gamma,\frac{\Rep}{\tilde{b}}}$ and the following estimates hold.
\begin{equation}
\left\|\csfii\right\|_{\Cdx}\leq C\st\kappa \dt \varepsilon^{2m+4}\rep^{-4m-\delta}\Rep^{-2}
\end{equation}
with $C\st \kappa \dt\in \RR^{+}$ depending only on $\omega$ and $\eta_{j}$'s and $\kappa$ the constant appearing in the definition of $\dombd$ (Section \ref{biharmonicext} formula \ref{eq:dombd} ). Moreover $s_{\etat}$, the scalar curvature of $\etat$ is 
\begin{equation}
s_{\etat}=s_{\omega_{{\bf 0},\bg,\hg,\kg}}=s_{\omega}+\frac{1}{2}s_{{\bf 0},\bg,\hg,\kg}\,.
\end{equation}
\end{prop}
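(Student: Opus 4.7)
The plan is to set up a Banach fixed-point problem in $\Cdx$ that is directly parallel to the one used in the proof of Proposition \ref{crucialbase}, but exploiting the mapping properties of $\Delta_\eta^2$ on ALE manifolds developed in Section \ref{lineareALE}. Equation \eqref{eq: modellogrezza2} is the master equation $\csfii$ must satisfy. I would first exploit the fact that the transplanted potential $\Pbg$ was built precisely so that $\Delta_\eta^2[\Pbg]$ absorbs $-2\varepsilon^2 \tilde{b}^4 s_\omega$ modulo fast-decaying remainders: by construction of $W_4$ and $W_5$ together with the expansions provided by Proposition \ref{proprietacsck} and Lemma \ref{espansioniALE}, the residual error decays at infinity like $|x|^{-2-2m}$ (times the scale factor $\varepsilon^2\tilde b^4$, plus analogous $\varepsilon^3\tilde b^5$ contributions and tails from $\chi\sum_{k\geq 6}\Psi_k(\tilde{b}\varepsilon x)/\varepsilon^2$). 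Similarly, $\hkii$ was designed so that $\Delta_\eta^2[\hkii]$ is a small remainder of size controlled by $\|\tilde h,\tilde k\|_{\mathcal{B}}$ times negative powers of $\Rep$ and of $|x|$, thanks to the corrections $v^{(0)},v^{(2)},v^{(3)},u^{(3)}$.

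Next, I would apply the continuous right inverse $\mathbb{J}^{(\delta)}$ from Remark \ref{inversamod} (with weight $\delta\in(4-2m,5-2m)$, which lies in the range $(4-2m,0)$ for $m\geq 3$ and in $(0,1)$ for $m=2$) to recast the equation as a fixed-point equation
\begin{equation*}
f \,=\, \mathbb{T}_{\tilde b,\tilde h,\tilde k}^{(\delta)}(f)\,:=\,\mathbb{J}^{(\delta)}\!\left[\,-\Delta_\eta^2\Pbg -\Delta_\eta^2\hkii -\varepsilon^2\tilde b^4(2s_\omega+s_{{\bf 0},\bg,\cg,\hg,\kg})+\tilde b^4\NN_{\tilde b^2\eta}(\Pbg+\hkii+f)\,\right].
\end{equation*}
When $m=2$, the operator $\mathbb{J}^{(\delta)}$ is only a right inverse modulo constants, and the additive constant $C$ appearing in the definition \eqref{eq:transplantedmod} of $\Pbg$ is precisely what allows one to make the equation solvable: one simply chooses the ambiguous constant in the image of $\mathbb{J}^{(\delta)}$ to match the constant $C$ coming from the base-side expansion. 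The first key estimate is then a bound on $\mathbb{T}_{\tilde b,\tilde h,\tilde k}^{(\delta)}(0)$ in $\Cdx$, carried out by adding up the weighted H\"older norms of each source term: the remainder from $\Pbg$ contributes $\mathsf{C}\varepsilon^{2m+4}\rep^{-4m-\delta}$ (from the rescaling $z=\tilde b\varepsilon x$ applied to the polynomial terms $\Psi_k$, $k\geq 6$, of the base potential), and the remainder from $\hkii$ contributes the factor $\Rep^{-2}$ via the dilation $\tilde b\, x/\Rep$ in the inner biharmonic extensions, producing the advertised bound $C(\kappa)\varepsilon^{2m+4}\rep^{-4m-\delta}\Rep^{-2}$.

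To complete the argument, I would then show that $\mathbb{T}_{\tilde b,\tilde h,\tilde k}^{(\delta)}$ is a contraction on the closed ball of radius $2\|\mathbb{T}_{\tilde b,\tilde h,\tilde k}^{(\delta)}(0)\|_{\Cdx}$, uniformly in $\varepsilon$ small. Since $\NN_{\tilde b^2\eta}$ is quadratic and higher in the potential (see \eqref{eq:defQg}), its Lipschitz constant on this ball is controlled by the $C^4$-norm of $\Pbg+\hkii+f$, which is small in the relevant weighted sense; the factor $\tilde b^4$ in front does not spoil this since $\tilde b$ is bounded. Banach's contraction principle then yields the unique fixed point $\csfii$ with the desired norm bound. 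Continuous dependence on $(\tilde b,\tilde h,\tilde k)$ follows by the same Lipschitz estimates applied to the data, exactly as in the proof of Proposition \ref{crucialbase}. Positivity of $\tilde b^2\eta+i\partial\overline{\partial}(\Pbg+\hkii+\csfii)$ on the truncated domain $X_{\Gamma,\Rep/\tilde b}$ is automatic from the smallness of the perturbation.

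The main obstacle is the careful bookkeeping needed to track how the four scales $\varepsilon$, $\rep=\varepsilon^{(2m-1)/(2m+1)}$, $\Rep=\rep/\varepsilon$, and $\tilde b$ enter each error term, and to check that the indicial root $3-2m$ sitting inside the chosen weight interval $(4-2m,5-2m)$ does not create an obstruction: this is exactly the point where Proposition \ref{GAP} is invoked, its use being permitted by the nontriviality of $\Gamma$ (Remark \ref{nolinear}). In parallel, one must verify that the $|x|^{4-2m}$ (resp. $\log|x|$) component of $W_4$, designed to kill the radial obstruction to solving \eqref{eq:psi4}, does not itself require being dominated by the pseudo-boundary data: it is instead the term that later forces the tuning condition \eqref{eq:tuning} and dictates the parameter $\tilde b$ appearing in the statement.
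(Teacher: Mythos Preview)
Your overall strategy matches the paper's: rewrite \eqref{eq: modellogrezza2} as a fixed-point problem for the right inverse $\mathbb{J}^{(\delta)}$ of Remark~\ref{inversamod}, estimate the source terms coming from $\Pbg$ and $\hkii$ separately (Lemmas~\ref{bihmod} and~\ref{transmod} in the paper), and close by a contraction argument (Lemma~\ref{lipschitzmod}). The paper also proves continuous dependence on the boundary data exactly as you outline.

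There are two points where your write-up diverges from what the paper actually does. First, you omit the truncation--extension operator $\mathcal{E}_{\Rep}$. This is not cosmetic: the right-hand side of \eqref{eq: modellogrezza2} contains the constant $-\varepsilon^{2}\tilde b^{4}s_{\bf{0},\bg,\cg,\hg,\kg}$ and the transplanted term $\frac{1}{\varepsilon^{2}}\chi\psi_{\omega}(\tilde b\varepsilon x)$, neither of which lies in $C^{0,\alpha}_{\delta-4}(X_\Gamma)$ globally (the former is constant, the latter grows like $|x|^{4}$). The paper therefore passes to the truncated equation on all of $X_\Gamma$ via $\mathcal{E}_{\Rep}$ before applying $\mathbb{J}^{(\delta)}$; without this step your operator $\mathbb{T}^{(\delta)}_{\tilde b,\tilde h,\tilde k}$ is not even defined. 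Relatedly, the paper takes the reference radius of the ball $\Omega$ to be $2\|\mathbb{J}^{(\delta)}\mathcal{E}_{\Rep}\Delta_\eta^{2}[\hkii]\|_{\Cdx}$ rather than $2\|\mathbb{T}(0)\|$, and Lemma~\ref{lipschitzmod} is organized around that quantity.

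Second, your final paragraph misplaces Proposition~\ref{GAP}. The weight interval $(4-2m,5-2m)$ contains no indicial root (in particular $3-2m<4-2m$, and for $m\geq 3$ the integer $3-2m$ is not even an indicial root), so no obstruction arises in the fixed-point step and $\mathbb{J}^{(\delta)}$ comes straight from Proposition~\ref{isomorfismopesati}. Proposition~\ref{GAP} is used \emph{before} this argument, in the construction of $W_4,W_5$ and of $v^{(0)},v^{(2)},v^{(3)}$, where one must solve $\Delta_\eta^{2}v=f$ with $v$ decaying at rates in $(2-2m,4-2m)$; it is there that the nontriviality of $\Gamma$ eliminates the $|x|^{3-2m}$ component and the integral condition of Proposition~\ref{GAP} becomes relevant.
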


\noindent As in the base orbifold case, we set up a fixed point problem for finding the correction $\csfii$  and we will solve it using Banach-Caccioppoli Theorem. Using the very definition of $\Pbg$ we can rewrite equation \eqref{eq: modellogrezza2} in the following form.
\begin{align}\label{eq: PDEmodello}
\Delta_{\eta}^{2}\sq f  \dq=&-\varepsilon^{2}\tilde{b}^{4}s_{\bf{0},\bg,\cg,\hg,\kg}-\frac{1}{\varepsilon^{2}}\Delta_{\eta}^{2}\sq \sum_{k=6}^{+\infty}\chi\Psi_{k}\st \tilde{b}\varepsilon x \dt\dq-\Delta_{\eta}^{2}\sq \hkii\dq\\
&+\tilde{b}^{4}\NN_{\tilde{b}^{2}\eta}\st \Pbg+\hkii+f  \dt\,.
\end{align}

\noindent In analogy with what we did on the base orbifold, we  look for a PDE defined on the whole $X_{\Gamma}$ and such that on $X_{\Gamma,\frac{\Rep}{\tilde{b}}}$ restricts to the \eqref{eq: PDEmodello}. To this aim we introduce a truncation-extension operator on weighted H\"older spaces

\begin{definition}
Let $f\in C_{\delta}^{0,\alpha} \st X_{\Gamma} \dt $, we define $\mathcal{E}_{\Rep}:C_{\delta}^{0,\alpha} \st X_{\Gamma} \dt \rightarrow C_{\delta}^{0,\alpha} \st X_{\Gamma} \dt $
\begin{displaymath}
\mathcal{E}_{\Rep} \st f \dt :\left\{ \begin{array}{ll}
f \st x \dt  & x\in X_{\frac{\Rep}{\tilde{b}}}   \\
f \st R\frac{x}{|x|} \dt \chi \st \frac{|x|\tilde{b}}{\Rep} \dt  & x\in X_{\frac{2\Rep}{\tilde{b}}}\setminus X_{\frac{\Rep}{\tilde{b}}}\\
0  & x\in X\setminus X_{\frac{2\Rep}{\tilde{b}}}  
\end{array} \right.
\end{displaymath}
with $\chi\in C^{\infty}\st [0,+\infty) \dt$ a  cutoff function that is identically $1$ on $[0,1]$ and identically $0$ on $[2,+\infty)$. 
\end{definition}
Now we use the truncation-extension operator and we find our equation.
\begin{align}
\Delta_{\eta}^{2}\sq f  \dq=&-\varepsilon^{2}\tilde{b}^{4}\mathcal{E}_{\Rep}s_{\bf{0},\bg,\cg,\hg,\kg}-\frac{1}{\varepsilon^{2}}\mathcal{E}_{\Rep}\Delta_{\eta}^{2}\sq \sum_{k=6}^{+\infty}\chi\Psi_{k}\st \tilde{b}\varepsilon x \dt\dq-\mathcal{E}_{\Rep}\Delta_{\eta}^{2}\sq \hkii\dq\\
&+\tilde{b}^{4}\mathcal{E}_{\Rep}\NN_{\tilde{b}^{2}\eta}\st \Pbg+\hkii+f  \dt\,.
\end{align}

\noindent Using the right inverse for $\Delta_{\eta}^{2}$ introduced in Remark \ref{inversamod} formula \eqref{eq:inversamod}  
\begin{equation}
\mathbb{J}^{(\delta)}:C_{\delta-4}^{0,\alpha}\st X_{\Gamma} \dt\rightarrow C_{\delta}^{4,\alpha}\st X_{\Gamma} \dt
\end{equation}
we define the nonlinear operator
\begin{equation}
\mathbb{T}_{\tilde{b}}^{(\delta)}:C_{\delta}^{4,\alpha}\st X_{\Gamma} \dt \times \mathcal{B} \rightarrow C_{\delta}^{4,\alpha}\st X_{\Gamma} \dt
\end{equation}

\begin{align}
\mathbb{T}_{\tilde{b}}^{(\delta)}\st f,\tilde{h},\tilde{k} \dt:=&-\varepsilon^{2}\tilde{b}^{4}\mathbb{J}^{(\delta)}\mathcal{E}_{\Rep}s_{\bf{0},\bg,\cg,\hg,\kg}-\frac{1}{\varepsilon^{2}}\mathbb{J}^{(\delta)}\mathcal{E}_{\Rep}\Delta_{\eta}^{2}\sq \sum_{k=6}^{+\infty}\chi\Psi_{k}\st \tilde{b}\varepsilon x \dt\dq-\mathbb{J}^{(\delta)}\mathcal{E}_{\Rep}\Delta_{\eta}^{2}\sq \hkii\dq\\
&+\tilde{b}^{4}\mathbb{J}^{(\delta)}\mathcal{E}_{\Rep}\NN_{\tilde{b}^{2}\eta}\st \Pbg+\hkii+f  \dt\,.
\end{align}

\noindent We prove the existence of a solution of equation \eqref{eq: PDEmodello} finding, for fixed $\st \tilde{h},\tilde{k} \dt\in \mathcal{B}$ such that $\st \varepsilon^{2}\tilde{h},\varepsilon^{2}\tilde{k} \dt\in \dombd$, a fixed point of the operator 
\begin{equation}
\mathbb{T}_{\tilde{b}}^{(\delta)}\st \cdot,\tilde{h},\tilde{k} \dt: C_{\delta}^{4,\alpha}\st X_{\Gamma} \dt  \rightarrow C_{\delta}^{4,\alpha}\st X_{\Gamma} \dt
\end{equation}

following exactly the same strategy we used on the base orbifold. We need to find a domain $\Omega\subseteq C_{\delta}^{4,\alpha}\st X_{\Gamma} \dt$ such that for any $f\in \Omega$ then  $\mathbb{T}_{\tilde{b}}^{(\delta)}\st f,\tilde{h},\tilde{k} \dt\in \Omega$ and $\mathbb{T}_{\tilde{b}}^{(\delta)}\st \cdot,\tilde{h},\tilde{k} \dt$ is a contraction on $\Omega$.To decide what kind of domain will be our $\Omega$ we need some informations on the behavior of $\mathbb{T}_{\tilde{b}}^{(\delta)}$ that we find in the following two lemmas.

\begin{lemma}\label{bihmod} 
Under the assumptions of Proposition \ref{crucialbase} the following estimate holds
\begin{equation}
\left\|\mathcal{E}_{\Rep}\Delta_{\eta}^{2}\sq\hkii\dq\right\|_{\Ccx}\leq  \frac{ \mathsf{C} }{\Rep^4}\left\|\tilde{h}^{(\dagger)},\tilde{k}^{(\dagger)}\right\|_{\mathcal{B}}=\kappa \mathsf{C} \varepsilon^{2m+2}\rep^{2-4m-\delta}\Rep^{-4}\,.
\end{equation}
\end{lemma}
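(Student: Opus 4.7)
The strategy is to exploit the fact that $\hkii$ has been engineered so that the ``leading'' Fourier modes of the rescaled biharmonic extension $H^{in}_{\tilde h,\tilde k}(\tilde b x/\Rep)$, combined with the corrections $v^{(0)}$, $v^{(2)}$, $u^{(3)}+v^{(3)}$, land exactly in $\ker\Delta^2_\eta$. What is not annihilated is a function that vanishes to order $|w|^4$ at $w=0$ in the rescaled variable $w=\tilde b x/\Rep$, and this fourfold vanishing produces the crucial factor $\Rep^{-4}$.

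Concretely, I would first expand $H^{in}_{\tilde h,\tilde k}(w)-H^{in}_{\tilde h,\tilde k}(0)$ in Fourier series. Since $\Gamma$ is nontrivial, the remark after the definition of $H^{in}$ eliminates any $\gamma=1$ mode, so the expansion starts with $|w|^2$, $|w|^2\phi_2$, $|w|^4\phi_2$, $|w|^3\phi_3$, $|w|^5\phi_3$ and, for $\gamma\geq 4$, $|w|^\gamma\phi_\gamma$ and $|w|^{\gamma+2}\phi_\gamma$. Regrouping gives
\begin{align*}
\hkii \,=\, H^{in}_{\tilde h,\tilde k}(0)
&+\lambda\bigl[\chi|x|^2+v^{(0)}\bigr]+\mu\bigl[\chi|x|^2\phi_2+v^{(2)}\bigr]+\xi\bigl[\chi|x|^3\phi_3+u^{(3)}+v^{(3)}\bigr]\\
&+\chi\cdot H^{\dagger}(\tilde b x/\Rep),
\end{align*}
for suitable scalars $\lambda,\mu,\xi$ proportional to the first Fourier coefficients of $\tilde h,\tilde k$, where each bracketed combination lies exactly in $\ker\Delta^2_\eta$ by the defining property of $v^{(0)},v^{(2)},u^{(3)}+v^{(3)}$, the constant $H^{in}_{\tilde h,\tilde k}(0)$ is killed by $\Delta^2_\eta$, and
\[
H^{\dagger}(w)\,:=\,\tfrac{\tilde k^{(2)}}{4(m+2)}|w|^4\phi_2+\tfrac{\tilde k^{(3)}}{4(m+3)}|w|^5\phi_3+\sum_{\gamma\geq 4}\Bigl[(\tilde h^{(\gamma)}-\tfrac{\tilde k^{(\gamma)}}{4(m+\gamma)})|w|^\gamma\phi_\gamma+\tfrac{\tilde k^{(\gamma)}}{4(m+\gamma)}|w|^{\gamma+2}\phi_\gamma\Bigr].
\]
By construction $H^{\dagger}$ satisfies $\Delta^2 H^{\dagger}=0$ on $B_1$, vanishes to order $4$ at the origin, and obeys $\|H^{\dagger}\|_{C^{4,\alpha}(B_1)}\lesssim\|\tilde h^{(\dagger)},\tilde k^{(\dagger)}\|_{\mathcal{B}}$. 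Thus the whole computation reduces to estimating $\Delta^2_\eta[\chi\cdot H^{\dagger}(\tilde b x/\Rep)]$.

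Next, I would split this into $\chi\cdot\Delta^2_\eta[H^{\dagger}(\tilde b x/\Rep)]$ plus commutator terms. For the first piece, Euclidean biharmonicity of $H^{\dagger}$ converts $\Delta^2_\eta$ into $\Delta^2_\eta-\Delta^2$, an operator whose coefficient in front of the $k$-th order derivative decays like $|x|^{-2m-(4-k)}$ at infinity by Lemma \ref{espansioniALE}. The fourfold vanishing of $H^{\dagger}$ at $w=0$ combined with the chain rule gives $\nabla_x^k[H^{\dagger}(\tilde b x/\Rep)]=\mathcal{O}\bigl((\tilde b/\Rep)^4|x|^{4-k}\|\tilde h^{(\dagger)},\tilde k^{(\dagger)}\|_{\mathcal{B}}\bigr)$ for every $0\leq k\leq 4$ on $\{|x|\leq \Rep/\tilde b\}$, and pairing the two yields the pointwise (and $C^{0,\alpha}$ on dyadic annuli) bound
\[
\bigl|(\Delta^2_\eta-\Delta^2)[H^{\dagger}(\tilde b x/\Rep)]\bigr|\,\lesssim\,\Rep^{-4}\,|x|^{-2m}\,\|\tilde h^{(\dagger)},\tilde k^{(\dagger)}\|_{\mathcal{B}}.
\]
Since the weight $\delta-4\in(-2m,1-2m)$ satisfies $-2m\leq\delta-4$, the weighted sup $\sup_{R\geq R_0}R^{4-\delta}\cdot R^{-2m}$ is finite, giving an $\Ccx$ contribution of size $\Rep^{-4}\|\tilde h^{(\dagger)},\tilde k^{(\dagger)}\|_{\mathcal{B}}$. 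The commutator terms are supported where $\nabla\chi\neq 0$, i.e.\ on the compact annulus $\{R_0/(3\tilde b)\leq|x|\leq R_0/(2\tilde b)\}$, where $|w|\lesssim R_0/\Rep$ is small and hence $H^{\dagger}$ and its derivatives up to order $4$ are automatically $\mathcal{O}(\Rep^{-4}\|\tilde h^{(\dagger)},\tilde k^{(\dagger)}\|_{\mathcal{B}})$. Finally, $\mathcal{E}_\Rep$ only rescales the already-estimated function radially in $\{\Rep/\tilde b\leq|x|\leq 2\Rep/\tilde b\}$, where the weight factor $R^{4-\delta}\sim\Rep^{4-\delta}$ times the pointwise bound $\Rep^{-4-2m}$ produces $\Rep^{-\delta-2m}$, absorbed into $O(\Rep^{-4})$ in our range. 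Substituting the hypothesis $\|\tilde h^{(\dagger)},\tilde k^{(\dagger)}\|_{\mathcal{B}}\leq\kappa\varepsilon^{2m+2}\rep^{2-4m-\delta}$ (inherited from $(\varepsilon^2\tilde h,\varepsilon^2\tilde k)\in\dombd$) yields the claimed estimate. The one delicate point is guaranteeing the fourfold vanishing of $H^{\dagger}$ at $w=0$: this is precisely where the nontriviality of $\Gamma$ enters, as it removes the otherwise problematic $\gamma=1$ contribution which would vanish only to first order and wreck the $\Rep^{-4}$ gain.
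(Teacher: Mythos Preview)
Your proof is correct and follows essentially the same route as the paper: isolate the pieces of $\hkii$ that lie in $\ker\Delta_\eta^2$ (the constant and the three bracketed corrections), observe that what remains is $(\Delta_\eta^2-\Delta^2)$ applied to the ``tail'' of the rescaled inner biharmonic extension vanishing to order four at the origin, and then combine the $|x|^{-2m}$ decay of $\Delta_\eta^2-\Delta^2$ with the $\Rep^{-4}$ factor coming from that fourfold vanishing. The paper carries out exactly this computation, writing out your $H^{\dagger}$ explicitly and stating the two bounds (the compact piece gives $\mathsf{C}/\Rep^4$, the weighted piece the sharper $\mathsf{C}/\Rep^{\delta+2m}$); your treatment of the cutoff commutator and of the extension operator is slightly more detailed than the paper's but entirely consistent with it.
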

\begin{proof}
Using formula \eqref{eq:thki} we have 
\begin{align}
\Delta_{\eta}^{2}\sq \hkii\dq =& \Delta_{\eta}^{2}\sq H_{\tilde{h},\tilde{k}}^{in}\st 0\dt+ \chi\st H_{\tilde{h},\tilde{k}}^{in}\st \frac{ \tilde{b} x}{\Rep} \dt- \chi H_{\tilde{h},\tilde{k}}^{I}\st 0\dt\dt\dq\\
&+\Delta_{\eta}^{2}\sq \frac{\tilde{k}^{(0)}\tilde{b}^2}{4m \Rep^2}v^{(0)}  + \st \tilde{h}^{(2)}-\frac{\tilde{k}^{(2)}}{4(m+2)} \dt \frac{\tilde{b}^{2}v^{(2)}}{\Rep^2}+ \st \tilde{h}^{(3)}-\frac{\tilde{k}^{(3)}\tilde{b}^{3}}{4(m+3)}  \dt  \frac{u^{(3)}+v^{(3)}}{\Rep^3}\dq\\
=&\st \Delta_{\eta}^{2}-\Delta^{2} \dt\sq \frac{\tilde{k}^{(2)}}{4(m+2)}\chi\left|\frac{\tilde{b}  x}{\Rep}\right|^{4}  \phi_{2}  + \frac{\tilde{k}^{(\gamma)}}{4(m+3)}\chi\left|\frac{\tilde{b}  x}{\Rep}\right|^{5}  \phi_{3}  \dq\\
&+\st \Delta_{\eta}^{2}-\Delta^{2} \dt\sq   \chi\sum_{\gamma=4}^{+\infty}\left(\left(\tilde{h}^{(\gamma)}-\frac{\tilde{k}^{(\gamma)}}{4(m+\gamma)}  \dt \left|\frac{\tilde{b} x}{\Rep}\right|^{\gamma}+\frac{\tilde{k}^{(\gamma)}}{4(m+\gamma)}\left|\frac{\tilde{b}  x}{\Rep}\right|^{\gamma+2} \dt \phi_{\gamma}   \dq
\end{align}
and so we deduce that
\begin{equation}\left\|\Delta_{\eta}^{2} \sq\hkii\dq\right\|_{C^{0,\alpha}\st X_{\Gamma,\frac{R_0}{\tilde{b}}} \dt}\leq  \frac{\mathsf{C}}{\Rep^4}\left\|\tilde{h}^{(\dagger)},\tilde{k}^{(\dagger)}\right\|_{\mathcal{B}}\,.\end{equation}
Now we estimate the quantity
\begin{equation}\sup_{\rho\in [R_0,\Rep]}\rho^{-\delta+4}\left\| \Delta_{\eta}^{2}\sq \hkii\dq\right\|_{C^{0,\alpha}\st  B_{1}\setminus B_{\frac{1}{2}}  \dt}\,.\end{equation}
Using again formula \eqref{eq:thki}, we have  

\begin{align}
\Delta_{\eta}^{2}\sq \hkii\dq =&  \st \Delta_{\eta}^{2}-\Delta^{2} \dt\sq \frac{\tilde{k}^{(2)}}{4(m+2)}\chi\left|\frac{\tilde{b}  x}{\Rep}\right|^{4}  \phi_{2}  + \frac{\tilde{k}^{(\gamma)}}{4(m+3)}\chi\left|\frac{\tilde{b}  x}{\Rep}\right|^{5}  \phi_{3}  \dq\\
&+\st \Delta_{\eta}^{2}-\Delta^{2} \dt\sq   \chi\sum_{\gamma=4}^{+\infty}\left(\left(\tilde{h}^{(\gamma)}-\frac{\tilde{k}^{(\gamma)}}{4(m+\gamma)}  \dt \left|\frac{\tilde{b} x}{\Rep}\right|^{\gamma}+\frac{\tilde{k}^{(\gamma)}}{4(m+\gamma)}\left|\frac{\tilde{b}  x}{\Rep}\right|^{\gamma+2} \dt \phi_{\gamma}   \dq\,.
\end{align}

\noindent So we have 

\begin{equation}
\sup_{\rho\in [R_0,\Rep]}\rho^{-\delta+4}\left\| \Delta_{\eta}^{2} \sq\hkii\dq  \right\|_{C^{0,\alpha}\st   B_{1}\setminus B_{\frac{1}{2}}   \dt}\leq  \frac{\mathsf{C}}{\Rep^{\delta+2m}}\left\|\tilde{h}^{(\dagger)},\tilde{k}^{(\dagger)}\right\|_{\mathcal{B}}\,.
\end{equation}
and therefore the lemma is proved.
\end{proof}

\begin{lemma}\label{transmod}
Under the assumptions of Proposition \ref{crucialbase} the following estimate holds
\begin{equation}\left\| \mathbb{T}_{\tilde{b}}^{(\delta)}\st 0, 0,0 \dt \right\|_{\Ccx}\leq \mathsf{C}\varepsilon^{4}\Rep^{6-2m-\delta}\,.\end{equation}
\end{lemma}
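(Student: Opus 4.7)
The plan is to decompose $\mathbb{T}_{\tilde{b}}^{(\delta)}(0,0,0)$ into its three effective summands and apply the continuity of the right inverse $\mathbb{J}^{(\delta)}\colon C^{0,\alpha}_{\delta-4}(X_{\Gamma}) \to C^{4,\alpha}_{\delta}(X_{\Gamma})$ from Remark~\ref{inversamod}. Since the choice $\tilde h = \tilde k = 0$ makes the biharmonic extension $\hkii$ vanish, only three summands remain:
\begin{align*}
\mathbb{T}_{\tilde{b}}^{(\delta)}(0,0,0) \,=\,& -\varepsilon^{2}\tilde{b}^{4}\,\mathbb{J}^{(\delta)}\mathcal{E}_{\Rep}\left[s_{{\bf 0},\bg,\cg,\hg,\kg}\right]\\
&-\tfrac{1}{\varepsilon^{2}}\,\mathbb{J}^{(\delta)}\mathcal{E}_{\Rep}\Delta_{\eta}^{2}\left[\sum_{k\geq 6}\chi\,\Psi_{k}(\tilde{b}\varepsilon x)\right]\\
&+\tilde{b}^{4}\,\mathbb{J}^{(\delta)}\mathcal{E}_{\Rep}\NN_{\tilde{b}^{2}\eta}(\Pbg),
\end{align*}
so it suffices to estimate each argument of $\mathbb{J}^{(\delta)}$ in $C^{0,\alpha}_{\delta-4}(X_\Gamma)$.

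For the first summand I would use Proposition~\ref{crucialbase}, which controls $|s_{{\bf 0},\bg,\cg,\hg,\kg}| \le \mathsf{C}\,\varepsilon^{2m}$. After the truncation $\mathcal{E}_{\Rep}$, the weighted norm of a constant is controlled by $\Rep^{4-\delta}$, producing a bound $\mathsf{C}\,\varepsilon^{2m+2}\Rep^{4-\delta}$; since $\rep = \varepsilon\Rep \to 0$ this is strictly smaller than the target order.

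For the second summand, Proposition~\ref{proprietacsck} gives the homogeneity $\Psi_k(\tilde{b}\varepsilon x)=(\tilde{b}\varepsilon)^k\Psi_k(x)$, and Lemma~\ref{espansioniALE} provides the ALE expansion $\Delta_\eta = [1+\mathcal{O}(|x|^{-2m})]\Delta + \mathcal{O}(|x|^{-2-2m})\partial\bar\partial$. The leading contribution is furnished by the euclidean bilaplacian of the $k=6$ term; one can exploit the identity $\sum_{k\geq 6}\Delta^2\Psi_k = \NN_{\mathrm{eucl}}(\psi_\omega)$ (extracted by plugging the series expansion of $\psi_\omega$ into $\mathbf{S}_{\mathrm{eucl}}(\psi_\omega)=s_\omega$) to recognise each coefficient as a polynomial coming from the nonlinearity. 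After the rescaling $z=\tilde{b}\varepsilon x$ and division by $\varepsilon^{2}$, the resulting function is truncated by $\mathcal{E}_{\Rep}$ and the weighted $C^{0,\alpha}_{\delta-4}$ norm over the support $|x|\lesssim \Rep/\tilde b$ furnishes the required contribution.

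The third summand is by far the most delicate and will be the main obstacle. The transplanted potential $\Pbg$ was built precisely so that, by the construction of $W_4$ and $W_5$ carried out just before \eqref{eq:transplantedmod}, the identities $\Delta_\eta^{2}[\chi\Psi_4+W_4]=-2s_\omega$ and $\Delta_\eta^{2}[\chi\Psi_5+W_5]=0$ hold. The Ricci-flatness of $\eta$ (which reduces $\mathbb{L}_\eta$ to $\Delta_\eta^{2}$ and kills the linear coupling with $\rho_\eta$) then forces $\NN_{\tilde{b}^{2}\eta}(\Pbg)$ to be at least quadratic in $\Pbg$ and its derivatives. Expanding $\NN_{\tilde{b}^{2}\eta}$ as in \eqref{eq:defQg}, one can add and subtract the euclidean nonlinearity $\NN_{\mathrm{eucl}}$, exactly mirroring the computation performed on the base in Lemma~\ref{stimascheletrobase}: the euclidean part is absorbed by the scalar-flatness cancellation $\mathbf{S}_{\mathrm{eucl}}(\psi_\omega/\varepsilon^{2})=\varepsilon^{2}s_\omega$, while the residual algebraic combinations acquire the ALE decay factor $|x|^{-2m}$ coming from $\tilde b^2\eta-g_{\mathrm{eucl}}$. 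The principal technical difficulty will be bookkeeping: organising the many cross-terms produced by $\NN_{\tilde b^2\eta}(\Pbg)$ while keeping simultaneous track of the $\varepsilon$-weights carried by $\psi_\omega(\tilde{b}\varepsilon x)$, $V$ and the ALE corrections, to collect them into the desired combined bound.
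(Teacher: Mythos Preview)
Your proposal contains the right ingredients --- the ALE expansion of $\Delta_\eta$, the homogeneity of the $\Psi_k$, and above all the scalar-curvature identity $\mathbf{S}_{\mathrm{eucl}}(\psi_\omega)=s_\omega$ --- but the framework of estimating the three summands \emph{separately} will not close. The second and third summands are each individually too large. The Euclidean bilaplacian part of the second summand,
\[
\tfrac{1}{\varepsilon^{2}}\,\Delta^{2}\!\Big[\sum_{k\ge 6}\chi\,\Psi_k(\tilde b\varepsilon x)\Big]\;\sim\;\varepsilon^{4}|x|^{2},
\]
has weighted $C^{0,\alpha}_{\delta-4}$ norm of order $\varepsilon^{4}\Rep^{\,6-\delta}$, which exceeds the target $\varepsilon^{4}\Rep^{\,6-2m-\delta}$ by the large factor $\Rep^{2m}$. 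The Euclidean part of $\tilde b^{4}\NN_{\tilde b^{2}\eta}(\Pbg)$ has the same size. The identity $\sum_{k\ge 6}\Delta^{2}\Psi_k=\NN_{\mathrm{eucl}}(\psi_\omega)$ you quote does not make either term small on its own; it is precisely the mechanism by which these two large contributions \emph{cancel against each other}. Your phrase ``the euclidean part is absorbed by the scalar-flatness cancellation'' cannot be carried out inside the third summand alone, because there is no $\Delta^{2}$ term there to absorb it.

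The paper avoids this trap by \emph{first recombining} the second and third summands (using the construction of $V$, which gives $\Delta_\eta^{2}[\Pbg]=-2\varepsilon^{2}\tilde b^{4}s_\omega+\varepsilon^{-2}\Delta_\eta^{2}[\sum_{k\ge 6}\chi\Psi_k(\tilde b\varepsilon x)]$) into a single expression of the shape
\[
-\varepsilon^{2}\tilde b^{4}s_\omega-\tfrac{1}{2}\Delta_\eta^{2}[\Pbg]+\NN_{\tilde b^{2}\eta}(\Pbg),
\]
and only \emph{then} decomposing on the annular region as
\[
\underbrace{\Big(-\varepsilon^{2}\tilde b^{4}s_\omega-\tfrac{1}{2}\Delta^{2}[\psi_\omega]+\NN_{\tilde b^{2}\,\mathrm{eucl}}(\psi_\omega)\Big)}_{=\,0\ \text{by}\ \mathbf{S}_{\mathrm{eucl}}(\psi_\omega)=s_\omega}\;+\;\mathbf{I}+\mathbf{II}+\mathbf{III},
\]
where $\mathbf{I}=-\tfrac{1}{2}(\Delta_\eta^{2}-\Delta^{2})\big[\sum_{k\ge 6}\tilde b^{k}\varepsilon^{k-2}\Psi_k\big]$ carries the extra $|x|^{-2m}$ ALE decay, $\mathbf{II}=\NN_{\tilde b^{2}\eta}(\Pbg)-\NN_{\tilde b^{2}\eta}\big(\varepsilon^{-2}\psi_\omega(\tilde b\varepsilon x)\big)$ isolates the effect of the correction $V$, and $\mathbf{III}=\NN_{\tilde b^{2}\eta}(\,\cdot\,)-\NN_{\tilde b^{2}\,\mathrm{eucl}}(\,\cdot\,)$ again carries the ALE decay. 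Each of $\mathbf{I},\mathbf{II},\mathbf{III}$ is then genuinely of size $\varepsilon^{4}\Rep^{\,6-2m-\delta}$. Once you regroup before splitting, the remainder of your outline (and your treatment of the $s_{{\bf 0},\bg,\cg,\hg,\kg}$ term) is fine.
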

\begin{proof}
We will prove the lemma for the case $m\geq 3$, for the case $m=2$ the proof is identical. By the very definition of $\Pbg$, on $X_{ \Gamma,\frac{R_{0}}{\tilde{b}}  }$, we have    
\begin{equation}
-\varepsilon^{2}\tilde{b}^{4}s_{\omega}-\frac{1}{2}\Delta_{\eta}^{2}\sq\Pbg\dq+\frac{1}{2}\mathbb{N}_{\tilde{b}^{2}\eta}\st \Pbg \dt =-\frac{1}{2}\Delta_{\eta}^{2}\sq \sum_{k=6}^{+\infty}\frac{\tilde{b}^{k}}{\varepsilon^{k-2}}\chi\Psi_{k} \dq+\mathbb{N}_{\tilde{b}^{2}\eta}\st \Pbg \dt
\end{equation}
and so 
\begin{equation}
\left\|-\varepsilon^{2}\tilde{b}^{4}s_{\omega}-\frac{1}{2}\Delta_{\eta}^{2}\sq\Pbg\dq+\frac{1}{2}\mathbb{N}_{\tilde{b}^{2}\eta}\st \Pbg \dt\right\|_{C^{0,\alpha}\st X_{\Gamma,\frac{R_0}{\tilde{b}}} \dt} \leq \mathsf{C} \varepsilon^4\,.
\end{equation}

Now we estimate the weighted part of the norm. On $X_{\frac{\Rep}{\tilde{b}}}\setminus X_{\frac{R_0}{ 2 \tilde{b}}}$ we have
\begin{equation}
-\varepsilon^{2}\tilde{b}^{4}s_{\omega}-\frac{1}{2}\Delta_{\eta}^{2}\sq\Pbg\dq+\NN _{\tilde{b}^{2}\eta}\st \Pbg \dt=-\varepsilon^{2}\tilde{b}^{4}s_{\omega}-\frac{1}{2}\Delta^{2}\sq \psi_{\omega}\dq+\NN_{\tilde{b}^{2}\,eucl}\st \psi_{\omega} \dt+\textrm{\bf I}+\textrm{\bf II}+\textrm{\bf III} 
\end{equation}

with 
\begin{equation}
\begin{array}{cl}
\textrm{\bf I}&= -\frac{1}{2}\st \Delta_{\eta}^{2}-\Delta^{2} \dt\sq \sum_{k=6}^{+\infty}\frac{\tilde{b}^{k}}{\varepsilon^{k-2}}\Psi_{k} \dq\\
&\\
\textrm{\bf II}&=  \NN_{\tilde{b}^{2}\eta}\st \Pbg \dt-\mathbb{N}_{\tilde{b}^{2}\eta}\st  \frac{1}{\varepsilon^{2}}\psi_{\omega}\st \tilde{b}\varepsilon x \dt  \dt\\
&\\
\textrm{\bf III}&= \NN_{\tilde{b}^{2}\eta}\st  \frac{1}{\varepsilon^{2}}\psi_{\omega}\st \tilde{b}\varepsilon x \dt  \dt-\NN _{\tilde{b}^{2}\,eucl}\st  \frac{1}{\varepsilon^{2}}\psi_{\omega}\st \tilde{b}\varepsilon x \dt  \dt
\end{array}
\end{equation}

Using Proposition \ref{proprietacsck}, precisely the algebraic identity
\begin{equation}
-\frac{1}{2}\Delta^{2}\sq \psi_{\omega}\dq+\NN_{eucl}\st \psi_{\omega} \dt=s_{\omega}\,,
\end{equation}
we see that the only remaining terms are $\textrm{\bf I},\textrm{\bf II},\textrm{\bf III}$. With standard, but cumbersome, computations  we obtain  

\begin{equation}
\sup_{ \rho\in [R_0,\Rep]}\rho^{-\delta+4}\st\left\| \textrm{\bf I}\right\|_{C^{0,\alpha}\st  B_{1} \setminus B_{\frac{1}{2}} \dt}+ \left\| \textrm{\bf II}\right\|_{C^{0,\alpha}\st  B_{1} \setminus B_{\frac{1}{2}} \dt} +\left\| \textrm{\bf III}\right\|_{C^{0,\alpha}\st  B_{1} \setminus B_{\frac{1}{2}} \dt}\dt \leq \mathsf{C}\varepsilon^4 \Rep^{6-2m-\delta}\,, 
\end{equation}

\noindent and hence  the lemma is proved.
 \end{proof}

\noindent  

We consider the subset of $\Omega\subset C_{\delta}^{4,\alpha}\st X_{\Gamma} \dt$ with $\delta\in (4-2m,5-2m)$ such that for any $f\in \Omega$
\begin{equation}
\left\| f \right\|_{\Cdx}\leq 2\left\|\mathbb{J}^{(\delta)}\mathcal{E}_{\Rep}\Delta_{\eta}^{2}\sq\hkii\dq\right\|_{\Cdx}\,.
\end{equation}

and we study continuity properties of $\mathbb{T}_{\tilde{b}}^{(\delta)}$ on $\Omega\times \mathcal{B}$.

\begin{lemma}\label{lipschitzmod}
If  $\st\varepsilon^{2}\tilde{h}',\varepsilon^{2}\tilde{k}'\dt\in \dombd$, $f,f'\in  C_{\delta}^{4,\alpha}\st X_{\Gamma} \dt$ 

\begin{equation}\left\|f\right\|_{\Cdx},\left\|f'\right\|_{\Cdx}\leq  2\left\|\mathbb{J}^{(\delta)}\mathcal{E}_{\Rep}\Delta_{\eta}^{2}\sq\hkii\dq\right\|_{\Ccx} \end{equation}
and assumptions of Proposition \ref{crucialmodello} are satisfied, then the following estimates hold:

\begin{align}
\left\|\mathbb{T}_{\tilde{b}}^{(\delta)}\st f,\tilde{h},\tilde{k} \dt - \mathbb{T}_{\tilde{b}}^{(\delta)}\st 0,0,0 \dt\right\|_{\Ccx}\,\leq&\, \frac{3}{2} \left\|\mathbb{J}^{(\delta)}\mathcal{E}_{\Rep}\Delta_{\eta}^{2}\sq\hkii\dq\right\|_{\Ccx}\\
\left\|\mathbb{T}_{\tilde{b}}^{(\delta)}\st f,\tilde{h},\tilde{k} \dt - \mathbb{T}_{\tilde{b}}^{(\delta)}\st f',\tilde{h},\tilde{k} \dt\right\|_{\Ccx}\,\leq&\, \frac{1}{2} \left\|f-f'\right\|_{\Cdx}\\
\left\|\mathbb{T}_{\tilde{b}}^{(\delta)}\st f,\tilde{h},\tilde{k} \dt - \mathbb{T}_{\tilde{b}}^{(\delta)}\st f,\tilde{h}',\tilde{k}' \dt\right\|_{C_{\delta}^{4,\alpha}\st X_{\Gamma} \dt}\,\leq&\, \frac{1}{2} \left\|\tilde{h}-\tilde{h}',\tilde{k}-\tilde{k}'\right\|_{\mathcal{B}}\,.
\end{align}

\end{lemma}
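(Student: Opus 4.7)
The three estimates are Lipschitz-type bounds for the operator $\mathbb{T}_{\tilde{b}}^{(\delta)}$, all of the same flavour. The plan is to use linearity of $\mathbb{J}^{(\delta)}$ and of the truncation/extension $\mathcal{E}_{\Rep}$ to reduce each estimate to a bound in $\Ccx$ on the nonlinear remainder $\NN_{\tilde{b}^2\eta}$ (and, for the third estimate, additionally on $\Delta_\eta^2 \mathbf{H}^{in}_{\tilde{h}-\tilde{h}',\tilde{k}-\tilde{k}'}$ via Lemma \ref{bihmod}). Concretely, all the ``constant'' terms in $\mathbb{T}_{\tilde{b}}^{(\delta)}$ cancel in the differences and one is left with
\begin{align*}
\mathbb{T}_{\tilde{b}}^{(\delta)}(f,\tilde h,\tilde k) - \mathbb{T}_{\tilde{b}}^{(\delta)}(0,0,0) &= -\mathbb{J}^{(\delta)}\mathcal{E}_{\Rep}\Delta_\eta^2[\hkii]+ \tilde b^4 \mathbb{J}^{(\delta)}\mathcal{E}_{\Rep}\bigl[\NN_{\tilde b^2 \eta}(\Pbg{+}\hkii{+}f) - \NN_{\tilde b^2 \eta}(\Pbg)\bigr],\\
\mathbb{T}_{\tilde{b}}^{(\delta)}(f,\tilde h,\tilde k) - \mathbb{T}_{\tilde{b}}^{(\delta)}(f',\tilde h,\tilde k) &= \tilde b^4 \mathbb{J}^{(\delta)}\mathcal{E}_{\Rep}\bigl[\NN_{\tilde b^2 \eta}(\Pbg{+}\hkii{+}f) - \NN_{\tilde b^2 \eta}(\Pbg{+}\hkii{+}f')\bigr],
\end{align*}
and analogously for the third difference with $\hkii$ replaced by $\mathbf{H}^{in}_{\tilde h',\tilde k'}$ plus a linear boundary term. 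Since $\mathbb{J}^{(\delta)}$ and $\mathcal{E}_{\Rep}$ are bounded on weighted H\"older spaces with constants depending only on the model, it suffices to estimate the brackets in $\Ccx$.

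To control the differences of the nonlinearity I would use the standard interpolation identity $\NN_{\tilde b^2 \eta}(u+v) - \NN_{\tilde b^2 \eta}(u) = \int_0^1 \bigl(D\NN_{\tilde b^2 \eta}\bigr)(u+sv)[v]\, ds$, combined with the expression \eqref{eq:defQg}: $\NN_{\tilde b^2 \eta}$ is quadratic in the first two derivatives (up to higher order terms contained in $\RR_\omega$), so its differential evaluated at $u+sv$ is linear in $v$ and its coefficients are controlled by the $C^{4,\alpha}_\delta$ norm of $u+sv$. In the three cases, $u+sv$ is a bounded perturbation of $\Pbg$ plus a small term $\hkii + f$ (or $\hkii + (1-s)f'+sf$, etc.), all living in the admissible range prescribed by $\dombd$ and the fixed point ansatz. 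The key outcome of this step is that, in each bracket, one gets a prefactor which is the size of the ``perturbation'' times the $\Cdx$-size of the ``variation''.

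Next I would distinguish the inner region $X_{\Gamma,R_0/\tilde b}$ (where the metric $\tilde b^2\eta$ is genuinely non-flat and the unweighted $C^{0,\alpha}$ norm contributes) from the asymptotic region $X_{\Gamma,2\Rep/\tilde b}\setminus X_{\Gamma,R_0/\tilde b}$ (where one plugs in the asymptotic expansions of Lemma \ref{espansioniALE} and the explicit bounds on $\Pbg$, $\hkii$ from the definitions \eqref{eq:transplantedmod} and \eqref{eq:thki}). In the asymptotic region the quadratic nature of $\NN_{\tilde b^2\eta}$ is crucial: one obtains decay of order $|x|^{\delta-4}$ multiplied by a power of $\rep$ or $\Rep$ which, because of the precise choices $\rep=\varepsilon^{(2m-1)/(2m+1)}=\varepsilon \Rep$ and the choice of weight $\delta\in (4-2m,5-2m)$, produces a small $\varepsilon$-power. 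Putting things together, each quadratic contribution contributes at worst a factor $\varepsilon^{2m}\rep^{-2m-\delta}$ (or similar), which is $o(1)$ as $\varepsilon\to 0$ and can be absorbed into $\tfrac12$.

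The main obstacle is purely bookkeeping: each difference $\NN_{\tilde b^2\eta}(\cdot)-\NN_{\tilde b^2\eta}(\cdot\cdot)$ must be split into its euclidean part (on the asymptotic region, this is the dominant contribution and obeys an exact algebraic Taylor expansion) and the correction coming from $\tilde b^2\eta-\tilde b^2\text{eucl}$, which by Lemma \ref{espansioniALE} contributes additional $|x|^{-2m}$ factors. This splitting is the same as in \cite[Lemma 5.2]{ap2} and in the analogous base-orbifold lemmas; the extra subtlety here is the presence of the $\tilde b^4$ prefactor and the rescaled transplanted potential $\Pbg$, but both are benign: $\tilde b$ is a bounded parameter, and $\Pbg$ is $O(\varepsilon^2)$ in the relevant weighted norm by construction. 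Once these case-by-case estimates on $\Ccx$ are in place, applying Remark \ref{inversamod} yields the three bounds with the stated constants $\tfrac32$, $\tfrac12$, $\tfrac12$, provided $\varepsilon$ is small enough.
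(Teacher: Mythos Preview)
Your proposal is correct and follows essentially the same approach as the paper, which simply states that the result ``follows by direct computations as \cite[Lemma 5.3]{ap2}''. You have in fact spelled out in more detail than the paper does what these direct computations entail: the cancellation of linear terms in the differences, the reduction to estimating $\NN_{\tilde b^2\eta}$ differences via the interpolation identity, the inner/asymptotic region splitting, and the use of Lemma~\ref{espansioniALE} for the correction $\tilde b^2\eta - \tilde b^2\,\text{eucl}$.
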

\begin{proof}
Follows by direct computations as \cite[Lemma 5.3]{ap2}
\end{proof}

Now Proposition \ref{crucialbase} easily follows from Lemma \ref{bihmod}, Lemma \ref{transmod},  Lemma \ref{lipschitzmod}.

\section{Data matching}\label{matching}

Now that we have the families of metrics on the base orbifold and on model spaces we want to glue them. To perform the data matching construction we will rescale all functions involved in such a way that functions on $X_{\Gamma_{j}}$ are  functions on the annulus $\overline{B_{1}}\setminus B_{\frac{1}{2}}$ and functions on $M$ are functions on the annulus $\overline{B_{2}}\setminus B_{1}$. The main technical tool we will use in this section is the ``Dirichet to Neumann'' map for euclidean biharmonic extensions that we introduce with the following Theorem whose proof can be found in \cite[Lemma 6.3]{ap1}.

\begin{teo} \label{dirneu}
The map
\begin{equation}\mathcal{P}:C^{4,\alpha} \st \Sp^{2m-1} \dt \times C^{2,\alpha} \st \Sp^{2m-1} \dt \rightarrow C^{3,\alpha} \st \Sp^{2m-1} \dt \times C^{1,\alpha} \st \Sp^{2m-1} \dt \end{equation} 
\begin{equation}\mathcal{P} \st h,k \dt = \st \partial_{|w|} \st H_{h,k}^{out}-H_{h,k}^{in} \dt ,\partial_{|w|}\Delta \st H_{h,k}^{out}-H_{h,k}^{in} \dt  \dt \end{equation}
is an isomorphism of Banach spaces with inverse $\mathcal{Q}$.
\end{teo}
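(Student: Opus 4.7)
The plan is to exploit the fact that both $H^{out}_{h,k}$ and $H^{in}_{h,k}$ are given by explicit Fourier series in the eigenfunctions $\{\phi_\gamma\}$ of $\Delta_{\mathbb{S}^{2m-1}}$, so that $\mathcal{P}$ decomposes into a sequence of $2\times 2$ linear maps $\mathcal{P}_\gamma$ acting on the Fourier coefficients of $(h,k)$ one mode at a time. First I would fix an eigenspace and differentiate \eqref{eq:bihout3}--\eqref{eq:bihout2} and the analogous inner-extension expansion once in the radial direction, using the identity $\Delta(|w|^a\phi_\gamma)=[a(a+2m-2)+\Lambda_\gamma]|w|^{a-2}\phi_\gamma$ to handle the second component. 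A short computation then produces an explicit upper-triangular matrix $\mathcal{P}_\gamma$ whose two diagonal entries both equal $2-2m-2\gamma$ and whose off-diagonal entry is $-\bigl[\tfrac{1}{2(m+\gamma-2)}+\tfrac{1}{2(m+\gamma)}\bigr]$, with the obvious modification when $m=2$ and $\gamma=0$ (the logarithmic mode).

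Next I would observe that $\det \mathcal{P}_\gamma = 4(m-1+\gamma)^2$ is non-zero for every $m\geq 2$ and every $\gamma\geq 0$, so $\mathcal{P}_\gamma$ admits an explicit inverse $\mathcal{Q}_\gamma$. The crucial qualitative feature is that every entry of $\mathcal{Q}_\gamma$ decays like $\gamma^{-1}$ as $\gamma\to\infty$: in other words, the Fourier multiplier $\mathcal{Q}$ assembled from the $\mathcal{Q}_\gamma$ formally gains one spherical derivative across each component, which is exactly the shift required to move $(C^{3,\alpha},C^{1,\alpha})$ back to $(C^{4,\alpha},C^{2,\alpha})$.

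The remaining and only delicate step is to promote this mode-by-mode inversion to a bounded isomorphism of the stated Hölder spaces. The most efficient way is to regard $\mathcal{P}$ as an elliptic first-order pseudodifferential system on $\mathbb{S}^{2m-1}$: the block computation above is precisely its principal symbol, ellipticity reduces to $\det \mathcal{P}_\gamma\neq 0$, and standard Schauder theory then supplies the Hölder continuity of $\mathcal{Q}$ together with the correct gain of derivatives. An equivalent route is to view $\mathcal{P}$ as the Cauchy-data map of a transmission problem for $\Delta^2$ on $B_1$ versus $\mathbb{C}^m\setminus\overline{B_1}$ and appeal to Agmon--Douglis--Nirenberg estimates. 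This Hölder upgrade is the main obstacle I anticipate: the algebraic invertibility on each mode is immediate, but turning it into a continuous two-sided inverse of the correct differential order requires either the pseudodifferential machinery or a careful direct Schauder argument. Once this is in place the theorem follows at once, and this is essentially the content of \cite[Lemma 6.3]{ap1}, which could simply be invoked.
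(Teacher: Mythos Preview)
Your proposal is correct and in fact goes well beyond what the paper does: the paper gives no proof at all, simply citing \cite[Lemma~6.3]{ap1} immediately before the statement. Your mode-by-mode computation of $\mathcal{P}_\gamma$, its determinant $4(m-1+\gamma)^2$, and the $\gamma^{-1}$ decay of the inverse entries is exactly the argument behind that cited lemma, so your sketch and the paper's approach coincide --- you have just unpacked the reference.
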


\begin{proof}[Proof of Theorem \ref{maintheorem}]: We carry on the proof for the case $m\geq 3$, for $m=2$ it is identical. Let $\mathcal{V}^{out}_{j,\bf{0},\bg,\cg,\hg,\kg}$ be K\"ahler potential of $\omega_{\bf{0},\bg,\cg,\hg,\kg}$ at the annulus $\overline{B_{2\rep}\st p_{j} \dt}\setminus B_{\rep}\st p_{j} \dt$ under the homothety
\begin{equation}
z=\rep w\,.
\end{equation}
We have then the expansion 
\begin{align}
\mathcal{V}^{out}_{j,\bf{0},\bg,\cg,\hg,\kg}=&\frac{\rep^{2}}{2}|w|^{2}+\psi_{\omega}\st \rep w \dt+\varepsilon^{2}\psi_{\eta_{j}}\st \frac{\rep w}{B_{j}	\varepsilon} \dt\\
&+\st 1-\frac{\st f_{{\bf 0},\bg,\cg,\hg,\kg}^{out}\dt^{j}}{\varepsilon^{2m}} \dt\st-c\st \Gamma_{j} \dt B_{j}^{2m}\varepsilon^{2m}\rep^{2-2m}|w|^{2-2m}+C_{j}\varepsilon^{2m}\rep^{4-2m}|w|^{4-2m}\dt\\
&+\hkoid\\
&-\sq {\varepsilon^{2m}\bf{G}}_{\bf{0},\bg,\cg} -c\st \Gamma_{j} \dt B_{j}^{2m}\varepsilon^{2m}\rep^{2-2m}|w|^{2-2m}+C_{j}\varepsilon^{2m}\rep^{4-2m}|w|^{4-2m} \dq\\
&+\sq f_{{\bf 0},\bg,\cg,\hg,\kg}^{out}+\st f_{{\bf 0},\bg,\cg,\hg,\kg}^{out}\dt^{j}\st c\st \Gamma_{j} \dt B_{j}^{2m}\rep^{2-2m}|w|^{2-2m}-C_{j}\rep^{4-2m}|w|^{4-2m}\dt\dq\,.
\end{align}
For the sake of notation we set
\begin{align}
\mathcal{R}_{j}^{out}:=&-\sq {\varepsilon^{2m}\bf{G}}_{\bf{0},\bg,\cg} -c\st \Gamma_{j} \dt B_{j}^{2m}\varepsilon^{2m}\rep^{2-2m}|w|^{2-2m}+C_{j}\varepsilon^{2m}\rep^{4-2m}|w|^{4-2m} \dq\\
&+\sq f_{{\bf 0},\bg,\cg,\hg,\kg}^{out}+\st f_{{\bf 0},\bg,\cg,\hg,\kg}^{out}\dt^{j}\st c\st \Gamma_{j} \dt B_{j}^{2m}\rep^{2-2m}|w|^{2-2m}-C_{j}\rep^{4-2m}|w|^{4-2m}\dt\dq\,.
\end{align}

We recall that, using notations of Theorem \ref{invertibilitapesatodef}, $f_{{\bf 0},\bg,\cg,\hg,\kg}^{out}\in C_{\delta}^{4,\alpha}\st M_{\p} \dt\oplus \mathcal{D}\st \bg,\cg \dt$ and we have 
\begin{equation}
f_{{\bf 0},\bg,\cg,\hg,\kg}^{out}=\tilde{f}_{{\bf 0},\bg,\cg,\hg,\kg}^{out}+\sum_{j=1}^{N}\st f_{{\bf 0},\bg,\cg,\hg,\kg}^{out}\dt^{j}W_{\bg,\cg}^{j}
\end{equation}
with $\tilde{f}_{{\bf 0},\bg,\cg,\hg,\kg}^{out}\in C_{\delta}^{4,\alpha}\st M_{\p} \dt$ for $\delta\in (4-2m,5-2m)$ and the numbers $\st f_{{\bf 0},\bg,\cg,\hg,\kg}^{out}\dt^{j}$'s are the coefficients of the deficiency components of $f_{{\bf 0},\bg,\cg,\hg,\kg}^{out}$. In writing the expansion of $\mathcal{V}^{out}_{j,\bf{0},\bg,\cg,\hg,\kg}$, precisely in the second and fourth lines ,  we used the only {\em principal asymptotics} of $\st f_{{\bf 0},\bg,\cg,\hg,\kg}^{out}\dt^{j}W_{\bg,\cg}^{j}$ exposed in formula \eqref{eq:Wjbc} while the remaining part falls in the remainder term $\mathcal{R}_{j}^{out}$.

Let also  $\mathcal{V}^{in}_{j,\tilde{b}_{j},\tilde{h}_{j},\tilde{k}_{j}}$ be the K\"ahler potential  of $\varepsilon^{2}\eta_{j,\tilde{b}_{j},\tilde{h}_{j},\tilde{k}_{j}}$ at the annulus $\overline{X_{\Gamma_{j},\frac{\Rep}{\tilde{b}_{j}}}}\setminus X_{\Gamma_{j},\frac{\Rep}{2\tilde{b}_{j}}}$ under the homothety
\begin{equation}
x=\frac{\Rep w}{\tilde{b}_{j}}\,.
\end{equation}
We have the expansion
\begin{align}
\mathcal{V}^{in}_{j,\tilde{b}_{j},\tilde{h}_{j},\tilde{k}_{j}}=&\frac{\varepsilon^{2}\Rep^{2}}{2}|w|^{2}+\varepsilon^{2}B^{2}\psi_{\eta_{j}}\st \frac{\Rep w}{B_{j}} \dt+\psi_{\omega}\st \varepsilon \Rep w \dt\\
&  -c\st \Gamma_{j} \dt \tilde{b}_{j}^{2m}\varepsilon^{2}\Rep^{2-2m}|w|^{2-2m}+\frac{  \cgaj \st m-1 \dt s_{\omega}\tilde{b}^{4}\varepsilon^{4}\Rep^{4-2m} }{2\st m-2 \dt   m\st m+1 \dt } |w|^{4-2m}\\
&+H_{\varepsilon^{2}\tilde{h}_{j},\varepsilon^{2}\tilde{k}_{j}}^{in}\\
&+\sq \varepsilon^{2}\Pbg-\psi_{\omega}\st \varepsilon \Rep w \dt-\frac{  \cgaj \st m-1 \dt s_{\omega}\tilde{b}^{4}\varepsilon^{4}\Rep^{4-2m} }{2\st m-2 \dt   m\st m+1 \dt } |w|^{4-2m} \dq\\
&+\sq\varepsilon^{2}\tilde{b}_{j}^{2}\psi_{\eta_{j}}\st \frac{\Rep w}{\tilde{b}_{j}} \dt-\varepsilon^{2}B_{j}^{2}\psi_{\eta_{j}}\st \frac{\Rep w}{B_{j}} \dt\dq+\sq {\bf H}_{\varepsilon^{2}\tilde{h}_{j},\varepsilon^{2}\tilde{k}_{j}}^{in}-H_{\varepsilon^{2}\tilde{h}_{j},\varepsilon^{2}\tilde{k}_{j}}^{in} \dq\\
&+\varepsilon^{2}f_{\tilde{b}_{j},\tilde{h}_{j},\tilde{k}_{j}}^{in}\,. 
\end{align}
For the sake of notation we set
\begin{align}
\mathcal{R}_{j}^{in}:=&\sq \varepsilon^{2}\Pbg-\psi_{\omega}\st \varepsilon \Rep w \dt-\frac{  \cgaj \st m-1 \dt s_{\omega}\tilde{b}^{4}\varepsilon^{4}\Rep^{4-2m} }{2\st m-2 \dt   m\st m+1 \dt } |w|^{4-2m} \dq\\
&+\sq\varepsilon^{2}\tilde{b}_{j}^{2}\psi_{\eta_{j}}\st \frac{\Rep w}{\tilde{b}_{j}} \dt-\varepsilon^{2}B_{j}^{2}\psi_{\eta_{j}}\st \frac{\Rep w}{B_{j}} \dt\dq+\sq {\bf H}_{\varepsilon^{2}\tilde{h}_{j},\varepsilon^{2}\tilde{k}_{j}}^{in}-H_{\varepsilon^{2}\tilde{h}_{j},\varepsilon^{2}\tilde{k}_{j}}^{in} \dq\\
&+\varepsilon^{2}f_{\tilde{b}_{j},\tilde{h}_{j},\tilde{k}_{j}}^{in}\,.
\end{align}

We want to find $\bg,\cg,\tilde{\bg},\hg,\kg, \tilde{\hg},\tilde{\kg}$ such that the functions 
\begin{equation}
\mathcal{V}_{j}:=\begin{cases}
\mathcal{V}^{in}_{j,\tilde{b}_{j},\tilde{h}_{j},\tilde{k}_{j}}&\textrm{ on } B_{1}\setminus B_{\frac{1}{2}}\\
&\\
\mathcal{V}^{out}_{j,\bf{0},\bg,\cg,\hg,\kg}&\textrm{ on }\overline{B_{2}}\setminus B_{1}
\end{cases}
\end{equation}
are smooth on $\overline{B_{2}}\setminus B_{\frac{1}{2}}$ for every $j=1,\ldots,N$.
We have written the expansions of  $\mathcal{V}^{out}_{j,\bf{0},\bg,\cg,\hg,\kg}$'s and $\mathcal{V}^{in}_{j,\tilde{b}_{j},\tilde{h}_{j},\tilde{k}_{j}}$'s in such a way we can see immediately perfectly matched terms in the first rows,    {\em principal asymptotics} in the second rows,  biharmonic extensions of {\em pseudo-boundary data} in the third rows,  and "remainder" terms.

\subsection{Tuning Procedure.}\label{tuning}
We would like to have that also the {\em principal asymptotics} match perfectly  and biharmonic extensions of {\em pseudo-boundary data} dominate all the "remainder terms" in  $\varepsilon$-growth. Moreover we need to recover a degree of freedom in biharmonic extensions since we have have taken meanless functions $\hg^{(\dagger)}, \kg^{(\dagger)}$ as parameters. To overcome these problems we  have  to perform a ``{\em tuning}" of the {\em principal asymptotics} i.e. we have to set 
\begin{align}\label{eq:force1}
\cgaj  \tilde{b}_{j}^{2m}\varepsilon^{2}\Rep^{2-2m}|w|^{2-2m}=&\st 1-\frac{\st f_{{\bf 0},\bg,\cg,\hg,\kg}^{out}\dt^{j}}{\varepsilon^{2m}} \dt\cgaj  B_{j}^{2m}\varepsilon^{2m}\rep^{2-2m}|w|^{2-2m}\\
&+\st h_{j}^{(0)}+\frac{k_{j}^{(0)}}{4m-8}\dt|w|^{2-2m}\\
\end{align}

\begin{align}\label{eq:force2}
\frac{  \cga \st m-1 \dt s_{\omega}\tilde{b}^{2m}\varepsilon^{4}\Rep^{4-2m} }{2\st m-2 \dt   m\st m+1 \dt } |w|^{4-2m}=& - \st 1-\frac{\st f_{{\bf 0},\bg,\cg,\hg,\kg}^{out}\dt^{j}}{\varepsilon^{2m}} \dt C_{j}\varepsilon^{2m}\rep^{4-2m}|w|^{4-2m}\\
&-\frac{k_{j}^{(0)}}{4m-8}|w|^{4-2m}
\end{align}
With the specialization above we regain the means of functions $h_{j}$ and $k_{j}$. In fact, as we can see from formula \eqref{eq:bihout3}, choosing meanless functions we were missing exactly the radial terms in the Fourier expansion of $H_{h,k}^{out}$ that incidentally have exactly the same growth of the {\em principal asymptotics}. So perturbing a bit the coefficients $b_{j}$'s we can recover these missing asymptotics in the biharmonic extensions but equation \eqref{eq:force1} imposes us to set the value of parameter $\tilde{b}_{j}$. Moreover, we point out that once we have set the value of $\tilde{b}_{j}$ the equation \eqref{eq:force2} imposes us to choose a particular value for the parameter $c_{j}$ and hence we see, as anticipated in Subsection \ref{analisinonlinearemodello}, how the nonlinear analysis on $X_{\Gamma_{j}}$'s constrains the parameters of {\em balancing condition}. We recall that coefficients $B_{j}$ and $C_{j}$ are defined in Section \ref{nonlinbase} respectively by equations \eqref{eq:Bj} and \eqref{eq:Cj}. Conditions above force us to set: 

\begin{equation}\label{eq:sceltaB}
\tilde{b}_{j}^{2m}=B_{j}^{2m}\st 1-\frac{\st f_{{\bf 0},\bg,\cg,\hg,\kg}^{out}\dt^{j}}{\varepsilon^{2m}} \dt+\frac{1}{ \cgaj  }\st h_{j}^{(0)}+\frac{k_{j}^{(0)}}{4m-8}\dt\frac{\rep^{2m-2}}{\varepsilon^{2m}}
\end{equation}

\begin{align}\label{eq:sceltaC}
C_{j}=&-\frac{1}{2\st m-2 \dt\st  \varepsilon^{2m}-\st f_{\bf{0},\bg,\cg,\hg,\kg}^{out}\dt^{j} \dt}\st \frac{  \cgaj \st m-1 \dt s_{\omega}\tilde{b}_{j}^{2m}\varepsilon^{4}\Rep^{4-2m} }{  m\st m+1 \dt }+k_{j}^{(0)} \dt
\end{align}

and hence we must set

\begin{equation}\label{eq:sceltagamma}
c_{j}=s_{\omega}b_{j}
\end{equation}

that is the assumption \eqref{eq:tuning} in Theorem \ref{maintheorem}.

\begin{remark}
 At this point, the presence of  $|x|^{4-2m}$ term in the correction $W_{4}$, introduced in Subsection \ref{analisinonlinearemodello} formula \eqref{eq:W4}, shows its effects.  That term indeed,  introduced as a technical tool for obtaining better estimates, puts now strong geometric constraints on our construction defining the correct form of {\em non degeneracy condition} and  {\em balancing condition} forcing us to impose Equation \eqref{eq:force2} and giving as consequence relations \eqref{eq:sceltaB}, \eqref{eq:sceltaC} and the key condition \eqref{eq:sceltagamma}.
 \end{remark} 

 We can see also that 
\begin{equation}\label{eq:reminderrad}
\left\|\st\mathcal{R}_{j}^{out}\dt^{(0)}\right\|_{C^{4,\alpha}\st \overline{B_{2}}\setminus B_{1} \dt},\left\|\st\mathcal{R}_{j}^{out}\dt^{(0)}\right\|_{C^{4,\alpha}\st \overline{B_{1}}\setminus B_{\frac{1}{2}} \dt}=o\st \varepsilon^{4m+2}\rep^{-6m+4-\delta}  \dt
\end{equation}
and 
\begin{equation}\label{eq:remindernonrad}
\left\|\st\mathcal{R}_{j}^{out}\dt^{(\dagger)}\right\|_{C^{4,\alpha}\st \overline{B_{2}}\setminus B_{1} \dt},\left\|\st\mathcal{R}_{j}^{out}\dt^{(\dagger)}\right\|_{C^{4,\alpha}\st \overline{B_{1}}\setminus B_{\frac{1}{2}} \dt}=o\st \varepsilon^{2m+4}\rep^{2-4m-\delta} \dt\,,
\end{equation}
therefore  the  biharmonic extensions  dominate all remainder terms in $\varepsilon$-growth indeed 
\begin{equation}
|\hg^{(0)}|+|\kg^{(0)}|=\mathcal{O}\st  \varepsilon^{4m+2}\rep^{-6m+4-\delta} \dt\qquad\textrm{ and }\qquad \left\|\hg^{(\dagger)},\kg^{(\dagger)}\right\|_{\dombd}=\mathcal{O}\st \varepsilon^{2m+4}\rep^{2-4m-\delta} \dt\,.
\end{equation}

\subsection{Cauchy data matching procedure}
Now we want to  find the correct parameters such that at $\Sp^{2m-1	}$ there is a $C^{3}$ matching of potentials $\mathcal{V}^{out}_{j,\bf{0},\bg,\cg,\hg,\kg}$ and $\mathcal{V}^{in}_{j,\tilde{b}_{j},\tilde{h}_{j},\tilde{k}_{j}}$. As proved in \cite{ap1}   there is the $C^{3}$ matching at the boundaries  if and only if the following system is verified    
\begin{equation}
(\Sigma_{j}):\left\{
\begin{array}{rcl}
\mathcal{V}^{out}_{j,\bf{0},\bg,\cg,\hg,\kg}&=&\mathcal{V}^{in}_{j,\tilde{b}_{j},\tilde{h}_{j},\tilde{k}_{j}}\\
&&\\
\partial_{|w|}\sq\mathcal{V}^{out}_{j,\bf{0},\bg,\cg,\hg,\kg}\dq&=&\partial_{|w|}\sq\mathcal{V}^{in}_{j,\tilde{b}_{j},\tilde{h}_{j},\tilde{k}_{j}}\dq\\
&&\\
\Delta\sq\mathcal{V}^{out}_{j,\bf{0},\bg,\cg,\hg,\kg}\dq&=&\Delta\sq\mathcal{V}^{in}_{j,\tilde{b}_{j},\tilde{h}_{j},\tilde{k}_{j}}\dq\\
&&\\
\partial_{|w|}\Delta\sq\mathcal{V}^{out}_{j,\bf{0},\bg,\cg,\hg,\kg}\dq&=&\partial_{|w|}\Delta\sq\mathcal{V}^{in}_{j,\tilde{b}_{j},\tilde{h}_{j},\tilde{k}_{j}}\dq
\end{array}\right.
\end{equation}

After choices \eqref{eq:sceltaB},\eqref{eq:sceltaC}, \eqref{eq:sceltagamma} and some algebraic manipulations, systems $(\Sigma_{j})$ become 


\begin{equation}
(\Sigma_{j}):\left\{
\begin{array}{rcl}
\varepsilon^{2}\tilde{h}_{j}&=&h_{j}- \xi_{j}\\
\varepsilon^{2}\tilde{k}_{j}&=&k_{j}- \Delta\sq\xi_{j}\dq\\
\partial_{|w|}\sq H_{h_{j},k_{j}}^{out}-H_{h_{j},k_	{j}}^{in}\dq&=& \partial_{|w|}\sq \xi_{j}-H_{\xi_{j},\Delta\xi_{j}}^{in}\dq\\
\partial_{|w|}\Delta\sq H_{h_{j},k_{j}}^{out}-H_{h_{j},k_{j}}^{I}\dq&=& \partial_{|w|}\Delta\sq\xi_{j}-H_{\xi_{j},\Delta\xi_{j}}^{in}\dq
\end{array}\right.
\end{equation}
with  $\xi_{j}$ a function depending linearly $\mathcal{R}_{j}^{out}$ and $\mathcal{R}_{j}^{in}$.  Using Theorem \ref{dirneu} we define the operators 

\begin{equation}
\mathcal{S}_{j}\st \varepsilon^{2}\tilde{h}_{j},\varepsilon^{2}\tilde{k}_{j} , h_{j},k_{j} \dt:=\st h_{j}- \xi_{j},  k_{j}- \Delta\xi_{j},\mathcal{Q}\sq \partial_{|w|}\st \xi_{j}-H_{\xi_{j},\Delta\xi_{j}}^{in}\dt,\partial_{|w|}\Delta\st\xi_{j}-H_{\xi_{j},\Delta\xi_{j}}^{in}\dt   \dq \dt
\end{equation}

and then the operator $\mathcal{S}: \dombd^{2}\rightarrow \mathcal{B}^{2}$

\begin{equation}
\mathcal{S}:=\st \mathcal{S}_{1},\ldots, \mathcal{S}_{N} \dt\,.
\end{equation}

Note also that biharmonic extensions, seen as operators 
\begin{equation}
H_{\cdot,\cdot}^{out},H_{\cdot,\cdot}^{in}:C^{4,\alpha}\st \Sp^{2m-1} \dt\times C^{2,\alpha}\st \Sp^{2m-1} \dt\rightarrow C^{4,\alpha}\st \Sp^{2m-1} \dt
\end{equation}
and the operator
\begin{equation}
\mathcal{Q}:C^{3,\alpha}\st \Sp^{2m-1} \dt\times C^{1,\alpha}\st \Sp^{2m-1} \dt\rightarrow C^{4,\alpha}\st \Sp^{2m-1} \dt\times C^{2,\alpha}\st \Sp^{2m-1} \dt
\end{equation}
defined in Proposition \ref{dirneu}, preserve eigenspaces of $\Delta_{\Sp^{2m-1}}$. Thanks to the explicit knowledge of the various  terms composing $\mathcal{R}_{j}^{out}$'s and $\mathcal{R}_{j}^{in}$'s, in particular estimates \eqref{eq:reminderrad} and \eqref{eq:remindernonrad},  we can find  $\kappa>0$ such that 
\begin{equation}
\mathcal{S}: \dombd^{2}\rightarrow \dombd^{2}\,.
\end{equation}
Now the conclusion follows immediately applying a Picard iteration scheme and standard regularity theory.
\end{proof}

\section{Examples}

\noindent In this Section we list few examples where our results can be applied. We have confined ourselves to the case when $M$ is a {\em{toric}} K\"ahler-Einstein orbifold, but there is no doubt that this is far from a comprehensive list.

\begin{example}
Consider $\st\PP^{1}\times\PP^{1},\pi_{1}^{*}\omega_{FS}+\pi_{2}^{*}\omega_{FS}\dt$ and let $\ZZ_{2}$ act in the following way
\begin{equation}\st[x_{0}:x_{1}],[y_{0}:y_{1}]\dt\longrightarrow  \st[x_{0}:-x_{1}],[y_{0}:-y_{1}]\dt\end{equation} 

\noindent It's immediate to check that this action is in $SU(2)$ with four fixed points
\begin{gather}
p_{1}= \st[1:0],[1:0]\dt \\
p_{2}= \st[1:0],[0:1]\dt \\
p_{3}= \st[0:1],[1:0]\dt \\
p_{4}= \st[0:1],[0:1]\dt 
\end{gather}

\noindent 
The quotient space $X_{2}:=\PP^{1}\times\PP^{1}/\ZZ_{2}$ is a K\"ahler-Einstein, Fano orbifold and thanks to the embedding into $\PP^{4}$
\begin{equation}
\st [x_{0}:x_{1}],[y_{0}:y_{1}] \dt\mapsto [ x_{0}^{2}y_{0}^{2}:x_{0}^{2}y_{1}^{2}:x_{1}^{2}y_{0}^{2}:x_{1}^{2}y_{1}^{2}: x_{0}x_{1}y_{0}y_{1} ]
\end{equation}
it is isomorphic to the intersection of singular quadrics 
\begin{equation}
\sg z_{0}z_{3}-z_{4}^{2}=0 \dg\cap \sg z_{1}z_{2}-z_{4}^{2}=0 \dg
\end{equation}
that by \cite{AGP} is a limit of \K -Einstein surfaces, namely the intersection of two smooth quadrics. Since it is K\"ahler-Einstein, conditions for applying our construction become exactly the conditions of  \cite{ap2}, so we have to verify that the matrix
\begin{equation}
\Theta\st \textrm{\bf{1}} , s_{\omega} \textrm{\bf{1}}\dt  =\st \frac{ s_{\omega}}{2} \varphi_{j}\st p_{i} \dt \dt_{\substack{ 1\leq i \leq 2 \\1\leq j\leq 4}}\,.
\end{equation}
has full rank and there exist a positive element in $\ker \Theta\st \textrm{\bf{1}} , s_{\omega} \textrm{\bf{1}}\dt$. It is immediate to see that we have 
\begin{equation}H^{0}\st X_{2}, T^{(1,0)}X_{2} \dt=H^{0}\st \PP^{1}/\ZZ_{2}, T^{(1,0)}\st  \PP^{1}/\ZZ_{2} \dt \dt\oplus H^{0}\st \PP^{1}/\ZZ_{2}, T^{(1,0)}\st  \PP^{1}/\ZZ_{2} \dt \dt\,.\end{equation}
Moreover \begin{equation}H^{0}\st \PP^{1}/\ZZ_{2}, T^{(1,0)}\st  \PP^{1}/\ZZ_{2} \dt \dt\end{equation} is generated by holomorphic vector fields on $\PP^{1}$ that vanish on points 
$[0:1],[1:0]$ so 
\begin{equation}\dim_{\CC} H^{0}\st \PP^{1}/\ZZ_{2}, T^{(1,0)}\st  \PP^{1}/\ZZ_{2} \dt \dt=1\end{equation}
and an explicit generator is the vector field
\begin{equation}V=z^{1}\partial_{1}\,.\end{equation} 
We can compute explicitly its potential $\varphi_{V}$ with respect to $\omega_{FS}$ that is
\begin{equation}\varphi_{V}\st [z_{0}:z_{1}] \dt=- \frac{|z_{0}z_{1}|}{|z_{0}|^{2}+|z_{1}|^{2}}+\frac{1}{2}\end{equation}
and it is easy to see that it is a well defined function and 
\begin{equation}\int_{\PP^1}\varphi_{V}\omega_{FS}=0\,.\end{equation}
Summing up everything, we have that the matrix $\Theta\st \textrm{\bf{1}} , s_{\omega} \textrm{\bf{1}}\dt$ for $X_{2}$ is a $2\times 4$ matrix and can be written explicitly
\begin{equation}\Theta\st \textrm{\bf{1}} , s_{\omega} \textrm{\bf{1}}\dt=\frac{s_{\omega}}{2}\begin{pmatrix}
-1&-1 & 1&1\\
-1&1&-1&1
\end{pmatrix}\end{equation}
that has rank $2$ and every vector of type $\st a,b,b,a \dt$ for $a,b>0$ lies in  $\ker\Theta\st \textrm{\bf{1}} , s_{\omega} \textrm{\bf{1}}\dt$. 

\end{example}

\begin{example}

Consider $\st\PP^{2},\omega_{FS}\dt$ and let $\ZZ_{3}$ act in the following way
\begin{equation}[z_{0}:z_{1}:z_{2}]\longrightarrow  [x_{0}:\zeta_{3}x_{1}:\zeta_{3}^{2}x_{2}]\qquad \zeta_{3}\neq 1, \zeta_{3}^{3}=1\end{equation} 

\noindent It's immediate to check that this action is in $SU(2)$ with three fixed points
\begin{gather}
p_{1}= [1:0:0]\\
p_{2}= [0:1:0] \\
p_{3}= [0:0:1] 
\end{gather}

noindent The quotient space $X_{3}:=\PP^{2}/\ZZ_{3}$ is a K\"ahler-Einstein, Fano orbifold and it is isomorphic, via the embedding
\begin{equation}
[x_{0}:x_{1} :x_{2}]\mapsto [x_{0}^{3}:x_{1}^{3}: x_{2}^{3}: x_{0}x_{1}x_{2}]\,,
\end{equation}
to the singular cubic surface in $\PP^{3}$
\begin{equation}
\sg z_{0}z_{1}z_{2}-z_{3}^{3}=0\dg\,.
\end{equation}
that by \cite{Tian} we know to be a point of the boundary of the moduli space of Fano K\"ahler-Einstein surfaces, namely smooth cubic hypersurfaces. 
Again, conditions for applying our construction become exactly the conditions of Theorem \cite[Theorem  ]{ap2}, so we have to verify that the matrix
\begin{equation}
\Theta\st \textrm{\bf{1}} , s_{\omega} \textrm{\bf{1}}\dt  =\st \frac{ 2s_{\omega}}{3} \varphi_{j}\st p_{i} \dt \dt_{\substack{ 1\leq i \leq 2 \\1\leq j\leq 3}}\,.
\end{equation}
has full rank and there exist a positive element in $\ker \Theta\st \textrm{\bf{1}} , s_{\omega} \textrm{\bf{1}}\dt $. It is immediate to see that we have 
\begin{equation}
\dim_{\CC} H^{0}\st X_{3}, T^{(1,0)}X_{3} \dt=2
\end{equation}
because $H^{0}\st X_{3}, T^{(1,0)}X_{3} \dt$ it is generated by holomorphic vector fields on $\PP^{2}$ vanishing at points $p_{1},p_{2},p_{3}$.
Explicit generators are the vector fields
\begin{gather}
V_{1}=z^{1}\partial_{1}+z^{2}\partial_{2}\\
V_{2}=z^{0}\partial_{0}+z^{1}\partial_{1}
\end{gather} 
We can compute explicitly their potentials $\phi_{V_{1}},\phi_{V_{2}}$ with respect to $\omega_{FS}$ that are
\begin{equation}\phi_{V_{1}}\st [z_{0}:z_{1}:z_{2}] \dt=  -\frac{|z^{0}|^{2}}{|z^{0}|^{2}+|z^1|^{2}+|z^{2}|^{2}}+\frac{1}{3}\end{equation}
\begin{equation}\phi_{V_{2}}\st [z_{0}:z_{1}:z_{2}] \dt=-\frac{|z^{2}|^{2}}{|z^{0}|^{2}+|z^1|^{2}+|z^{2}|^{2}}+\frac{1}{3}\end{equation}
and it is easy to see that are well defined functions and 
\begin{equation}\int_{\PP^2}\phi_{V_{1}}\frac{\omega_{FS}^{2}}{2}=\int_{\PP^2}\phi_{V_{1}}\frac{\omega_{FS}^{2}}{2}=0\,\end{equation}
One can check that

\begin{align}
\varphi_{1}=&-3\st \phi_{1}+2\phi_{2}  \dt\\
\varphi_{2}=&-3\st 2\phi_{1}+\phi_{2}  \dt
\end{align}
is a basis of the space of potentials of holomorphic vector fields vanishing somewhere on $X_{3}$. Summing up everything, we have that the matrix $\Theta\st \textrm{\bf{1}} , s_{\omega} \textrm{\bf{1}}\dt$ for $X_{3}$ is a $2\times 3$ matrix and can be written explicitly
\begin{equation}\Theta\st \textrm{\bf{1}} , s_{\omega} \textrm{\bf{1}}\dt=\frac{2s_{\omega}}{3}\begin{pmatrix}
1&-1 & 0\\
0&-1&1
\end{pmatrix}\end{equation}
that has rank $2$ and every vector of type $\st a,a,a \dt$ for $a>0$ lies in  $\ker \Theta\st \textrm{\bf{1}} , s_{\omega} \textrm{\bf{1}}\dt$. 

\end{example}

\subsection{Equivariant version and partial desingularizations}

\noindent If the orbifold is acted on by a compact group it is immediate to observe that our proof goes through taking at every step of the proof equivariant spaces and averaging on the group with its Haar measure. We can then use the following

\begin{teo}\label{maintheoremequiv}
Let $\st M,\omega, g\dt$ be a compact Kcsc orbifold with isolated singularities and let $G$ be a compact subgroup of holomorphic isometries
such that $\omega $ is invariant under the action of $G$. Let $\p\:=\sg p_1,\ldots,p_{N}\dg\subseteq M$ the set of points  with neighborhoods biholomorphic to  a ball of  $\CC^m/\Gamma_{j}$ with $\Gamma_{j}$ nontrivial subgroup of $SU(m)$ such that $\CC^{m}/\Gamma_{j}$ admits an ALE  Kahler Ricci-flat resolution $\st X_{\Gamma_{j}},h,\eta_{j} \dt$
and
\begin{align}
\ker\st   \Lg  \dt^{G} :=&\,\ker\st   \Lg  \dt\cap \left\{f\in C^{2}\st M \dt| \gamma^{*}\dd f=\dd f \quad \forall\gamma \in G \right\}\\
=&\,span_{\RR}\left\{1,\varphi_1,\ldots,\varphi_d \right\}\,.
\end{align}

Suppose moreover that there exist $ \bg \in (\mathbb{R}^{+})^{N}$ and $\cg\in\RR^{N}$ such that
			
		\begin{displaymath}\label{eq:matricebalequiv}
		\left\{\begin{array}{lcl}
		\sum_{j=1}^{N}b_{j}\Delta_{\omega}\varphi_{i}\st p_{j} \dt+c_{j}\varphi_{i}\st p_{j} \dt=0 && i=1,\ldots, d\\
		&&\\
		\st b_{j}\Delta_{\omega}\varphi_{i}\st p_{j} \dt+c_{j}\varphi_{i}\st p_{j} \dt \dt_{\substack{1\leq i\leq d\\1\leq j\leq N}}&& \textrm{has full rank}
		\end{array}\right.
		\end{displaymath}
	
	If 
	\begin{equation}\label{eq:tuningequiv}
	c_{j}=s_{\omega}b_{j}\,,
	\end{equation}

then there is $\bar{\varepsilon}$ such that for every $\varepsilon \in (0,\bar{\varepsilon})$ the orbifold
\[
\tilde{M} : = M \sqcup _{{p_{1}, \varepsilon}} X_{\Gamma_1} \sqcup_{{p_{2},\varepsilon}} \dots
\sqcup _{{p_N, \varepsilon}} X_{\Gamma_N}
\]

\noindent has a Kcsc metric in the class 
\begin{equation}
\pi^{*}\sq\omega \dq+ \sum_{j=1}^{N}\varepsilon^{2m}\tilde{b}_{j}^{2m}\sq \tilde{\eta}_{j} \dq\qquad\textrm{ with }\qquad \mathfrak{i}_{j}^{*}\sq \tilde{\eta}_{j} \dq=[\eta_{j}]
\end{equation}
\noindent where $\pi$ is the canonical surjection of $\tilde{M}$ onto $M$ and $\mathfrak{i}_{j}$ the natural embedding of $X_{\Gamma_{j},\Rep}$ into $\tilde{M}$. Moreover 
\begin{equation}
\left|\tilde{b}_{j}^{2m} - \frac{|\Gamma_{j}|b_{j}}{2\st m-1 \dt}\right| \leq \mathsf{C} \varepsilon^{\gamma}\qquad\textrm{ for some }\qquad \gamma>0\,.
\end{equation}\end{teo}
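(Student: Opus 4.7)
The plan is to repeat the proof of Theorem~\ref{maintheorem} verbatim, but with every functional space replaced by its $G$-invariant subspace and every operator inversion carried out equivariantly by averaging over the Haar measure of $G$. The starting observation is that, because $\omega$ is $G$-invariant, all the natural geometric operators attached to it---$\Delta_\omega$, the Ricci form $\rho_\omega$, the linearized scalar curvature $\Lg$, the full scalar curvature operator $\mathbf{S}_\omega$, and the nonlinear remainder $\NN_\omega$---commute with the pullback action $\gamma^{*}$ for every $\gamma\in G$. Consequently $\Lg$ restricts to a self-adjoint elliptic operator on $G$-invariant weighted H\"older spaces, whose bounded kernel is precisely $\spa_{\RR}\{1,\varphi_1,\dots,\varphi_d\}$ by hypothesis.

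Next, the group $G$ acts by permutations on the singular set $\p$ (since holomorphic isometries preserve the singular locus), and this permutation matches the orbifold groups $\Gamma_j$ up to conjugation. One then chooses, on each $G$-orbit $\{p_{j_1},\dots,p_{j_r}\}\subset\p$, a single Ricci-flat ALE K\"ahler resolution $(X_\Gamma,h,\eta)$ and transports it to the other points of the orbit by the group action; the constants $b_j,c_j$ are taken constant along orbits, which is allowed by the balancing hypothesis. With these choices, the \emph{multi-poles fundamental solution} $\GGG_{\mathbf{0},\bg,\cg}$ of Proposition~\ref{balancrough} is automatically $G$-invariant (apply Proposition~\ref{balancrough} and then average the resulting distribution over $G$; uniqueness of the projection to the orthogonal complement of $\ker\Lg$ forces $\GGG_{\mathbf{0},\bg,\cg}$ to coincide with its average). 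The same averaging argument upgrades the right inverse $\mathbb{J}^{(\delta)}_{\mathbf{0},\bg,\cg}$ of Theorem~\ref{invertibilitapesatodef} to a bounded right inverse between $G$-invariant spaces, and similarly for $\mathbb{J}^{(\delta)}$ on each model $X_{\Gamma_j}$ (cf.\ Proposition~\ref{isomorfismopesati}) and for the gap result Proposition~\ref{GAP}.

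The nonlinear fixed-point problems of Propositions~\ref{crucialbase} and~\ref{crucialmodello} are then run inside the $G$-invariant closed subspaces of $\Cqdd\oplus\mathcal{D}_\p(\bg,\cg)$ and $\Cdx$, respectively. The transplanted potentials $\Pbe$ and $\Pbg$ are $G$-invariant because the cutoffs $\chi_j$ and the coefficients $B_j,C_j$ can be taken constant along $G$-orbits, and the biharmonic extensions $\hko,\hkii$ are $G$-invariant whenever the pseudo-boundary data $(\hg,\kg),(\tilde{\hg},\tilde{\kg})$ lie in the $G$-invariant subspaces $\dombd^G,\mathcal{B}^G$. Since the operators $\mathbb{T}^{(\delta)}_{\mathbf{0},\bg,\cg}$ and $\mathbb{T}^{(\delta)}_{\tilde b}$ are built from $G$-equivariant ingredients, they preserve the $G$-invariant subspaces, and Banach--Caccioppoli produces $G$-invariant fixed points $f^{out}$ and $f^{in}$ with the same estimates as before.

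Finally, the tuning procedure and the Cauchy data matching of Section~\ref{matching} are carried out between $G$-invariant pseudo-boundary data. The Dirichlet-to-Neumann operator $\mathcal{P}$ of Theorem~\ref{dirneu} commutes with the linear action of $G$ on $\Sp^{2m-1}$ induced by the identifications above, so its inverse $\mathcal{Q}$ preserves $G$-invariance, and the contraction $\mathcal{S}\colon(\dombd^G)^2\to(\dombd^G)^2$ admits a fixed point by the same Picard iteration used in Theorem~\ref{maintheorem}. The resulting metric on $\tilde M$ is Kcsc and $G$-invariant. The only bookkeeping obstacle, and the place where care is required, is to verify that all choices (local holomorphic coordinates at the $p_j$, cutoffs, identifications of resolutions along each $G$-orbit, and the not-uniquely-defined right inverses appearing in Remarks~\ref{inversadef} and~\ref{inversamod}) can be made simultaneously $G$-equivariant; this is guaranteed by the Haar-averaging trick combined with the fact that $G$ is compact.
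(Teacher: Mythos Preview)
Your proposal is correct and follows exactly the same approach as the paper: the authors' entire justification for Theorem~\ref{maintheoremequiv} is the single sentence ``it is immediate to observe that our proof goes through taking at every step of the proof equivariant spaces and averaging on the group with its Haar measure,'' and your write-up is a faithful (and considerably more detailed) expansion of precisely that strategy. If anything, you supply more than the paper does, e.g.\ the remark that $G$ permutes $\p$ and that one should take $b_j,c_j$ constant along $G$-orbits, and the observation that the Dirichlet-to-Neumann map $\mathcal{P}$ commutes with the induced $G$-action.
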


\noindent If the K\"ahler orbifold $(M,\omega )$ is  a toric variety,  $\omega $ is K\"ahler-Einstein and $G=\st S^{1} \dt^{m}$ then  $\omega $ is $G$-invariant (by Matsushima-Lichnerowicz) and 
\begin{equation}
\ker\st \Lg \dt^{G}=\left\{ 1,\varphi_{1},\ldots, \varphi_{m} \right\}\,.
\end{equation}

By definition, the functions  $\varphi_{j}$ are such that  
\begin{equation}
\partial^{\sharp}\varphi_{j}\st p \dt=\left.\frac{d}{dt}\sq \st e^{t\log\st \lambda_{j}^{1} \dt},\ldots, e^{t\log\st \lambda_{j}^{m} \dt} \dt \cdot p  \dq\right|_{t=0}\qquad (\lambda_{j}^{1},\ldots,\lambda_{j}^{m})\in \st\CC^{*}\dt^{m}
\end{equation}
\noindent and can be chosen in such a way that,  having set
\begin{equation}
\mu:M\rightarrow \RR^{m}\qquad \mu\st p\dt := \st \varphi_{1}\st p \dt,\ldots,\varphi_{d}\st p \dt \dt\,,
\end{equation}
the set $\mu\st M \dt$ is a convex polytope that coincides up to transformations in $SL(2,\ZZ)$ with the polytope associated to the pluri-anticanonical polarization of the toric variety $M$. Moreover 
\begin{equation}
\Lg=\Delta_{\omega}^{2}+\frac{s_{\omega}}{m}\Delta_{\omega}
\end{equation}
and 
\begin{equation}
\Delta\varphi_{j}=-\frac{s_{\omega}}{m}\varphi_{j}
\end{equation}
so 
\begin{equation}
\Theta\st \textrm{\bf{1}} , s_{\omega} \textrm{\bf{1}}\dt=\Theta\st \textrm{\bf{0}} , \frac{\st  m-1 \dt s_{\omega}}{m} \textrm{\bf{1}}\dt  =\st \frac{\st  m-1 \dt s_{\omega}}{m} \varphi_{j}\st p_{i} \dt \dt_{\substack{1\leq j\leq d\\1\leq i \leq N}}\,.
\end{equation}
Moreover the set $\mu\st \p \dt$ is a subset of the vertices of $\mu\st M \dt$,  indeed points of $\p$ are critical points for $\varphi_{j}$ since their gradients vanish at these points (indeed the holomorphic vector fields $\partial^{\sharp}\varphi_{j}$ must vanish at these points since they must preserve the isolated singularities). Assumptions of Theorem \ref{maintheoremequiv} are then satisfied if the barycenter of the set $\mu\st \p \dt$ is the origin of $\RR^{m}$.

\begin{example}
Let  $X^{(1)}$ be the toric K\"ahler-Einstein threefold whose  1-dimensional fan  $\Sigma^{(1)}_{1}$ is generated by points
\begin{equation}\Sigma^{(1)}_{1}=\left\{(1,3,-1), (-1,0,-1), (-1,-3,1), (-1,0,0), (1,0,0), (0,0,1), (0,0,-1), (1,0,1)\right\}
\end{equation}
and its $3$-dimensional fan $\Sigma^{(1)}_{3}$  is generated by $12$ cones 
\begin{align}
C_{1}:=& \left<        (-1,  0, -1),(-1, -3,  1),(-1,  0,  0)\right>\\
C_{2}:=& \left<        ( 1,  3, -1),(-1,  0, -1),(-1,  0,  0)\right>\\
C_{3}:=& \left<        (-1, -3,  1),(-1,  0,  0),( 0,  0,  1)\right>\\
C_{4}:=&\left<        ( 1,  3, -1),(-1,  0,  0),( 0,  0,  1)\right>\\
C_{5}:= &\left<        ( 1,  3, -1),(-1,  0, -1),( 0,  0, -1)\right>\\
C_{6}:=&\left<        (-1,  0, -1),(-1, -3,  1),( 0,  0, -1)\right>\\
C_{7}:= & \left<        (-1, -3,  1),( 1,  0,  0),( 0,  0, -1)\right>\\
C_{8}:=& \left<        (1,  3, -1),(1,  0,  0),(0,  0, -1)\right>\\
C_{9}:= &\left<        (1,  3, -1),(0,  0,  1),(1,  0,  1)\right>\\
C_{10}:=& \left<        (-1, -3,  1),( 1,  0,  0),( 1,  0,  1)\right>\\
C_{11}:=& \left<        (1,  3, -1),(1,  0,  0),(1,  0,  1)\right>\\
C_{12}:= & \left<        (-1, -3,  1),( 0,  0,  1),( 1,  0,  1)\right>
\end{align}

\noindent All these cones are singular and $C_{1},C_{4},C_{5},C_{7},C_{11},C_{12}$ are cones relative to affine open subsets of $X^{(1)}$ containing a $SU(3)$ singularity, while the others  are cones relative to affine open subsets of $X^{(1)}$ containing a $U(3)$ singularity. 


\noindent The  3-anticanonical polytope  $P_{-3K_{X^{(1)}}}$ is the convex hull of vertices
\begin{align}
P_{-3K_{X^{(1)}}}:=&\left<(0,-2,-3), (-3,0,0), (-3,1,3), (0,0,3), (3,-2,0),\right.\\
&\left. (0,2,3), (0,0,-3), (-3,2,0), (-3,3,3), (3,0,0), (3,-1,-3), (3,-3,-3)\right>
\end{align}


\noindent With 2-faces 
\begin{align}
F_{1}:=&\left<        ( 0, -2, -3),( 3, -3, -3),(-3,  0,  0),(-3,  1,  3),( 0,  0,  3),( 3, -2,  0)\right>\\
F_{2}:=&\left<        (-3,  1,  3),( 0,  0,  3),( 0,  2,  3),(-3,  3,  3)\right>\\
F_{3}:=&\left<        (0,  0,  3),(3, -2,  0),(0,  2,  3),(3,  0,  0)\right>\\
F_{4}:=&\left<        ( 0, -2, -3),(-3,  0,  0),( 0,  0, -3),(-3,  2,  0)\right>\\
F_{5}:=&\left<        ( 3, -1, -3),( 0,  2,  3),( 0,  0, -3),(-3,  2,  0),(-3,  3,  3),( 3,  0,  0)\right>\\
F_{6}:=&\left<        (-3,  0,  0),(-3,  1,  3),(-3,  2,  0),(-3,  3,  3)\right>\\
F_{7}:=&\left<        (3, -1, -3),(0, -2, -3),(3, -3, -3),(0,  0, -3)\right>\\
F_{8}:=&\left<        (3, -1, -3),(3, -3, -3),(3, -2,  0),(3,  0,  0)\right>
\end{align}


\noindent We have the following correspondences between cones containing a $SU(3)$-singularity and vertices of $P_{-3K_{X^{(1)}}}$
\begin{align}
C_{1}& \longleftrightarrow F_{3}\cap F_{5}\cap F_{8}=\sg (3,0,0) \dg\\
C_{4}& \longleftrightarrow F_{1}\cap F_{7}\cap F_{8}=\sg (3,-3,-3) \dg\\
C_{5}& \longleftrightarrow F_{1}\cap F_{2}\cap F_{3}=\sg (0,0,3) \dg\\
C_{7}& \longleftrightarrow F_{2}\cap F_{5}\cap F_{7}=\sg (-3,3,3) \dg\\
C_{11}& \longleftrightarrow F_{1}\cap F_{4}\cap F_{6}=\sg (-3,0,0) \dg\\
C_{12}& \longleftrightarrow F_{4}\cap F_{5}\cap F_{7}=\sg (0,0,-3) \dg
\end{align}

\noindent Since in complex dimension $3$ every $SU(3)$-singularity admits a K\"ahler crepant resolution it is then immediate to see that all assumptions of Theorem \ref{maintheoremequiv} are satisfied. 

\end{example}

\begin{example}
Let  $X^{(4)}$ be the toric K\"ahler-Einstein threefold whose  1-dimensional fan  $\Sigma^{(3)}_{1}$ is generated by points
\begin{equation}\Sigma^{(4)}_{1}=\left\{(0,3,1), (1,1,2), (1,0,0), (-1,0,0), (-2,-1,-2), (1,-3,-1)\right\}\end{equation}
and its $3$-dimensional fan $\Sigma^{(4)}_{3}$  is generated by $8$ cones

\begin{align}
C_{1}:=&\left<( 0,  3,  1),( 1,  1,  2),(-1,  0,  0)\right>\\
C_{2}:=&\left<(0, 3, 1),(1, 1, 2),(1, 0, 0)\right>\\
C_{3}:=&\left<( 0,  3,  1),(-1,  0,  0),(-2, -1, -2)\right>\\
C_{4}:=&\left<( 0,  3,  1),( 1,  0,  0),(-2, -1, -2)\right>\\
C_{5}:=&\left<( 1,  0,  0),(-2, -1, -2),( 1, -3, -1)\right>\\
C_{6}:=&\left<( 1,  1,  2),(-1,  0,  0),( 1, -3, -1)\right>\\
C_{7}:=&\left<(-1,  0,  0),(-2, -1, -2),( 1, -3, -1)\right>\\
C_{8}:=&\left<(1,  1,  2),(1,  0,  0),(1, -3, -1)\right>
\end{align}

\noindent The cones $C_{1},C_{4},C_{7},C_{8}$ are relative to affine open subsets of $X^{(4)}$ containing a $SU(3)$ singularity and the  other cones are  relative to affine open subsets of $X^{(4)}$ containing a $U(3)$ singularity.

\noindent The  5-anticanonical polytope  $P_{-5K_{X^{(4)}}}$ is the convex hull of vertices
\begin{align}
P_{-5K_{X^{(4)}}}:=&\left< (5,-1,-2), (5,0,-5), (-5,-2,1), (-5,0,0),\right.\\
&\left. (5,5,-5), (-5,-5,10), (-5,-3,9), (5,6,-8) \right>
\end{align}
With 2-faces 
\begin{align}
F_{1}:=&\left<( 5,  0, -5),(-5, -2,  1),(-5,  0,  0),( 5,  6, -8)\right>\\
F_{2}:=&\left<( 5, -1, -2),( 5,  0, -5),(-5, -2,  1),(-5, -5, 10)\right>\\
F_{3}:=&\left<(5, -1, -2),(5,  0, -5),(5,  5, -5),(5,  6, -8)\right>\\
F_{4}:=&\left<( 5, -1, -2),( 5,  5, -5),(-5, -5, 10),(-5, -3,  9)\right>\\
F_{5}:=&\left<(-5, -2,  1),(-5,  0,  0),(-5, -5, 10),(-5, -3,  9)\right>\\
F_{6}:=&\left<(-5,  0,  0),( 5,  5, -5),(-5, -3,  9),( 5,  6, -8)\right>
\end{align}


\noindent We have the following correspondences between cones containing a $SU(3)$-singularity and vertices of $P_{-5K_{X^{(4)}}}$
\begin{align}
C_{1}& \longleftrightarrow F_{1}\cap F_{2}\cap F_{5}=\sg (-5,-2,1) \dg\\
C_{4}& \longleftrightarrow F_{2}\cap F_{3}\cap F_{4}=\sg (5,-1,-2) \dg\\
C_{7}& \longleftrightarrow F_{4}\cap F_{5}\cap F_{6}=\sg (-5,-3,9) \dg\\
C_{8}& \longleftrightarrow F_{1}\cap F_{3}\cap F_{6}=\sg (5,6,-8) \dg
\end{align}

\noindent Since in complex dimension $3$ every $SU(3)$-singularity admits a K\"ahler crepant resolution it is then immediate to see that all assumptions of Theorem \ref{maintheoremequiv} are satisfied.

\end{example}


\providecommand{\bysame}{\leavevmode\hbox to3em{\hrulefill}\thinspace}
\providecommand{\MR}{\relax\ifhmode\unskip\space\fi MR }
\providecommand{\MRhref}[2]{%
  \href{http://www.ams.org/mathscinet-getitem?mr=#1}{#2}
}
\providecommand{\href}[2]{#2}


\end{document}